\documentclass[11pt]{amsart}
\usepackage[leqno]{amsmath}
\usepackage{amssymb}
\usepackage{hyperref}
\usepackage{enumerate}
\usepackage[normalem]{ulem}
\usepackage{color}
\usepackage{subfigure}
\usepackage{graphicx}
\usepackage{tabularx}
\usepackage[table]{xcolor}
\usepackage[protrusion=true,expansion=true]{microtype}
\usepackage[all]{xy}
\usepackage{bm}

   \topmargin=0in
   \oddsidemargin=0in
   \evensidemargin=0in
   \textwidth=6.5in
   \textheight=8.5in
\numberwithin{equation}{section}
\numberwithin{figure}{section}

%\usepackage{setspace}
%\onehalfspacing

\newtheorem{theorem}{Theorem}[section]
\newtheorem{remark}[theorem]{Remark}
\newtheorem{lemma}[theorem]{Lemma}
\newtheorem{proposition}[theorem]{Proposition}
\newtheorem{corollary}[theorem]{Corollary}

\newtheorem{definition}[theorem]{Definition}

\newtheorem{prop}[theorem]{Proposition}
\usepackage{comment}

\newcommand{\C}{\mathbb{C}}
\newcommand{\D}{\mathbb{D}}

\newcommand{\h}{\mathbb{H}}
\renewcommand{\H}{\h}
\newcommand{\N}{\mathbb{N}}

\newcommand{\R}{\mathbb{R}}

\newcommand{\HH}{\mathbb{H}}
\renewcommand{\P}{\mathbb{P}}

\newcommand{\ccwBCLE}{\BCLE^{\boldsymbol {\circlearrowleft}}}
\newcommand{\cwBCLE}{\BCLE^{\boldsymbol {\circlearrowright}}}

\newcommand{\Fh}{\mathfrak {h}}

\newcommand{\CF}{\mathcal {F}}

\newcommand{\CL}{\mathcal {L}}

\newcommand{\CS}{\mathcal {S}}

\newcommand{\strip} {\CS}
\newcommand{\eps}{\epsilon}

\newcommand{\bSLE}{{\rm bSLE}}
\newcommand{\SLE}{{\rm SLE}}
\newcommand{\CLE}{{\rm CLE}}
\newcommand{\BCLE}{{\mathrm{BCLE}}}

\newcommand{\wt}{\widetilde}

\newcommand{\ol}{\overline}
\newcommand{\ul}{\underline}

%\input xy
%\xyoption{all}

\begin{document}

\title{CLE percolations}

\author{Jason Miller}
\address{Statslab, Center of Mathematical Sciences, University of Cambridge, Wilberforce Road, Cambridge CB3 0WB, UK}
\email{jpmiller@statslab.cam.ac.uk}

\author{Scott Sheffield}

\address{
Department of Mathematics, MIT,
77 Massachusetts Avenue,
Cambridge, MA 02139, USA}
\email{sheffield@math.mit.edu}

\author{Wendelin Werner}

\address{Department of Mathematics, ETH Z\"urich, R\"amistr. 101, 8092 Z\"urich, Switzerland} 
\email{wendelin.werner@math.ethz.ch}

%\date{\today}

\begin{abstract}
Conformal loop ensembles are random collections of loops in a simply
connected domain, whose laws are characterized by a natural
conformal invariance property. The set of points not surrounded by any
loop is a canonical random connected fractal set --- a random and conformally
invariant analog of the Sierpinski carpet or gasket.

In the present paper, we derive a direct relationship between the conformal loop ensembles with simple loops ($\CLE_\kappa$ for $\kappa \in (8/3, 4)$, whose loops are Schramm's $\SLE_\kappa$-type curves) and the corresponding conformal loop ensembles with non-simple loops ($\CLE_{\kappa'}$ with $\kappa' := 16/\kappa \in (4, 6)$, whose loops are $\SLE_{\kappa'}$-type curves).  This correspondence is the continuum analog of the {\em Edwards-Sokal} coupling between the $q$-state Potts model and the associated FK random cluster model, and its generalization to non-integer~$q$.

Like its discrete analog, our continuum correspondence has two directions. First, we show that for each $\kappa \in (8/3,4)$, one can construct a variant of $\CLE_{\kappa}$ as follows: start with an instance of $\CLE_{\kappa'}$, then use a biased coin to independently color each $\CLE_{\kappa'}$ loop in one of two colors, and then consider the outer boundaries of the clusters of loops of a given color.  Second, we show how to interpret $\CLE_{\kappa'}$ loops as interfaces of a continuum analog of critical Bernoulli percolation within $\CLE_\kappa$ carpets --- this is the first construction of continuum percolation on a {\em fractal} planar domain. It extends and generalizes the continuum percolation on open domains defined by $\SLE_6$ and $\CLE_6$.
 
These constructions allow us to prove several conjectures made by the second author in \cite{she2009cle} and provide new and perhaps surprising interpretations of the relationship between conformal loop ensembles and the Gaussian free field. Along the way, we obtain new results about generalized $\SLE_\kappa (\rho)$ curves for $\rho < -2$, such as their decomposition into collections of $\SLE_\kappa$-type ``loops'' hanging off of $\SLE_{\kappa'}$-type ``trunks'', and vice-versa (exchanging $\kappa$ and $\kappa'$).  We also define a continuous family of natural $\CLE$ variants called boundary conformal loop ensembles (BCLEs) that share some (but not all) of the conformal symmetries that characterize $\CLE$s, and that should be scaling limits of critical models with special boundary conditions. We extend the $\CLE_\kappa$/$\CLE_{\kappa'}$ correspondence to a $\BCLE_\kappa$/$\BCLE_{\kappa'}$ correspondence that makes sense for the wider range $\kappa \in (2,4]$ and $\kappa' \in [4,8)$.

\end{abstract}

\maketitle

\setcounter{tocdepth}{1}

\vspace{-.4in}

\tableofcontents

\parindent 0 pt
\setlength{\parskip}{0.20cm plus1mm minus1mm}

\section{Introduction}
\label{sec:intro}

\subsection {CLE background}
Before describing the results and content of the present paper, let us first recall some facts about the conformal loop ensembles (CLEs), which will be 
our central object of study. CLEs are natural random collections of planar loops that possess conformal invariance 
properties, and that have been defined and studied in \cite{she2009cle,sw2012cle}. The set of points {\em not} surrounded by a $\CLE$ loop is a random closed connected fractal subset of the plane known as either a {\em $\CLE$ gasket} or {\em $\CLE$ carpet}, depending on whether the loops intersect each other. These sets are the 
natural random generalizations of deterministic self-similar fractals, with simple built-in conformal symmetries.
As we will see, these symmetries make it possible to perform analysis that is currently out of reach for most deterministic fractals. 
CLEs arise in a number of settings (for instance as conjectured or proven scaling limits of a number of lattice models, or as level-lines of random surfaces) and they are very closely and directly related to Schramm's SLE processes \cite{schramm2000sle} and to the Gaussian free field (GFF).

One can view a $\CLE$ as a random collection of loops defined in the closed unit disk (here, we consider loops modulo time-reparameterization; note in particular that a loop is not oriented). The law of this collection is invariant under any M\"obius transformation of the unit disk, which implies that $\CLE$ is well-defined in any simply connected domain $D \not= \C$ as the image of the $\CLE$ in the disk under any given conformal transformation from the unit disk into $D$. 
Each $\CLE$ comes in two closely-related versions: nested and non-nested. A non-nested $\CLE$ is a random collection of loops in the (closed) unit disk with the property that no loop surrounds another loop; once one has defined non-nested CLE, one may construct an instance of nested $\CLE$ using an iteration procedure.

\begin{figure}[ht!]
\begin{center}
\includegraphics[width=3.1in]{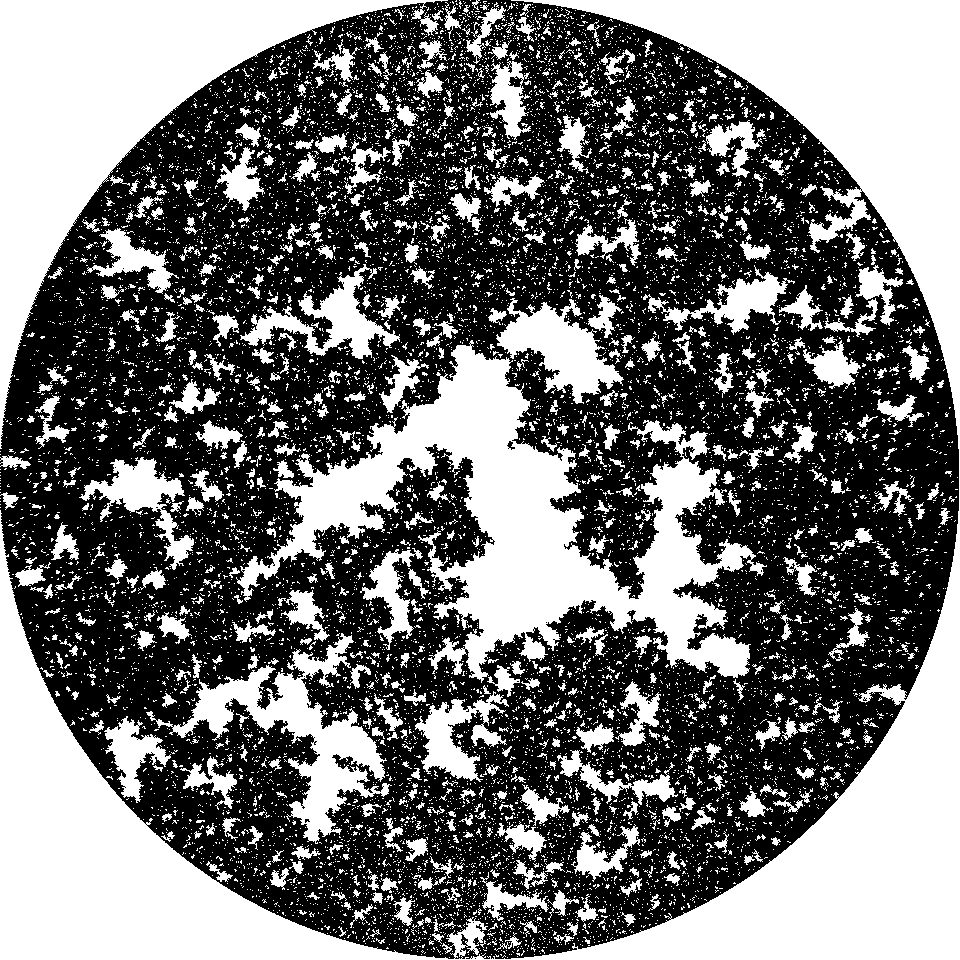}\hspace{0.03\textwidth}\includegraphics[width=3.1in]{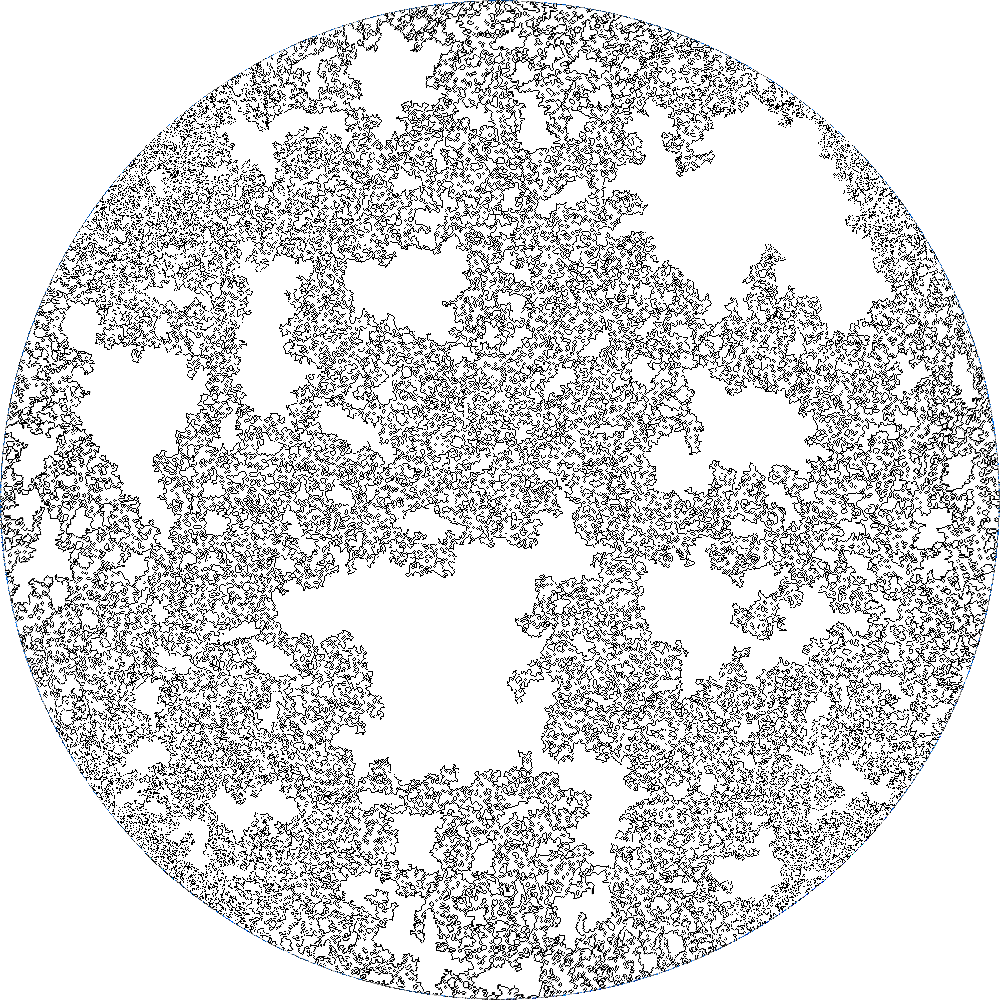}
\caption{\label{fig:simulations}{\bf Left:} Simulation of a $\CLE_4$ carpet (due to David B.\ Wilson); the carpet, in black, is a fractal set with zero Lebesgue measure. 
{\bf Right:} Simulation of a non-nested $\CLE_6$, the closure of the union of loops is the $\CLE_6$ gasket.}
\end{center}
\end{figure}

The family of CLEs is indexed by a parameter $\kappa \in (8/3, 8)$; this notation comes from the fact that the loops in a $\CLE_\kappa$ are loop variants of $\SLE_\kappa$. Recall that the Hausdorff dimension of an $\SLE_\kappa$ curve is almost surely $1+ (\kappa/8)$ \cite{rs2005basic,BEF_DIM}. The Hausdorff dimension of the $\CLE_\kappa$ carpets/gaskets is almost surely equal to $1 + (2 / \kappa) + (3 \kappa /32)$, as shown in \cite{ssw2009radii, nw2011carpets, msw2012cle_gasket}.  Like $\SLE_\kappa$ itself, $\CLE_\kappa$ has different properties depending on whether or not $\kappa \leq 4$ (see Figure~\ref{fig:simulations} for some simulations\footnote{Some of the simulations in this article are based on discrete models which are only conjecturally related to $\CLE$.}): 

\begin{itemize}
\item
When $\kappa \in (8/3, 4]$, each loop in the $\CLE_\kappa$ is a simple loop, and it does not intersect either the boundary of the disk or any other loop in the CLE. As mentioned above, if one removes the interiors of all the $\CLE$ loops, one is left with a random closed and connected fractal carpet reminiscent of the Sierpinski carpet. Note also that the knowledge of all of the loops of a (non-nested) simple $\CLE$ encapsulates exactly the same information as the knowledge of the carpet. These simple CLEs can be characterized by their conformal restriction axioms \cite{sw2012cle}, which explains why they arise in many different settings.

The present paper will not really directly deal with any discrete models, but it is nevertheless useful to recall that simple CLEs are the conjectural scaling limits of certain lattice models. The nested versions of the $\CLE_\kappa$ conjecturally correspond to the scaling limits of the so-called critical $O(n)$ models, which are natural models for random collections of loops within discrete lattices. 
The three $\CLE_\kappa$'s for $\kappa = 3$, $10/3$ and $4$ have been respectively conjectured to correspond to the scaling limits of critical Potts models for $q=2$, $3$ and $4$. The Potts model is a natural measure on colorings of the lattice using $q$ colors; to obtain loops, one considers the Potts models with monochromatic boundary conditions and then looks at the loops that form the inner boundaries of the outermost monochromatic cluster. It is worth emphasizing that except in the special case $q=2$ (which is the Ising model and corresponds also to the $O(n)$ model for $n=1$) for which the scaling limit of single interfaces is now well understood thanks the 
discrete analyticity features of the model (see \cite {cdhks2014ising} and \cite {smirnov2007ising,cs2012universality,ks2012loewner} or \cite {smirnov2010icm} for a survey), the Potts-CLE correspondence only describes {\em non-nested} $\CLE_\kappa$.  When $q = 3$, for example, the critical Potts model does {\em not} describe a collection of loops that conjecturally correspond to {\em nested} $\CLE_{\kappa}$ (since in this case there are three colors of clusters, and the law of the set of {\em all} cluster boundaries is expected to be more complicated). The present paper will however provide insight into this.

In the critical case $\kappa=4$, $\CLE_4$ is still a carpet but (roughly speaking) the probability that two big holes get very close does not decay in a power-law fashion anymore but much more slowly, which yields for instance the rather frequent presence of ``narrow bottlenecks'' between big holes.  This explains why, for the properties that we study in the present paper, $\CLE_4$ behaves somewhat differently from the $\CLE_\kappa$ with $\kappa<4$.

 \begin{figure}[ht!]
\begin{center}
\includegraphics [width=3in]{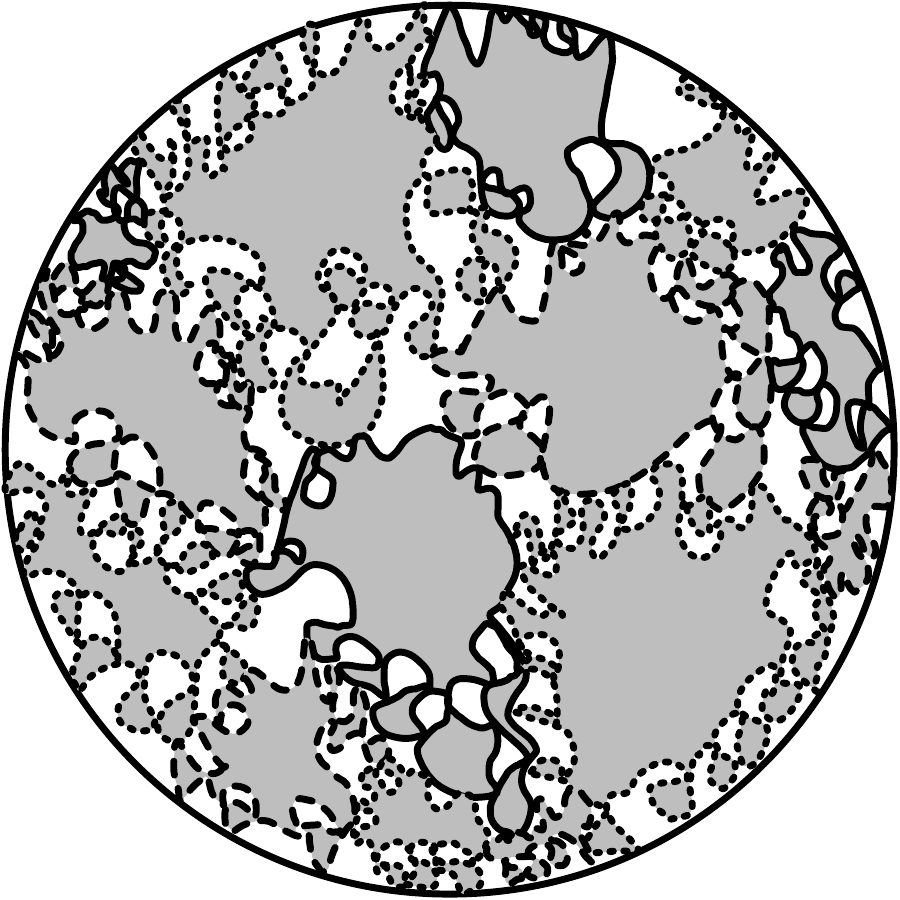}
\caption{Sketch of the non-nested larger loops in a $\CLE_{\kappa'}$ for $\kappa' \in (4,8)$.  Different loops have boundaries which are dashed/dotted/plain to differentiate them from each other in the illustration.}
\label {fig12}
\end{center}
\end{figure} 

\item
When $\kappa' \in (4, 8)$, the loops are not simple anymore (see Figure~\ref{fig12}), but because each loop is not ``self-crossing'' one can still define its interior and its exterior.  Also a loop can intersect the boundary of the disk and it can intersect other loops (but they cannot cross). The complement of the union of the interiors of all the loops is now a random gasket.  It turns out that in this case the information provided by the collection of outermost loops is richer than that provided by the gasket (viewed simply as a closed set).  This will follow from the results of \cite{msw2016randomness}. However, by a slight abuse of  terminology, we will often use ``CLE gasket'' also for the collection of outermost loops.

 Each non-simple conformal loop-ensemble is conjectured to be the scaling limit of a critical dependent percolation model called the critical FK$_q$-random cluster model for $q = q (\kappa') \in (0,4)$. 
 In this setting, the loops of the nested $\CLE_{\kappa'}$ will alternatively correspond to inner or outer interfaces of FK$_q$-clusters (that are respectively the outer and inner interfaces of the dual clusters for the dual FK$_q$-model).  This has been proved mathematically
 for $\kappa'=6$ (this corresponds to $q=1$ which is ordinary critical percolation, see \cite {smirnov2001percolation,cn2006percolation,smirnov2010icm}) and
 for the boundary touching loops of $\CLE_{16/3}$ that corresponds to the special case $q=2$ \cite{ks2015fkcle}. 
 It has also been proved for FK$_q$ models on random planar maps for all values of $q \in (0,4)$ in the so-called {\em peanosphere topology} \cite{she2011qginv,dms2014mating} for infinite volume surfaces.  See also \cite{gms2015cone_times,gs2015finite_volume,gs2015finite,quantum_spheres} for the corresponding results in the finite volume setting and \cite{gm2016topology} for a strengthening of this topology.
   
 It is natural in view of this FK framework to separate the loops of a nested $\CLE_{\kappa'}$ into the even and odd ones, depending on the parity of their nesting (all outermost loops would be odd ones, the next level ones would be even, and so on). If we work with ``free boundary conditions'', the $\CLE_{\kappa'}$ clusters will correspond to the gasket that is squeezed inside an odd loop and outside of all the even loops that it surrounds, and the ``wired boundary conditions'' clusters would be the complementary ones (the $\CLE_{\kappa'}$ clusters correspond to the  gasket that is squeezed inside an even loop and outside of all the even loops that it surrounds; the boundary of the domain would also count as an even loop here). 
\end{itemize}
 
The limiting cases $\kappa=8/3$ and $\kappa =8$ respectively correspond to the empty collection of loops and to the space-filling $\SLE_8$ loop (which has been shown to be the scaling limit of the Peano curve associated with the uniform spanning tree \cite{lsw2004lerw_ust}, which can be viewed as the critical  FK$_q$-model for $q=0$). 

Let us mention that while CLEs do provide a description of the conjectural full scaling limit of the discrete models that we have mentioned, which provides one motivation to study them, 
other possible descriptions of these scaling limits than via these interfaces do exist, via scaling limits of correlation functions or other observables and conformal fields 
(see for instance in the case of the Ising model \cite {cardy2010conformal,hs2013isingenergy,chi2015isingcorrelation,dtw2016ising, cgn2015ising} and the references therein).

\subsection{Overview of our CLE duality results}

The  main purpose of the present paper is to derive a direct
coupling between $\CLE_\kappa$ where $\kappa \in (8/3, 4)$ and $\CLE_{\kappa'}$ where $\kappa' = 16/\kappa \in (4,6)$. We will 
  also sometimes refer to this coupling as the $\CLE_\kappa$/CLE$_{\kappa'}$ duality. 
As we shall explain in the next section, the existence of this coupling
is not so surprising in view of the aforementioned facts that for three values of $\kappa$, the $\CLE_\kappa$ and $\CLE_{\kappa'}$ are (conjectured) scaling limits of critical discrete Potts 
models and FK$_q$-models  respectively (for $q=2$, $3$ and $4$), and of the existence of  a coupling (sometimes referred to as the Edwards-Sokal coupling) between the $q$-Potts models and the FK$_q$ models in the discrete setting
 (see \cite{grimmett2006cluster} for more background on the random cluster model and this coupling -- see \cite {fk1972fkmodel,es1988fk} for the original papers on the coupling). In some sense, we will derive a continuous analog and generalizations of this coupling that works for the continuum of values of $\kappa$. Several of our results had been conjectured in the $\CLE$ paper \cite {she2009cle}.
 
\subsubsection {From $\CLE_{\kappa'}$ to $\CLE_{\kappa}$}

Let us first describe how, starting from a $\CLE_{\kappa'}$, one can construct a $\CLE_\kappa$ or variants of $\CLE_\kappa$. 
  
The setup in this subsection is going to be the following. 
Fix any $\beta \in [-1,1]$, and consider a nested $\CLE_{\kappa'}$ for $\kappa' \in (4,8)$. 
For each $\CLE_{\kappa'}$ cluster, one tosses an independent coin to declare it to be colored in black or in white (with respective probability $(1-\beta)/2$ and $(1+ \beta)/2$).  Then, we will be interested in 
the clusters of clusters that one obtains by agglomerating all $\CLE_{\kappa'}$ clusters of the same color that touch each other. Consider for instance all clusters of black $\CLE_{\kappa'}$ clusters.  See Figure~\ref{fig::bcle_nested} for a sketch in the unit disk.

Let us describe here three type of results among those that we shall derive in the present paper.  All of them say in some ways
that the outer boundaries of such clusters of clusters form a variant of $\CLE_\kappa$.

\begin {itemize}
 \item 
If we start with the exact setup described above, we can consider the ``free boundary'' definition of clusters and focus on those clusters of black clusters that do touch the boundary. Then, the outer boundary of these clusters will be formed of unions of interfaces with clusters of white clusters that also touch the boundary. We will see that for any choice of $\beta$, this collection of boundary-touching interfaces will form a variant of $\CLE_\kappa$ that we define and call a boundary conformal loop ensemble $\BCLE_\kappa$ (see Section~\ref{subsubsec::BCLE} below and Figure~\ref{fig:perc_sim} -- for each $\kappa$, there will be a one-parameter family of such $\BCLE_\kappa$ corresponding to the choice of $\beta$). Note that this $\BCLE_\kappa$ cannot be exactly a $\CLE_\kappa$ because the clusters of clusters do touch the unit circle, while $\CLE_\kappa$ loops do not (and when $\kappa'\in [6,8)$ for which the previous 
statement is valid, $\CLE_{16/\kappa'}$ does not even exist). Note also that for this result, only the outermost $\CLE_{\kappa'}$ loops matter and that one could have started with a non-nested $\CLE_{\kappa'}$. 
The precise statements are described in Section~\ref{S7}, after all the definitions of these $\CLE$ variants have been properly laid out. 

 \begin{figure}[ht!]
\begin{center}
\includegraphics [width=3in]{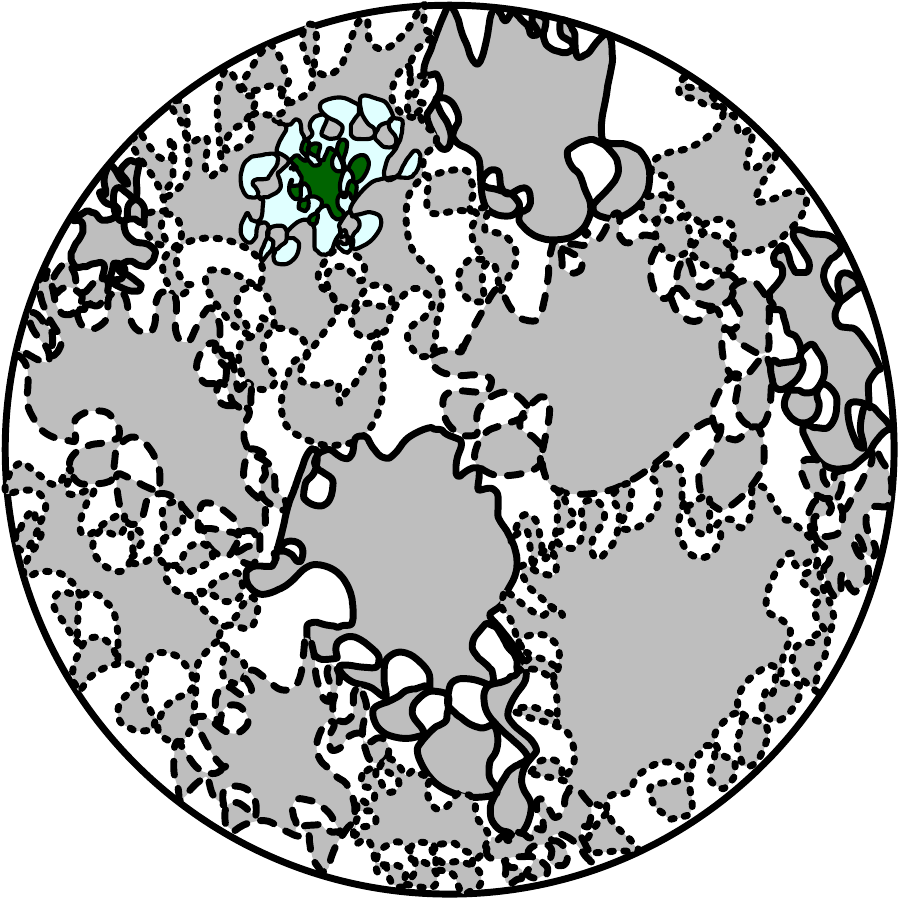}
\includegraphics [width=3in]{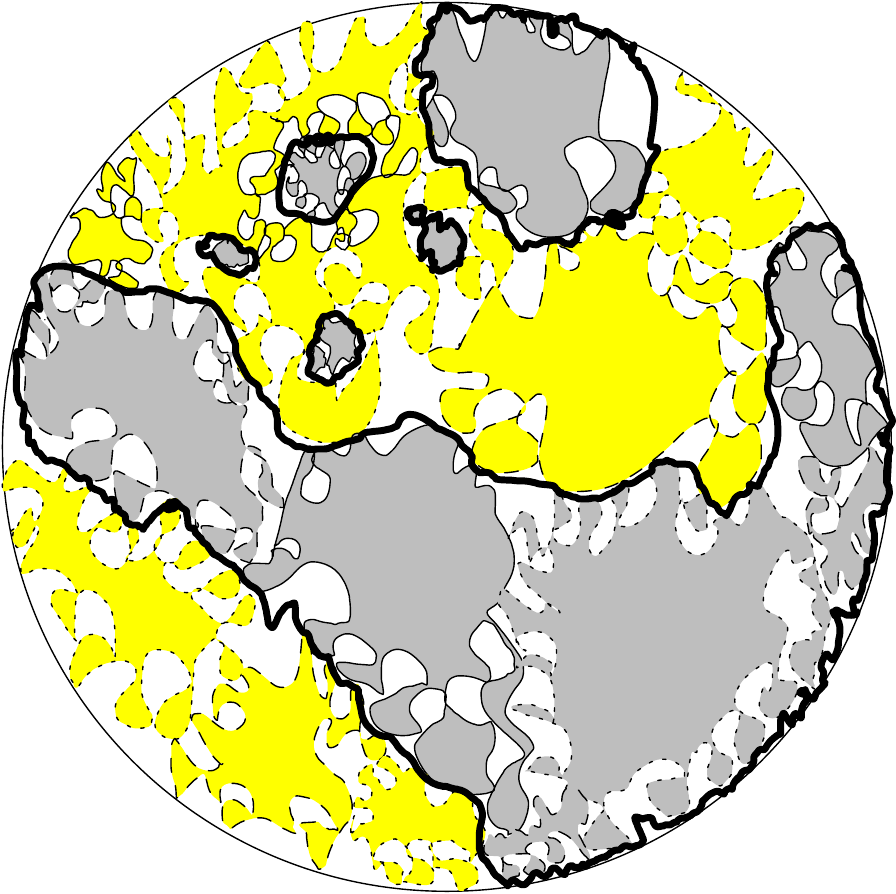}
\caption{\label{fig::bcle_nested}From nested $\CLE_{\kappa'}$ (on the left -- only one second-layer loop and one third-layer loop are drawn on top of the sketch of Figure~\ref{fig12})  to a variant of $\CLE_\kappa$  (on the right) by considering percolation of loops (sketch).}
\end{center}
\end{figure} 
 
 One very closely related result goes as follows.  
Consider a simple $\CLE_{\kappa'}$ in the upper half-plane, and toss an independent (not necessarily fair) coin for each of the $\CLE_{\kappa'}$ clusters just as before (and here again, it is actually in fact enough to 
use a non-nested $\CLE_{\kappa'}$). We then consider the union of all clusters of black $\CLE_{\kappa'}$ clusters that touch the positive half-line, and the union of all clusters of white $\CLE_{\kappa'}$ clusters that touch the negative half-line. It then turns out that these two sets have a common boundary, which is a simple curve from $0$ to infinity in the closed half-plane. 
We will describe precisely the law of this interface as an $\SLE_{\kappa} ( \rho ; \kappa - 6 - \rho)$ curve for some value of $\rho = \rho (\kappa', \beta)$,  which is a rather simple variant of the chordal $\SLE_\kappa$ curve.

\begin{figure}[ht!]
\begin{center}
\includegraphics[width=3.2in]{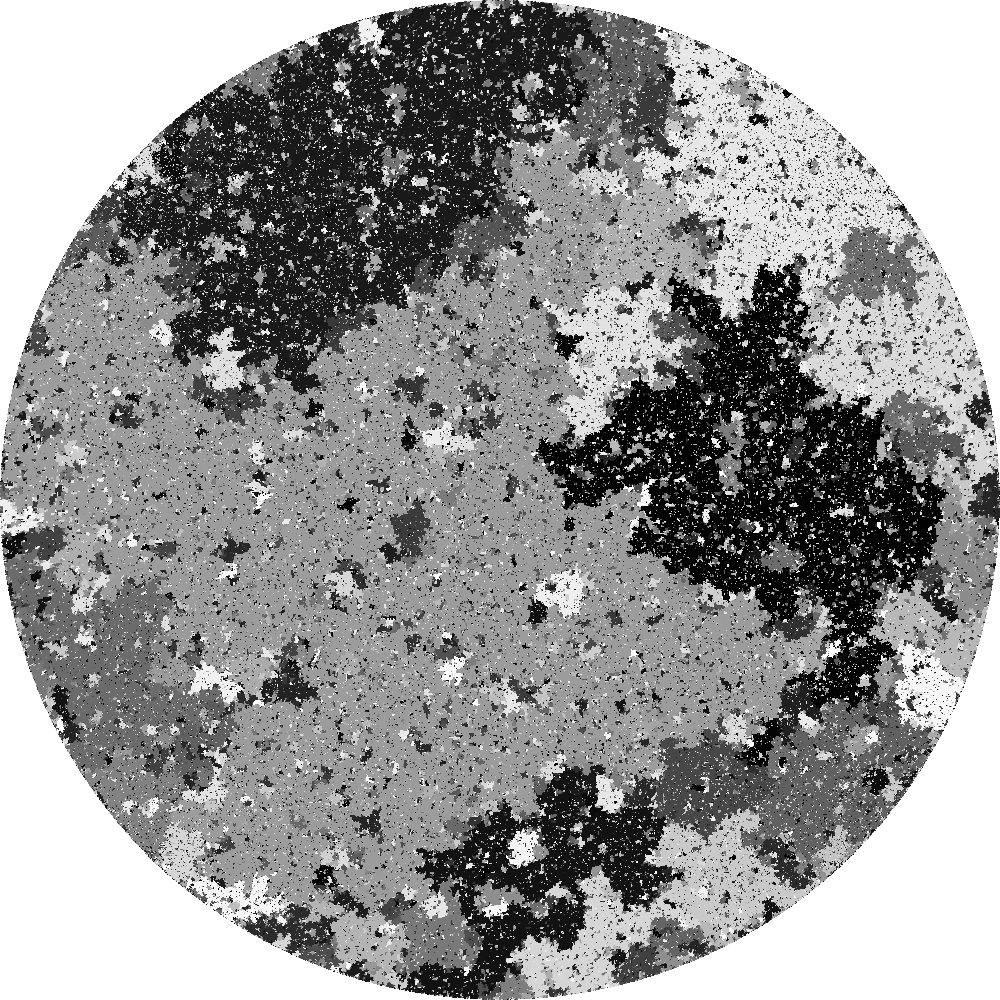}\hspace{0.01\textwidth}\includegraphics[width=3.2in]{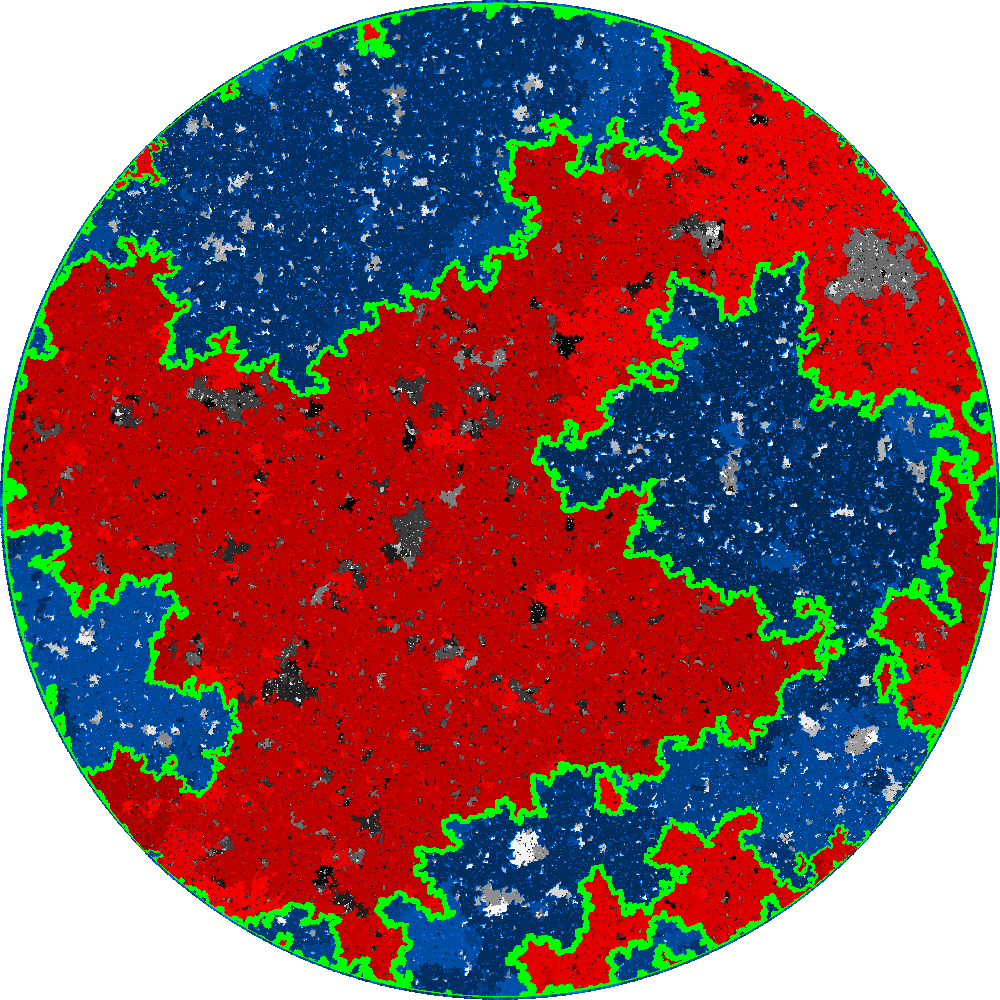}
\end{center}
\caption{\label{fig:perc_sim} Percolation of $\CLE_6$ loops: {\bf Left:} Critical bond percolation on a $1000 \times 1000$ square grid, conformally mapped to $\D$.  Each cluster is colored according to an i.i.d.\ label chosen uniformly in $[0,1]$.  {\bf Right:} Cluster of clusters with label at most (resp.\ at least) $1/2$ which is connected to the boundary is colored in red (resp.\ blue).  The interface between the red and blue clusters is indicated in green and forms a $\BCLE$.}
\label {figcle6}
\end{figure}

\item 
Let us now describe another result in the same direction, that can be viewed as the exact scaling limit of the Edwards-Sokal correspondence. Here we start with $\CLE_{\kappa'}$ for $\kappa' \in (4,6)$ (we exclude this time the values $\kappa' \in [6,8)$) and it will be important to consider a nested version, and this time to look at the wired way to define its clusters via the parity of the $\CLE_{\kappa'}$ loops. With this definition, the outermost cluster is a cluster that has the boundary of the domain as its outside boundary and the first-level $\CLE_{\kappa'}$ as its inside boundary.    

We then color all these nested clusters in white or black with probability $p$ or $1-p$, except that we force the outside cluster to be white. We then look at the law of the collection of all outermost black cluster boundaries.  
Theorem~\ref{thm:cle_k_from_kp} will state that for each $\kappa' \in (4,6)$, there exists a value $p (\kappa') \in (0,1)$ such that this collection is exactly a $\CLE_\kappa$ for $\kappa \in (8/3, 4)$. This is the exact continuous analog of the construction of the $q$-state Potts model out of the corresponding FK$_q$ model, where $p$ plays here the role of $1/q$. 
\end {itemize}

Let us now list some consequences of these couplings:
A first observation is that it provides a new derivation of some basic properties of $\CLE_{\kappa}$ carpets (such as its M\"obius invariance and the fact that the branching tree construction of \cite{she2009cle} does not depend of the chosen root of the tree) that does not rely on the loop-soup construction of these $\CLE_\kappa$ of \cite {sw2012cle} 
(up to now, this was the only existing approach to these results). 

A second feature is related to the scaling limit of discrete models. 
Given an instance of the $q$-state Potts model, one can construct a two-coloring by simply dividing the $q$ states into two parts and assigning each part one of the two colors. The corresponding two-color model belongs to a more general family of models (known as {\em fuzzy} Potts models in the literature, see for instance \cite {mv1995fuzzypotts,hagg1999fuzzypotts}) that can be constructed by starting with an instance of FK$_q$ percolation model and then using i.i.d.\ biased coin tosses to assign a color to each cluster. 
Clearly, it follows from our description of clusters of $\CLE_{\kappa'}$ clusters that (modulo some discrete considerations), 
if one has proved that the scaling limit of a discrete FK$_q$ model is a $\CLE_{\kappa'}$, then one will be able to deduce that the scaling limits of clusters of randomly colored FK$_q$ clusters (the fuzzy Potts models) are given by the aforementioned variants of $\CLE_{\kappa}$ for $\kappa = 16/ \kappa'$ that we will describe.  
This is for instance useful in the case of the Ising model, as it is essentially known that the critical FK$_{q=2}$ model (see \cite {ks2015fkcle} for the boundary-touching loops)  converges to $\CLE_{16/3}$ and the randomly colored FK$_{q=2}$ model for $\beta=0$ is exactly the Ising model. Hence, this will make it possible to deduce that the scaling limit of critical Ising model interfaces is $\CLE_3$ from the fact that the scaling limit of the corresponding FK$_{q=2}$ is $\CLE_{16/3}$ -- see the upcoming paper \cite {dtw2016ising}).

An interesting feature of our coupling is that the $\CLE_\kappa$ variant that one constructs by
coloring $\CLE_{\kappa'}$ clusters exhibits a certain special symmetry for $\beta =0$ (i.e.\ when one colors the clusters using a fair coin) that corresponds to the fact that $p(\kappa') = 1/2$ 
only when $\kappa' = 16/3$. 
On the other hand, if the scaling limit of the critical FK$_{q=2}$ model is conformally invariant, it should satisfy this special symmetry.
Hence, one can conclude, based on our results only and without reference to discrete observables, that the values $\kappa' =16/3$ and $\kappa=3$ are the only possible 
candidates for a conformally invariant scaling limit for the critical FK$_{q=2}$ and Ising models. 
 
\subsubsection {From $\CLE_{\kappa}$ to $\CLE_{\kappa'}$}
 
We now describe the converse coupling: 
Let us first consider a simple $\CLE_\kappa$ carpet for $\kappa \in (8/3, 4)$, and let us try to list properties that a  ``continuous conformally critical percolation interface within the $\CLE_\kappa$ carpet'' would have to satisfy. Intuitively, one can imagine that it traces the outer boundary of what critical percolation clusters within the sparse $\CLE_\kappa$ 
 carpet would be (see Section~\ref{subsec:cpi} for more details, Figure~\ref{fig:percolation_sketch} for an illustration and Figure~\ref{figcleising} for a simulation). This would  be a random curve within this carpet, that would never be allowed to self-cross; it would bounce off 
 its past trajectory (just like non-simple SLEs do) as well as each of the holes of the $\CLE_\kappa$ carpet (leaving always the holes on the left-hand side of the percolation curve, when it is exploring the boundary of a cluster clockwise). Furthermore, it should be local in the sense that it should feel the holes of the $\CLE$ only when it hits them. Finally, we expect the joint law of the $\CLE_\kappa$
 and the percolation interface to be conformally invariant. 
 
In general fractal deterministic carpets (e.g., the Sierpinski carpet), such continuous percolation interfaces should in principle exist,
but showing this seems out of reach of the current mathematical knowledge,  as does describing or even guessing the conjectural features (such as their dimension) of these interfaces.    
 However, the conformal invariance properties of $\CLE$ carpets make things possible. We show that for each $\kappa \in (8/3, 4)$, a random process with the aforementioned properties does indeed exist
 in $\CLE_\kappa$ and that its distribution is unique. We describe it precisely in terms of $\SLE_{\kappa'}$-type processes, and more generally, we describe the corresponding collection of ``all continuous percolation clusters'' within a $\CLE_\kappa$ carpet in terms of a $\CLE_{\kappa'}$. In this way, we indeed interpret $\CLE_{\kappa'}$'s in terms of ``percolation clusters'' within the $\CLE_\kappa$ as is suggested by the corresponding relation between FK$_q$-models and the Potts models.

 \begin{figure}[ht!]
\begin{center}
\includegraphics [width=3in]{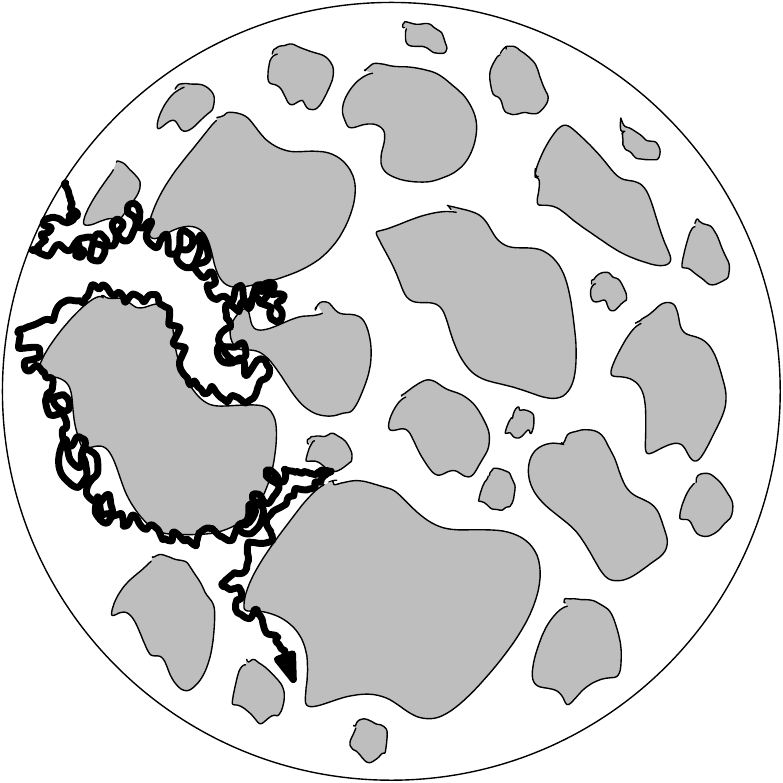}
\includegraphics [width=3in]{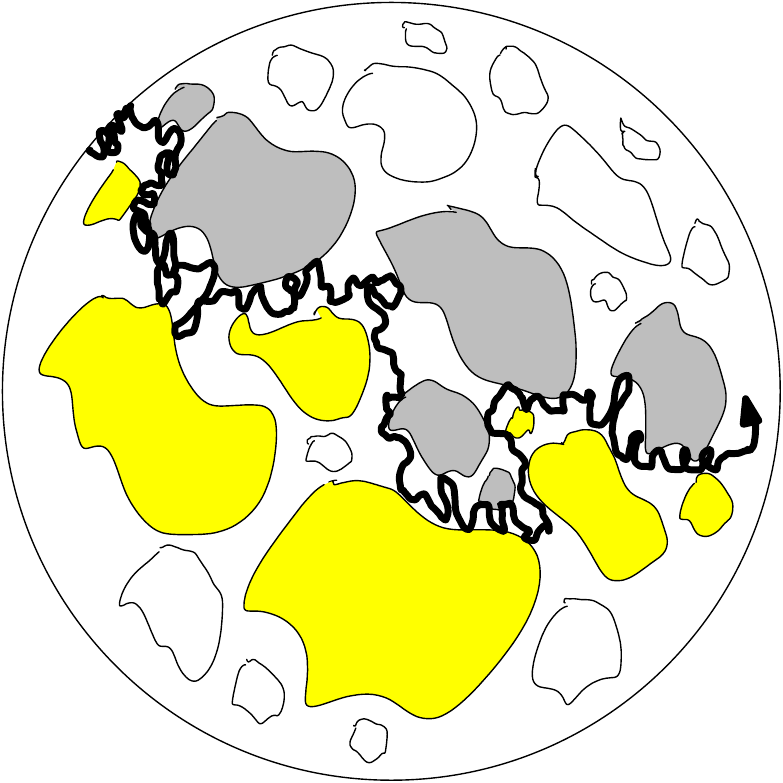}
\caption{\label{fig:percolation_sketch} A sketch of $\CLE_\kappa$ ``percolation interfaces'' for $\beta=-1$ and $\beta=0$}
\end{center}
\end{figure}

\begin{figure}[ht!]
\begin{center}
	\includegraphics[width=3.2in]{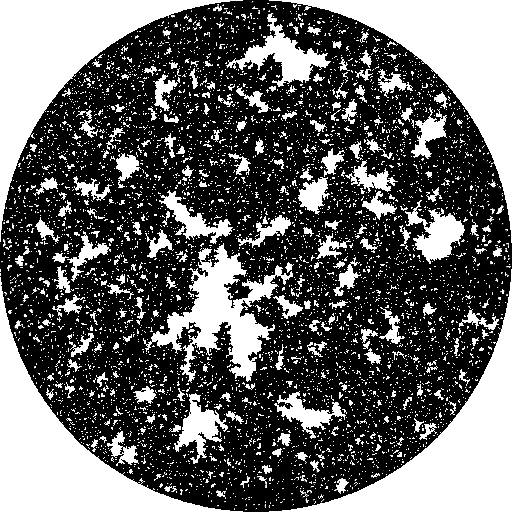}\hspace{0.01\textwidth}\includegraphics[width=3.2in]{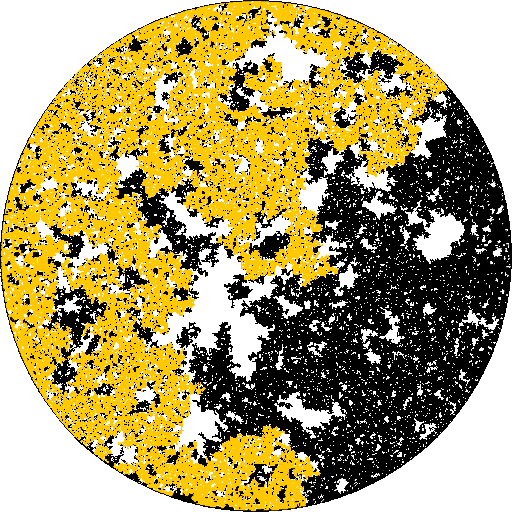}
\end{center}
\caption{Percolation within $\CLE_3$: {\bf left:} Boundary touching cluster of sites with $+$ spin in an Ising model instance (black) --- a.k.a.\ Ising carpet --- on a $512 \times 512$ square grid with $+$ boundary conditions together with Ising carpet holes (white) conformally mapped to $\D$.  {\bf Right:} Clusters of percolation performed on Ising carpet incident to an arc of $\partial \D$ in yellow.}
\label {figcleising}
\end{figure}

This construction can be generalized as follows. One first tosses an independent coin for each $\CLE_\kappa$ hole to decide whether it is closed or open (with probability $(1+\beta)/2$ and $(1-\beta)/2$). This then changes of course the connectivity rules within the $\CLE_\kappa$ but it is then still possible to describe, construct and characterize the corresponding ``continuous percolation interface'' (this process would now leave the closed holes to its left and the open holes on its right, when it is exploring the boundary of a cluster clockwise) in terms of a variant of $\SLE_\kappa$ that will depend on the parameter $\beta$. The case $\beta=-1$ is then exactly the previously described case, where all $\CLE_\kappa$ holes are closed, i.e.\ obstacles for the percolation within the CLE. Note that for general $\beta$, one can choose the status of a $\CLE_\kappa$ hole only once the percolation interfaces does indeed hit it, which explains the relation to the so-called side-swapping $\SLE_\kappa(\kappa-6)$ 
processes, that will be at the heart of our analysis. 

In the subsequent paper \cite{msw2016randomness}, we shall prove that (when $\kappa \in (8/3, 4)$), these percolation interfaces are indeed still random (when one conditions on the $\CLE_\kappa$) for all choices of $\beta \in [-1,1]$. This means that this percolation process captures additional randomness that is located ``inside'' the $\CLE_\kappa$ carpet.

An important remark is that, just as in the discrete Edwards-Sokal correspondence, if one constructs a $\CLE_\kappa$ from a $\CLE_{\kappa'}$ by looking at clusters of $\CLE_{\kappa'}$-clusters, and then looks at the conditional law of the 
CLE$_{\kappa'}$ clusters given the obtained $\CLE_{\kappa}$ carpet that one did construct, then one can interpret the former as the critical percolation clusters in the latter. So both types of couplings are in fact the same.

\subsection {Other results and comments}

\subsubsection {$\CLE_4$ percolation}

In the special boundary case $\kappa =4$, it turns out  not to be  possible to define such  continuous percolation interfaces within the $\CLE_4$ carpet, unless 
one uses the previous procedure with $\beta =0$ (in other words, each hole will be closed or open with probability $1/2$). But when $\beta =0$, there  exists in fact a one-parameter family of 
such continuous percolation interfaces.  Another important difference is that these continuous percolation interfaces in the $\CLE_4$ turn out to be deterministic functions
of the $\CLE_4$ carpet and of the status of all its holes. One can interpret this as follows: When one colors the $\CLE_4$ loops at random in black and white using fair coins, then there is a one-dimensional family of rules to define clusters of black loops.  The situation 
has therefore similarities with the $\kappa' > 4$ case, where one agglomerates deterministically $\CLE_{\kappa'}$ loops of the same color that touch each other, but recall that the $\CLE_4$ loops are all disjoint so that the very existence of such a deterministic gluing mechanism is quite surprising and intricate.

\subsubsection {Boundary conformal loop ensembles} \label{subsubsec::BCLE}

We define and study a natural family of $\CLE$ variants that we call boundary conformal loop ensembles (BCLEs) that share some (but not all) of the conformal symmetries that characterize CLEs and that loosely speaking should describe the scaling limit of critical models for a  large class (actually a continuum) of boundary conditions (there will be a one parameter family of loop ensemble models called $\BCLE_\kappa (\rho)$). These arise naturally when we study the CLE-variants that one needs to properly describe the relation between variants of $\CLE_\kappa$ and $\CLE_{\kappa'}$. They are defined in a natural and elementary way via target-independent variants of the $\SLE_\kappa$ processes (that we will denote by $\bSLE_\kappa (\rho)$), and they heuristically correspond to the scaling limit of discrete models for a certain continuous family of ``constant'' boundary values.  It is worthwhile noticing that while the definition of $\CLE_\kappa$ is restricted to $\kappa \in (8/3, 8]$, these
boundary conformal loop ensembles make sense also (for some values of $\rho$) in the range $\kappa \in (2, 8/3]$.

\subsubsection{$\SLE_\kappa(\rho)$ processes with $\rho < -2$}
The $\SLE_\kappa(\rho)$ processes are an important variant of $\SLE$ in which one keeps track of just one extra marked point.  These processes were first introduced in \cite{lsw2003restriction} and, just like $\SLE$ itself, they appear naturally in many different contexts.  Roughly, $\SLE_\kappa(\rho)$ is defined similarly to ordinary $\SLE$ except the driving process is described by a multiple of a Bessel process (the unique continuous one-dimensional Markov processes with the same scaling property as Brownian motion). These processes can be defined in a natural and 
simple way for all values of $\rho > -2$ and the continuity of the trace and couplings with the GFF in this case have been analyzed in \cite{ms2012ig1}. 
However, it turns out that there are natural ways to generalize these $\SLE_\kappa (\rho)$ processes to some values of $\rho \le -2$, and that these generalized processes do play an important role 
in many instances (such as in the present paper about CLE).  
In the present article, we will establish the continuity of the trace and couplings with the GFF of certain generalized $\SLE_\kappa(\rho)$, and we will shed some light on the structure of these processes.  When $\kappa \in (0,4]$ there are in fact two phases of process behavior depending on whether $\rho \in (-2-(\kappa/2), (\kappa/2)-4)$ or $\rho \in ( (\kappa/2)-4,-2)$.  The present work will focus on the former range, while the latter range will be the focus of \cite{ms2016lightcone}.  The two regimes differ in that in the former the process is ``loop-forming'' (and therefore 
related to loop ensembles) and the latter is where the process can be constructed as a ``light cone'' of angle-varying GFF flow lines.

The critical value $\rho=(\kappa/2)-4$ is quite interesting because, as we will show in this paper, the law of its range is exactly equal to the law of the range of an $\SLE_{\kappa'}(\rho')$ process with $\rho'=(\kappa'/2)-4$ but it visits the points of its range in a different order.

The minimal value $\rho = -2-(\kappa/2)$ is also interesting because, as we will show later, these processes have exactly the same law as an $\SLE_{\kappa'}$ process. More precisely, it will follow from our results that as $\eps$ decreases to $0$, the generalized processes $\SLE_{\kappa} ( -2 - (\kappa/2) + \eps )$ do converge in a rather strong sense to an $\SLE_{\kappa'}$. For instance, $\SLE_{8/3}(-10/3)$ can be interpreted as an $\SLE_6$.  (We remark that the interpretation of $\SLE_{8/3}(-10/3)$ as $\SLE_6$ was pointed out in \cite[Section~4.5]{she2009cle} by considering a limit of $\SLE_\kappa(\kappa-6)$ as $\kappa \downarrow 8/3$.)

Thus, for a fixed $\kappa$, as $\rho$ increases from $-2-(\kappa/2)$ to $(\kappa/2)-4$, we have a family of random curves whose laws interpolate between that of an $\SLE_{\kappa'}$ curve and the law of a random curve which has the same range as an $\SLE_{\kappa'}$-type curve but visits the points of its range in a different order.  The intermediate curves look like $\SLE_{\kappa'}$-type curves decorated with extra $\SLE_\kappa$-type loops.  More details on the definition of these processes will be given later in the paper.  The generalized $\SLE_{\kappa'}(\rho')$ processes with $\rho' < -2$ have a similarly interesting structure.

 \subsubsection {Further comments}
 
A key role in the present paper will be played by the coupling of some SLE processes with the GFF. Recall that as pointed out in \cite{ms2012ig1,ms2013ig4}, it is natural to couple some $\SLE_\kappa$ processes with $\SLE_{\kappa'}$ processes by coupling each with the same instance of the GFF.  This is also what we shall be doing in the present work in the $\CLE$ context. The very rough idea of the derivation of our
$\CLE_\kappa$/$\CLE_{\kappa'}$ duality results will go as follows: It is possible to couple an $\SLE_\kappa$ exploration tree started from a given point $x \in \partial D$ with a GFF in $D$ with fairly simple boundary conditions but that depend on $x$. In this coupling the branches of the $\SLE_\kappa$ exploration tree then in turn define the $\CLE_\kappa$ collection of loops. 
(Mind that the dependence on the $\CLE$ with respect to the choice of boundary point is quite delicate, except when $\kappa =4$.) 
 One can then define, associated with the same choice of boundary point, the same GFF and the same boundary data, an $\SLE_{\kappa'}$ exploration tree that in turn will define a $\CLE_{\kappa'}$, and a careful analysis of the interaction and commutation relations between the branches of these two trees will enable us to conclude. Here, as in the series of papers on imaginary geometry \cite{ms2012ig1,ms2012ig2,ms2012ig3,ms2013ig4}, the concept of local sets of the GFF as introduced in \cite{ss2010continuumcontour}
 (see also \cite {rozanov1982markov} for an earlier version of such ideas).
 Also, just as in these imaginary geometry papers, while some of the commutation relations that we will use are very close to some cases established by Dub\'edat \cite {dub2007commutation} for SLE curves as long as 
 they do not intersect, the GFF flow line framework will be crucial to control the joint law and interaction between the coupled SLE curves also when they touch each other. 
 
  In a sequel paper \cite{msw2016fan}, we will explain how to interpret the SLE fan defined in \cite{ms2012ig1} as the collection of all $\SLE_\kappa (\rho)$ type curves from a point to another coupled with a given GFF, in terms of this $\CLE_{\kappa'}$ decomposition. This will give another example (but still building on the present work) of how the FK/Potts coupling is also intrinsically embedded in the GFF and directly related to imaginary geometry concepts.

Let us stress that the duality type results between variants of $\CLE_\kappa$ and $\CLE_{\kappa'}$ that we derive here are of a rather different type from the duality results between variants of $\SLE_{\kappa}$ and 
variants of $\SLE_{\kappa'}$ derived by \cite{zhan2008duality,dubedat2009duality,zhan2010duality2,ms2012ig1,ms2013ig4}, who were viewing single $\SLE_\kappa$-type curves as outer boundaries of single $\SLE_{\kappa'}$-type curves (and did not mention any percolation of $\CLE_{\kappa}$ or 
$\CLE_{\kappa'}$ loops).

\subsection {Outline of the paper}
We conclude this introduction with a general description of the structure of the paper: We start by recalling some background and motivation, which leads to the definition of what we call ``continuous percolation interfaces'' (CPI) in labeled $\CLE$ carpets (this corresponds to the percolation in $\CLE_\kappa$ for $\kappa \in (8/3, 4]$ question) in Section~\ref{Sec2}. We then spend some time in Section~\ref{Sec3} discussing the various definitions and generalizations of $\SLE_\kappa (\rho)$ processes to some values of $\rho \le -2$. Then, mostly building on ideas related to conformal restriction and $\CLE$ characterizations (in the spirit of \cite{sw2012cle}), we derive a description and characterization of these CPI processes in Sections~\ref{Sec4} and~\ref{Sec5}, but {\em conditionally} on what looks like a rather technical but tricky assumption (roughly speaking, the existence and continuity of the trace of certain $\SLE_\kappa (\rho)$ processes) that will be proved later in the paper.
In the case $\kappa =4$ which is the focus of Section~\ref{Sec5}, one can prove everything though, building on existing results relating the $\CLE_4$ to the GFF. We then turn our attention to the case of $\CLE_{\kappa'}$ percolation: After describing various aspects of the construction of $\CLE_{\kappa'}$, we again derive a conditional result about $\CLE_{\kappa'}$ percolation interfaces in Section~\ref{Sec6}, this time conditionally on the existence and continuity of the trace of $\SLE_{\kappa'}(\rho)$-type processes (that will be proved later in the paper).

In Section~\ref{Sec7},  we  
define the boundary conformal loop ensembles (BCLEs) and derive some of their basic properties. We are then ready to state precisely the general duality results (Theorems~\ref{thm:duality1} and~\ref{thm:duality2}) between $\BCLE_\kappa$ and $\BCLE_{\kappa'}$. These results prove all the assumptions that the conditional results of Section~\ref{Sec4}, Section~\ref{Sec5} 
and Section~\ref{Sec6} were based on, turning them into unconditional statements. They also imply the duality results that we have described in the introduction and that are formally stated in Section~\ref{Sec7}.

The proofs of these two key theorems, based on the GFF imaginary geometry couplings of various SLE curves, is then the goal of Sections~\ref{Sec8} and~\ref{Sec9}: We first recall some imaginary geometry background, and some of the results from \cite{ms2012ig1,ms2012ig2,ms2012ig3,ms2013ig4} that are instrumental in the proof. 
In particular, we recall the interaction rules between various flow and counterflow lines coupled with the GFF (mostly derived in \cite{ms2012ig1}), and the definition and basic features of the ``space-filling SLE'' that has been derived in \cite{ms2013ig4}.  We then use those to derive the theorems in Section~\ref{sec:proofs}.

Finally, we conclude in Section~\ref{sec:conclusion} with some comments and outlook.

\section{Heuristics and motivation from discrete models}
\label {Sec2}

In the following, we will first provide some motivation, background, and a possible natural interpretation for our subsequent study and results. We will then define axiomatically what we call ``continuous percolation'' within $\CLE$ carpets.

\subsection{Percolation in fractal carpets}

It is possible (see \cite{kumagai1997percolation,hw2008uniqueness}) to study models that
can be viewed as percolation in deterministic self-similar fractals such as the Sierpinski carpet, and to show the existence of a non-trivial phase transition. Intuitively speaking, even though the fractal set has zero Lebesgue measure (and a fractal dimension that is strictly smaller than $2$), taking a percolation parameter that on the discrete level is sufficiently close to one makes it possible to create macroscopic connections within this sparse fractal domain.

One way to proceed is for instance to look at the infinite discrete connected subgraph of the square lattice, where one has removed the larger and larger squares at each scale (more precisely, consider the graph $\N \times \N$, and remove first all interiors of squares of the type $S_{N,j,k}= 3^N ([3j + 1 ,  3j + 2] \times [3k+1 , 3k+2])$ for non-negative integers $j, k, N$, and to see that with probability one, percolation with parameter $p$ on this graph has an infinite open cluster provided $p$ is chosen to be large enough. 
Furthermore, in this particular case, at the critical value, one can show that there are macroscopic connections at each scale, but no infinite cluster. This is like what classical Russo-Seymour-Welsh  type results yield 
for ordinary critical percolation in the plane: At whatever scale one looks at the critical percolation picture, the existence of connections remains very much random.

This suggests that there might exist a continuous object that could be interpreted as the scaling limit of these critical percolation interfaces in the carpet. It would be the analog of  $\SLE_6$ paths (the scaling limits of percolation interfaces in the plane \cite{smirnov2001percolation,cn2006percolation}), but within this fractal set. It is also natural to conjecture that it should satisfy a number of remarkable properties (target-invariance, local growth, some notion of scale-invariance, or even conformal invariance).

In the case of $\SLE_6$, conformal invariance was the key-property that enabled Oded Schramm \cite{schramm2000sle} to construct a candidate for this scaling limit directly in the continuum via Loewner's equation. In the present case, this feature seems delicate to handle. Even though this exploration path would not ``feel the holes in the fractal set before hitting them,'' one has to keep track of their location in the remaining-to-be-explored domain in order to describe its future evolution, and this (conformally speaking) means to keep track of infinitely many parameters. So, there does not seem to be a simple Loewner-growth way to describe it.

However, in the case where the fractal carpet is itself a random $\CLE_\kappa$ for $\kappa \in (8/3, 4]$,  the conditional distribution of the holes remaining to be discovered by this exploration is easy to describe thanks to the $\CLE$ restriction property and the locality property of the percolation path. As we shall see in the present paper, this enables us to give a full understanding of the joint distribution of the $\CLE_\kappa$ with what can be interpreted as the percolation path in the $\CLE_\kappa$ carpet. 

Let us mention that these heuristic arguments are still valid for the following variant of the model. Choose first an additional parameter 
$p_0 \in [0,1]$, and decide independently for each hole in the fractal carpet whether it is ``completely open'' (with probability $1-p_0$)  or ``completely closed'' (with probability $p_0$). In the discrete graph approach that we briefly described above, this would correspond to open/close all the edges (or sites) that are located inside the squares $S_{N,j,k}$. The previously described case where all the holes are closed corresponds to $p_0=1$, whereas in some loose sense (this is best seen rigorously on some triangular site-percolation analog) the case where all the holes are open, corresponds to a dual-version of the same problem. The value $p_0=1/2$ is quite natural too in this setting, as it induces a natural duality within the carpet between open and closed. Hence, for each value of $\kappa \in (8/3, 4]$ and $p_0 \in [0,1]$, one could  hope to obtain a continuous critical percolation interface path in the 
$\CLE_\kappa$ carpet, that leaves the open holes it encounters to one of its sides (say to its left) and the closed ones to its other side (i.e.\ its right).

\subsection{FK clusters as critical percolation in Potts clusters}

The standard coupling between FK-clusters for parameters $(p,q)$ where $q$ is a positive integer and the corresponding $q$-state Potts model with parameter $\beta = \beta (p,q)$ is usually described as follows (and this is why Fortuin and 
Kasteleyn introduced the FK percolation model in the first place): First sample the FK-percolation model on the edges of a given finite graph, then color independently each cluster with one of the $q$ possible colors, and note that the obtained coloring of the sites of the graph follows exactly the law of the Potts model. In other words, the Potts clusters are obtained by randomly coloring clusters of the associated FK model. 
   
This coupling has also a simple and classical reverse version: Sample first a $q$-state Potts model coloring of the sites of a finite graph with parameter $\beta$. Then, for each edge of the graph, if the two ends have different colors, declare the edge closed, otherwise, toss a coin with probability $p=p(\beta, q)$ to declare it open (or closed). Then, the obtained configuration on edges follows exactly the $(p,q)$-FK percolation distribution. 
In other words, one can view the FK-percolation clusters as having been obtained by performing independent Bernoulli percolation in the $q$-state Potts clusters (see, e.g.,
\cite{grimmett2006cluster} for background on FK-percolation and the Potts model).

In the scaling limit,  when $q \le 4$ and for a critical value $p_c = p_c (q)$ (that depends on the considered regular graph), the FK-percolation model should behave randomly at every scale, and therefore the corresponding Potts model should as well (when $q=2,3,4$), and the FK clusters are strict subsets of the Potts clusters. 
Furthermore, it is believed (and proved for many graphs in the case $q=2$) that the obtained pictures are conformally invariant in law. 
On top of this, it is easy to note that these critical Potts-model-clusters should be random carpets that possess the properties that enable us to characterize axiomatically CLEs for $\kappa \in (8/3,4]$. 
Loosely speaking, conditionally on its outside boundary, the law of the inside holes of (the scaling limit) of a cluster of $1$'s should be distributed like a $\CLE_\kappa$ in the domain delimited by this outer boundary. 

Hence, this suggests that in the case where one considers a $\CLE_\kappa$ for $\kappa \in (8/3, 4]$ and $p_0=0$, then the ``continuous critical percolation interface'' in this random carpet could describe the boundary of the scaling limit of a coupled FK-percolation cluster, and therefore be related to the $\CLE_{\kappa'}$ loop-ensembles.

However, some care has to be given to the boundary conditions of the FK-models and the Potts models in order to make such statements more precise. For instance, in the $q$-state Potts model, one would wish to take a Potts model with mono-chromatic boundary conditions (say all boundary points have color $1$) and look at the carpet created by all $1$-clusters that touch the boundary (the law of the coloring inside holes would therefore be Potts models conditioned to have ``only non-$1$'' colors on the boundary etc.). This should correspond to a $\CLE_\kappa$. The monochromatic boundary conditions for the FK model corresponds to a wired boundary condition for the corresponding FK model, which does not exactly correspond to a $\CLE_{\kappa'}$ (the $\CLE_{\kappa'}$ should rather correspond to the contours of the dual FK clusters).  

Figure~\ref{FKexploration} provides an illustration of the FK correspondence in the case of the Ising model (i.e., $q=2$) and how it is then possible to explore successively portions of the FK pictures and of the associated Ising pictures. These should give rise to variants of $\CLE_3$ and $\CLE_{16/3}$ in the scaling limit, with other types of ``boundary conditions'' -- these will be the some of the ``boundary CLEs'' that we will construct in the present paper. 
\begin{figure}[ht!]
\begin{center}
\includegraphics [width=5in, angle=180]{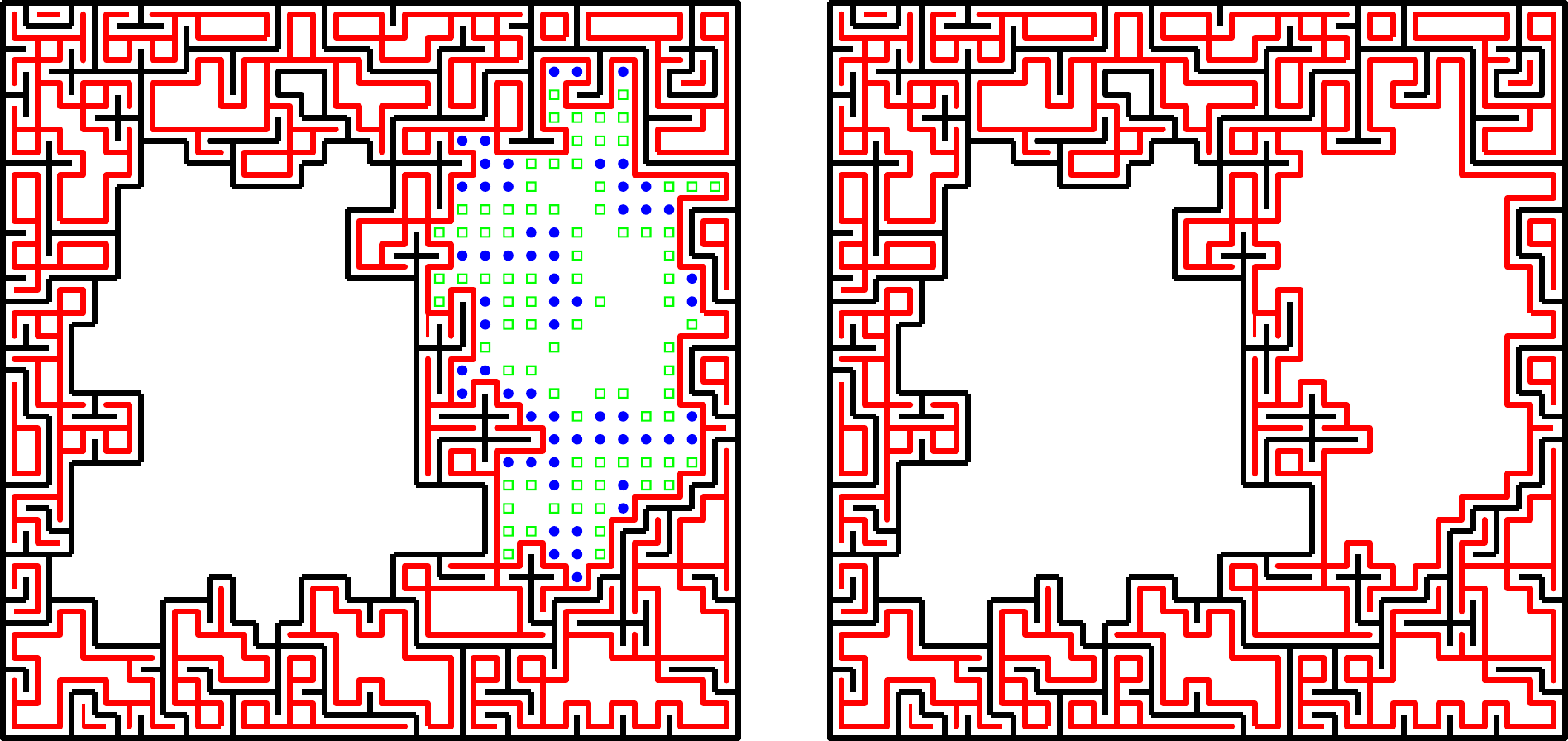}
\caption{\label{FKexploration} Illustration of iterative discovery from the boundary of a critical FK$_{q=2}$ model with wired boundary conditions. (i) First exploring the outer boundaries of the dual FK clusters that touch the boundary (or equivalently, all interfaces that touch the boundary of the domain). (ii) Conditionally on this, the boundary conditions in the remaining holes are wired and free depending on the holes.  In a hole with free boundary conditions, we discover the Ising clusters of $+$ (circles) that touch the boundary. In the remaining part inside this loop, the boundary conditions are now given by $-$ (squares), and correspond to wired boundary conditions for FK. }
\end{center}
\end{figure} 

Hence, this FK-Potts analogy heuristic suggests that a candidate for the critical percolation paths within a $\CLE_\kappa$ carpet for $\kappa \in (8/3, 4]$ could be expressed in terms of a $\CLE_{\kappa'}$ coupled to it.

\subsection{Continuous percolation interfaces within $\CLE$ carpets}
\label{subsec:cpi}

The ideas laid out in the previous subsections lead us to try to define directly in the continuum and in abstract terms what a continuous conformally invariant ``critical percolation interface''
within the $\CLE_\kappa$ carpets for $\kappa \in (8/3, 4]$ should satisfy.

We first choose a parameter $\beta \in [-1, 1]$ which we will use as follows:  When $\beta=-1$ all the holes are closed, when $\beta=1$, all the holes are open, and in general, we choose randomly and independently each hole to be open or closed with respective probabilities $p_0:=(1+\beta)/2$ and $1-p_0=(1-\beta)/2$. This gives rise to a ``labeled'' $\CLE_\kappa$ that we call a $\CLE_\kappa^\beta$. One way to represent the open or closed status of a hole is to decide to respectively orient its boundary counterclockwise or clockwise. A $\CLE_\kappa^\beta$ is then a random collection of oriented, disjoint, and non-nested loops. 

Suppose that $\Gamma$ is such a $\CLE_\kappa^\beta$ in a simply connected planar domain $D$ (with $D \not= \C$) and that it is coupled with a random continuous curve $\gamma$ in $\overline D$ (in the sense that $\overline D$ is the union of $D$ with its prime ends) from one boundary point (or prime end) $x$ of $D$ to another one $y$. We assume that almost surely: 
\begin{enumerate}[(i)]
 \item\label{it:non-self-crossing} The curve $\gamma$ is non self-crossing (double points are nevertheless allowed). 
 \item\label{it:does-not-hit-interior} The curve $\gamma$ does not intersect the interior of any of the loops of $\Gamma$ (but it can touch the loops). 
 \item\label{it:bouncing-orientation} When $\gamma$ intersects a counterclockwise (open) loop of $\Gamma$, it leaves it to its right on its way from $x$ to $y$.  If the loop is clockwise (closed), it leaves it to its left.
\item\label{it:conformal-invariance} The joint law $\mu$ of the coupling $(\Gamma, \gamma)$ in $D$ is invariant under any conformal automorphism of $D$ (which makes it possible to define it by conformal invariance in any other simply connected proper subset of the complex plane with two marked prime ends).
\item\label{it:does-not-trace} Almost surely, for all $t >0$, $\gamma (0,t] \cap D \not= \emptyset$ (this loosely speaking means that almost surely, $\gamma$ does not start by sneaking along $\partial D$). 
\end{enumerate}

In the sequel, we will call $\gamma_t^*$ the set consisting of  $\gamma[0,t]$ together with all the loops of $\Gamma$ that it intersected, and let $\CF_t^*$ be the $\sigma$-algebra generated by  $((\gamma_s^*, \gamma_s) :  s \le t)$. We let $D_t^0$ the set obtained by removing from $D$ the set $\gamma_t^*$ and all the interiors of the loops of $\Gamma$ that have been discovered. One connected component of $D_t^0$ that we denote by $D_t$  has $y$ on its boundary, and 
we define $x(t)$ to be the prime end in this domain corresponding to the point where $\gamma$ is currently growing; when at time $t$ one discovers no $\CLE$ loop, then this is just $\gamma (t)$, and when at time $t$ one 
discovers a $\CLE$ loop, then one has to choose $x(t)$ among the two prime ends corresponding to $\gamma_t$, depending on the label of this loop. 
We will also use the conformal map  $\varphi_t$ from this connected component $D_t$ onto $D$ with $x(t)$ mapped to $x$, $y$ onto itself, and $\varphi_t (z) \sim z$ in the neighborhood of $y$.

\begin{definition}
\label{def:cpi}
Suppose that, in addition to~\eqref{it:non-self-crossing}-\eqref{it:does-not-trace}, we have for any $(\CF_t^*)$ stopping time $\tau$ that:
\begin{enumerate}[(i)]
 \item The conditional law of $\varphi_\tau (\gamma,\Gamma)$ restricted to $D_\tau$ given $\CF_\tau^*$ is equal to the joint law $\mu$ of the coupling $(\Gamma, \gamma)$.
 \item The conditional law of $\Gamma$ in the other connected components of $D_\tau^0$ is independently that of a labeled $\CLE_{\kappa}^\beta$.
\end{enumerate}
Then, we say that $\gamma$ is a continuous percolation interface (CPI) in the labeled $\CLE_\kappa^\beta$ $\Gamma$.
\end{definition}

As explained in the previous paragraphs, if a continuous path that could be interpreted as the continuous analog of a critical percolation interface in a $\CLE_\kappa^\beta$  exists at all, then one would expect it to be such a \hyperref[def:cpi]{CPI}. 
In the present paper, we shall provide a characterization and description of \hyperref[def:cpi]{CPIs} in labeled CLEs.  The aforementioned relations between Potts and FK-clusters (in particular that FK clusters can be viewed as percolation clusters within Potts clusters) suggests that when $\beta = -1$ or $1$, the paths $\gamma$ will turn out to be $\SLE_{\kappa'}$-type curves  that are coupled to  the $\CLE_\kappa$.

\section{Classical and generalized $\SLE_{\kappa}(\rho)$ processes}
\label{subsubsec:classical_sle_kr}
\label {Sec3}

Let us now very briefly review some basic facts concerning the $\SLE_\kappa(\rho)$ processes that will be relevant to the present paper. These ideas have been described in several previous papers and we will therefore just give a brief overview.  We shall assume that the reader is familiar with Bessel processes (the standard reference on this subject is \cite[Chapter~XI]{ry2004martingale}).

\subsection{Definition and characterization up to the swallowing time}
The $\SLE_\kappa(\rho)$ processes were introduced in \cite{lsw2003restriction} as the natural generalization of $\SLE$, where one keeps track of an additional marked force point on the boundary of the considered domain, which for simplicity we take to be $\h$.  Suppose  first that $O_0 < W_0$ on $\R$, and that $(K_t, t < \tau)$ is a Loewner chain in the upper half-plane, started at $W_0$ and stopped at its (possibly infinite) first swallowing time $\tau$ of $O_0$ (i.e., $\tau$ is the first time at which $O_0 \in K_\tau$). We use here the parameterization of Loewner chains by half-plane capacity (as is customary for chordal SLE) and denote the driving function of the Loewner chain by $W_t$. With the usual notation, we let $(g_t, t < \tau)$ be the associated family of conformal maps and define $O_t = g_t(O_0)$. 

For any time $t < \tau$, we denote by $F_t$ the conformal map from $\H \setminus K_t$ normalized so that $F_t (O_0) = 0$, $F_t ( \infty)=\infty$, and where the tip of the Loewner chain is mapped onto $1$. In other words, $F_t (z) = (g_t (z) - O_t) / (W_t - O_t)$. 

We say that the random Loewner chain satisfies the conformal Markov property with the extra marked point $O_0$ if for any stopping time $\sigma < \tau$, the conditional distribution of the process $(F_{\sigma} (K_{\sigma + t}), t < \tau - \sigma)$ given its realization up to time $\sigma$ is always the same up to time-reparameterization. 
It implies immediately that in fact, the process $(Z_t:= W_t - O_t, t < \tau)$ is a multiple of a Bessel process of some dimension $\delta$ up to its (possibly infinite if $\delta \ge 2$) first hitting time $\tau$ of $0$. As we also know that necessarily, for all $t < \tau$,  $\partial_t O_t = - 2/Z_t$, and therefore that 
\begin{equation}
\label{eqn:sle_w_def}
W_t = Z_t - 2 \int_0^t  \frac{1}{Z_s} ds.
\end{equation}
 This driving function fully characterizes these Loewner chains up to the first swallowing time $\tau$ of the marked point. When $Z$ is $\sqrt{\kappa}$ times a Bessel process of dimension $\delta$, then one says that the chain is an $\SLE_\kappa (\rho)$ where 
 \begin{equation}
 \label{eqn:sle_kp_bes_dim}
 \delta= \delta (\kappa, \rho):= 1+ \frac{2 (\rho +2 )}{\kappa}
\end{equation}
up to this stopping time $\tau$. $\SLE_\kappa (\rho)$ processes are therefore the only random Loewner chain satisfying the conformal Markov property with one extra marked point, at least up to the first swallowing time of this point.

When $O_0 > W_0$, then exactly the same analysis defines the $\SLE_\kappa (\rho)$ processes with {\em marked point on the right}. In this case, $Z$ is simply $-\sqrt{\kappa}$ times a Bessel process of dimension $\delta$.

\subsection{Classical $\SLE_\kappa (\rho)$ for $\rho > -2$}
The next question is whether and how one can define these Loewner chains after this swallowing time, i.e., how one can update the position of the marked point in an intrinsic way after the time $\tau$. An alternative and almost equivalent question is whether one can extend the definition and characterization of $\SLE_\kappa (\rho)$ when $O_0 = W_0$, and if so, how. 
This turns out to be almost immediate when $\rho > -2$ i.e.\ when $\delta >1$. Indeed, one can choose $(Z_t, t \ge 0)$ to be $\sqrt{\kappa}$ times an instantaneously reflecting Bessel process (recall that these reflected processes exist for all $\delta >0$), and to note that for $\delta > 1$, the integral
$\int_0^t ds / Z_s  $ is finite for all $t$ (recall that this is not true for $\delta \le 1$), which in turn allows us to define the driving function $W$ by $W_t := Z_t - 2 \int_0^t ds/Z_s $ at all positive times as in~\eqref{eqn:sle_w_def}. 

Note that with this definition, the marked point always stays (conformally speaking) to the left of the tip of the Loewner chain (i.e.,  $0 \le W_t- O_t = Z_t$ at all times). It is then in particular possible to define  $\SLE_\kappa (\rho)$ from $0$ to infinity in $\HH$ with marked point at $0^-$, and this process is scale-invariant. 

 \begin{figure}[ht!]
\begin{center}
\includegraphics[width=5in]{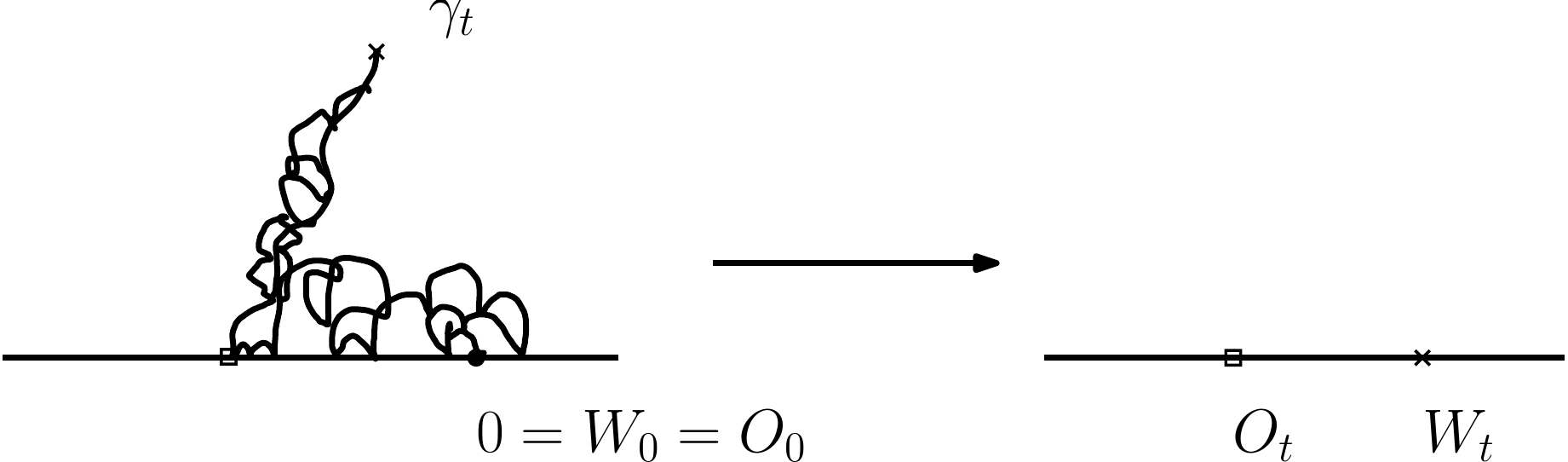}
\caption{\label{fig:slekrhosketch} A sketch of $\SLE_{\kappa'} (\rho)$ for $\kappa' > 4$ and $\rho \in (\kappa'/2-4,\kappa'/2-2)$.}
\end{center}
\end{figure} 

We will refer to such $\SLE_\kappa (\rho)$ processes for $\rho > -2$ (defined for all positive times) as the \emph{standard} or \emph{classical} $\SLE_\kappa (\rho)$ processes, as opposed to the generalized ones that we will define next.

It is also explained in \cite{ms2012ig1} that the marked point then always corresponds to the left-most visited point by $\gamma$ of the half-line to the left of $O_0$.  Also, each excursion of the process $Z$ away from~$0$ corresponds to an excursion of the $\SLE_\kappa(\rho)$ path away from the  half-line to the left of $O_0$. If $\delta \ge 2$, equivalently $\rho \geq \kappa/2-2$, then $Z$ does not hit $0$ and the $\SLE_\kappa (\rho)$ does not hit this half-line.

Similarly, one can define also the classical SLE$_\kappa (\rho)$ from $0$ to $\infty$ in $\HH$ with marked point at $0^+$ or at $x \in (0,\infty)$, which now lies \emph{on the right} of the starting point,
by choosing $Z$ to be $-\sqrt{\kappa}$ times an instantaneously reflecting Bessel process.

The continuity of the trace of ordinary $\SLE$ was proved by Rohde and Schramm in \cite{rs2005basic}.  By a Girsanov theorem \cite{ry2004martingale} argument using absolute continuity, it follows that the $\SLE_\kappa(\rho)$ processes are continuous in the intervals of time in which they are not hitting the boundary.  In particular, they are continuous for all $\rho \geq \kappa/2-2$.  The continuity of all of the classical $\SLE_\kappa(\rho)$ processes was established using the GFF in \cite{ms2012ig1}.

We note that the coupling results of the $\SLE_\kappa(\rho)$ processes with the GFF in \cite{ms2012ig1} only deal with classical $\SLE_\kappa (\rho)$ processes. One of the outcomes of the present paper will precisely be to shed some light on this coupling also in the generalized cases that we will describe next.

Let us summarize some of the special ranges of $\rho$ values for classical $\SLE_\kappa(\rho)$ for $\rho > -2$. Here, $\partial$ will refer to the boundary half-line
of $\partial \h$ between the marked point and infinity that does not contain the origin:  
\begin{itemize}
\item $\rho \in (-2,\kappa/2-4]$: The process fills $\partial$.  This phase is non-empty only for $\kappa > 4$.
\item $\rho \in ( \max (\kappa/2-4, -2),\kappa/2-2)$: The process hits $\partial$, but it bounces off and does not fill $\partial$. This phase is non-empty for every positive value of $\kappa$.
\item $\rho \geq \kappa/2-2$: The process does not hit $\partial$.
\end{itemize}
See Figure~\ref{fig:slekrhosketch} for an illustration of the middle phase in the case $\kappa > 4$.

The processes $\SLE_\kappa (\kappa-6)$ are special because this is the value of $\rho$ for which the law of the process does not depend on its target point (here which point is chosen to correspond to infinity (see, e.g., \cite{dub2007commutation,sw2005coordinate}). We will reexplain this in the generalized setting. 

\subsection{Generalized $\SLE_\kappa (\rho)$ processes}

We now list the ways to generalize these definitions in order to treat also the case where $\rho \le -2$.  As in the case of classical $\SLE_\kappa(\rho)$, the evolution of the chordal Loewner driving function $W$ is described by a pair $(W,O)$.  These processes, however, have a different character than the classical $\SLE_\kappa(\rho)$ processes in that the set $\{t : W_t = O_t\}$ of collision times of $W$ and $O$ does not coincide with the set of times in which the process is swallowing a point on the domain boundary.  The so-called \emph{trunk} of such a process is the set of points visited at a collision time.  We will show in this article that for certain ranges of $\rho$ values, the trunk of such processes can be understood as a continuous curve.  Moreover, the law of such a process can be sampled from by first sampling a continuous $\SLE$-type curve which corresponds to the trunk and then sampling $\SLE$-type loops which hang off the trunk.  See Figure~\ref{fig:trunksketch} for an illustration 
of the general picture when $\kappa \leq 4$.

In the next few paragraphs, we will describe all the natural generalized $\SLE_\kappa (\rho)$ processes (with one marked boundary point) -- which corresponds to the range $-2 \ge \rho > -2 - (\kappa/2)$.  Note that $\rho=-2-(\kappa/2)$ corresponds to $\delta = 0$. 
It is however worth mentioning already that the results in the present paper will provide insight into these processes only for $\rho < (\kappa/2) - 4$ (which means in particular that $\kappa > 2$).
As we shall see, this includes a number of important cases, but there will still be 
a range of values of $(\kappa, \rho)$ that is not treated here. This is the ``light-cone regime'' (where $\max (-2 - (\kappa/2), (\kappa/2 -4)) \le \rho \le -2$ and $\kappa \in (0,4)$) that will be 
studied in \cite {ms2016lightcone}.

\subsubsection{Symmetric side swapping}
\label{subsubsec:sym_side_swapping}

The first possible extension is to allow the process to choose at random on which side the marked point is with respect to the tip of the curve.  In this case, $Z$ takes on both positive and negative values corresponding to when the marked point is to the left or to the right of the tip of the curve.

The following side-swapping construction will work for all $\kappa >0$ and $\rho > -(\kappa/2) - 2$.  One samples a multiple of a Bessel process of dimension $\delta$ as in~\eqref{eqn:sle_kp_bes_dim} and then modifies it to yield $Z$ by tossing an independent fair coin for each excursion of $Z$ away from the origin in order to decide its sign.  Then, one makes sense of the integral of $1/Z$ up to a given time $t$ (for instance, by taking the limit when $\eps \to 0$ of the contribution to this integral of all $Z$-excursions of length at least $\eps$, and one sees that this converges thanks to the cancellation induced by the random signs.  The integral, in particular, converges in the case that $\rho \leq -2$ even though the integral of $1/|Z|$ blows up.  As in the case $\rho > -2$, one defines $W_t:= Z_t - 2 \int_0^t ds / Z_s$.  We refer to the resulting Loewner chain as a symmetric side-swapping $\SLE_\kappa (\rho)$ process, or as the $\SLE_\kappa^{0} (\rho)$ process.  This is a member of the family of side-swapping $\SLE_
\kappa^\beta(\rho)$ processes, $\beta \in [-1,1]$, that we will introduce in full generality just below.

Note that, as explained for instance in \cite{ww2013conformally}, the times at which the process $Z$ hits the origin does not necessarily correspond to times at which the Loewner chain hits the real line (when $\kappa \in (8/3, 4]$, one constructs exactly the $\CLE_\kappa$ loops in this way, and these loops do not touch the real line).  Our construction of $\SLE_\kappa^\beta(\rho)$ given later in this paper will shed further light on this.

\subsubsection{L\'evy compensation}
There is another natural  way to generalize $\SLE_\kappa(\rho)$ processes to values of $\rho$ in 
$(-\kappa/2 -2, -2)$.  Note that in this case, by~\eqref{eqn:sle_kp_bes_dim}, the dimension $\delta$ of the corresponding Bessel process is in $(0,1)$.  As opposed to the  construction explained in Section~\ref{subsubsec:sym_side_swapping}, this  generalization does not work for $\delta=1$, i.e., for $\rho = -2$.  It will also be the case that the marked point always stays on the same side of the tip of the SLE, as in the case of the classical $\SLE_\kappa (\rho)$.  The details of this construction are explained, for example, in \cite{she2009cle, ww2013conformally}.  See also \cite[Chapter~XI]{ry2004martingale} for a discussion of Bessel processes with $\delta \in (0,1)$.

The starting point for the construction is a multiple of a non-negative Bessel process $Z$ of dimension $\delta$ as before, but one now overcomes the difficulty that the integral $\int ds/Z_s$ can blow up  (because $\delta \le 1$) by adding add a ``compensation/renormalization'' which serves to prevent the explosion of this integral.  The process $W_t$ obtained in this way satisfies the SDE $ dO_t:= d (Z_t - W_t) = - 2dt/ Z_t $ at all times where $Z_t \not= 0$.   We will refer to the corresponding process as the \emph{totally asymmetric generalized $\SLE_\kappa^{1} (\rho)$ process}.

Similarly, if we take the symmetric image, i.e. starting with a negative multiple of a non-negative Bessel process $Z$, we will get what we call the {\em totally asymmetric 
generalized $\SLE_\kappa^{-1} (\rho)$ process.} 

Note that when $\rho >-2$, no  compensation is needed in these constructions because in this case the $1/|Z|$ is integrable up to any fixed time.  Consequently, $\SLE_\kappa^{-1} (\rho)$ 
and SLE$_\kappa^1 (\rho)$ for $\rho > -2$ correspond to the classical $\SLE_\kappa (\rho)$ processes, with marked points on the right and on the left respectively.

\begin{figure}[ht!]
\begin{center}
\includegraphics [width=4in]{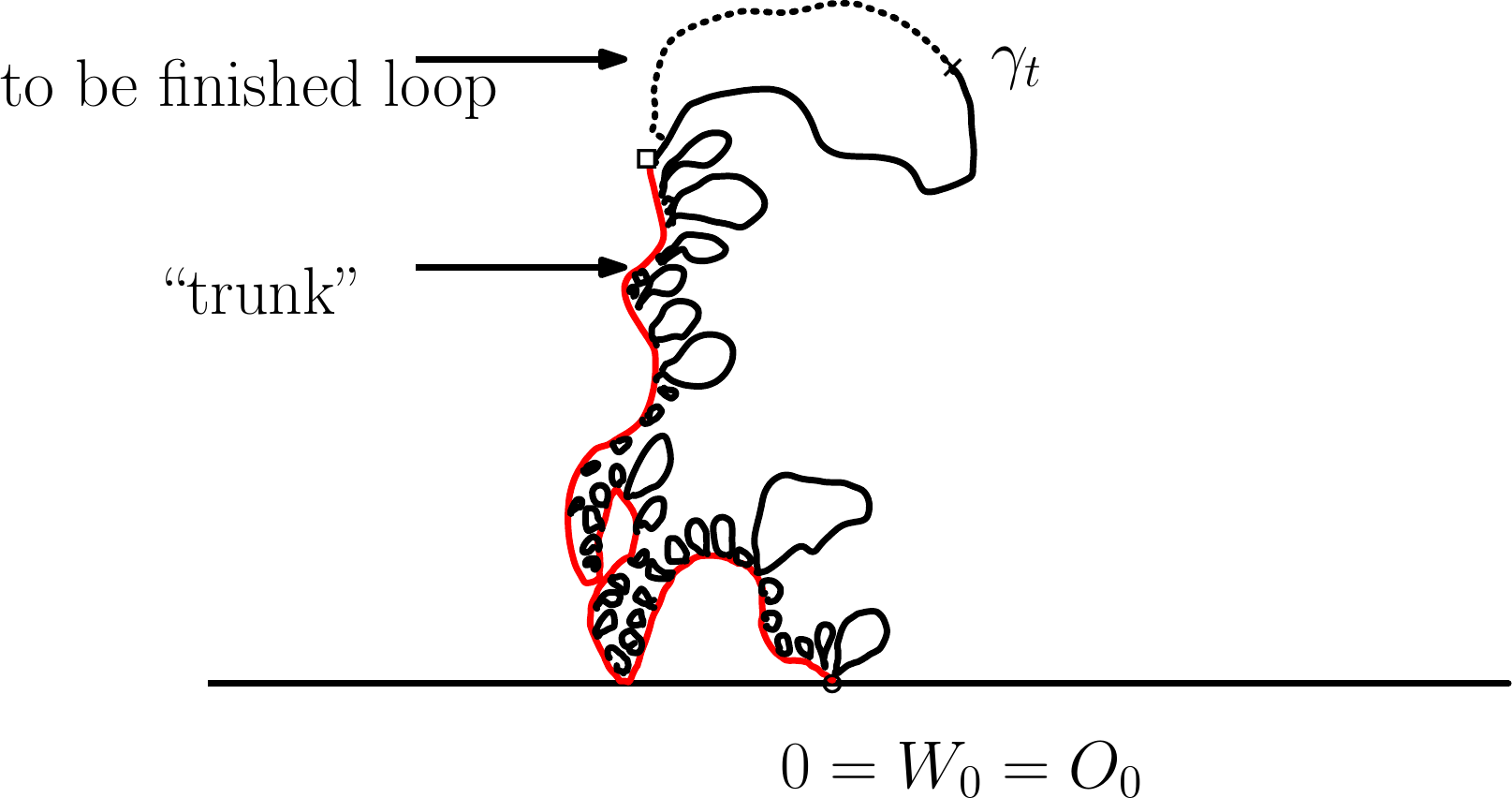}
\caption{\label{fig:trunksketch}A sketch of the conjectural trunk and loops traced by $\SLE_\kappa^1 (\rho)$ (for $\kappa \in (0,4)$ and $\rho < -2$). NB. It will follow from 
our results that the trunk looks itself more like a (non-simple) $\SLE_{\kappa'}$ curve.}
\end{center}
\end{figure}

\subsubsection{Asymmetric side-swapping}
One can also interpolate between the previous two constructions when $\rho \in (-\kappa/2 -2, -2)$, and allow the process to choose at random on which side the marked point is with respect to the tip of the curve, but with a biased coin.  One chooses a parameter $\beta \in [-1,1]$, and first modifies the multiple $Z$ of a Bessel process by tossing an independent coin for each excursion of $Z$ away from $0$ in order to decide its sign.  The sign of a given excursion is positive with probability $(1+\beta)/2$ and negative with probability $(1-\beta)/2$.  Then, one proceeds exactly as before (define $W_t$ using the same formula as above, and use a compensation in the case where $\rho < -2$ to make sense of the integral of $1/Z$). The choice of the sign of the excursion of $Z$ decides  whether the process tries to grow to the right of the marked point or to its left when the tip and the marked point coincides.  This then defines a new Loewner chain: The $\SLE_\kappa^\beta (\rho)$ process. This definition indeed 
interpolates between
the previous symmetric (corresponding to $\beta=0$) and totally asymmetric cases (corresponding to $\beta= \pm 1$).  For details about these side-swapping $\SLE_\kappa^\beta (\rho)$ processes, we again refer to \cite{she2009cle, ww2013conformally}.

\subsubsection{The case $\rho=-2$}
In the case where $\rho=-2$, the symmetric $\SLE_\kappa^0 (\rho)$ definition works, but none of the asymmetric ones does. It is however possible (and this extension is specific to this $\rho=-2$ case) to introduce an asymmetry by introducing an additional drift in the driving process of the symmetric side-swapping process, that is equal to $\mu$ times the local time at the origin of the underlying Bessel process (which is in fact a Brownian motion because $\beta=0$). This gives rise to 
a process denoted by  $\SLE_\kappa^{0, \mu} (-2)$. For details, we refer again to \cite{she2009cle, ww2013conformally}.

\subsubsection{Characterization of $\SLE_\kappa(\rho)$ processes}

The following simple conformal Markov characterization of the $\SLE_\kappa^\beta(\rho)$ processes will be useful later on:  

\begin{lemma}
\label{lem:slekrcharacterization}
Suppose that we have a pair $(W,O)$ of continuous processes with $W_0 = O_0 = 0$ which together form a  Markov process such that the following conditions hold: 
\begin {itemize}
 \item There exists $\rho$ such that during the intervals in which $W - O \not= 0$, the process $|W-O|$ evolves as $\sqrt{\kappa}$ times a Bessel process of dimension $\delta$ as in~\eqref{eqn:sle_kp_bes_dim} and $dO_t = 2 dt/ (O_t - W_t)$.
 \item For each $t >0$, if $\tau$ denotes the first time after $t$ at which $W-O$ hits $0$, then the conditional law of $(W_{\tau+s}-W_\tau, O_{\tau+s} - O_\tau)_{s \ge 0}$ given the information up to the stopping time $\tau$ is equal to the (unconditional) law of $(W, O)$. 
 \item The process $(W,O)$ satisfies Brownian scaling.  
 \end {itemize}
Then, if $\rho \not=-2$, there exists $\beta \in [-1,1]$ such that $(W,O)$ generates an $\SLE_\kappa^\beta(\rho)$ process, and if $\rho = -2$, there exists $\mu \in \R$ such that $(W,O)$ generates an $\SLE_\kappa^{0, \mu} (-2)$.
\end{lemma}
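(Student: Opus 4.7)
Set $Z_t := W_t - O_t$. By hypothesis, during each excursion of $Z$ away from $0$ the absolute value $|Z|$ evolves as $\sqrt{\kappa}$ times a Bessel process of dimension $\delta = \delta(\kappa,\rho)$, and $O$ satisfies the ODE $dO_t = 2\,dt/(O_t - W_t)$ in that interval. The first step is to extract the discrete side-swapping data: enumerate the excursions of $Z$ from $0$ (well-defined as soon as $\delta < 2$; in the regime $\delta \geq 2$ there is a single excursion and there is nothing to do beyond choosing an initial sign by scaling). Applying the restart hypothesis at the endpoint $\tau$ of each excursion, the process after $\tau$ has the same law as $(W,O)$; combined with Brownian scaling, this forces the successive excursion signs to form an i.i.d.\ sequence of Bernoulli coins with some common bias $p \in [0,1]$ that is independent of the Bessel modulus. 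Define $\beta := 2p - 1 \in [-1,1]$.

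Assuming first that $\rho \neq -2$, I would argue that the pair (sign sequence, multiple of Bessel-$\delta$) determines $(W,O)$, matching $\SLE_\kappa^\beta(\rho)$. When $\delta > 1$ the relation $dO_t = -2\,dt/Z_t$ holds as an absolutely convergent integral and $O$ (hence $W = Z + O$) is pathwise reconstructed from $Z$. When $\delta \in (0,1)$ the naive integral diverges and one needs the compensation recalled in Section~\ref{subsubsec:classical_sle_kr}; here the hypothesis that $O$ is continuous, together with the Markov restart at each return time to $0$ (which prevents accumulating an arbitrary deterministic drift across excursions) and Brownian scaling (which rules out any nontrivial non-self-similar additive correction), forces the compensation to be exactly the one that produces $\SLE_\kappa^\beta(\rho)$. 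In other words, the distribution of the signed Bessel excursion decomposition uniquely specifies the law, and matches by construction the sign-decorated compensated driving process of $\SLE_\kappa^\beta(\rho)$.

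It remains to treat the borderline case $\rho = -2$, i.e.\ $\delta = 1$, where $|Z|$ is $\sqrt\kappa$ times reflected Brownian motion. First I would show $\beta = 0$: if signs were chosen with bias $p \neq 1/2$, then on each excursion the contribution to the integral $-2\int dt/Z_t$ would be of random sign with non-zero mean, and summing over the (infinite, rapidly accumulating) excursion set would produce an oscillation on all scales inconsistent with the continuity of $O$ (the symmetric $p=1/2$ cancellation is exactly what renders the principal-value integral finite in dimension one). Thus $\beta = 0$. Finally, the extra degree of freedom: one can ask what continuous additive functional $A$ can be added to $O$ while preserving the Markov restart and Brownian scaling. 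Since the addition must vanish on the open set $\{Z \neq 0\}$ (else the excursion-dynamics hypothesis is violated) and must satisfy $A_{ct} \stackrel{d}{=} c^{1/2} A_t$, the theory of continuous additive functionals of Brownian motion supported on $\{0\}$ forces $A_t = \mu L_t$ for some $\mu \in \R$, where $L$ is the local time at $0$. This parametrizes exactly the family $\SLE_\kappa^{0,\mu}(-2)$.

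\textbf{Main obstacle.} The delicate step is the uniqueness of the compensation in the range $\delta \in (0,1)$ and the exclusion of asymmetric coins at $\delta = 1$. Both amount to showing that, once the Bessel modulus $|Z|$ and the sign sequence are fixed, the continuity requirement on $O$ leaves no wiggle room other than the local-time drift at the critical dimension $\delta = 1$. I expect this to reduce to a careful excursion-theoretic computation along the lines of \cite{ww2013conformally,ry2004martingale}, combined with the Brownian scaling hypothesis to rule out any non-scale-invariant correction.
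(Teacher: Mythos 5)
Your argument follows the same route as the paper's proof: scale invariance identifies $|W-O|/\sqrt{\kappa}$ as an instantaneously reflected Bessel process, the strong Markov restart at the return times makes the excursion signs i.i.d.\ (giving $\beta$), and one then checks that $O$ must coincide with the (compensated) integral $-2\int_0^t ds/Z_s$, since the difference is constant on excursions and the Markov and scaling hypotheses leave no room for an extra additive term except, when $\rho=-2$, a local-time drift producing $\mu$. Your handling of the $\rho=-2$ case (ruling out biased signs and identifying the residual additive functional supported on $\{W=O\}$ as $\mu$ times local time via scaling) is in fact slightly more explicit than the paper's remark that this case ``follows the same general lines.''
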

\begin{proof}
When $\rho \not= -2$, the scale invariance of $(W,O)$ implies that $|W-O| / \sqrt {\kappa}$ is a Bessel process of dimension $\delta$ that is instantaneously reflected at $0$. 
The strong Markov property at the stopping times $\tau$ implies readily that the signs of the excursions that $W-O$ makes away from $0$ are i.i.d., so that $ W-O$ is distributed like the $\beta$-side-swapping process $Z$ described 
above.  We then further define from this process $Z=W-O$ the continuous process $\tilde O_t := 2 \int_0^t ds / Z_s$ defined in the previous generalized sense, and we note that the first condition in 
the lemma implies that the continuous process $ O - \tilde O$ is constant during the excursions of $W-O$ away from $0$, and the last two properties imply  readily that $O=\tilde O$ at all times.
The proof for $\rho=-2$ follows the same general lines.
\end{proof}

\subsection{Further discussion}

In all these definitions of $\SLE_\kappa (\rho)$ processes for $\rho > -2 - (\kappa/2)$, it is possible to start with the marked point $O_0$ equal to the origin. Then, the trace of all these $\SLE_\kappa (\rho)$ processes is scale-invariant in distribution which makes it  possible to define them in any other simply connected domain by conformal invariance.  These families of generalized $\SLE_\kappa (\rho)$ processes are important, as they form the only Loewner chains with continuous driving functions that satisfy  scale-invariance and their Markovian property, see \cite{she2009cle, ww2013conformally} and recall Lemma~\ref{lem:slekrcharacterization}. 
This in turn makes it possible by conformal invariance to define the corresponding $\SLE_\kappa (\rho)$ targeting any given fixed point $x$ on the real line instead of infinity (as the image of the previous one by any given M\"obius transformation of the upper half-plane that fixes the origin and maps $\infty$ to $x$), and more generally $\SLE_\kappa (\rho)$ from one boundary point of a simply connected domain to another.  

One consequence of the characterization of the $\SLE_\kappa (\rho)$ as the only processes with the conformal Markov property with one additional marked point is that if one considers an $\SLE_\kappa$ from $0$ to $z \in \R_+ \setminus \{0\}$, and views it ``parameterized'' from infinity, then it is in fact an $\SLE_\kappa (\rho)$ from the origin to infinity, with marked point at $z$ (at least up to the time at which the SLE disconnects $z$ from infinity) for some $\rho$. A simple computation shows that this value is $\kappa - 6$ (see also \cite{dub2007commutation,sw2005coordinate}; this can also be derived using the SLE/GFF coupling and the change of coordinates rule \cite{ms2012ig1}).  This explains 
the following target-invariance property of all the generalized $\SLE_\kappa (\kappa -6)$ processes that will play a very important role in the present paper: 
When  $\kappa > 8/3$ (so that $\kappa - 6 > -2 - (\kappa/2)$), the laws of a given generalized $\SLE_\kappa (\kappa -6)$ targeting two different points
$z$ and $z'$ coincide up to their first disconnection time of $z$ from  $z'$.

In the sequel, for $\kappa \in (8/3, 8)$, we will use the acronym $\bSLE_\kappa^\beta$ and $\bSLE_4^{0,\mu}$  (for ``branchable $\SLE_\kappa$'') for these generalized SLE$_\kappa (\kappa -6)$ 
processes. Again, the nature of these bSLE processes (and the corresponding bSLE tree that we will now discuss) will be quite different depending on whether $\kappa> 4$, $\kappa=4$ and $\kappa <4$  (because  the $\SLE_\kappa$ is then not a simple curve anymore when $\kappa > 4$, and also because $\kappa - 6 < -2$ when $\kappa < 4$).

As explained in \cite{she2009cle}, the target-invariance property enables us to define for each branching tree of these generalized $\bSLE_\kappa$ processes a random collection of loops as follows. 
When $\kappa \not= 4$ and $\beta \in [-1,1]$, choose a boundary point $x$ of $D$ and from $x$ launch a branching tree of $\bSLE_\kappa^\beta$ targeting any point in $D$.
Each point $z$ of $D$ will almost surely be surrounded by a loop corresponding to an excursion of the Bessel process and created by the part of this branching tree that targets $z$. Call $\gamma(z)$ this first loop. We then consider the countably family of loops surrounding say the points with rational coordinates.

It turns out that the law of this family depends only on $\kappa$, but neither the choice of $\beta$ (or of $\mu$ when one considers $\bSLE_4^{0,\mu}$ instead of $\bSLE_\kappa^\beta$) nor  of 
the choice of $x$. This random collection of loops is called a $\CLE_\kappa$ (it is sometimes referred to as the \emph{branching tree definition of $\CLE$}).

To see that the law of the obtained loops does not depend on $x$ is far from trivial. In the case $\kappa' \in (4,8)$, this is stated in \cite[Theorem~5.4]{she2009cle} conditionally on the reversibility and the continuity (i.e., the fact that it is generated by a continuous curve) of $\bSLE_\kappa $, and these two facts have since then been derived in \cite{ms2012ig1,ms2012ig2,ms2012ig3}.  In the case where $\kappa \in (8/3, 4]$, prior to the present article, the only existing proof builds on a different set of tools and techniques (in particular the Brownian loop-soup) for $\kappa \in (8/3, 4]$ in \cite{sw2012cle}.   As a consequence of our analysis here, we will obtain a new proof of this statement by reducing it to the reversibility of $\SLE$ processes.

The fact that the law of the obtained collection of loops does not depend on $\beta$ (or $\mu$) 
 is explained in \cite{ww2013conformally} in the case $\kappa \in (8/3, 4]$, and we will give some details about the case $\kappa>4$ in a few paragraphs.  In fact, when $\kappa \not=4$, a $\CLE_\kappa$ loop that is traced via the $\bSLE_\kappa^\beta$ branching tree is either traced counterclockwise or clockwise 
 (loosely speaking, this  corresponds to 
 whether this loop corresponds to a portion of the exploration where the marked point is on the left or or the right of the tip). For instance, all the loops traced
 by a $\bSLE_\kappa^1$ will be counterclockwise loops and all loops traced by a $\bSLE_\kappa^{-1}$ will be clockwise loops. 
 The same argument shows that in fact, for this construction, the conditional law of the orientations of these loops given the $\CLE_\kappa$ is given by i.i.d.\ $(1+\beta)/2$ versus $(1-\beta)/2$ coin tosses. This $\CLE_\kappa$ with i.i.d.\ orientations will be referred to as a $\CLE_\kappa^\beta$ in this paper.

Another consequence of the characterization of $\SLE_\kappa (\rho)$ processes as the only chains with the conformal Markov property with one additional marked point goes as follows: 
 
Suppose that one considers an $\SLE_\kappa (\rho)$ from one boundary point $a$ to another boundary point $b$ with marked point at $c$ (where $a$, $b$ and $c$ are different boundary points of the simply connected domain $D$ -- we choose them here to be ordered counterclockwise on $\partial D$, so this SLE$_\kappa (\rho)$ has its marked point on the left). Then, up to the time at which the Loewner chain disconnects $b$ from $c$, it satisfies also the conformal Markov property with one additional marked point when one swaps the role of $b$ and $c$ and views it as targeting $c$. It is therefore an $\SLE_\kappa (\wt \rho)$ from $a$ to $c$ with marked point at $b$ (therefore, on the right), and it is rather simple to check that $\wt \rho = \kappa - 6 - \rho$ (see for instance \cite {dub2007commutation,sw2005coordinate}). 
Note that when $\rho =0$, then $\wt \rho =  \kappa -6$ which is no surprise given that in this case, this is the very same question as the previous one. 
When one now introduces a {fourth boundary} point $d$ located in the arc from $b$ to $c$ in $\partial D$ that does not contain $a$, one can also view the $\SLE_\kappa (\rho)$ from $a$ to $b$ with marked point at $c$ from there. One way to describe it is via the $\SLE_\kappa (\rho_1; \rho_2)$ processes with two marked points -- it is an $\SLE_\kappa (\rho; \kappa-6-\rho)$ process from $a$ to $d$ with marked points at $b$ and $c$ (here $\rho$ corresponds to the marked point on the left, which is $c$, and $6 -\kappa - \rho$ corresponds to the marked point 
on the right, which is $b$). Clearly, $d$ plays no role in the law of this chain: The $\SLE_\kappa (\rho; \kappa-6-\rho)$ process is therefore target-invariant.
These processes will be important in our definition of boundary conformal loop ensembles.

\subsection{Further remarks on exploration trees and side-swapping}

Let us now make a few further remarks about the case in which $\rho >-2$ but where 
one is using the $\beta$-side-swapping construction.

In this case, let us recall that the construction of $\SLE_\kappa^\beta (\rho)$ is very direct. One starts with the multiple of a Bessel process with dimension $\delta > 1$ and one chooses at random and independently for each excursion away from the origin, whether it is positive or negative,  using a $(1+\beta)/2$-coin. Then, as the integrals 
$\int_0^t ds / Z_s$ are absolutely convergent, there is no problem to define 
$W_t = Z_t - 2 \int_0^t ds / Z_s$ and the corresponding Loewner chain. 

Suppose now that for each positive $\eps$, we have a way that is measurable with respect to the filtration generated by the Poisson point process of excursions in order to decide whether the excursion of $Z$ is going to be forced to be positive or whether one uses a coin-toss (here we are allowed use information about this excursion in order to decide this), then we can define another driving function $W^\eps$ in the very same way. A typical example is for instance to only toss a coin for the excursions of $Z$ of time-length at least $\eps$ and to decide that all others are positive. We will also describe another useful cut-off procedure based on the diameter of completed $\SLE_\kappa$ loops later on. Then, as $\eps \to 0$, the process $W^\eps$ converges almost surely (provided the scheme is consistent as $\eps \to 0$) and uniformly on any compact time-interval to $W$. 

This type of argument will for instance be useful in the case of $\bSLE_{\kappa'}^\beta$ processes for $\kappa' > 4$. 
It follows in particular readily that for such a process started at the origin, for any given $x < 0 < y$, the probability that the $\eps$-approximation of the $\bSLE_{\kappa'}^\beta$ disconnects $x$ from infinity before disconnecting $y$ from infinity does tend to the probability that the actual $\bSLE_{\kappa'}^\beta$ does disconnect~$x$ from infinity before~$y$.

The $\bSLE_\kappa$ process has a very simple radial version: If we are given a point $z$ in $\HH$, we can first follow a $\bSLE_\kappa$ (from $0$ to $\infty$) up to the first time at which it disconnects $z$ from $\infty$, and at this point, instead of continuing in the connected component of the complement that contains infinity, one continues using a $\bSLE_\kappa$ targeting a boundary point of the connected component that contains $z$ (here one can choose this boundary point a little before the disconnection time, and use the target-invariance), and so on. This is the radial $\bSLE_\kappa$ process from $0$ to $z$. Again, when $\rho > -2$, there is no difficulty in defining the side-swapping    
version of these radial processes (and in fact, also for all generalized ones as well). The observation about cut-offs that we have just made in the previous paragraph can be easily generalized to this radial case. This will play an instrumental role in the proof of Proposition~\ref{prop:unique_interface}.

{\bf Notation warning.}
In the sequel, as in the introduction, we will often have to treat separately the cases $\kappa \le 4$ and $\kappa \ge 4$. When this is the case, we will specify 
 at the beginning of the 
(sub)section the range of values under consideration and we will use the notation $\kappa \le 4$ and $\kappa' = 16/ \kappa \ge 4$.  
However, we will also occasionally (as in this past section for instance) want to make statements that are valid in the entire range $(2,8)$ or in the range $(8/3, 8)$. In this case, we will use 
the symbol $\kappa$ and emphasize that those statements are valid for some values of $\kappa$ greater than $4$.

\section{Conformal percolation in $\CLE_\kappa^\beta$ carpets for $\kappa< 4$}
\label{sec:cpi_carpet}
\label {Sec4}

In the present section and the next one, we will restrict ourselves to the $\CLE_\kappa$ carpets. We will first focus on the case where $\kappa \in (8/3, 4)$, and we will 
study the special case where $\kappa =4 $ in the next section.  Some of the arguments that we will give in the present section will also be valid for $\kappa =4$. 

We will mostly use here the definition of $\CLE$ carpets via the aforementioned exploration tree \cite{she2009cle} (see e.g.\ \cite{ww2013conformally} 
for more aspects of the SLE tree construction). Recall that because $\bSLE_\kappa$ is an $\SLE_\kappa (\rho)$ with $\rho = \kappa -6 $ which is smaller than $-2$, this 
process involves  side-swapping and/or L\'evy compensation. 
It is known that the $\bSLE_\kappa^\beta$ is a random Loewner chain from $0$ to infinity (if one chooses the target point to be $\infty$ in the upper half-plane) that traces simple disjoint loops (that also do not touch the real line -- the 
derivation of the fact that these are proper loops follows from the loop-soup construction) on the way (and these loops correspond to the excursions of the corresponding Bessel process in the branching SLE construction).
So, the intuitive structure is that one has proper loops hanging off a ``trunk'' (this trunk corresponds to the moments where the Bessel process in the construction of the $\bSLE_\kappa^\beta$ is equal to $0$; see Figure~\ref{fig:cpi} for an illustration).  However, at this point 
of the paper, we have not yet shown that this trunk is almost surely a continuous path.  The goal of the present section is to derive the following result on \hyperref[def:cpi]{CPIs} in $\CLE_\kappa^\beta$ as defined in Section~\ref{subsec:cpi}:
\begin{prop}
\label{prop:CME_characterization}
\begin{enumerate}[(i)]
 \item 
There exists at most one \hyperref[def:cpi]{CPI} distribution in $\CLE_\kappa^\beta$ for each given choice of $\beta \in [-1,1]$ and $\kappa \in (8/3, 4)$. The \hyperref[def:cpi]{CPI} path has then necessarily the same law as a $\bSLE_\kappa^\beta$ process viewed only at those times at which it does not trace an $\SLE_\kappa$ loop (and the traced loops are labeled loops of the corresponding labeled $\CLE_\kappa$). 
\item
Conversely, if $\bSLE_\kappa^\beta$ is almost surely a continuous path, then its trunk (which is the subpath corresponding to the times at which it is not tracing a $\CLE_\kappa$ loop) is a \hyperref[def:cpi]{CPI} in the corresponding $\CLE_\kappa^\beta$.
\end{enumerate} 
\end{prop}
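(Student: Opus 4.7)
For part (i), the plan is to extract a chordal Loewner driving pair $(W_t, O_t)$ from the \hyperref[def:cpi]{CPI} and then appeal to Lemma~\ref{lem:slekrcharacterization}. Working in $D = \h$ with $x = 0$ and $y = \infty$, I would parametrize the growing closed set $\gamma_t^*$ by half-plane capacity, obtaining Loewner maps $g_t$ with driving function $W_t$. In parallel I would introduce a second process $O_t$ as follows: set $O_t := W_t$ during trunk times (when $\gamma$ is not currently touching any loop of $\Gamma$), and during an excursion in which the \hyperref[def:cpi]{CPI} is bouncing off a loop, set $O_t$ to be the image under $g_t$ of the prime end of $D_t$ opposite $x(t)$ across this loop (so that the map $\varphi_t$ of Definition~\ref{def:cpi} agrees with $(g_t - O_t)/(W_t - O_t)$). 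Properties~\eqref{it:non-self-crossing}--\eqref{it:does-not-trace} together with the conformal Markov property of Definition~\ref{def:cpi} translate directly into the hypotheses of Lemma~\ref{lem:slekrcharacterization}: scale invariance follows from~\eqref{it:conformal-invariance} applied to scalings of $\h$; the joint Markov property of $(W,O)$ follows from the stopping-time restart condition; and the Bessel structure during excursions follows from restarting at the stopping time at which $W-O$ next returns to $0$.

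Applying Lemma~\ref{lem:slekrcharacterization} then yields that $(W,O)$ generates an $\SLE_\kappa^{\beta'}(\rho)$ process for some $\beta' \in [-1,1]$ and some $\rho$. The parameter $\beta'$ is identified with $\beta$ because the sign of each excursion encodes whether the currently-touched loop is counterclockwise (open) or clockwise (closed), which under the i.i.d.\ $\CLE_\kappa^\beta$ labeling is $\pm 1$ with probabilities $(1 \pm \beta)/2$. To identify $\rho = \kappa - 6$, I would use that the collection $\Gamma$ is by assumption a $\CLE_\kappa^\beta$ and that the loops traced by the \hyperref[def:cpi]{CPI} must coincide (after marking) with those loops of $\Gamma$ reached by the exploration; the target-invariance required for this collection of excursion loops to simultaneously be produced by branching tree explorations toward any boundary target of $D$ singles out $\rho = \kappa - 6$ among all Bessel-driven Loewner chains of this form. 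Consequently the \hyperref[def:cpi]{CPI} is the trunk of a $\bSLE_\kappa^\beta$ and its law is uniquely determined by $(\kappa,\beta)$.

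For part (ii), I would take a continuous $\bSLE_\kappa^\beta$ from $0$ to $\infty$ in $\h$, let $\gamma$ be its trunk, and let $\Gamma$ consist of the excursion loops (oriented according to the sign of the underlying excursion) together with independent $\CLE_\kappa^\beta$ samples in each pocket cut off by a completed loop. That $\Gamma$ is a $\CLE_\kappa^\beta$ is the branching-tree description of $\CLE_\kappa$ recalled in Section~\ref{Sec3}, together with the independent orientation marking. Axioms~\eqref{it:non-self-crossing}--\eqref{it:bouncing-orientation} and~\eqref{it:does-not-trace} are either built into the construction (non-crossing, non-entry of loop interiors) or follow from the assumed continuity of the $\bSLE_\kappa^\beta$ trace. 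Axiom~\eqref{it:conformal-invariance} is inherited from Brownian scaling of the Bessel driving process and the standard conformal covariance of $\bSLE_\kappa^\beta$. The conformal Markov property is then precisely the target-invariance of $\bSLE_\kappa(\kappa-6)$ recalled at the end of Section~\ref{Sec3}: after any $(\CF_t^*)$-stopping time $\tau$, the image under $\varphi_\tau$ of the unexplored portion is an independent $\bSLE_\kappa^\beta$ in $\h$ from $0$ to $\infty$, and by construction the \hyperref[def:cpi]{CPI} definition of $\CLE$ in the other components of $D_\tau^0$ is an independent $\CLE_\kappa^\beta$.

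The main obstacle is the identification $\rho = \kappa - 6$ in part~(i). Lemma~\ref{lem:slekrcharacterization} only forces $(W,O)$ to be a side-swapping Bessel-driven Loewner chain of some parameters, and a priori the resulting excursion loops need not match $\SLE_\kappa$-type loops for every value of $\rho$. Pinning down $\rho = \kappa - 6$ relies on using that the collection $\Gamma$ is a $\CLE_\kappa^\beta$ (so its loops are $\SLE_\kappa$-type) combined with the conformal Markov property applied across all boundary target points, which forces the target-invariance of the driving chain and singles out $\rho = \kappa - 6$. A secondary technical point is verifying that the "opposite prime end" process $O_t$ is well-defined at the instant a loop is first touched and continuous thereafter; this uses axiom~\eqref{it:does-not-trace} to rule out degenerate initial behavior and a careful use of the $\CLE_\kappa^\beta$ restriction property inside the unexplored components.
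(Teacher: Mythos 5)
Your part~(ii) outline is essentially the intended argument, but your part~(i) has two genuine gaps. First, the object you propose to feed into Lemma~\ref{lem:slekrcharacterization} is not well defined: the hulls generated by $\gamma_t^*$ do \emph{not} grow continuously, since at each time $t_i$ the entire loop $\CL_i$ is adjoined at the instant of first contact, so there is no continuous Loewner driving function $W$ for the capacity parameterization of $\gamma_t^*$. The paper's proof instead builds the path $\eta$ obtained by \emph{tracing} each encountered loop in full (clockwise or counterclockwise according to its label) as it is met; local finiteness of the $\CLE$ makes $\eta$ continuous, the orientation convention makes it non-self-crossing, and it is the driving pair of this glued path -- in which the loop-tracing periods, not the ``bouncing'' periods, are the Bessel excursions -- that one analyzes. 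Second, even with a driving pair in hand, nothing in Definition~\ref{def:cpi} gives you the first hypothesis of Lemma~\ref{lem:slekrcharacterization}, namely that during excursions $|W-O|$ is $\sqrt{\kappa}$ times a Bessel process of the appropriate dimension with $dO_t = 2\,dt/(O_t-W_t)$. This is exactly where the substance of the paper's proof lies: Lemma~\ref{lem:loop_distribution} shows that the ordered family $(L_{t_i},s(t_i))$ of encountered loops, mapped back by $\varphi_{t_i-}$, is a Poisson point process whose intensity is the $\SLE_\kappa$ \emph{bubble measure} with independent $(1\pm\beta)/2$ signs, and its proof is a nontrivial adaptation of the iterated small-exploration argument of \cite[Proposition~4.1]{sw2012cle}. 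It is this bubble-measure identification that makes $\eta$ evolve as an $\SLE_\kappa(\kappa-6)$ during excursions and hence pins down $\rho=\kappa-6$; your proposal supplies no substitute for it.

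Your suggested alternative route to $\rho=\kappa-6$ -- ``target-invariance forced by applying the conformal Markov property across all boundary target points'' -- is not available: a \hyperref[def:cpi]{CPI} is defined with a single target point $y$ and no axiom relates CPIs aimed at different targets, so target-invariance cannot be invoked; indeed the paper records target-invariance as a \emph{consequence} of the proposition, not an input. Two further points that the paper handles and you omit: one must rule out the degenerate possibility that $\gamma$ never meets $\Gamma$ at all (done via a zero-one law together with a conformal-radius contradiction), and one must show that the capacity-time spent on the trunk has zero Lebesgue measure (done via the scaling argument showing the associated stable subordinator has no drift); the latter cannot simply be delegated to Lemma~\ref{lem:slekrcharacterization} before its hypotheses have been established.
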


 \begin{figure}[ht!]
\begin{center}
\includegraphics [width=3in]{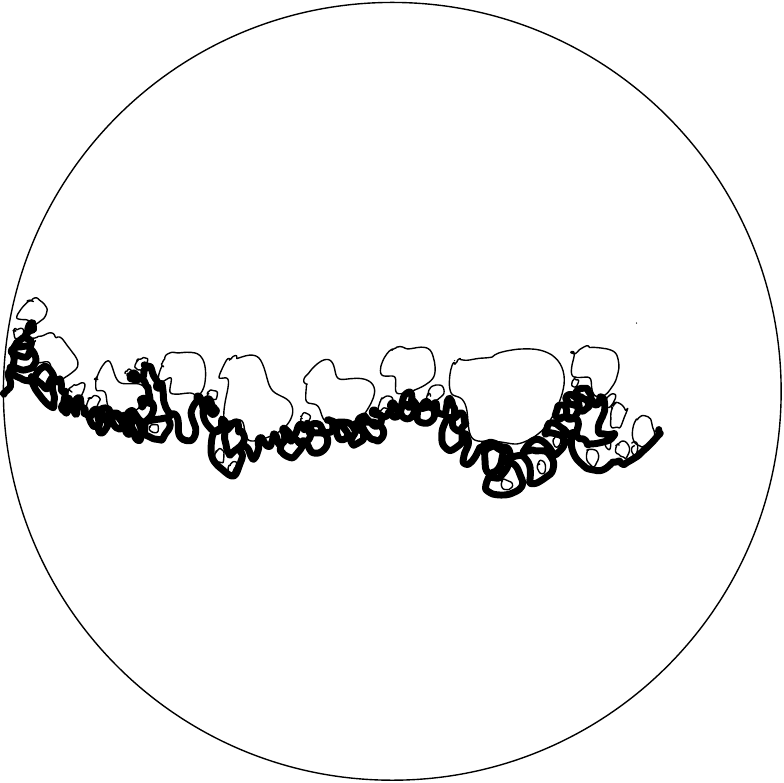}
\includegraphics [width=3in]{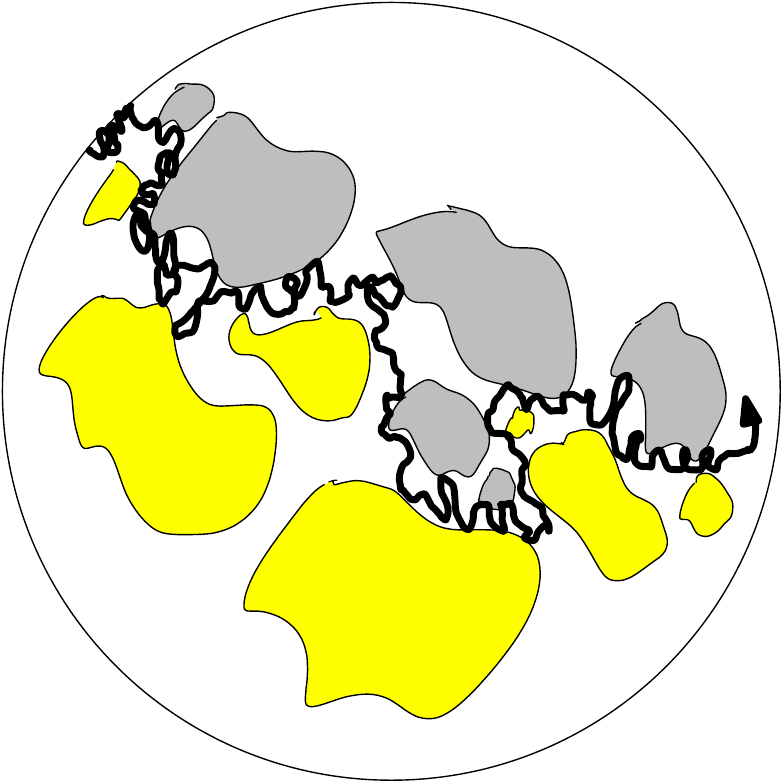}
\caption{\label{fig:cpi} If the \hyperref[def:cpi]{CPI} in $\CLE_\kappa^\beta$ exists then it is a trunk of a $\bSLE_\kappa^\beta$.  {\bf Left:} Sketch for $\beta=-1$.  {\bf Right:} Sketch for $\beta = 0$.}
\end{center}
\end{figure}

Note that this proposition does not yet state the existence of such \hyperref[def:cpi]{CPIs} for $\kappa \in (8/3, 4]$ (we only say ``at most'') because we have not yet shown at this point that the $\bSLE_\kappa^\beta$ traces are almost surely  continuous paths.  We will however prove this in Theorem~\ref{thm:duality2} and 
we will furthermore describe the law of 
the trunk of a $\bSLE_\kappa^\beta$ in terms of $\SLE_{\kappa'}$-type processes. 
Combined with this proposition, this will give a rather detailed description of all of the  \hyperref[def:cpi]{CPIs} in $\CLE_\kappa^\beta$ carpets for $\kappa \in (8/3, 4)$. 
We will in particular also describe the conditional distribution of the $\CLE_{\kappa}^\beta$ given the 
\hyperref[def:cpi]{CPI} (when one first samples the \hyperref[def:cpi]{CPI} as an $\SLE_{\kappa'}$ type process, and then traces the $\CLE_\kappa$ loops that it encountered).  

In the subsequent paper \cite{msw2016randomness}, we shall prove that when $\kappa \in (8/3,4)$, the \hyperref[def:cpi]{CPI} are not deterministically determined from the labeled CLE, meaning that it captures additional randomness that is not present in the labeled CLE. This contrasts with the case $\kappa=4$ that we will discuss in the next section.

In order to prove this proposition, we will use ideas in the spirit of the $\CLE$ properties studied in \cite{sw2012cle}.
Let $\Gamma$  now denote a $\CLE_\kappa^\beta$ in the upper half-plane, and let $\gamma$ be a \hyperref[def:cpi]{CPI} in $\Gamma$ from $0$ to $\infty$ (by conformal invariance, it is enough to consider this case). We use the same notation as in Section~\ref{subsec:cpi}.
We parameterize the continuous curve $\gamma$ in some way (for instance by half-plane capacity, would work here), and we can define the ordered family $(L_{t_i}):= (\varphi_{t_i-}^{-1} ( l_{t_i}))$, where the  discrete set $(t_i)$ denotes the times at which the \hyperref[def:cpi]{CPI} hits a $\CLE$ loop (that we then call $\CL_i$)
for the first time, and $\varphi_t$ is defined as in Section~\ref{subsec:cpi}.
We also denote the orientation (clockwise or counterclockwise) of $\CL_i$ by $s(\CL_i)$ and define also $s(t_i)= s(\CL_i)$.

\begin{lemma}
\label{lem:loop_distribution}
The distribution of the ordered collection $(L_{t_i}, s ({t_i}))$ is equal to that of the ordered family associated with a Poisson point process with intensity given by the $\SLE_\kappa$ bubble measure and an independent sign which is chosen to be counterclockwise with probability $(1+\beta)/2$ and clockwise with probability $(1- \beta)/2$.
\end{lemma}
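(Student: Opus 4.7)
The plan is to use the conformal Markov property of the CPI (conditions (i) and (ii) of Definition~\ref{def:cpi}) together with conformal invariance (iv) to establish a regenerative structure for the ordered sequence of encountered loops, and then to identify the intensity via scale invariance and the $\CLE_\kappa^\beta$ construction.

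First, I would apply condition (i) of the CPI definition at each hitting time $t_k$, viewed as an $(\CF_t^*)$-stopping time. This gives that $\varphi_{t_k}(\gamma_{t_k+\cdot}, \Gamma \cap D_{t_k})$ has law $\mu$ and is independent of $\CF_{t_k}^*$. Iterating yields the regenerative property: conditionally on $(L_{t_j}, s(t_j))_{j \leq k}$, the pushed-forward future sequence has the same distribution as the original sequence. Combining with the conformal invariance of $\mu$ under the dilations $z \mapsto \lambda z$ of $\h$, the counting process of encountered loops inherits a regenerative and scale-invariant structure. By a standard argument in the spirit of It\^o excursion theory (a strong Markov chain augmented with a scaling property at its regeneration times yields a Poisson point process of ``excursions'' indexed by a local-time parameter), one concludes that $(L_{t_i}, s(t_i))$ is the atoms of a Poisson point process on the space of bubbles in $\h$ rooted at $0$, indexed by an intrinsic local-time-like parameter measurable with respect to $\gamma$.

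Second, I would identify the intensity measure. On the one hand, each $L_{t_i}$ is the image under $\varphi_{t_i-}$ of a $\CLE_\kappa$ loop and hence an $\SLE_\kappa$-type bubble rooted at $0\in\partial\h$. On the other hand, scale invariance of the intensity combined with the fact that the $\SLE_\kappa$ bubble measure is (up to a multiplicative constant) the unique scale-invariant measure on such bubbles forces the intensity to be a constant multiple of the $\SLE_\kappa$ bubble measure. The normalization is fixed by matching the total intensity of, say, bubbles in $\h$ separating $0$ from a fixed reference point, with the corresponding quantity computed directly from the $\CLE_\kappa$ loop density of such loops. Finally, the signs $s(t_i)$ are i.i.d.\ with bias $(1+\beta)/2$ directly from the definition of the labeling of $\CLE_\kappa^\beta$: each $\CLE_\kappa$ loop is labeled independently before the CPI ever meets it, and conditionally on the labels the CPI is a measurable function of $(\gamma, \Gamma)$, so the labels of the subsequence of encountered loops remain independent.

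The main obstacle will be making rigorous the Poisson-point-process statement without yet having any structural result on $\bSLE_\kappa^\beta$ itself, so one cannot cheat by quoting the Bessel excursion theorem on the driving process. The cleanest way around this is truncation: for each $\eps > 0$, one considers only those encountered loops whose conformal image $L_{t_i}$ has, say, half-plane capacity at least $\eps$. The number of such loops up to any given stopping time is a.s.\ finite, the regenerative / scale-invariance argument applies to this discrete sequence directly, and one obtains a Poisson point process whose intensity is the restriction of the $\SLE_\kappa$ bubble measure to the truncation. The full statement then follows by letting $\eps \downarrow 0$ and using consistency of the truncated Poisson structures. A secondary bookkeeping issue, matching the normalization of the $\SLE_\kappa$ bubble measure as used in the statement with the one arising from the argument, reduces to comparing the $\CLE_\kappa$ loop intensity (as described in \cite{sw2012cle, she2009cle}) with the $\SLE_\kappa$ bubble measure, which is internal to the $\CLE_\kappa$/$\SLE_\kappa$ framework.
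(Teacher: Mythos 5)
The regenerative part of your argument (applying the \hyperref[def:cpi]{CPI} Markov property at the hitting times, plus scale invariance, to get a Poisson point process of labeled bubbles, possibly after an $\eps$-truncation) matches the first step of the paper's proof and is fine. The genuine gap is in your identification of the intensity measure. You assert that ``the $\SLE_\kappa$ bubble measure is (up to a multiplicative constant) the unique scale-invariant measure on such bubbles,'' and this is false: scale covariance alone pins down nothing beyond a scaling exponent, and even with a prescribed exponent there are many scale-covariant measures on bubbles rooted at $0$ in $\h$ (for instance $\SLE_\kappa(\rho)$ bubble measures, or any measure obtained by scaling out an arbitrary law on bubbles of unit capacity). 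Likewise, the observation that each $L_{t_i}$ is the conformal image of a $\CLE_\kappa$ loop does not by itself say anything about the \emph{law} of the first encountered loop; that is precisely what has to be proved. (The normalization issue you worry about at the end is actually immaterial, since the ordered collection attached to a Poisson point process is unchanged when the intensity is multiplied by a constant.)

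The missing input is the $\CLE$ structure itself, used through an exploration argument in the spirit of \cite[Proposition~4.1]{sw2012cle}, which is exactly how the paper proceeds: one iterates short \hyperref[def:cpi]{CPI} excursions (each run until it first exits a small ball around its starting point, which has positive half-plane capacity by property~\eqref{it:does-not-trace}), uses the \hyperref[def:cpi]{CPI} definition to see that after each step the undiscovered loops again form a $\CLE_\kappa^\beta$ in the unexplored domain, and chooses the successive starting points so as to approximate a deterministic slit from the root to a fixed interior point $z$. Exactly as in \cite{sw2012cle}, this shows that in the $\eps \to 0$ limit the loop surrounding $z$, at the step at which it is discovered, has the law of the $\SLE_\kappa$ bubble measure conditioned to surround $z$, with an independent $(1+\beta)/2$ sign; ranging over $z$ identifies the intensity. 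Your argument for the independence of the signs should also be routed through this regeneration structure (the conditional law of the undiscovered labeled configuration given $\CF_\tau^*$ is a fresh $\CLE_\kappa^\beta$) rather than through the claim that the labels are assigned ``before the CPI meets the loops,'' since a priori the selection of which loop is met next could be biased by labels; the Markov property is what rules this out.
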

\begin{proof}
This proof will be based on $\CLE$ exploration arguments developed in \cite{sw2012cle}.
A first observation is that the \hyperref[def:cpi]{CPI} property implies that for all stopping times $\tau$, the conditional distribution of the ordered collection of $(L_{t_i}, s(t_i))$ corresponding to $t_i > \tau$, given all the information provided by what one has discovered until~ $\tau$ is always the same. In other words, this  ordered collection of to be discovered labeled bubbles is 
independent of those discovered so far. This implies that it is distributed like the ordered collection  defined via a Poisson point process. It therefore remains to identify the intensity measure of this Poisson point process.

Recall that the definition of a \hyperref[def:cpi]{CPI} implies that almost surely, for all $\eps >0$, the curve $\gamma$ up to its first hitting time $\tau_\eps$ of the circle of radius $\epsilon$ does intersect the upper half-plane and has a positive half-plane capacity. This makes it possible to adapt the arguments of \cite{sw2012cle} to prove that when $\eps \to 0$, the law of the labeled $\CLE$ loop that surrounds a given point $z$, conditioned on the event that $\gamma [0, \tau_\eps]$ intersects this loop, does converge to the $\SLE_\kappa$ bubble measure restricted (and then renormalized to make a probability measure) to surround $z$, defined in \cite{sw2012cle}, with an independent $p_0$ versus $1-p_0$ labeling. 

The idea is to proceed exactly as in the proof of \cite[Proposition~4.1]{sw2012cle}.
For small $\eps$, we are going to iterate the following experiment: Choose a boundary point in the upper half-plane, launch a \hyperref[def:cpi]{CPI} from that point until it hits the
circle of radius $\eps$ around this starting point (and collect all $\CLE$ loops encountered). Consider the connected component that contains $z$ of the complement of the traced CPI and  the discovered loops, and map back this domain 
conformally onto the upper half-plane leaving $z$ fixed, and look at the image under this map of the collection of $\CLE$ loops that remain to be discovered in this domain.
The \hyperref[def:cpi]{CPI} property ensures that on the event where the CLE loop that surrounds $z$ has not been discovered yet, the conditional law of the collection of loops
obtained in this way is that of a $\CLE$, which makes it possible to iterate the same experiment again. 

As explained in \cite{sw2012cle}, by choosing iteratively the starting points of the explorations in an appropriate way, one can approximate (taking $\eps$ small) the deterministic exploration of all $\CLE$ loops 
discovered along some deterministic slit from $0$ to $z$, and this leads, exactly as in \cite{sw2012cle} to the fact that ``at the iteration step at which one discovers the loop that surrounds $z$'' and in the $\eps \to 0$ limit, it is distributed
as an ``$\SLE_{\kappa}$ bubble measure conditioned to surround $z$'' in the remaining to be discovered domain (and the fact that its label is independently chosen follows from our construction).  

All this argument is non-trivial, but it is a very direct adaptation of the proof of \cite[Proposition~4.1]{sw2012cle} with no other ingredient (one just replaces each iteration step by the discovery of a \hyperref[def:cpi]{CPI} with the loops that it intersects instead of the half-disk and the loops that it intersects)  so that we refer to that proof for details.
\end{proof}

We can now turn to the actual proof of Proposition~\ref{prop:CME_characterization}:

\begin{proof}[Proof of Proposition~\ref{prop:CME_characterization}]
Lemma~\ref{lem:loop_distribution} suggests that a \hyperref[def:cpi]{CPI} will necessarily be distributed like the trunk of a $\bSLE_\kappa^\beta$ from $x$ to $y$ (if this trunk exists and if it is a continuous curve) used to construct the $\CLE_\kappa^\beta$.

We consider the path $\eta$ obtained by gluing to the path $\gamma$ the loops of the $\CLE_\kappa^\beta$ that it discovered and decide to trace each of these loops following their orientation. 
The path $\gamma$ passes  to the left (resp.\ right) of this loop (on its way from $x$ to $y$) if the loop is traced counterclockwise (resp.\ clockwise). This path $\eta$ is then a continuous (because the $\CLE$ loops are locally finite)
non-self-crossing (because of this clockwise versus counterclockwise choice) path from $x$ to $y$, and the fact that $\gamma_t^*$ is increasing implies in fact that $\eta$ (when seen from $y$) can be defined via a Loewner chain with a continuous driving function (the Loewner chain is generated by the non-self crossing $\eta$ and the hull generated by this curve is increasing, so the driving function is just the conformal image of the tip of the curve).
Our goal is to prove that $\eta$ is necessarily a $\bSLE_\kappa^\beta$ process.

First, Lemma~\ref{lem:loop_distribution} implies that at those times when $\eta$ is away from $\gamma$, it evolves exactly like an $\SLE_\kappa$ process targeting the last point of $\gamma$ that it has visited, because of the definition of the $\SLE_\kappa$ bubble measure.  Viewed as a process targeting $\infty$ (just via change of variables), it means that it evolves like an $\SLE_\kappa (\kappa -6)$ when it is away from $\gamma$. Furthermore, when one discovers 
a loop, one tosses a $(1+\beta)/2$ versus $(1-\beta)/2$ coin to decide its orientation (i.e., to choose the sign of the corresponding excursions of the multiple of the Bessel process $(W_t-O_t)$).  

Second, scale-invariance and a zero-one argument (and the Markov property) shows that either the set of capacity-times for $\eta$ during which it traces a $\CLE$ loop is almost surely empty, or it is almost surely dense. It is not difficult to rule out the former case
(i.e.\ to rule out the possibility that  $\gamma$ does not intersect $\Gamma$ at all). For instance, if this would be the case, then by the conformal Markov property, we could first sample the entire path $\gamma$, and then independent CLEs in the connected components of its complement, and the union of the obtained collection of loops that one obtains would be exactly a CLE. If one considers a given point $a$  in the interior of $D$, one can get a contradiction by looking at the law of the conformal radius of the loop that surrounds $a$ in the CLE.

Let us now consider  the process $A_t$ defined to be the total Lebesgue measure of the set of times in $[0,t]$ at which $\eta$ (parameterized by capacity seen from $y$) is on its 
``trunk'' $\gamma$. Because of the conformal Markov property, this is necessarily the inverse of a subordinator, and its jumps are those of a stable process (because of the $\CLE$ property and the scaling property of the bubble measure). But conformal invariance of the whole process implies that the stable subordinator has no drift part (because a non-zero drift would not scale in the same way as the jumps of the subordinator), so that the Lebesgue measure of the set of times at which $\eta$ in on $\gamma$ is almost surely equal to zero.

We can therefore conclude that the path $\eta$ is one of the $\bSLE_\kappa^\beta$ processes because the process $ W_t - O_t$ has to be a sign-swapping Bessel process and the fact that $W$ can have no inverse local time type drift when the Bessel process hits zero.
\end{proof}

Note that this \hyperref[def:cpi]{CPI} process would then also satisfy target-invariance and other properties that we would have also expected from such a continuous percolation interface in CLEs. Indeed, these properties are known to hold for the generalized $\bSLE_{\kappa}^\beta$ processes.

\section{Conformal percolation in the $\CLE_4^0$ carpet}
\label {Sec5}
In the present section, we study \hyperref[def:cpi]{CPIs} in the labeled $\CLE_4$ carpets, and will make use of the relation of $\CLE_4^0$ with the GFF.  This type of GFF-coupling based argument will be instrumental in the remainder of the present paper.   

\subsection{Background and preliminaries on $\SLE_4$, $\CLE_4$, local sets and the GFF}
\label{subsec:cle4_local}

Let us first briefly recall a few features about local sets. This notion, introduced in \cite{ss2010continuumcontour}, was instrumental in \cite{ms2012ig1,ms2012ig2,ms2012ig3,ms2013ig4} and will be important in the present paper as well. When $\Fh_0$ is a harmonic function in a domain $D$, we say that $h$ is a GFF with boundary conditions given by $\Fh_0$ if $h-\Fh_0$ is a (usual) centered GFF with covariance given by the Dirichlet Green's function in $D$.  We use the same normalization as in \cite{ms2012ig1} for the Green's function (another choice would affect some of the constants in what follows). The terminology ``boundary conditions'' comes from the fact that a harmonic function is fully determined by its ``trace on the boundary,'' so that it is in fact enough to specify the latter to define the former. For instance, when~$\Fh_0$ is a harmonic function that extends continuously to the set of the prime ends of $D$, then one can recover~$\Fh_0$ from its boundary values (and we just say that $h$ is a GFF with boundary conditions given by the boundary values of~$\Fh_0$).  

When $U$ is a deterministic open subset of $D$, it is possible to decompose the GFF $h$ into the sum of two independent parts: The projection $h^U$ of $h$ onto the set of generalized functions that are harmonic in $U$ (in other words, this is equal to $h$ in $D \setminus U$ and to the harmonic extension of this generalized function in $U$) and a GFF with zero boundary conditions in $U$. This decomposition can be understood as a generalization of the standard Markov property for Brownian motion or Brownian bridge where the one-dimensional time-set is here replaced by the two-dimensional set $D$. 

Local sets form an important and natural class of random sets in relation to the GFF; they correspond to the random sets for which a ``strong Markov property''  holds: A  random closed set $A$ is said to be local for the GFF $h$ defined on $D$ if there exists a random distribution $h_A$ that has the property that $h_A$ is almost surely continuous and harmonic  in $D \setminus A$, and such that, conditionally on $A$ and $h_A$,  $h - h_A$ is a GFF with zero boundary conditions in $D \setminus A$ (for more information and surveys about this, we refer to \cite{ss2010continuumcontour,ms2012ig1} or \cite{ww2016ln}).   Let us now just briefly review some features of this theory that we shall use here.  We begin with a restatement of part of \cite[Lemma~3.9]{ss2010continuumcontour}, which will be especially important for our later arguments.
\begin{proposition}
\label{prop:local_set_char}
Suppose that $A$ is a random closed subset of $\overline D$ which is coupled with a GFF $h$ on $D$.   If for each given $U \subseteq D$ open, the event  $\{ A \cap U \neq \emptyset \} $ is a measurable function of $h^U$, then $A$ is a local set of the GFF $h$.
\end{proposition}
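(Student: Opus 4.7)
The plan is to deduce the statement from one of the equivalent formulations of locality already established in \cite{ss2010continuumcontour}. First I would record the Markov decomposition that underlies any local set argument: for any deterministic open $U\subseteq D$, one can write $h = h^U + \tilde h_U$, where $h^U$ is the projection of $h$ onto the closed subspace of distributions that are harmonic in $U$ (in particular $h^U$ agrees with $h$ on $D\setminus U$), and $\tilde h_U$ is a zero-boundary GFF in $U$, extended by $0$ to $D\setminus U$, independent of $h^U$.

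Next I would quote the version of \cite[Lemma~3.9]{ss2010continuumcontour} most convenient for us: $A$ is local for $h$ if and only if, for every deterministic open $U\subseteq D$, the event $\{A\cap U = \emptyset\}$ is conditionally independent of $\tilde h_U$ given $h^U$. The substantive content of that lemma is the passage from this family of conditional-independence statements (for $U$ ranging over, say, a countable basis of smooth open sets) to the existence of a single random distribution $h_A$ that is harmonic in $D\setminus A$ and such that conditionally on $(A,h_A)$ the field $h-h_A$ is a zero-boundary GFF in $D\setminus A$; that gluing argument is carried out in \cite{ss2010continuumcontour} and we take it as a black box.

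Given this, the proposition follows almost tautologically. Under the stated hypothesis, $\mathbf{1}_{A\cap U\neq\emptyset}$, and therefore also $\mathbf{1}_{A\cap U=\emptyset}$, is $\sigma(h^U)$-measurable. Since $h^U$ and $\tilde h_U$ are independent by the Markov decomposition, any $\sigma(h^U)$-measurable random variable is unconditionally independent of $\tilde h_U$, hence a fortiori conditionally independent of $\tilde h_U$ given $h^U$. The equivalence quoted above then gives that $A$ is local.

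The only point requiring a bit of care, and really the main ``obstacle'', is bookkeeping: one must match the form of the hypothesis (measurability with respect to $h^U$) to the precise item of \cite[Lemma~3.9]{ss2010continuumcontour} being invoked, and verify that the family of $U$ for which conditional independence is needed can be taken countable (e.g.\ a basis of smoothly bounded open sets) so that all the good events are simultaneously measurable. There is no genuine analytic difficulty beyond what has already been handled in \cite{ss2010continuumcontour}.
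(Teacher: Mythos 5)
Your argument is correct and coincides with what the paper does: the paper states this proposition as a restatement of part of \cite[Lemma~3.9]{ss2010continuumcontour} and offers no further proof, and your reduction—measurability of $\{A\cap U\neq\emptyset\}$ with respect to $\sigma(h^U)$ trivially implies the conditional independence required in that lemma—is exactly the intended justification.
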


A useful subclass of local sets are formed by sufficiently small local sets (referred to as  ``thin'' local sets in \cite{asw2016boundedthin,ww2016ln,sep2016thinlocalsets}). Indeed, if we know for instance that for some $d < 2$, the Minkowski dimension of the local set $A$ is almost surely smaller than $d$, then the random distribution $h_A$ is in fact equal to the harmonic function $h_A$ times the indicator function on $D \setminus A$ (in loose words, it carries no mass on $A$).

The local sets that we will consider in the present section are all closely related to the level lines of the GFF.  Recall that $h$ is a distribution and does therefore not take values at points, so 
that it does not have level lines in the literal sense.  But these have been made sense of in \cite{ss2009contours,ss2010continuumcontour} using two different approaches. The first is to define the GFF lines to be the scaling limits of the level lines of the discrete GFF and the second construction is done directly in the continuum and builds on the idea that such a level line should be local for the GFF since changing the field values away from a level line does not affect the level line itself.  The way that this construction proceeds is to first sample a random curve $\eta$ according to some well-chosen law and then construct a distribution on $D$ by sampling a GFF in the complement of $D$ with given boundary conditions and then show that this defines a GFF on all of $D$.  It turns out that the boundary conditions that one should use are $-\lambda$ (resp.\ $\lambda$) on the left (resp.\ right) side of the curve where $\lambda=\pi/2$.  This discrepancy between the field heights between the left and right 
sides of $\eta$ was coined the ``height-gap'' by Schramm and Sheffield in their original article \cite{ss2009contours}.  

More precisely, one can consider an $\SLE_4$ from $-1$ to $1$ in the unit disk $\D$ and define the harmonic function in the complement of the $\SLE_4$ that is equal to $+ \lambda$ in the bottom connected component of the complement of the curve (the one with the lower half of the circle $\partial \D$ on its boundary), and to $- \lambda$ in the top one. Then this is a local set for a GFF $h$ with boundary values equal to $+\lambda$ and $-\lambda$ on the bottom and top half-circles of $\partial \D$. Conversely, and this is not a trivial fact, it turns out that the $\SLE_4$ path can be deterministically recovered from $h$ (and it is therefore sometimes referred to as a zero-level-line of $h$), see \cite{ss2010continuumcontour}.

A variant of the previous result that will be important for our purposes is the following: Choose $\rho \in (-2, 0)$ and define first an $\SLE_4 (\rho ; -2- \rho)$ from $-1$ to $1$ in $\D$. This is a simple continuous curve in the closed unit disk, that touches almost surely both the top and bottom half-circle of $\partial \D$. Then, in each connected component of the complement of the curve (we can either look at the entire curve, or stop it at some stopping time), define the harmonic function with boundary conditions given by: (i) on $\partial \D$, the boundary condition is $0$, (ii) on the ``left-hand'' side of the curve, the boundary condition is $-\lambda + c$, (iii) on the ``right-hand'' side of the curve, the boundary condition is $\lambda + c$, where $c= \lambda ( \rho + 1)  \in (- \lambda, \lambda)$ (see Figure~\ref{sle411} for the case $c=0$).  
\begin{figure}[ht!]
\begin{center}
\includegraphics [width=2.4in]{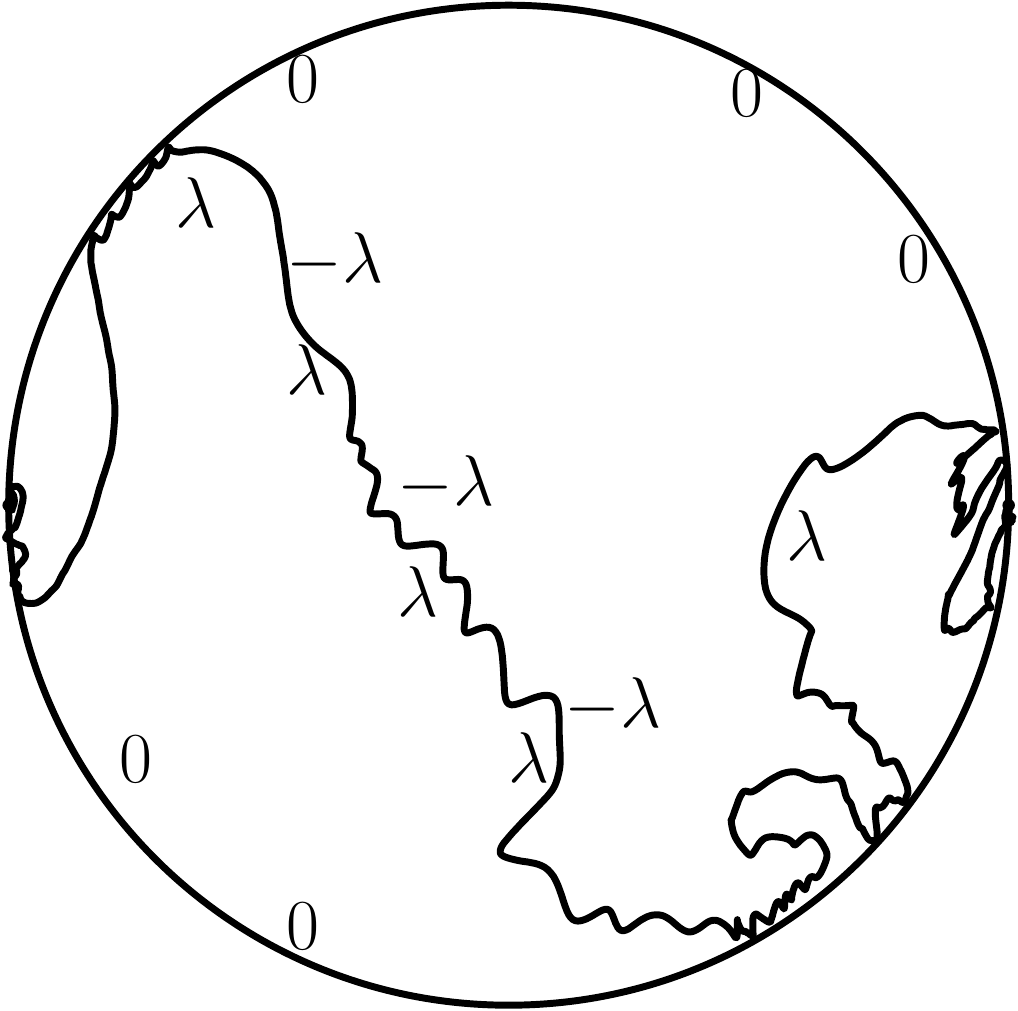}
\caption{\label{sle411}
Sketch of the $0$-level line from $-1$ to $1$ in $\D$ with the corresponding boundary conditions.}
\end{center}
\end{figure} 
Then this is a local set of GFF with zero boundary conditions. Again, it can be proved that this local set can be deterministically recovered from the GFF \cite{ss2010continuumcontour}.

Another variant that we will also use in a few paragraphs goes as follows. We consider the bounded harmonic function $\Fh_0$ in $\h$ with boundary values equal to $0$ on the negative half-line, to $c+\lambda$ on $(0,x)$ for some $x \in (0, \infty]$, and to $c- \lambda$ on $(x, \infty)$, where $c \in (- \lambda, \lambda)$. If one now samples a well chosen $\SLE_{4} (\rho_1; \rho_2)$ process from $0$ to $x$ with marked points at $0^+$ and $\infty$ (so here, the force points lie on different sides of the tip of the curve), then this process will trace a curve from $0$ to $x$, that will intersect the interval $(0,x)$ but not the rest of the real line. Furthermore, if one defines the ``boundary conditions'' on the complement of the curve to be: (i) as $\Fh_0$ on the real line, and (ii) equal to  $0$ and $2 \lambda$ on the left-hand and right-hand side of the curve respectively, one gets a local set in the upped half-plane with boundary conditions given by $\Fh_0$ (see Figure~\ref{sle4rhorho}). 
\begin{figure}[ht!]
\begin{center}
\includegraphics [width=3in]{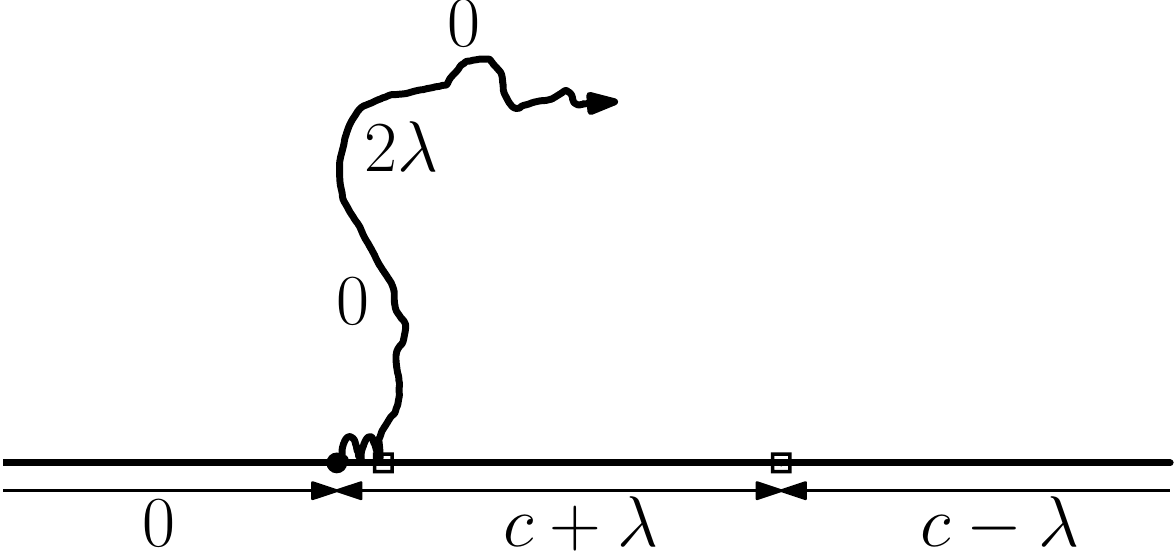}
\includegraphics [width=3in]{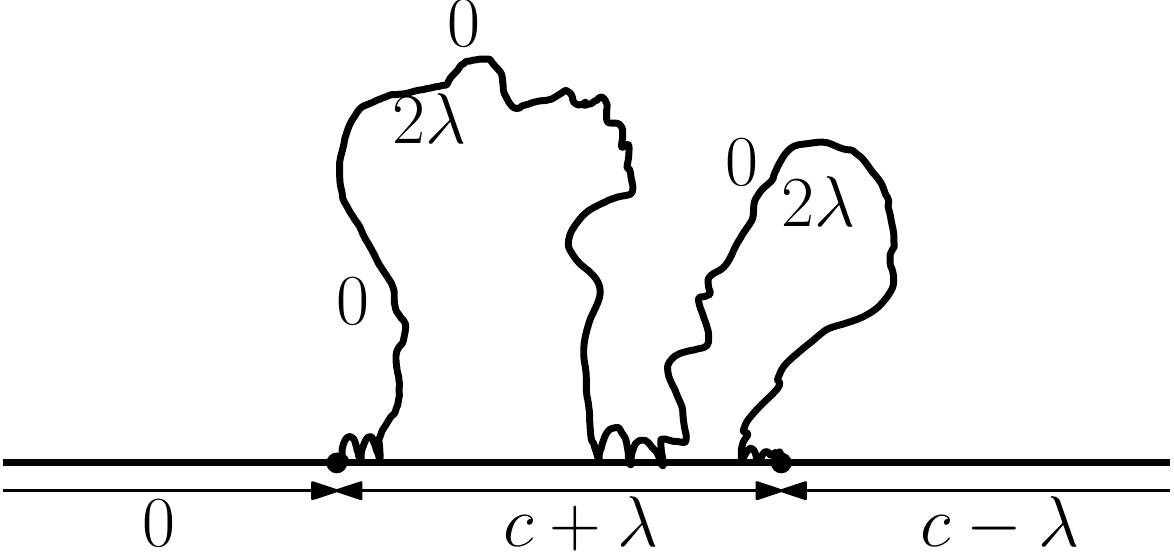}
\caption{A sketch of the $\SLE_4 ( \rho_1, \rho_2)$ coupling. \label{sle4rhorho}}
\end{center}
\end{figure}

Recall that $\CLE_4$ is a random collection of disjoint non-nested simple loops in the unit disk. Each loop is an $\SLE_4$-type loop, and the complement of the inside of all loops in the unit disk is a set with zero Lebesgue measure and Hausdorff dimension that is strictly smaller than $2$ (it is actually $15/8$, see \cite{ssw2009radii,nw2011carpets}) that is  called the $\CLE_4$ carpet. We then toss an i.i.d.\ fair coin inside each loop of the $\CLE_4$ to choose between the constant values $+2 \lambda$ or $-2 \lambda$ inside this loop. In this way, we have a random set $A$ (the CLE carpet) with labels that define a harmonic function $h_A$ in the complement of this carpet. It turns out that this couple $(A, h_A)$ is also a thin local set (to define a GFF in $D$, one just has to sample independent GFFs in the connected components of the complement of the carpet, with boundary conditions $h_A$). Again, the couple $(A, h_A)$ can be deterministically recovered from the GFF it defines. These facts are due 
to Miller and Sheffield \cite{mscle}, see also the self-contained presentation in \cite{asw2016boundedthin}.  

In the present section, we will use features of the following type, that are very closely related to the fact that the $\CLE_4^0$ is a deterministic function of the GFF: 
\begin{lemma}[\cite{asw2016boundedthin}]
\label{cle4charact}
 Suppose that one can construct a local set $A$ with Minkowski dimension almost surely smaller than some $d<2$, and with harmonic function $h_A$ such that $h_A \in  \{ -2 \lambda, 2 \lambda, 0 \}$ almost surely (i.e., the harmonic function in each of the connected components $D_j$ of the complement of $A$ is constant and equal to one of these three values). Then $A$ can be coupled with a $\CLE_4^0$ carpet and a GFF in such a way that both $A$ and the $\CLE_4^0$ are local with respect to $h$. The components $D_j$ with $h_A \in \{ - 2 \lambda, 2 \lambda \}$ are then also connected components of this $\CLE_4^0$ carpet. 
\end{lemma}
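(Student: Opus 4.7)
The plan is to realise both $A$ and a $\CLE_4^0$ as local sets of a single GFF $h$ built from $(A,h_A)$, and then to argue that components of $D\setminus A$ on which $h_A$ is nonzero are automatically enclosed by a single $\CLE_4^0$ loop of matching sign. The central input we invoke is the Miller--Sheffield coupling of $\CLE_4^0$ with the GFF as recalled in Section~\ref{subsec:cle4_local}: given a GFF on a simply connected domain with zero boundary conditions, one can produce a $\CLE_4^0$ collection $\Gamma$ in a measurable (hence deterministic) way, whose carpet is a thin local set and whose associated harmonic function is $\pm 2\lambda$ on each component of the complement.

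First, extend $(A,h_A)$ to an ambient GFF. By hypothesis $A$ is a thin local set of some GFF (its Minkowski dimension is strictly less than $2$), so $h_A$ is genuinely a function on $D\setminus A$ and carries no mass on $A$. Sampling in each component $D_j$ of $D\setminus A$ an independent zero-boundary GFF $\tilde h_j$ and setting
\[
h \;:=\; h_A \;+\; \sum_j \tilde h_j\,\mathbf{1}_{D_j},
\]
the standard local-set decomposition (Proposition~\ref{prop:local_set_char} and the paragraph preceding it) shows that $h$ has the law of a GFF on $D$ with whatever boundary data $A$ was originally coupled to, and that $A$ is local for $h$ with associated harmonic function $h_A$.

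Next, apply the Miller--Sheffield construction to $h$ to produce a $\CLE_4^0$ carpet $\Gamma$ that is a thin local set of $h$. This gives a coupling of $(A,\Gamma,h)$ in which both $A$ and $\Gamma$ are local for $h$. It remains to identify each component $D_j$ with $h_A|_{D_j}\equiv 2\lambda$ (the case $-2\lambda$ is symmetric) as a component of $D\setminus\Gamma$ carrying label $+2\lambda$. By locality, conditional on $(A,h_A)$ the restriction $h|_{D_j}$ is a GFF on $D_j$ with constant boundary value $2\lambda$. Applying the $\CLE_4^0$/GFF coupling inside $D_j$ after subtracting the constant $2\lambda$, the $\CLE_4^0$ structure that $\Gamma$ induces inside $D_j$ must be exactly (in law) an independent $\CLE_4^0$ shifted by $+2\lambda$; equivalently, $\partial D_j$ plays the role of an outermost $\CLE_4^0$ loop around any point of $D_j$, carrying label $+2\lambda$, and no loop of $\Gamma$ enters the interior of $D_j$ (nestedness is precluded by the non-nested convention).

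The main obstacle is the last step: showing that a component of $D\setminus A$ with $h_A\equiv \pm 2\lambda$ is automatically enclosed by a single outermost $\CLE_4^0$ loop of the correct sign, rather than being crossed by other loops of $\Gamma$. This is not purely formal; it relies on the characterisation of the $\CLE_4^0$ carpet as (essentially) the maximal thin $\{-2\lambda,0,2\lambda\}$-valued local set of the GFF established in \cite{asw2016boundedthin}, together with the deterministic dependence of $\Gamma$ (and its labels) on $h$. Given that characterisation, the identification of components and hence the conclusion of the lemma follows by the local-set manipulations outlined above.
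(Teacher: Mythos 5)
There is a genuine gap, and it sits exactly where you flag it. Your route is: build $h$ from $(A,h_A)$, let $\Gamma$ be the Miller--Sheffield $\CLE_4^0$ of $h$, and then argue that a component $D_j$ with $h_A\equiv\pm2\lambda$ is a complementary component of the $\Gamma$-carpet. The justification you give for that last step does not work as stated: locality of $A$ tells you the conditional law of $h$ restricted to $D_j$ (a GFF with boundary value $\pm2\lambda$), but $\Gamma$ is a deterministic functional of the \emph{global} field, and nothing in the locality of $A$ implies that $\Gamma\cap D_j$ is determined by, or distributed like, the $\CLE_4^0$ of $h|_{D_j}$, nor that no loop of $\Gamma$ crosses $\partial D_j$. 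That compatibility is precisely the content of the lemma, so the argument as written is circular. The appeal to a ``maximality'' characterisation of the carpet among thin $\{-2\lambda,0,2\lambda\}$-valued local sets is also not the statement proved in \cite{asw2016boundedthin}; even the containment of $A$ in the two-valued set $\mathbb{A}_{-2\lambda,2\lambda}$ (the $\CLE_4$ carpet) would not by itself rule out the carpet subdividing a $\pm2\lambda$ component $D_j$, so an additional argument would still be needed there.

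The paper's proof goes the other way around and avoids comparing two separately constructed local sets. One \emph{completes} $A$: in each component where $h_A=0$ one samples an independent labeled $\CLE_4^0$ carpet (of the zero-boundary GFF there), leaving the $\pm2\lambda$ components untouched. The union is a thin local set (Minkowski dimension still $<2$) whose harmonic function takes only the values $\pm2\lambda$, and the uniqueness theorem of \cite{asw2016boundedthin} says that the labeled $\CLE_4^0$ carpet is the \emph{only} such thin local set coupled with the GFF. Hence the completed set, with its labels, \emph{is} a $\CLE_4^0$ coupled to $h$, both $A$ and this $\CLE_4^0$ are local for $h$, and since the completion added nothing inside the $\pm2\lambda$ components of $A$, those components are automatically components of the $\CLE_4^0$ carpet. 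If you want to salvage your approach, the cleanest fix is to replace your final step by this completion-plus-uniqueness argument (or to prove, with a genuine argument, that the deterministically constructed $\Gamma$ agrees with the completed set), rather than relying on an unformulated maximality principle.
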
 
Indeed, one can first complete $A$ by sampling a labeled $\CLE_4$ carpet in all of the components of its complement with $h_A = 0$. In this way, one gets a local set with $h_A \in \{ -2 \lambda, 2 \lambda \}$
and Minkowski dimension strictly smaller than $2$, and it is known (see \cite{asw2016boundedthin}) that the labeled $\CLE_4^0$ is the only possible one. We shall use also a slight variation of this result that we will describe during the proof.

\subsection{$\CLE_4^0$ percolation}

We are now going to derive the following proposition that characterizes and describes all possible CPIs in labeled $\CLE_4$ carpets (see Figure~\ref{fig:cle4_perc}): 

\begin{prop}
\label{prop:CME_characterization2}
\begin{enumerate}[(i)]
 \item 
There are no \hyperref[def:cpi]{CPIs} in $\CLE_4^\beta$ for $\beta \not= 0$. 
\item  
There is exactly a one-parameter family of \hyperref[def:cpi]{CPIs} in $\CLE_4^{0}$ parameterized by $c \in (- \lambda, \lambda)$ or equivalently by $\mu \in \R$. These \hyperref[def:cpi]{CPIs} have the following properties:
\begin{itemize}
\item Each of these \hyperref[def:cpi]{CPIs} is a deterministic function of the labeled $\CLE_4^0$. 
\item In the coupling between the $\CLE_4^0$ and the GFF described above, a \hyperref[def:cpi]{CPI} corresponds to the $c$-level line for some $c \in (- \lambda, \lambda)$. 
\item The $\bSLE_4^{0,\mu}$ Loewner chain is almost surely generated by a continuous path, and its trunk (when one erases the $\CLE_4$ loops it creates when going from its starting point to its target point) is an $\SLE_4 ( \rho; -2- \rho)$ process (these processes will also be called $\bSLE_4 (\rho)$ later in this paper) for some $\rho \in (-2, 0)$.
This trunk is then a \hyperref[def:cpi]{CPI} of the $\CLE_4^0$.
\end{itemize}
\end{enumerate}
\end{prop}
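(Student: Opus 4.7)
\textbf{Proof plan for Proposition~\ref{prop:CME_characterization2}.}

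The overarching strategy is to use the GFF as the intermediate object through which both the CPI and the labeled $\CLE_4^0$ are produced as local sets. Existence will come from running a GFF level line and filling in the complementary components with labeled $\CLE_4^0$'s, while uniqueness, the deterministic dependence, and the non-existence in the $\beta\neq 0$ case will all follow from the characterization of labeled $\CLE_4^0$ as the only consistent local set whose harmonic function takes values in $\{-2\lambda,0,2\lambda\}$ (Lemma~\ref{cle4charact}).

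For existence I would proceed as follows. Fix $c\in(-\lambda,\lambda)$ and set $\rho=-1+c/\lambda\in(-2,0)$, so that the $\SLE_4(\rho;-2-\rho)$ level line of Figure~\ref{sle4rhorho} has height discrepancy $c+\lambda$ and $c-\lambda$ on its two sides. I would sample a GFF $h$ on $\h$ with boundary conditions $+\lambda$ and $-\lambda$ on the appropriate halves of $\partial\h$ (shifted by $c$), draw the corresponding level line $\gamma$ from $0$ to $\infty$, and in each connected component of $\h\setminus\gamma$ sample an independent labeled $\CLE_4^0$ using Lemma~\ref{cle4charact}. The resulting labeled loop ensemble $\Gamma$ is then a $\CLE_4^0$ in $\h$ (by Lemma~\ref{cle4charact} applied to the whole domain, since the combined object is a thin local set with harmonic function in $\{-2\lambda,0,2\lambda\}$). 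The CPI axioms~\eqref{it:non-self-crossing}--\eqref{it:does-not-trace} follow from the continuity of $\gamma$ and the fact that the sign of the height-jump across $\gamma$ dictates on which side of $\gamma$ each loop lies; the orientations of loops touching $\gamma$ match the CPI orientation rule~\eqref{it:bouncing-orientation}. The conformal Markov property of Definition~\ref{def:cpi} follows from the Markov property of the GFF together with the fact that $\gamma$ is locally determined by $h$ (so that conditioning on $\gamma[0,\tau]$ is the same as conditioning on the restriction of $h$ to the explored set, and the GFF in the unexplored components is an independent GFF with the corresponding boundary values).

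For uniqueness, let $\gamma$ be any CPI in $\CLE_4^\beta$. Arguing exactly as in the proof of Proposition~\ref{prop:CME_characterization} using Lemma~\ref{lem:loop_distribution}, the curve $\eta$ obtained by splicing the oriented loops into $\gamma$ is the trace of a Loewner chain whose driving pair $(W,O)$ satisfies the hypotheses of Lemma~\ref{lem:slekrcharacterization} with $\rho=\kappa-6=-2$ (so that in the bulk-to-infinity parameterization the marked point is $\kappa-6$ away). Since $\rho=-2$ the lemma forces $\eta$ to be a $\bSLE_4^{0,\mu}(-2)$ process for some $\mu\in\R$; in particular $\beta=0$ (the sign of each excursion must be symmetric for the Bessel dimension-$1$ underlying process), which gives the non-existence assertion in part~(i). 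Next, the labeled loop configuration together with $\gamma$ forms a random closed set whose complement carries a canonical locally-constant function taking the values $0$ (outside the loops) and $\pm 2\lambda$ (inside loops of either label); by Proposition~\ref{prop:local_set_char} (via the explicit construction above, which matches any prescribed small-scale behavior) and the uniqueness statement in Lemma~\ref{cle4charact}, this set is a thin local set of a GFF $h$, and $\gamma$ itself is a level line of $h$ at some height $c\in(-\lambda,\lambda)$. The deterministic dependence on the labeled $\CLE_4^0$ then follows: the labeled carpet determines $h$ (this is the $\CLE_4$/GFF coupling), and the level line is a deterministic functional of $h$. Finally, the parameter correspondence $c\leftrightarrow\mu$ is obtained by identifying the drift coming from the height-gap $c$ of the level line with the local time drift $\mu$ in the driving function, which gives a bijection between $c\in(-\lambda,\lambda)$ and $\mu\in\R$.

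The main obstacle is the combination of the last two steps: upgrading the Markov characterization to the identification of $\eta$ as a $\bSLE_4^{0,\mu}(-2)$ process (which requires verifying that the total Lebesgue measure of the trunk time set is zero, in the spirit of the corresponding step in the proof of Proposition~\ref{prop:CME_characterization}, and correctly identifying the local-time drift in the Bessel-dimension-$1$ case where the standard scaling argument is more delicate), and then matching the resulting $\mu$ with the height $c$ of the GFF level line so that the construction and the characterization produce the same one-parameter family. Showing that the trunk of $\bSLE_4^{0,\mu}$ is precisely an $\SLE_4(\rho;-2-\rho)$ with $\rho=-1+c/\lambda$ is the technical heart of the argument, and it will rely on the GFF coupling to transfer the continuity of $\SLE_4(\rho;-2-\rho)$ (from \cite{ms2012ig1}) to continuity of the $\bSLE_4^{0,\mu}$ trace.
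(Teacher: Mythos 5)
Your overall architecture does track the paper's: uniqueness and part~(i) via a word-for-word repetition of the Section~\ref{Sec4} argument (Lemma~\ref{lem:loop_distribution} plus Lemma~\ref{lem:slekrcharacterization} with $\rho=-2$, which forces $\beta=0$ and identifies the spliced curve as a $\bSLE_4^{0,\mu}$, now allowing a drift), existence via GFF level lines, and a bijection $c\leftrightarrow\mu$ at the end. However, your existence construction contains a genuine gap, and it is precisely at the technical heart of the paper's proof. After drawing the $c$-level line $\gamma$, the complementary components have boundary data $c-\lambda$ or $c+\lambda$ along $\gamma$ and $0$ along $\partial D$ (also note the carpet coupling requires zero boundary data on $\partial D$, not $\pm\lambda$). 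These boundary values are neither zero nor constant, so a labeled $\CLE_4^0$ sampled \emph{independently} inside such a component is not coupled to the ambient field, and the combined object is not a local set whose harmonic function takes values in $\{-2\lambda,0,2\lambda\}$: inside the loops one gets $\pm 2\lambda$ plus a non-constant harmonic correction coming from the $c\pm\lambda\,/\,0$ boundary data of the component, so Lemma~\ref{cle4charact} does not apply to ``the whole domain.'' Worse, $\CLE_4$ loops sampled in a component $U$ almost surely do not touch $\partial U$, so in your construction \emph{no} loop touches $\gamma$ at all; but your own uniqueness step (following the proof of Proposition~\ref{prop:CME_characterization}) rules out a CPI that meets no loops, via the conformal-radius argument. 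The missing idea is the iterated ``layer'' construction: inside each component of the complement of $\gamma$ one draws further level lines (with height gaps $0/\!-\!2\lambda$ or $0/2\lambda$, then iterates), which is exactly what creates the $\gamma$-touching loops carrying $\pm 2\lambda$ and reduces the remaining components to constant boundary data $0$ or $\pm2\lambda$; only after this reduction (together with a Minkowski-dimension bound for the union of the layers) can Lemma~\ref{cle4charact} be invoked to identify the discovered loops as loops of the $\CLE_4^0$ coupled to the \emph{same} GFF and to conclude that no other $\CLE_4^0$ loop touches $\gamma$.

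A second problem is your justification of the bullet that the CPI is a deterministic function of the labeled $\CLE_4^0$: you assert that ``the labeled carpet determines $h$,'' which is false. In the $\CLE_4^0$/GFF coupling the field given the labeled carpet is $h_A$ plus \emph{independent} zero-boundary GFFs inside the loops, so $h$ is not a function of the labeled ensemble; what the construction gives directly is that $\gamma$ (and the spliced path $\eta$) is a deterministic function of the GFF, since both the level line and the $\CLE_4^0$ are. Relatedly, your closing suggestion to obtain the $c\leftrightarrow\mu$ correspondence ``by identifying the drift coming from the height-gap with the local-time drift'' is not a proof and is not what the paper does: no formula relating $c$ and $\mu$ is derived there (it is deferred to \cite{msw2016betarho}); the bijection is instead obtained softly, by injectivity together with a continuity argument for the laws of the driving processes as $c$ varies, with $c=\pm\lambda$ corresponding to $\mu=\pm\infty$.
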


Note that by symmetry, the relation between $c$ and $\mu$ satisfies $\mu (-c) = - \mu (c)$ and $\mu (0) = 0$. The arguments presented in the present paper will not provide a formula for the relation between $\mu$ and $c$ (or equivalently, between $\mu$ and $\rho$) but the explicit formula should follow from our upcoming paper \cite{msw2016betarho}.  We will see a similar feature in our study of $\CLE_\kappa$ percolations for other $\kappa$'s, and will comment further on this after the statements of the main results (Theorems~\ref{thm:duality1} and~\ref{thm:duality2}) at the end of Section~\ref{sec:bcle}.

The first statements of this proposition have some similarities with the previous $\kappa \in (8/3, 4)$ case, but we note already that the description here is complete in that this proposition contains both the existence of the \hyperref[def:cpi]{CPIs} and the description of their distribution. In the course of the proof, we will also describe the conditional distribution of the $\CLE_4^0$ given the \hyperref[def:cpi]{CPI}.

\begin{figure}[ht!]
\begin{center}
\includegraphics [width=3in]{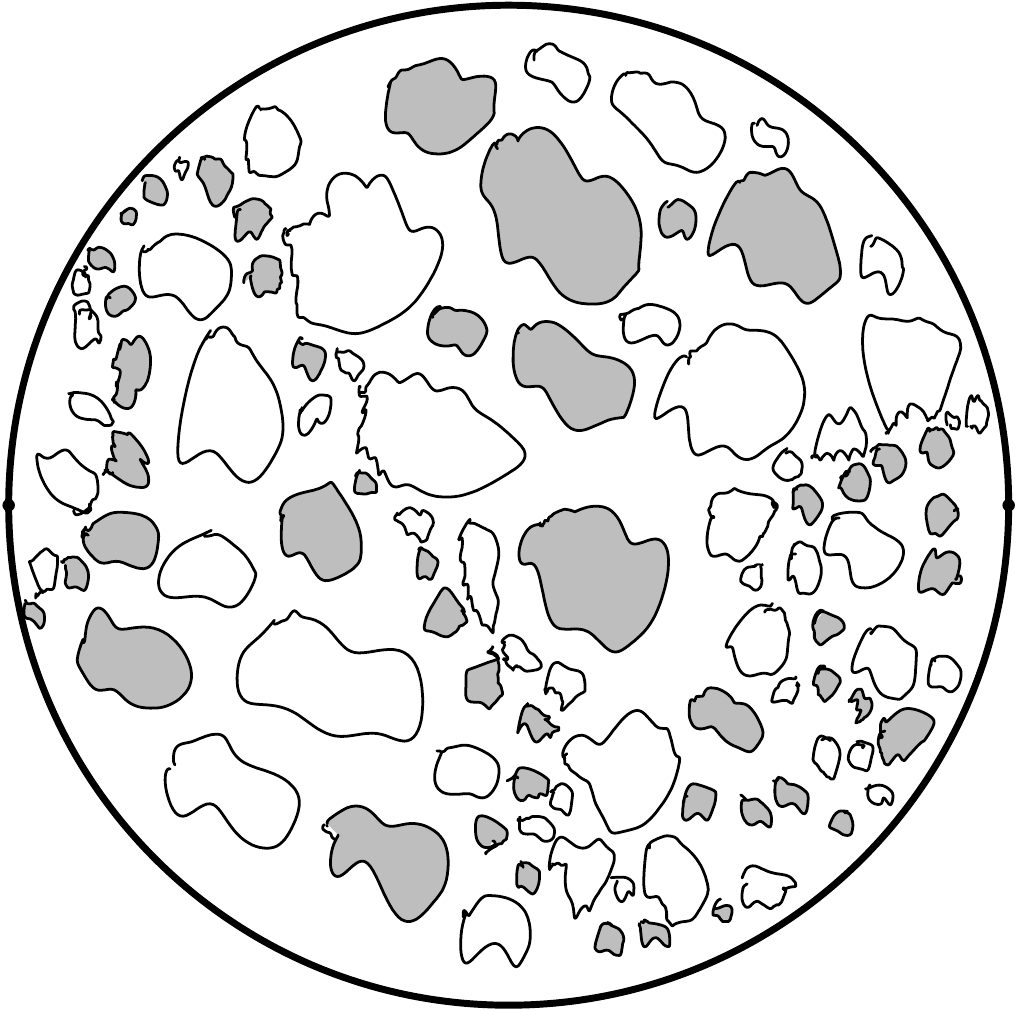}
\includegraphics [width=3in]{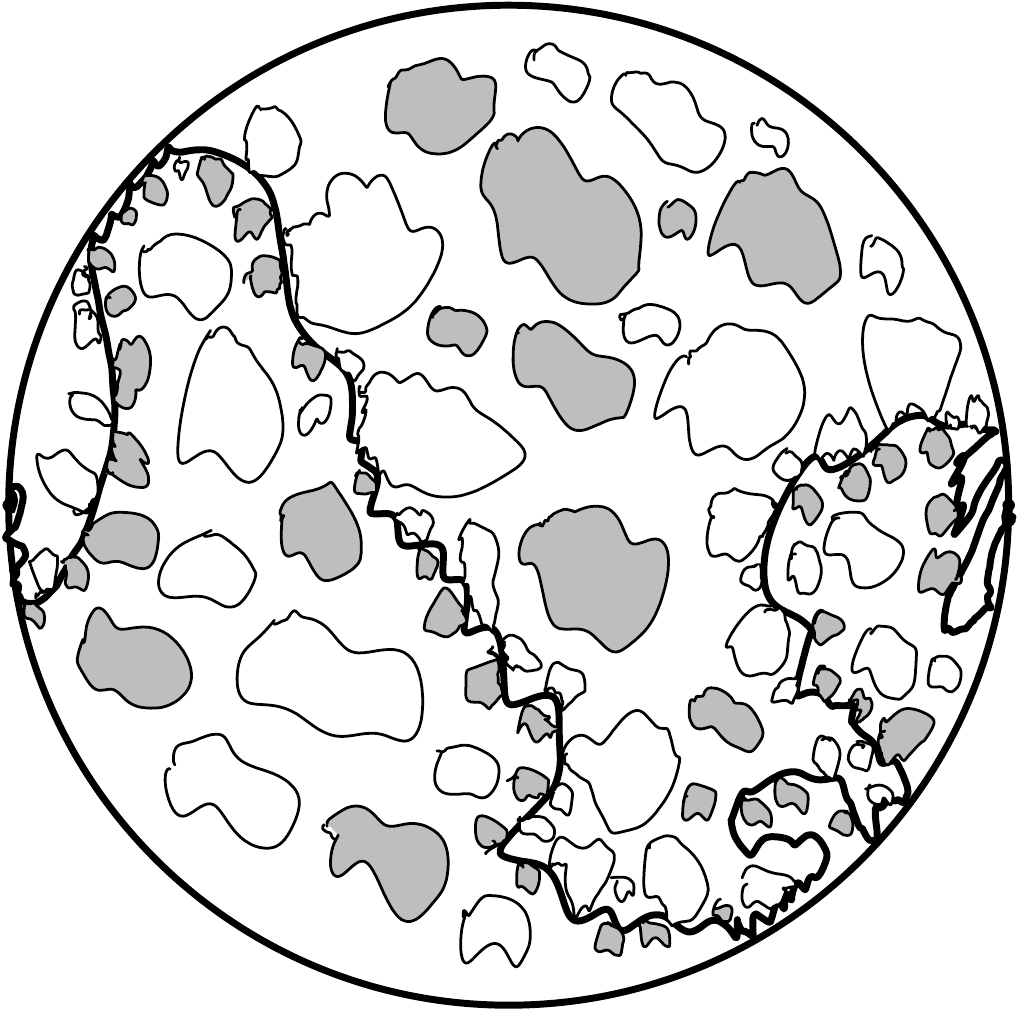}
\caption{\label{fig:cle4_perc} {\bf Left:} A sketch of $\CLE_4$ (shaded loops have boundary values $2 \lambda$ and the others have boundary values $-2 \lambda$ for the GFF). {\bf Right:} A sketch of the joint realization of the $\CLE_4^0$ and the $\SLE_4 (-1; -1)$. Proposition~\ref{prop:CME_characterization2} shows that the latter is a \hyperref[def:cpi]{CPI} in the former, and that the law of the process obtained when tracing the encountered loops along the way of the level line is a $\bSLE_4^{0,0}$.}
\end{center}
\end{figure}

\begin{proof}[Proof of Proposition~\ref{prop:CME_characterization2}]
The arguments of the previous section can be repeated almost word for word, in order to derive the first few statements in the proposition, as well as the fact that for $\CLE_4^0$, there is at most a one-parameter family of \hyperref[def:cpi]{CPIs} in $\CLE_4^0$ and that the process $\gamma_t^*$ must have the same law as a $\bSLE_4^{0,\mu}$ process, viewed at those times at which it does not trace a $\CLE_4$ loop, and that finally, if one knows that $\bSLE_4^{0,\mu}$ is almost surely a continuous path, then its trunk is a \hyperref[def:cpi]{CPI} in a corresponding $\CLE_4^0$.  The only difference is that in the last part of the argument, the subordinator can have a drift part (this is as in \cite{she2009cle,ww2013conformally}).

We will now show that each $c$-level line  of the GFF $h$ from $x$ to $y$ coupled to the $\CLE_4^0$ as described above does indeed define a \hyperref[def:cpi]{CPI} from $x$ to $y$ in that GFF. Combined with the above, it shows that each of these $\CLE_4^0$ level lines is the trunk of the $\bSLE4^{0, \mu}$ for some $\mu$, and this $\bSLE_4^{0,\mu}$ is a continuous curve. 

We proceed as follows (in the remainder of this proof, we shall work with processes in the unit disk~$\D$): For a given $c \in ( - \lambda, \lambda)$, we first consider the height $c$ level-line started from $-1$ and targeting $1$ of a GFF $h$, that we call $\eta_c$ (recall that this is a continuous simple curve in the closed unit disk that touches almost surely both the bottom and the top half-circle, and that it is a deterministic function of $h$). Note that this GFF $h$ also deterministically defines a $\CLE_4^0$.

Sample first the path $\eta_c|_{[0, \tau]}$, where $\tau$ is some stopping time for $\eta_c$. We know the conditional distribution of $h$ given $\eta_c|_{[0,\tau]}$ in the complement of this slit (it is a GFF with boundary conditions~$0$ on the unit circle and $c - \lambda$ and $c+\lambda$ on the two sides of~$\eta_c$). Each connected component of the complement of this slit has three boundary arcs (one of which can be empty): an arc of the unit circle, a portion of $\eta$ seen from below, and a portion of $\eta$ seen from above. Then, we draw the $-\lambda$-level lines (i.e.\ boundary values on the two sides of such lines are~$0$ and~$-2 \lambda$) of this GFF in each of the components that sees a portion of $\eta_c$ from above, as indicated in Figure~\ref{fig:cle4_bcle4_construction}, and we also draw symmetrically the $\lambda$-level lines below the curve. In all connected components except the one that has $\eta_c (\tau)$ on its boundary, one has drawn just one level line.  Again, we know (from the 
level-lines couplings) the form of the conditional distribution of the GFF in the complement of the union of this first level line with these new level line. 

In particular, for those components that have part of $\partial \D$ on their boundary, the boundary conditions are identically $0$. In all other ones, the boundary consists of a piece of the curve $\eta_c$ (seen either from below or above) and a piece of the $\lambda$ (or $-\lambda$) level line, so the corresponding boundary conditions are $\lambda +c$ and $2\lambda$, or $- \lambda + c$ and $-2 \lambda$.

\begin{figure}[ht!]
\begin{center}
\includegraphics [width=3in]{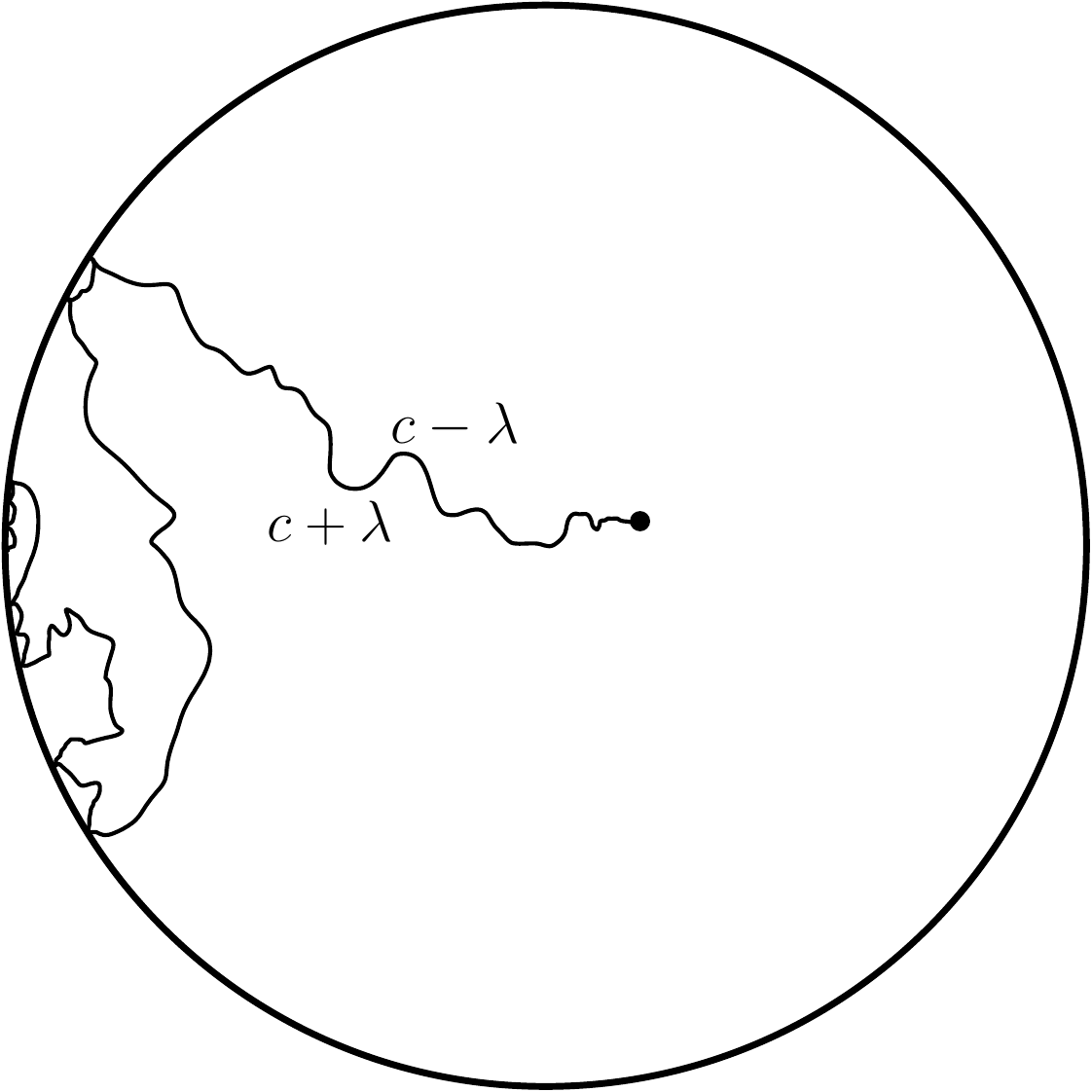}
\includegraphics [width=3in]{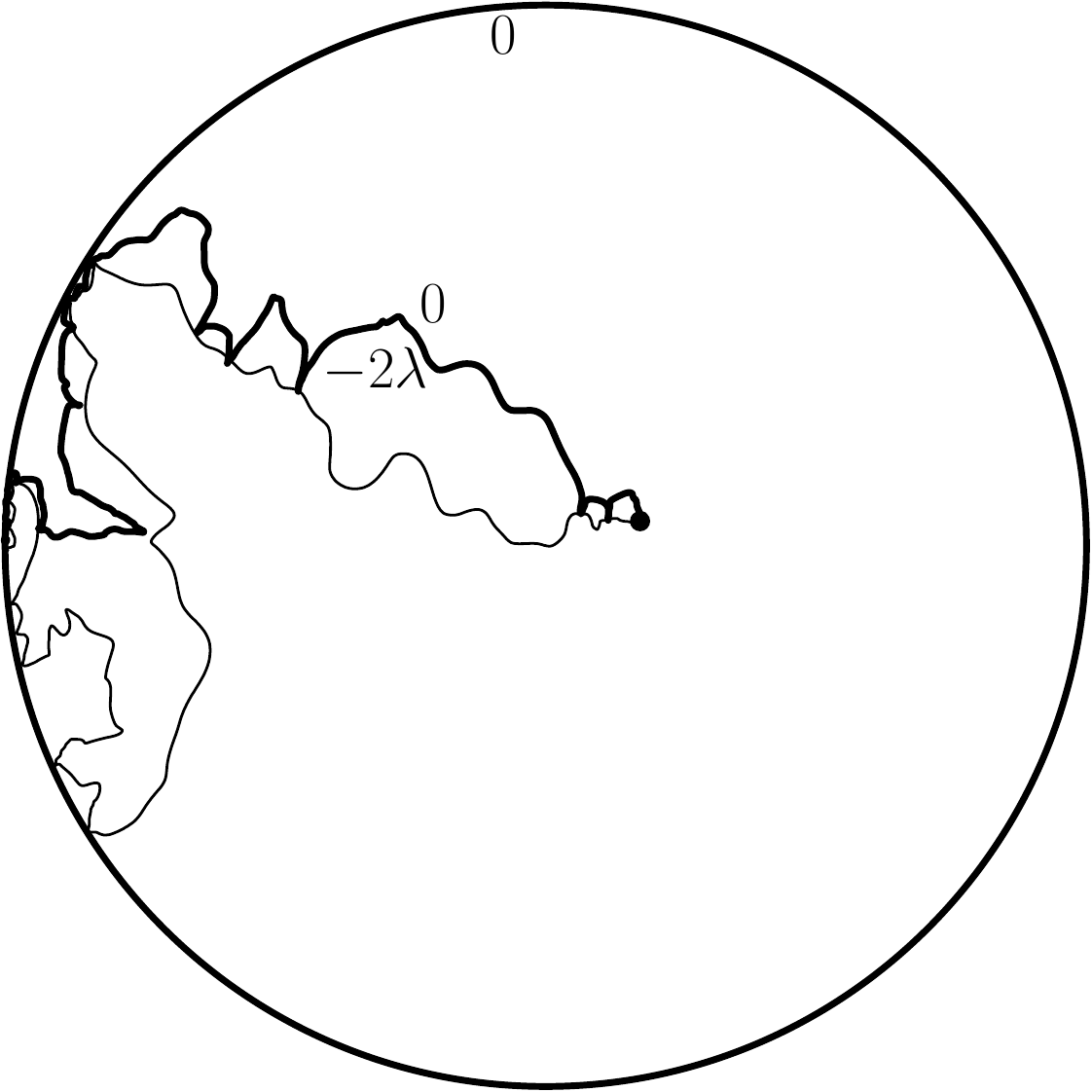}
\caption{\label{fig:cle4_bcle4_construction}The level line, and the first layer}
\end{center}
\end{figure}

\begin{figure}[ht!]
\begin{center}
\includegraphics [width=2in]{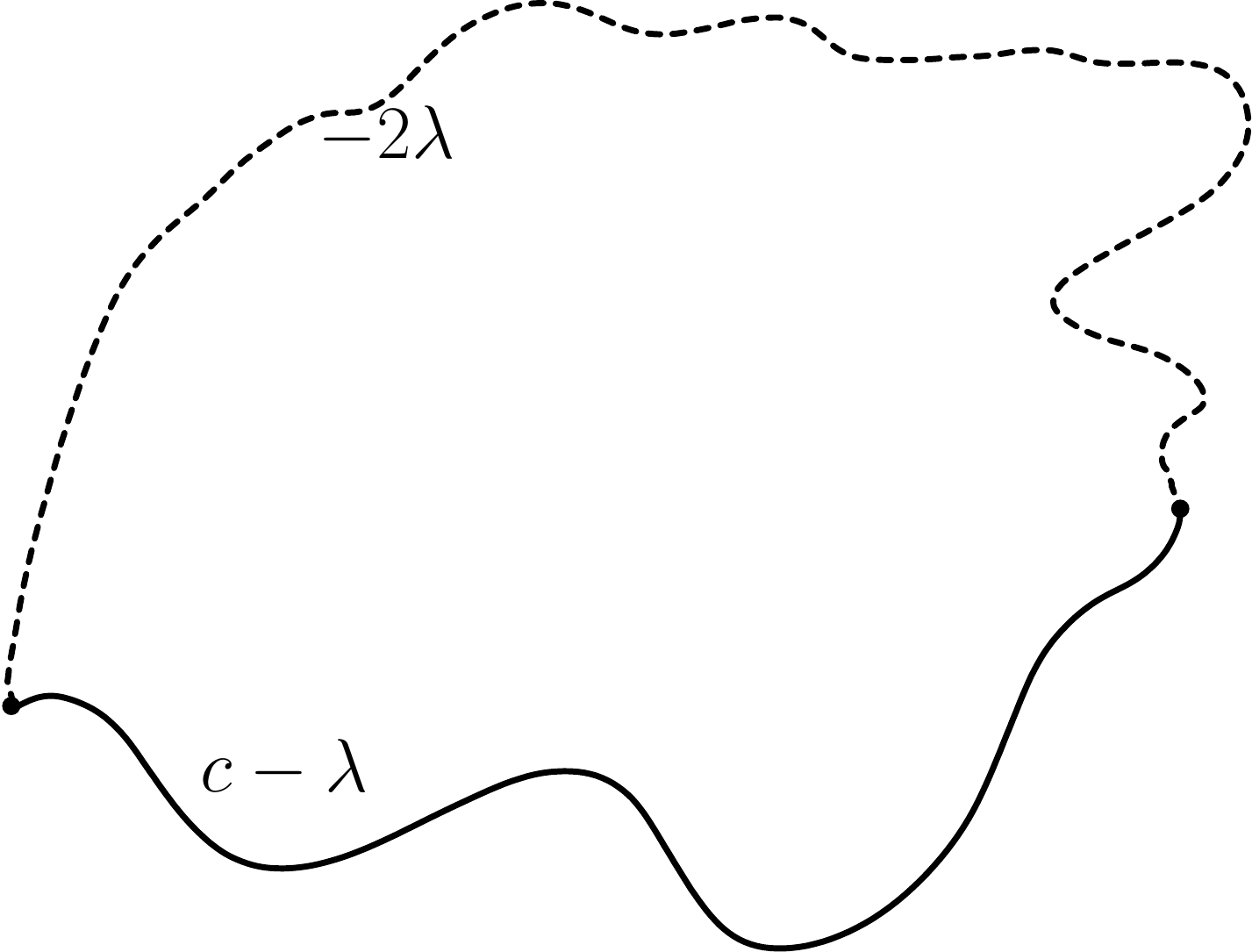}
\includegraphics [width=2in]{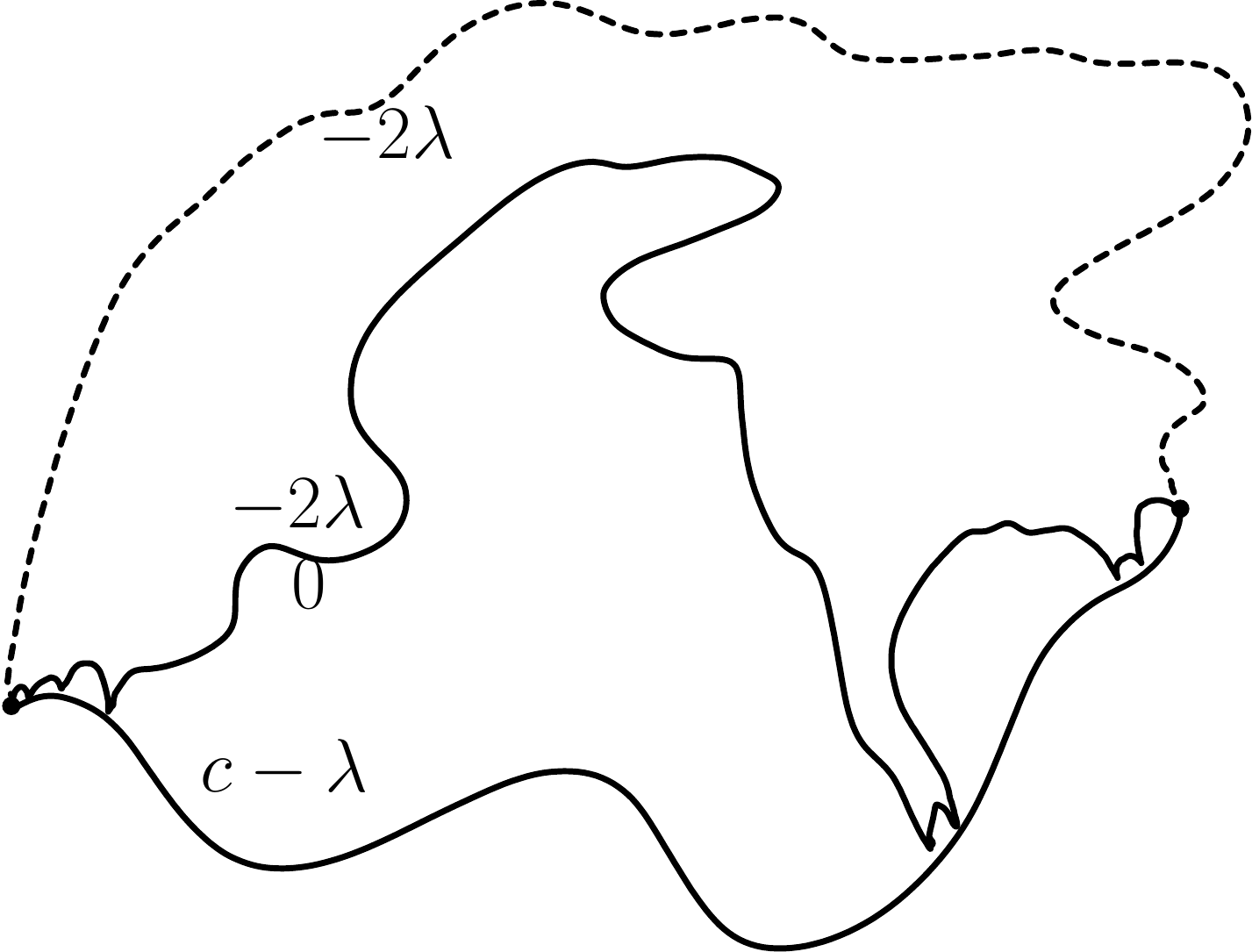}
\includegraphics [width=2in]{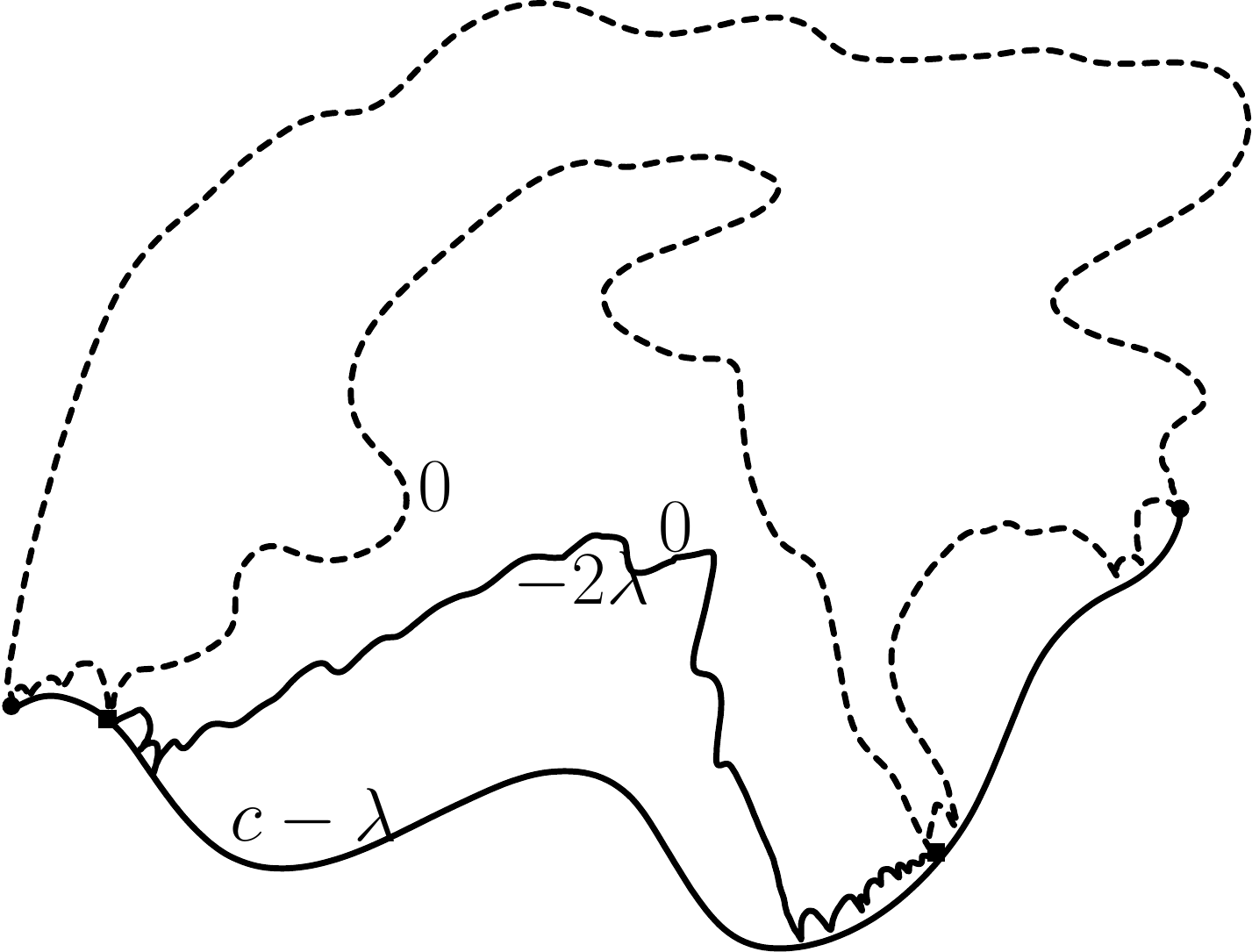}
\caption{\label{fig:cle4_bcle4_construction2}Iterating $-2\lambda / 0$ layers in a pocket of the first layer and discovering $-2 \lambda$ $\CLE_4^0$ loops (such as the dashed loop in the third picture) on the way}
\end{center}
\end{figure}

Iterating the procedure once in each of these connected component as indicated in Figures~\ref{fig:cle4_bcle4_construction} and~\ref{fig:cle4_bcle4_construction2}, one obtains domains with constant $\pm 2 \lambda$ boundary conditions, and domains where the boundary conditions are $0$ and $\lambda +c$ or $0$ and $\lambda -c$. We then iterate the procedure in the latter ones. This defines an increasing sequence of local sets $A_n$. We then define $A$ to be the closure of the union of all $A_n$. Its complement is then just the union of all the interiors loops discovered along the way and the value of the corresponding harmonic function is almost surely equal to $\pm 2 \lambda$ or to $0$ (and it is constant in each connected component of the complement of the local set). 

For each $n$, the Minkowski dimension of $A_n$ is strictly smaller than $2$ (this is easy to check because it is just the union of SLE interfaces), so that (using one of the main propositions in \cite{asw2016boundedthin}) they are almost surely all contained in the same stopped iterated $\CLE_4$ defined there (which is known to have Minkowski dimension smaller than~$2$) and therefore the same holds for $A$, which in turn implies that the  Minkowski dimension of $A$ is smaller than~$2$.  We can then apply Lemma~\ref{cle4charact} to deduce that the loops that have been discovered in this way are all part of the $\CLE_4^0$, and that all other loops in the $\CLE_4^0$ do not touch $\eta_c$. In other words, our procedure to define loops with $2\lambda$ boundary conditions via the iteration of layers does define a subset of the $\CLE_4^0$ loops that are coupled to the GFF (this is the same GFF that is coupled with the level line $\eta_c$).  By construction, all these loops do touch $\eta_c$, the positive ones touch the ``right-hand side'' of $\eta_c$ 
while the negative one are on its ``left-hand side'', and no other loop of the $\CLE_4^0$ touches $\eta_c|_{[0,\tau]}$ (because they are sampled out of a GFF with zero boundary conditions in the remaining connected components, and therefore do not touch $\partial A$). 

Summarizing things, we have constructed a coupling of the curve $\eta_c$, of the $\CLE_4^0$ loops of the GFF $h$ that do touch $\eta_c$ and of the GFF, with the following properties: 
\begin{itemize}
 \item  The $\CLE_4^0$ loops that do touch the curve $\eta_c$ are above or below $\eta_c$ depending on their label. 
 \item  The curve $\eta_c$ does not enter in the inside of any of the loops. 
 \item  One can then define the curve $\eta$ that is obtained by attaching in their order of appearance along $\eta_c$ from $-i$ to $i$ the loops of the labeled $\CLE_4^0$ that do intersect $\eta_c$. The local finiteness of $\CLE_4$ ensures that $\eta$ is indeed a continuous curve.  If one chooses to trace the positive loops counterclockwise and the negative loops clockwise, then the obtained curve $\eta$ is non-self-crossing.
\item Both $\eta$ and the $\CLE_4^0$ are deterministic functions of the GFF, so that the path $\eta$ is also a deterministic function of the GFF. 
\end{itemize}

It is now easy to show that $\eta_c$ is a \hyperref[def:cpi]{CPI} in the $\CLE_4^0$.  If $\tau$ is a stopping time for the filtration generated by the curve $\eta_c$ together with the collection of $\CLE_4^0$ loops in encounters, let us define ${\mathcal F}_\tau^*$ to be the corresponding $\sigma$-algebra. 
The previous description of $\eta_c|_{[0, \tau]}$ and of the loops it encounters shows that the conditional distribution of $h$ in $D_\tau$ (the connected component of the 
remaining to be discovered domain that has $1$ on its boundary) is then a GFF with zero boundary conditions. In particular, as $\eta_c$ is the $c$ level line in $h$, the picture in $D_\tau$ will be that of a $0$-boundary GFF with its $c$-level line, which proves the Markovian part of the definition of a \hyperref[def:cpi]{CPI}. In the other
unexplored connected components  (that are not surrounded by an already discovered $\CLE_4^0$ loop), the conditional distribution of $h$ is that of a GFF with zero boundary conditions, and the restriction of the $\CLE_4^0$ defined by $h$ to this domain is then distributed like a $\CLE_4^0$ in 
this connected component. We therefore conclude that $\eta_c$ is indeed a \hyperref[def:cpi]{CPI} in the $\CLE_4^0$.

To conclude the proof, we need to check that the mapping $c \mapsto \mu$ is a monotone bijection from $(-\lambda, \lambda)$ into $\R$.  We extend this map to be a map from $[-\lambda,\lambda]$ to $\R \cup\{\pm \infty\}$ by declaring that $-\lambda$ (resp.\ $\lambda$) is sent to $-\infty$ (resp.\ $+\infty$).  Since the extended map is injective and sends $-\lambda$ to $-\infty$ and $\lambda$ to $+\infty$, it suffices to show that $c \mapsto \mu$ is continuous.  Suppose that $(c_n)$ is any sequence in $[-\lambda,\lambda]$ which converges to $c \in [-\lambda,\lambda]$.  For each $n$, we let $\mu_n$ be the image of $c_n$ under the map and let $\mu$ be the image of $c$ under the map.  By passing to a subsequence if necessary, we may assume that $\mu_n \to \wt{\mu} \in \R \cup \{ \pm \infty\}$ as $n \to \infty$.  It suffices to show that $\mu = \wt{\mu}$.  It is easy to see from the construction that the law of the driving process associated with $c_n$ converges weakly as $n \to \infty$ to the law of the driving 
process associated with $c$ simply because the hulls of the corresponding processes converge.  Similarly, the law of the process associated with $\mu_n$ converges weakly to the law of the process associated with $\wt{\mu}$ as $n \to \infty$.  Indeed, this can be seen by inspecting the equation satisfied by the driving process.  Therefore $\mu = \wt{\mu}$, as desired.
\end{proof}

\section{$\CLE_{\kappa'}$ percolation}
\label {Sec6}

\subsection{Boundary-touching $\CLE_{\kappa'}$ loops}

Recall that the $\CLE_{\kappa'}$ for $\kappa' \in (4,8)$ correspond to gaskets (as opposed to carpets, as in the case that $\kappa \in (8/3,4]$) because different loops can touch each other and can touch the boundary.  In the case where $\kappa' \in (4,8)$, the existence and first properties of the $\CLE_{\kappa'}$ follow directly from the combination of the results in \cite{she2009cle} and \cite{ms2012ig1,ms2012ig2,ms2012ig3,ms2013ig4}.  In particular, the existence and basic properties of $\CLE_{\kappa'}$ were stated in \cite{she2009cle} conditionally on a continuity and reversibility assumption for $\bSLE_{\kappa'}$ that was then proved in \cite{ms2012ig1,ms2012ig2,ms2012ig3}.  The local finiteness of $\CLE_{\kappa'}$ was proved in \cite{ms2013ig4} by using the relationship between space-filling $\SLE_{\kappa'}$ and $\CLE_{\kappa'}$ (we will come back to this in Section~\ref{sec:ig}). We are now going to describe some consequences of this reversibility, in the spirit of the arguments in \cite{she2009cle}.

Suppose that $\kappa' \in (4, 8)$ and let us now recall from \cite{she2009cle} how to concretely define parts of the $\CLE_{\kappa'}$ using the $\bSLE_{\kappa'}$. Let us first consider a time-indexed Poisson point process of $\SLE_{\kappa'}$ bubbles. The intensity measure of this process is given by the Lebesgue measure on $\R_+$ times the so-called  $\SLE_{\kappa'}$ bubble measure 
(which is the appropriately rescaled limit when $\eps \to 0^+$ of the law of an $\SLE_{\kappa'}$ from $0$ to $\eps$ in the upper half-plane). 
This Poisson point process is therefore a countable random collection of pairs $(u_i, e_{u_i})$ where $e_{u_i}$ is a bubble and $u_i \ge 0$ (and we think then of $e_{u_i}$ as ``appearing'' at time $u_i$). 

The previous bubbles $e_{u_i}$ are oriented clockwise (for the previous definition)
but we can note that the bubble measure is invariant under the operation of taking the symmetry with respect to the imaginary axis of its 
counterclockwise orientation (this follows from the reversibility of SLE$_{\kappa'}$).

For each bubble $e$, we define $x(e)$ (resp.\ $y(e)$) to be the rightmost (resp.\ leftmost) point of the bubble on the real axis and $\varphi_e$ to be the conformal transformation from the unbounded connected component of $\HH \setminus e$ onto $\HH$ with $\varphi_e (z) \sim z$ as $z \to \infty$ and $\varphi_e (x (e))=0$. 
\begin{figure}[ht]
\begin{center}
\includegraphics[width=3in]{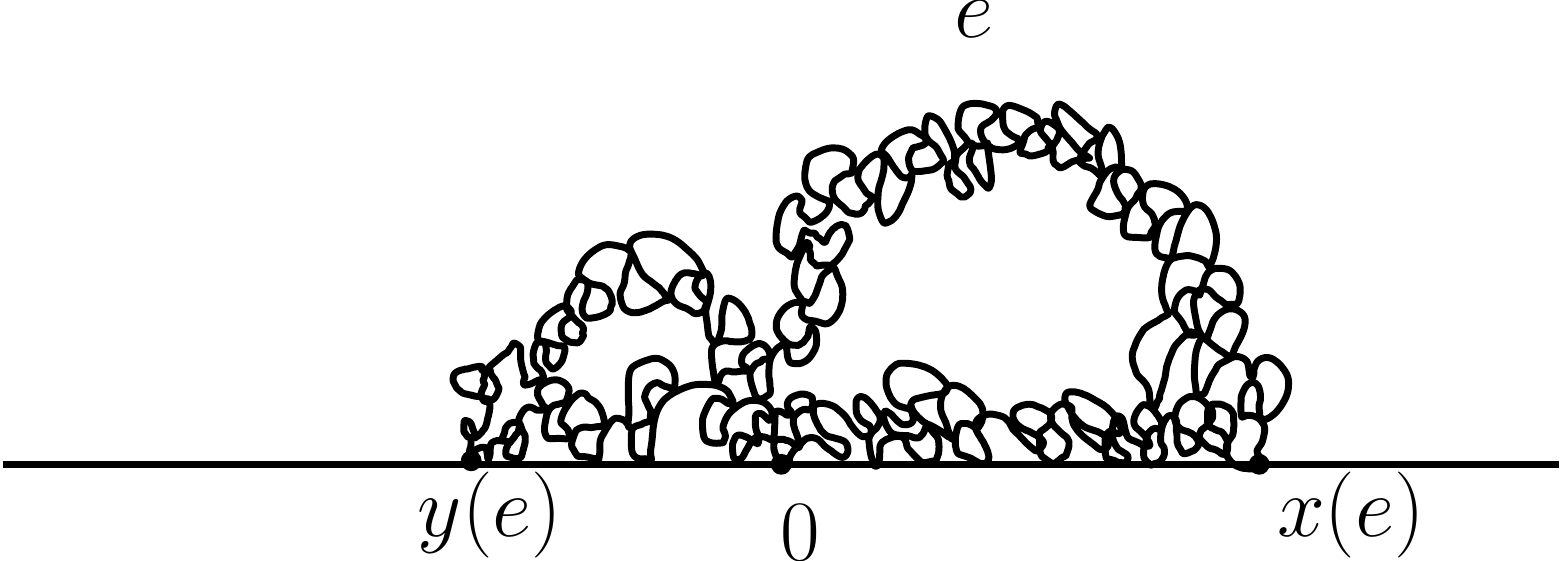}
\end{center}
\caption{\label{fig:sle_bubble} An $\SLE_{\kappa'}$ bubble $e$}
\end{figure}
We also define  $e^+$ to be the clockwise part of the bubble $e$ from $0$ to $x(e)$, and $e^-$ the counterclockwise part of the bubble between $0$ and $y(e)$ (see Figures~\ref{fig:sle_bubble} and~\ref{fig:sle_bubble2}). 
\begin{figure}[ht]
\begin{center}
\includegraphics[width=3in]{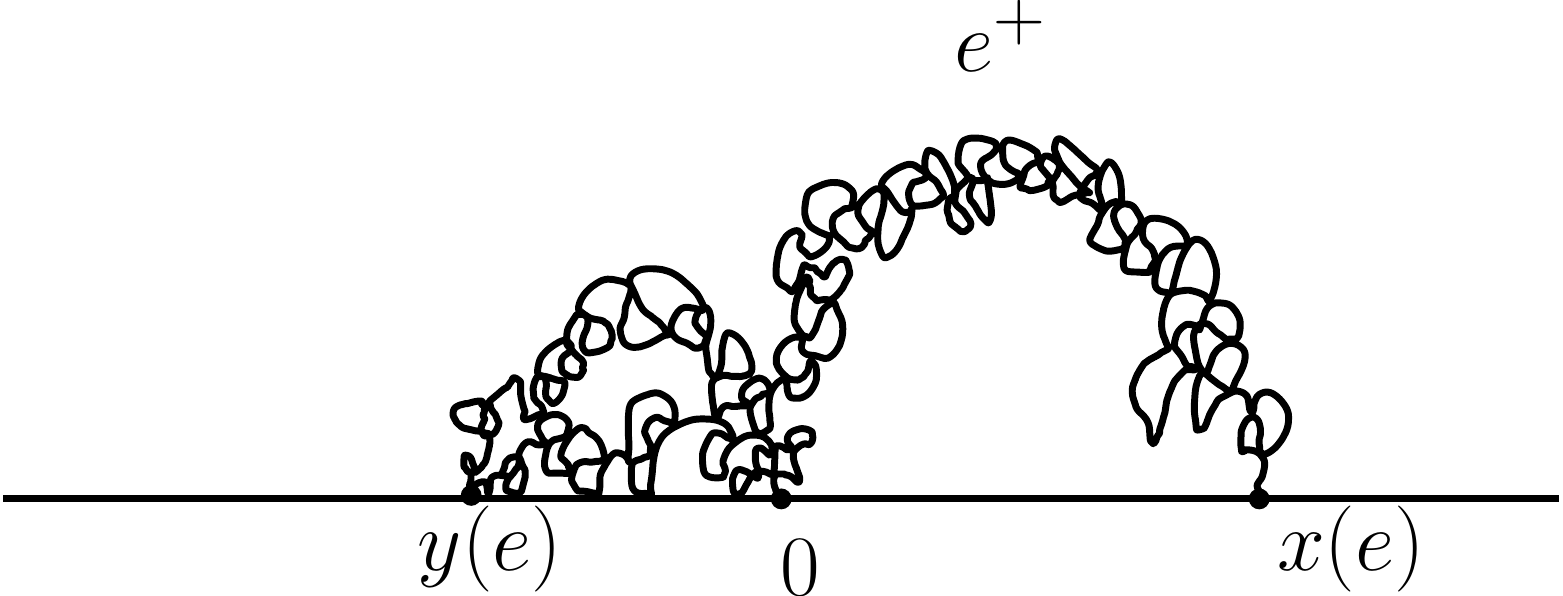}
\quad
\includegraphics[width=3in]{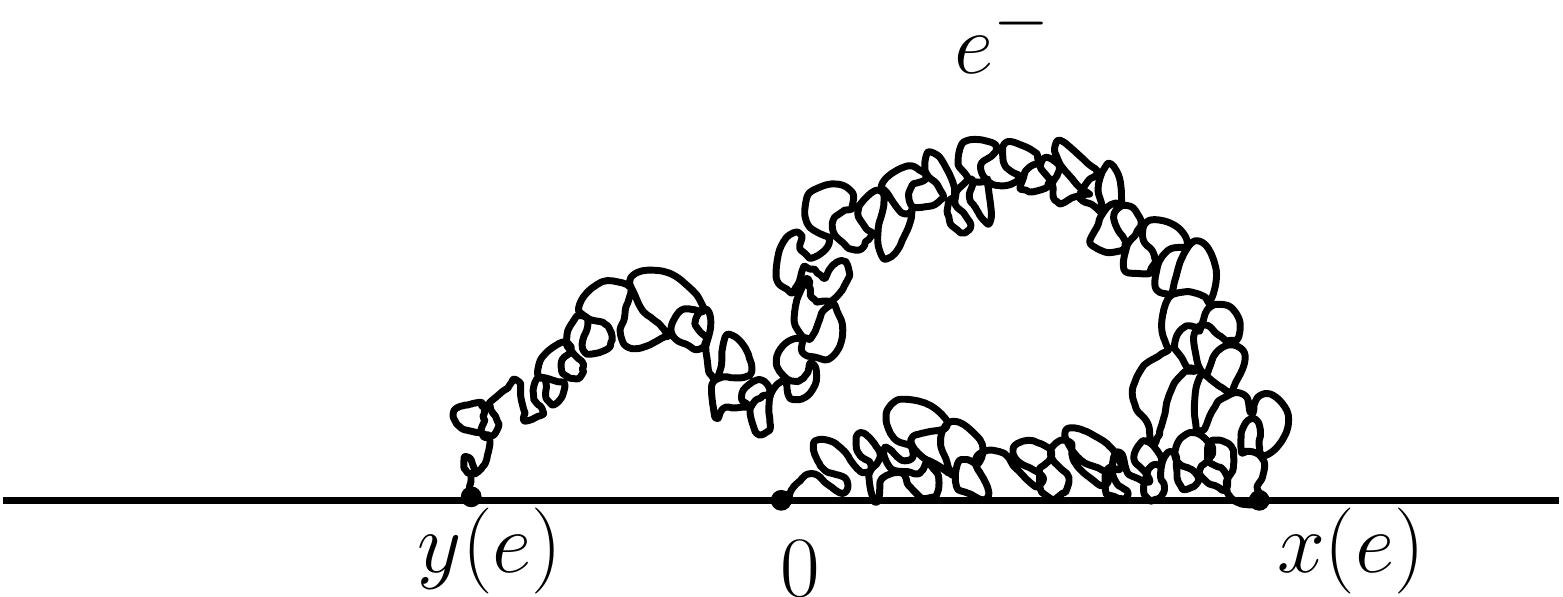}
\end{center}
\caption{\label{fig:sle_bubble2} The corresponding $e^+$ and $e^-$}
\end{figure}

If one iterates the maps $\varphi_e$ in their order of appearance, one obtains a process of conformal maps $(\Phi_u, u \ge 0)$. If one now concatenates the paths $\Phi_{u-} (e_u^+)$ in their order of appearance (see Figure~\ref{fig:sle_bubble3}), one gets exactly the ordinary $\bSLE_{\kappa'}$ (i.e.\ an $\SLE_{\kappa'} (\kappa' -6)$) from the origin to infinity (recall that $\kappa' >4$ so that $\kappa' -6 > -2$, hence there is no issue with the accumulation of small bubbles) -- one can also invoke here the target independence of these paths.  The family of loops $\Phi_{u-} ( e_u)$ form now a part of the $\CLE_{\kappa'}$.  

\begin{figure}[ht]
\begin{center}
\includegraphics[width=3in]{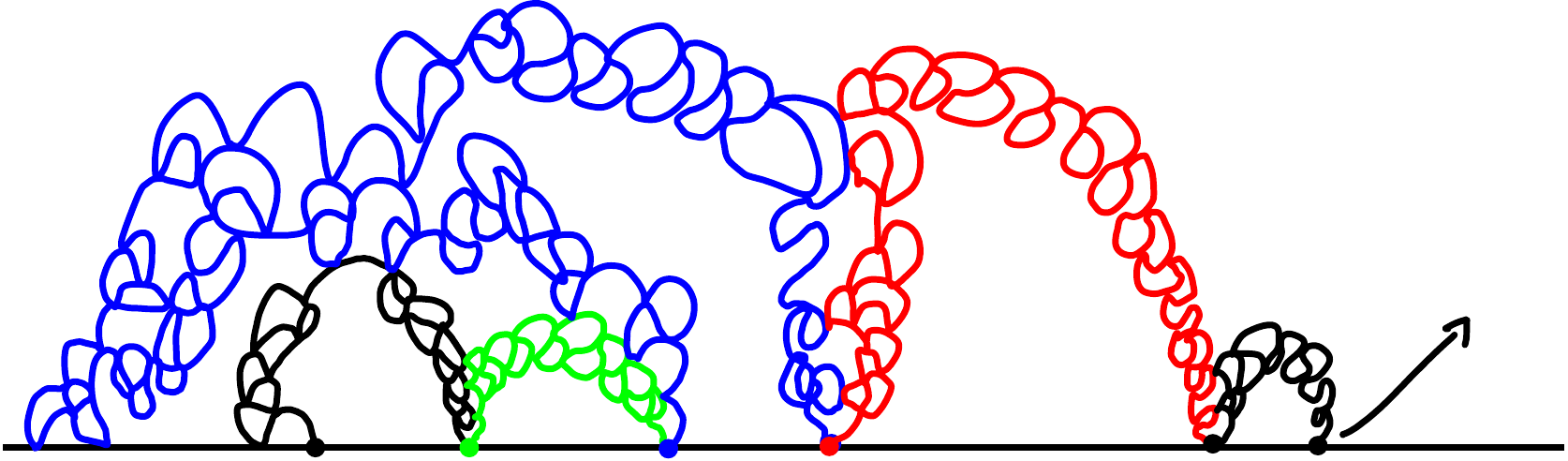}
\includegraphics[width=3in]{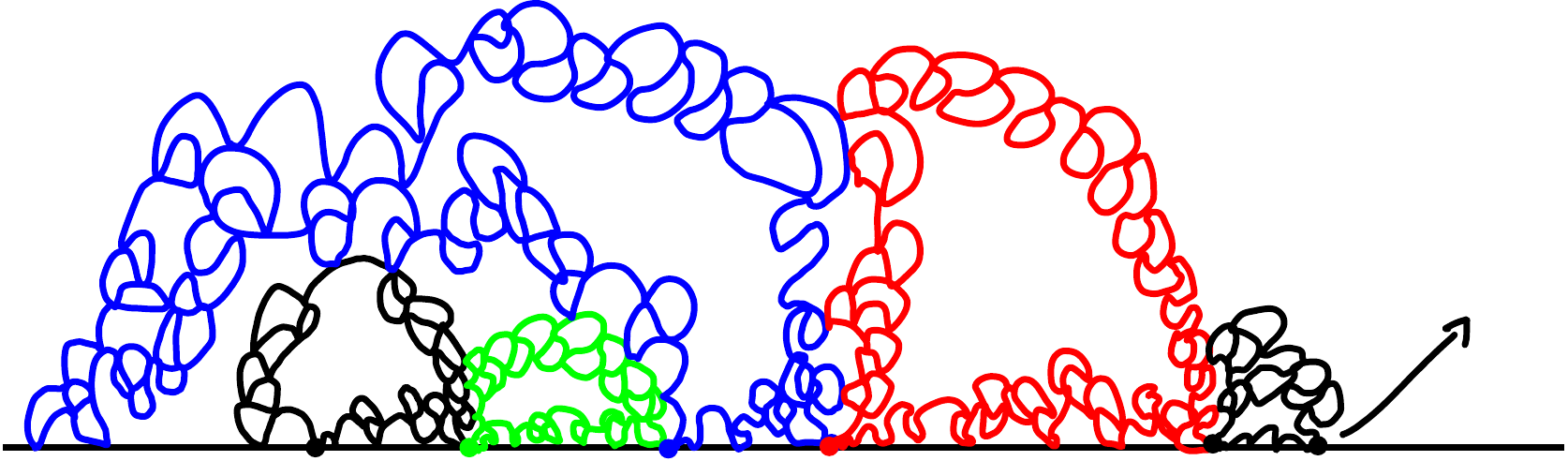}
\end{center}
\caption{\label{fig:sle_bubble3} A $\bSLE_{\kappa'}$ and a corresponding collection of $\Psi_{u-}(e_u)$'s}
\end{figure}

In this way, starting from the Poisson point process of $\SLE_{\kappa'}$ bubbles, one constructs a collection 
of $\CLE_{\kappa'}$ loops that intersect the positive half-line, but one does not construct all the loops of this $\CLE_{\kappa'}$ that do intersect the positive half-line. 
Indeed, there are a number of additional loops that are squeezed ``under'' the ones that one has constructed.

It is however not difficult to construct them as well. Indeed, it suffices to iterate the procedure inside each of the pockets that are located underneath all the loops that one has constructed, and then to iterate the procedure inside the pockets that are underneath all of the newly traced loops. Note (for instance in Figure~\ref{Fpockets}), that in this way, one creates pockets that are squeezed in between two loops that touch the boundary, and that a point in the upper half-plane will be either on a loop, or inside such a clockwise loop, or inside one pocket. We can also note that one can view a pocket as being surrounded counterclockwise by a concatenation of parts of $\CLE_{\kappa'}$ loops i.e.\ of excursions away from the boundary of $\CLE_{\kappa'}$ loops, that are concatenated at boundary points.

\begin{figure}[ht]
\begin{center}
\includegraphics[width=3.8in]{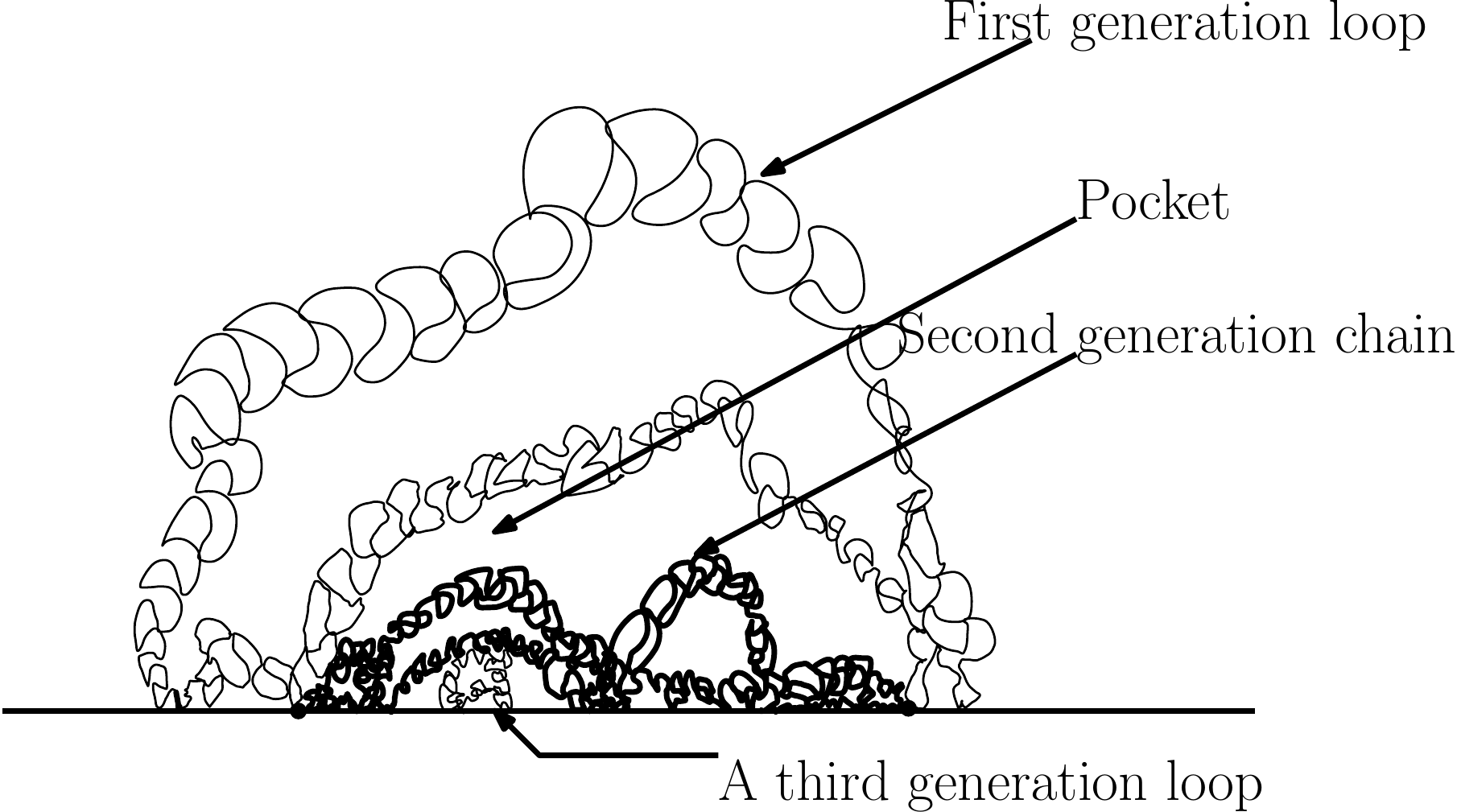}
\end{center}
\caption{Adding the missing loops iteratively}
\label{Fpockets}
\end{figure}

The local finiteness (i.e.\ for all positive $\eps$, there are almost surely finitely many loops of diameter greater than $\eps$ when one looks at the image under a given conformal map from the upper half-plane onto the unit disk)  \cite{ms2013ig4} of the $\CLE_{\kappa'}$ ensures that if one traces (in a clockwise manner) all the $\CLE_{\kappa'}$ loops that touch the positive half-line, in the order in which they ``appear'' alongside the positive half-line, one obtains in fact a continuous path, formed of the concatenation of all these loops (i.e., there exists a parameterization that turns this into a continuous path). One may note that the proof of the local finiteness of $\CLE_{\kappa'}$ in \cite{ms2013ig4} relies on the continuity of the space-filling SLE defined there, which is directly related to this continuous path that we are drawing here, so that in a way, the argument goes rather in the other direction i.e.\ the local finiteness of $\CLE_{\kappa'}$ follows from the continuity of a 
path that is related to the concatenation of the loops. 

 Similarly, one could consider the process started from the boundary point at infinity, that moves on the real line and traces on the way all loops that intersect the real line (not just the positive half-line) in the order given by their left-most intersection points with the real line. By applying a conformal transformation, one can then discover for a given domain $D$ and a given boundary point $x$, the process that traces all loops of a $\CLE_{\kappa'}$ in $D$ that touch $\partial D$, in the counterclockwise order of appearance on $\partial D$ (when one starts from $x$).  In this procedure, after a given time $t$, one has discovered a certain 
collection of whole loops, and one is typically in the process of tracing one.

Finally, in order to define the whole $\CLE_{\kappa'}$ one can to iterate this procedure, and define a second layer of boundary intersecting loops in the domains obtained when removing the interiors of all the loops that intersect the real line and so on.  

It is important at this point to observe that conversely, once one samples the entire $\CLE_{\kappa'}$, then this boundary-intersecting loops tracing procedure (and the decomposition into layers) 
is a deterministic function of the $\CLE_{\kappa'}$. 

In the next paragraphs, we will focus on two variations of this construction. The first one will be to consider side-swapping $\bSLE_{\kappa'}^\beta$ (and its ``full'' version) and to define its trunk. The second one will be to replace the $\bSLE_{\kappa'}$ by a $\bSLE_{\kappa'} (\rho)$ variant, and this will define the boundary conformal loop ensembles.

\subsection{Side-swapping, $\CLE_{\kappa'}^\beta$, $\bSLE_{\kappa'}^\beta$, the full $\bSLE_{\kappa'}^\beta$ and its trunk}
\label{subsec:side-swapping}

At each end-time of a macroscopic excursion $e_u$ in the previous construction we can stop the process, and then, the conditional distribution of the not-yet discovered loops of the $\CLE_{\kappa'}$ is that of a collection of independent $\CLE_{\kappa'}$ in each of the connected components that remain to be discovered. This indicates that it is actually possible to then change the starting point of the ``discovery'' process in the unbounded component at that time. One natural possibility is to start the discovery at $\Phi_{u} (y(e_u))$
instead of $\Phi_u (x (e_u))$. This corresponds in fact to the Loewner chain that one would have obtained when going along the loop $e_u$ in the counterclockwise direction.  This leads to the following construction:
\begin{itemize}
\item Independently, for each given $\beta \in [-1,1]$, toss an independent $p_0 = (1 - \beta)/ 2$ versus $1-p_0$ coin to decide whether one 
defines $\psi_u^\beta$ to be equal to  $\varphi_{e_u}$ or to be equal to  $\varphi_{e_u}$ shifted horizontally so that $\psi_u^\beta (y(e_u)) = 0$. 
\item Then, iterate the conformal maps $\psi_u^\beta$ in their order of arrival, i.e., $\Psi_u^\beta$ is the composition of all $\psi_v^\beta$ for $v <u$ in chronological order. 
\end{itemize}
Then again, the obtained loops $\Psi_{u-} ( e_u)$ will be part of a labeled $\CLE_{\kappa'}$ and the appropriate concatenation of the clockwise/counterclockwise parts (when one does not choose $\varphi_{e_u}$ in the coin-tossing, then one takes the counterclockwise part $e^-$ of $e$ from $0$ to $y(e)$ instead of $e^+$) of the half-loops $\Psi^\beta_{u-} ( e_u^\pm)$ will form a side-swapping $\bSLE_{\kappa'}^\beta$ process. 

One way to make sense of this is to first do the side-swapping (i.e.\ to decide to toss a coin) only for the bubbles
$e_u$ that give rise to $\CLE_{\kappa'}$ loops of diameter greater than $\eps$, when the entire picture is mapped onto the unit disk; the set of such swapping times is then discrete, the procedure therefore also defines loops that are part of a $\CLE_{\kappa'}$ and one can also see that the obtained path is a deterministic function of the labeled $\CLE_{\kappa'}^\beta$. It starts for instance like the non-swapping exploration along the boundary, until the first discovered positive loop with diameter at least $\eps$ and so on. 

One natural way to couple all these cut-offs is to first sample the entire labeled $\CLE_{\kappa'}^\beta$, and then define all these deterministically defined 
$\eps$-side-swapped explorations. As explained at the end of the section on  generalized $\SLE_\kappa (\rho)$ processes, the driving function of the obtained process does indeed converge in distribution to that of $\bSLE_{\kappa'}^\beta$ as $\eps \to 0$, and the collections of traced loops as well (for instance, if one picks $n$ given points and looks at the loops surrounding these points if they exist, that are traced by the $\eps$-approximation, their distribution converges to the corresponding one for the $\bSLE_{\kappa'}^\beta$. In particular, we see that the latter are still 
part of a $\CLE_{\kappa'}^\beta$. 

Another possibility is to first sample the Poisson point process of bubbles, and then do the iteration procedure described above for all $\eps$ and the same Poisson point process of bubbles (Figure~\ref{fig:changed_orientation}). In this case, the driving function will converge almost surely as $\eps \to 0$, but the $\CLE_{\kappa'}$ ensembles that are constructed (both before and after the cut-off) then vary from one $\eps$ to another.

\begin{figure}[ht]
\begin{center}
\includegraphics[width=3in]{figures/bui2.pdf}
\includegraphics[width=3in]{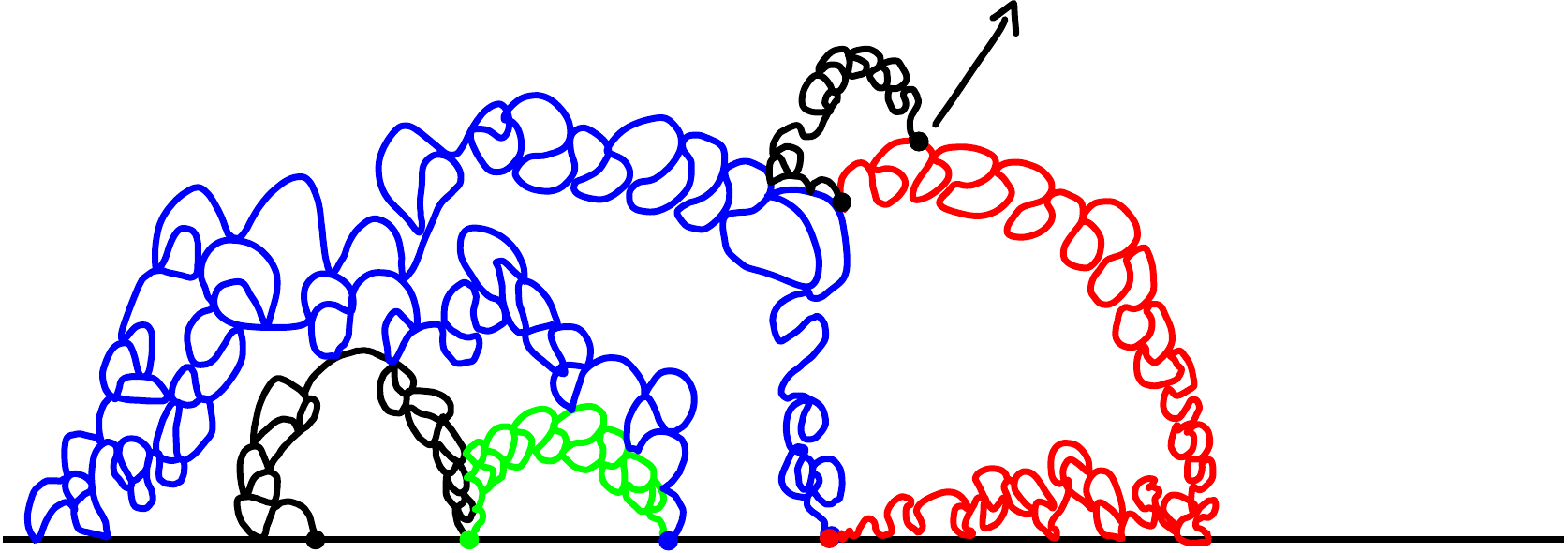}
\end{center}
\caption{\label{fig:changed_orientation} Changing the exploration direction of one loop and side-swapping}
\end{figure}

We now describe what we will call the full  $\bSLE_{\kappa'}^\beta$ process and its trunk (we will first define them both for the version from $0$ to $\infty$ in the upper half-plane). We 
first consider the side swapping $\bSLE_{\kappa'}^\beta$ as defined above when starting from a Poisson point process of labeled bubbles. As we have explained, this process only traces one part of each of the loops it encounters (as it has to branch off to infinity instead of completing them). In order to define the more complete picture, we now complete each of these loops. On the way back to its starting point, each of these loops will bounce off some earlier traced loops of the opposite type, thereby creating a countable family of pockets in between them. These pockets are naturally ordered from $0$ to $\infty$, as indicated in Figure~\ref{fig:pockets} (see also Figure~\ref{fig:cle_6_perc_sim} for a simulation), and each of them contains two marked points that we refer to as their entrance and exit points. 

This procedure therefore defines in the upper half-plane a collection of labeled $\CLE_{\kappa'}$ loops, ordered in their order of appearance along the side-swapping $\bSLE_{\kappa'}^\beta$ from $0$ to $\infty$, and an ordered  collection of simply connected pockets with entrance and exit points. All these collections are invariant in distribution under multiplication by a positive constant, so that we can also define them in other simply connected domains with two marked boundary points (or prime ends). We can therefore iterate the procedure by defining inside each pocket, a second layer of labeled $\CLE_{\kappa'}^\beta$ loops from the entrance point to the exit point of each pocket. We can also then clearly order all the loops in each pocket, and decide the loops in a given pocket gets discovered just after the completion of the first-layer loop that creates that pocket. We then further iterate the procedure. In this way, we define a countable and ordered collection of labeled $\CLE_{\kappa'}$ loops (we 
can note that in the particular case where $\beta = \pm 1$, this corresponds exactly to the discovery of all the loops that touch the real half-line from $0$ to infinity, in their order of appearance along this half-line).  
 
At this point, it is not yet clear whether the concatenation of all of these loops in this order does indeed create a continuous path. However, if it does (and this will be established in Theorem~\ref{thm:duality1}), we call it the {\em full} $\bSLE_{\kappa'}^\beta$ from $0$ to infinity. Then we can erase again all these loops and obtain in this way a continuous path from the origin to infinity, that we call the $\bSLE_{\kappa'}^\beta$ trunk from the origin to infinity.  That is, the $\bSLE_{\kappa'}^\beta$ trunk is the interface between the loops discovered by the full $\bSLE_{\kappa'}^\beta$ with different orientations.  It will follow from our analysis that the trunk is indeed a continuous curve.
However, at this point, even if we do not know whether the full $\bSLE_{\kappa'}^\beta$ from $0$ to $\infty$ is a continuous curve, we know that it is 
a deterministic function of a $\CLE_{\kappa'}^\beta$ that traces some of the oriented loops of the $\CLE_{\kappa'}^\beta$.

\begin{figure}[ht]
\begin{center}
\includegraphics[height=2in]{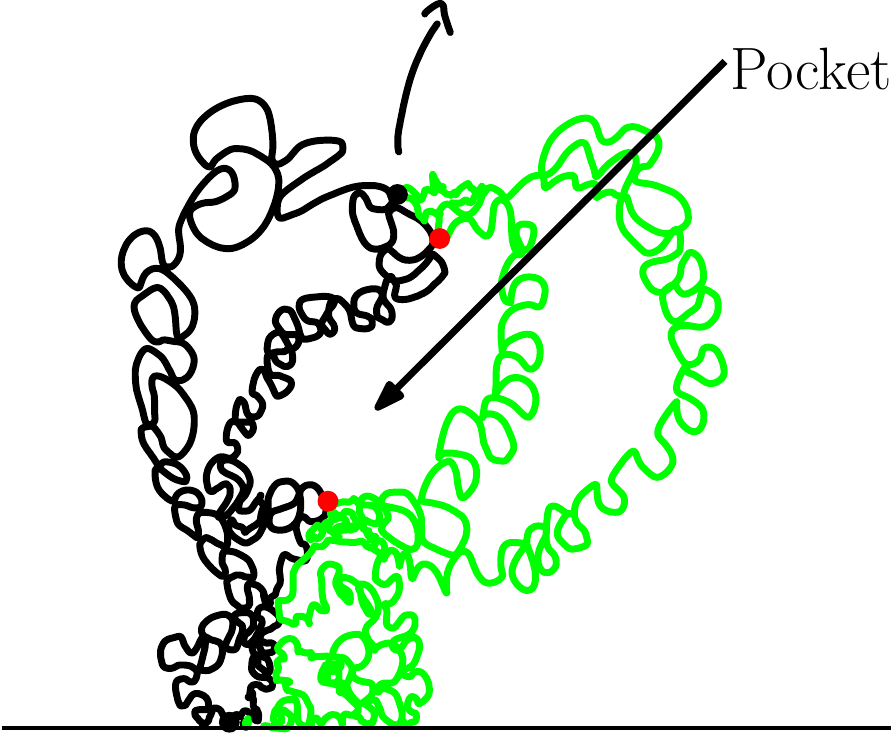} 
\hskip 2mm
\includegraphics[height=2in]{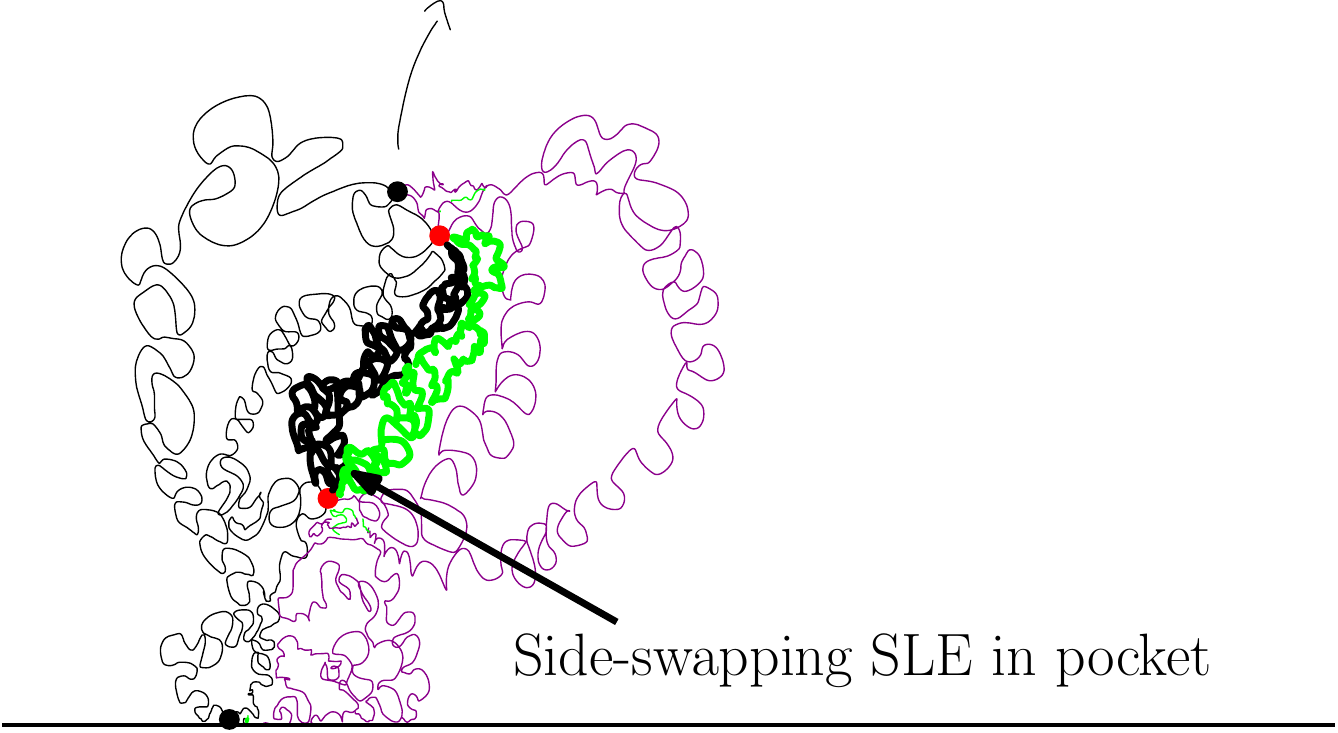}
\end{center}
\caption{\label{fig:pockets}Filling in the pockets in a side-swapping SLE iteratively to construct a full $\bSLE_{\kappa'}^\beta$.  The interface between loops with different orientations is the trunk of the full $\bSLE_{\kappa'}^\beta$.}
\end{figure}
\subsection{Relation between trunk and $\CLE_{\kappa'}^\beta$ percolation}

We now explain how to relate this (conjecturally existing) trunk to percolation of labeled $\CLE_{\kappa'}^\beta$ loops. 

When two $\CLE_{\kappa'}^\beta$ loops have the same sign and touch each other, we say that they belong to the same cluster.  We now consider the union $C^-$ of all negative clusters that touch the negative half-axis (by this we mean that one of the loops in the cluster touches the negative half-axis), and the union $C^+$ of all positive clusters that touch the positive half-axis.  As it turns out, these clusters are closely related to the previously described trunk: 
\begin{proposition}
\label{prop:unique_interface}
If we assume that the full $\bSLE_{\kappa'}^\beta$ process from $0$ to $\infty$ in the upper half-plane is almost surely a continuous curve with zero Lebesgue measure, and that its trunk $\eta$ is a continuous simple curve from $0$ to $\infty$ in $\ol{\h}$, then almost surely, this trunk is equal to the intersection between the boundaries of $C^+$ and $C^-$. 
\end{proposition}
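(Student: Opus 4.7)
Under the stated hypothesis, the simple continuous curve $\eta$ from $0$ to $\infty$ in $\overline{\h}$ disconnects $\overline{\h}\setminus\eta$ into two open components $U^+$ and $U^-$, which I label so that the positive half-axis $(0,\infty)$ lies on $\partial U^+$ and the negative half-axis $(-\infty,0)$ lies on $\partial U^-$. I plan to first check that every positive (counterclockwise) loop of the $\CLE_{\kappa'}^\beta$ is contained in $\overline{U^+}$ and every negative (clockwise) loop in $\overline{U^-}$. This is essentially built into the construction of the full $\bSLE_{\kappa'}^\beta$: in the side-swapping $\bSLE_{\kappa'}^\beta$, positive and negative excursions of the driving Bessel process produce bubbles that, under the iterated conformal maps, are traced on opposite sides of $\eta$; completing these bubbles into full loops preserves the side, and the recursion into pockets inherits the same dichotomy since each pocket lies entirely in one side of $\eta$.

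Given this separation, one inclusion is immediate: since $C^+\subseteq \overline{U^+}$ and $C^-\subseteq\overline{U^-}$, one has $\partial C^+\cap \partial C^-\subseteq \overline{U^+}\cap\overline{U^-}=\eta\cup\{0,\infty\}=\eta$.

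The reverse inclusion $\eta\subseteq\partial C^+\cap\partial C^-$ is the substantive part. By left--right symmetry it suffices to show $\eta\subseteq\partial C^+$. Fix $z\in\eta$ and $\epsilon>0$. By the local finiteness of $\CLE_{\kappa'}^\beta$ and the density of its loops in $\h$, some positive loop $\ell$ sits inside $B(z,\epsilon)\cap\overline{U^+}$. It remains to verify that $\ell\in C^+$, i.e.\ that $\ell$ is connected to the positive half-axis via a chain of touching positive loops. To this end I would order the positive loops by the time they are first reached by the full $\bSLE_{\kappa'}^\beta$. The very first such loop is attached to $[0,\infty)$ near $0$, because the construction starts at the origin and the first positive excursion of the Bessel driver produces (via the iterated conformal maps) a bubble rooted on the positive half-axis. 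For each subsequent positive loop, the continuity of the full $\bSLE_{\kappa'}^\beta$ forces it either to touch a previously discovered positive loop directly, or else to lie on the boundary of a pocket on the $U^+$ side whose boundary itself contains positive loops; in the latter case, applying the same argument recursively inside that pocket (to which the construction applies by conformal self-similarity) produces a bridging chain of touching positive loops back to a positive loop already in $C^+$.

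The main obstacle, which I expect will require the most careful argument, is making this pocket-based induction rigorous over the countable hierarchy of nested pockets. At each step one has to certify that a pocket's boundary on the $U^+$ side contains at least one positive loop that bridges between the pocket's interior clusters and the surrounding cluster, and that this chain can be iterated without loss through arbitrarily deep nesting. The hypotheses that the full $\bSLE_{\kappa'}^\beta$ is continuous with zero Lebesgue measure, combined with the local finiteness of $\CLE_{\kappa'}^\beta$, are the key inputs that rule out pathological configurations where a portion of $\eta$ could isolate a positive loop from the rest of $C^+$.
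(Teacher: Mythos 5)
Your opening separation claim is false as stated: it is not true that every positive loop of the $\CLE_{\kappa'}^\beta$ lies in $\ol{U^+}$ and every negative loop in $\ol{U^-}$. The full $\bSLE_{\kappa'}^\beta$ traces only \emph{some} of the loops of the $\CLE_{\kappa'}^\beta$ (the ones attached to the trunk); the complementary regions on either side of $\eta$ that are not surrounded by traced loops contain further loops of the same $\CLE_{\kappa'}^\beta$, with i.i.d.\ labels of both signs. Hence there are positive loops strictly to the left of $\eta$ and negative loops strictly to the right (for $|\beta|\neq 1$), and the "built into the construction" justification collapses. What is true is only the weaker statement that the \emph{clusters} attached to the two half-lines do not cross $\eta$, i.e.\ that $\ol{C^+}$ (resp.\ $\ol{C^-}$) does not enter the open region $T^-$ to the left (resp.\ $T^+$ to the right) of $\eta$ --- your $U^-$, $U^+$ --- and even this requires a genuine argument: in the paper it is deduced from the coupling of the $\eps$-cutoff side-swapping explorations with a single $\CLE_{\kappa'}^\beta$, which shows that the exploration a.s.\ goes around the sets $O_\eps^+$ of points disconnected from $\R_-$ by chains of positive loops of diameter at least $\eps$, giving $O_\eps^+\subset T^+$ and hence $O^+\subset T^+$ (and symmetrically $O^-\subset T^-$).

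The more serious gap is in the reverse inclusion, and you acknowledge it yourself. Your plan needs the statement that a positive loop $\ell$ found near a trunk point actually belongs to $C^+$, i.e.\ is linked to $\R_+$ by a finite chain of touching positive loops, and the proposed "pocket induction" does not deliver it: continuity of the full $\bSLE_{\kappa'}^\beta$ does not force consecutive traced positive loops to touch one another, and the assertion that loops inside a pocket are bridged to positive loops on the pocket boundary is a connectivity statement of exactly the same nature as the one being proved, so the recursion does not reduce the difficulty. The paper's proof avoids any loop-by-loop connectivity claim: for each fixed $z$ it compares the radial $\bSLE_{\kappa'}^\beta$ targeted at $z$ with its $\eps$-cutoff approximation to obtain $\P(z\in T^+)\le \lim_{\eps\to 0}\P(z\in O_\eps^+)=\P(z\in O^+)$, which together with $O^+\subset T^+$, the assumption that the full curve has zero Lebesgue measure, and Fubini shows that $O^+$ is dense in $T^+$ (and $O^-$ in $T^-$); this density is what identifies $\eta$ as the common boundary. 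Some probabilistic/approximation input of this kind appears unavoidable and is absent from your argument, so as it stands the proof is incomplete on both counts.
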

This result will become useful because we shall prove (see Theorem~\ref{thm:duality1}) that the full $\bSLE_{\kappa'}^\beta$ is indeed continuous (and furthermore that its trunk is distributed like an $\SLE_{\kappa} ( \rho; \kappa- 6 - \rho)$ (we will define them in the next section).  This result will also play an important role in \cite{msw2016fan}.

\begin{figure}[ht!]
\begin{center}
	\includegraphics[width=3.2in]{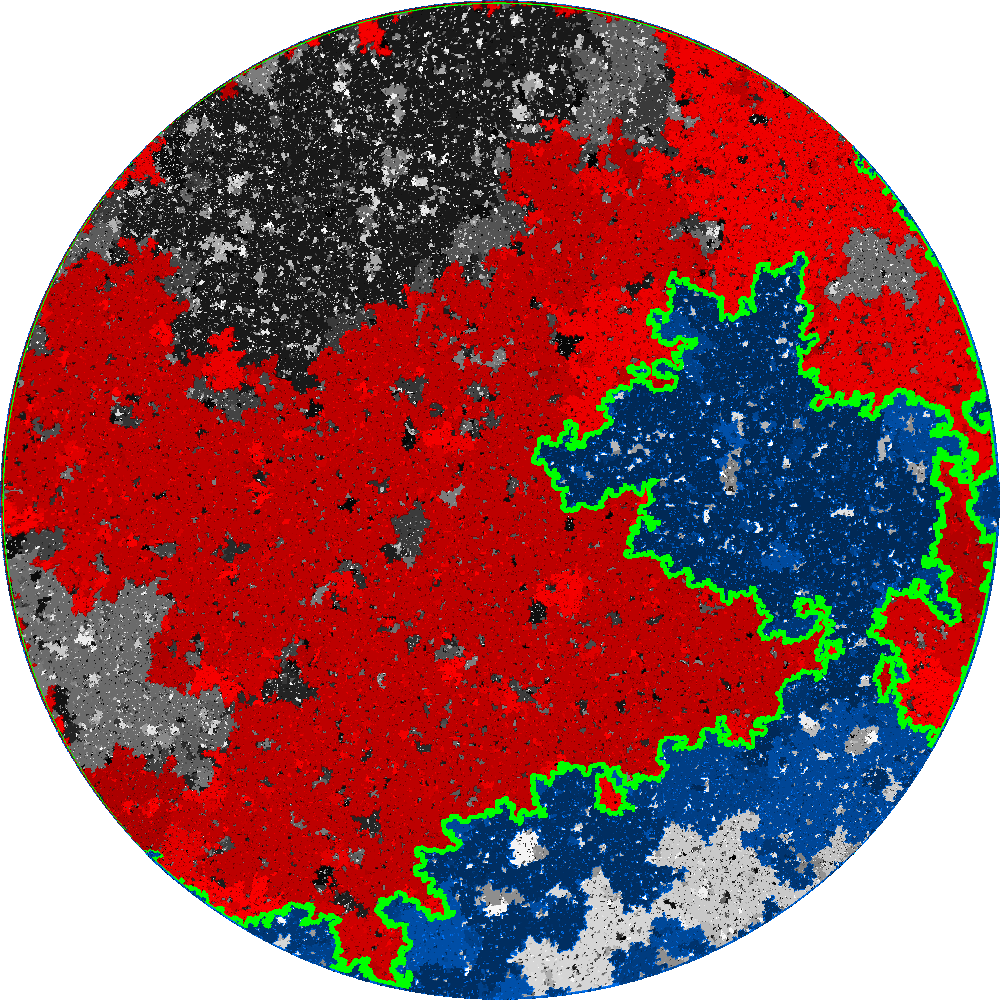}\hspace{0.01\textwidth}\includegraphics[width=3.2in]{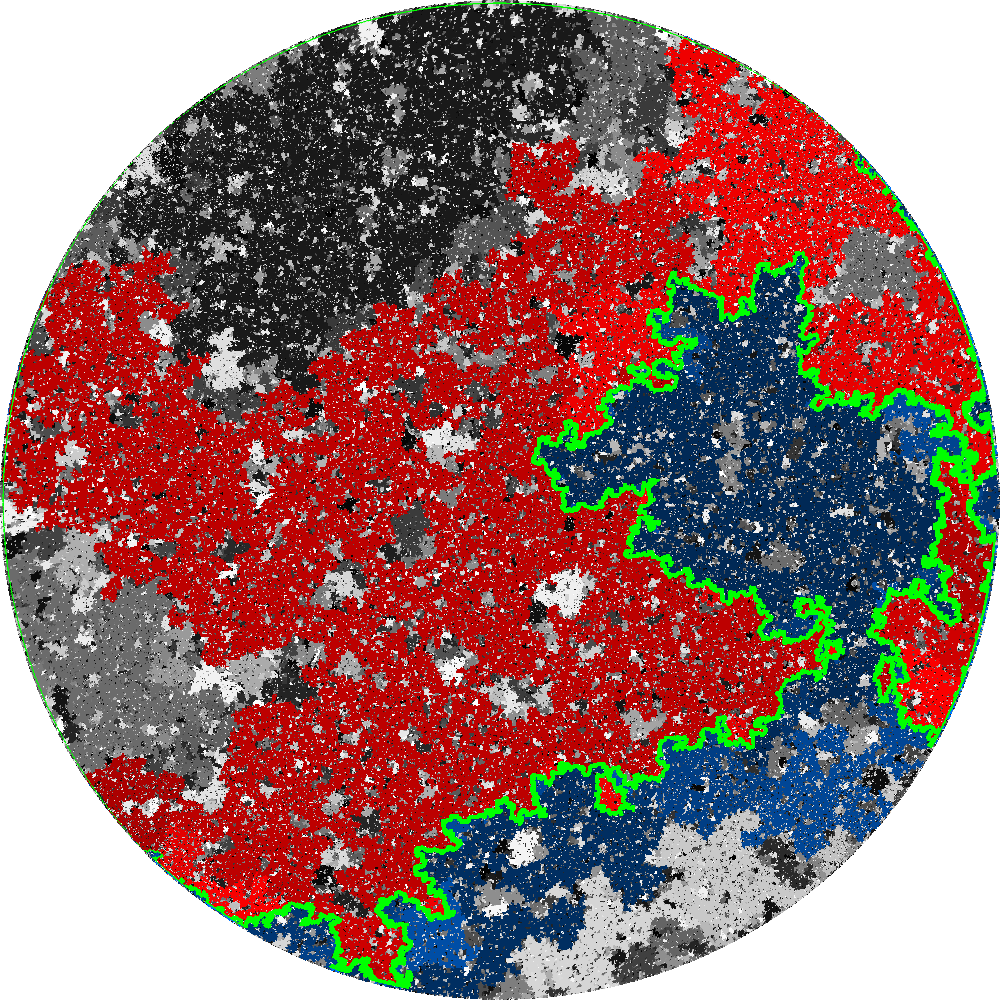}
\end{center}
\vspace{-0.01\textheight}
\caption{\label{fig:cle_6_perc_sim} The $\CLE_6^{\beta}$ interface in the unit disk for $\beta=0$: In the percolation simulation of Figure~\ref{figcle6}, the boundary is divided into two arcs. {\bf Left:} Cluster of percolation clusters with label at at most (resp.\ at least) $1/2$ which 
touches left (resp.\ right) arc is shown in blue.  Their interface is shown in green.  In the scaling limit, the green path should be an $\SLE_{8/3}(-5/3;-5/3)$.  {\bf Right:} Same picture, but only the clusters with label at most (resp.\ at least) $1/2$ which do touch the green interface are colored red (resp.\ blue).  In the scaling limit, the path which follows the red and blue clusters in the order in which they are visited by the green path should be an $\SLE_6^0$.}
\end{figure}

 \begin{proof}[Proof of Proposition~\ref{prop:unique_interface}]
The proof will be based essentially on the two following ingredients: First, using general properties of $\CLE_{\kappa'}$, we will see that $C^+$ and $C^-$ can touch on their boundaries, but cannot overlap i.e.\ that 
$\overline C^+$ and $\overline C^-$ cannot respectively intersect open domains that are respectively to the left-hand or to the right-hand side of $\eta$. Then, using the $\eps$-approximation of the $\bSLE_{\kappa'}^\beta$, we will see that the set of points squeezed between $\overline C^+$ and $\overline C^-$ has zero Lebesgue measure. 

We will work in the upper half-plane. In order to use the notion of local finiteness, we therefore use the $h$-diameter of a set, which is the diameter of its image under a given conformal map from $\HH$ onto the unit disk. Local finiteness of $\CLE_{\kappa'}$ in the $\h$ then means that almost surely, for all $\eps > 0$, only finitely many have an $h$-diameter greater than~$\eps$. 

Let us first make some general observations about the ``graph'' of $\CLE_{\kappa'}$ loops.  We can approximate $C^+$ by the union $C_\eps^+$ of the collection of positive $\CLE_{\kappa'}^\beta$ loops of $h$-diameter greater than~$\eps$, that are connected to the positive half-line by a finite chain of positive $\CLE_{\kappa'}^\beta$ loops of diameter at least~$\eps$.  Clearly $\cup_\eps C_\eps^+ = C^+$.  We call $c_\eps^+$ to be the ``left boundary'' of $\R_+ \cup C_\eps^+$. The local finiteness of $\CLE_{\kappa'}$ ensures that $c_\eps^+$ is a continuous curve from~$0$ to~$\infty$ in~$\overline \HH$ (note that some $\CLE_{\kappa'}^\beta$ loops of both open and closed type will touch both the positive and the negative half-line, which  shows that $c_\eps^+$ will touch both the positive and the negative half-line when~$\eps$ small).  We define also $c_\eps^-$ symmetrically (via chains of negative loops attached to the negative half-line), and the open sets $O_\eps^+$ and $O_\eps^-$ corresponding to the part of 
$\HH$ lying to the right of~$c_\eps^+$ and to the left of~$c_\eps^-$ respectively.  See Figure~\ref{fig:cutoff_clusters} for an illustration. A point in $O^+_\eps$ is then a point that is disconnected from the negative half-line by a chain of positive $\CLE_{\kappa'}^\beta$ loops of $h$-diameter greater than~$\eps$. The maps $\eps \mapsto O_\eps^+$ and $\eps \mapsto O_\eps^-$ are non-increasing, and we define $O^+ := \cup_\eps O_\eps^+$ and $O^- := \cup_\eps O_\eps^-$. The open set $O^+$ is therefore exactly  the collection of points that are disconnected from the negative half-line by a finite chain of positive $\CLE_{\kappa'}^\beta$ loops.  

We can already list a few further  easy consequences of the fact that almost surely, $\CLE_{\kappa'}$ is locally finite, that if a $\CLE_{\kappa'}$ loop touches another loop or the real line, then it does so at infinitely many other points, and that a $\CLE_{\kappa'}$ loop does not have any cut-points (i.e., that its outer boundary is a simple loop). It is indeed then easy to see that $O_\eps^+ \cap O_\eps^- = \emptyset$,  and that a point $z \in O_\eps^+$ can be linked to $\R^+$ by a continuous path that stays a positive distance from $c_\eps^+$, and therefore also of  the closure of $O^-$ (as $O^-$ lies on ``the other side'' of $c_\eps^+$).

\begin{figure}[ht]
\begin{center}
\includegraphics[width=3.1in]{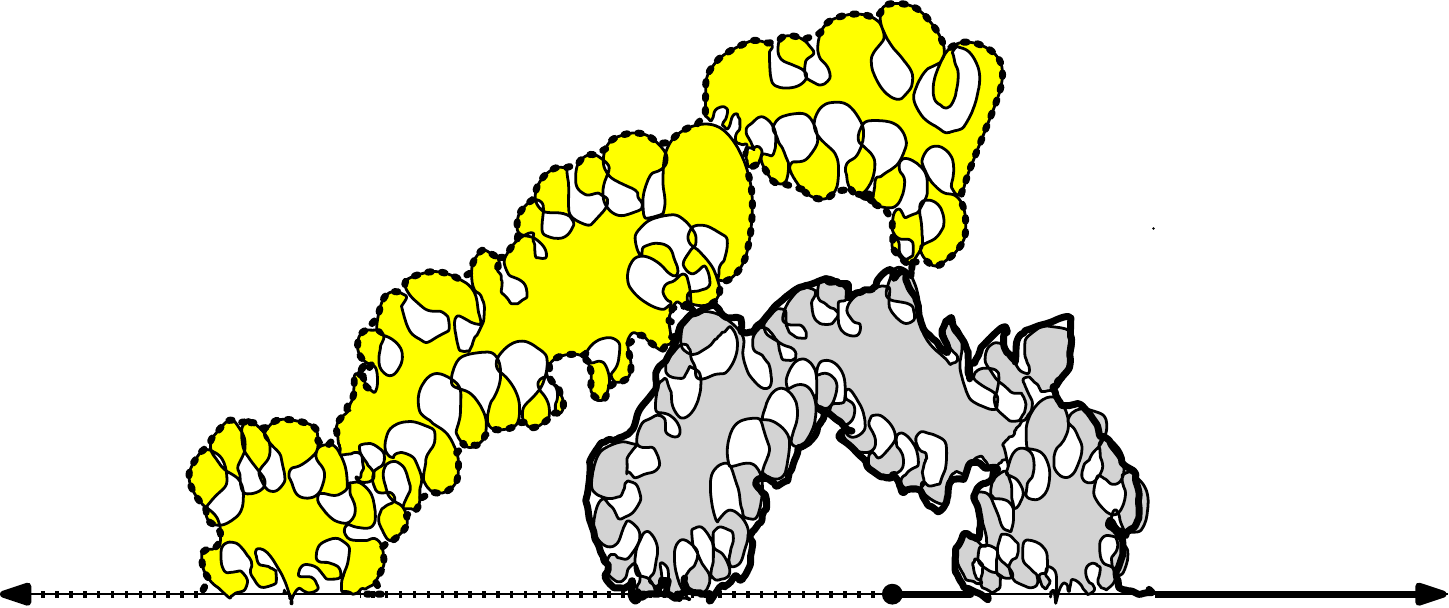} 
\includegraphics[width=3.1in]{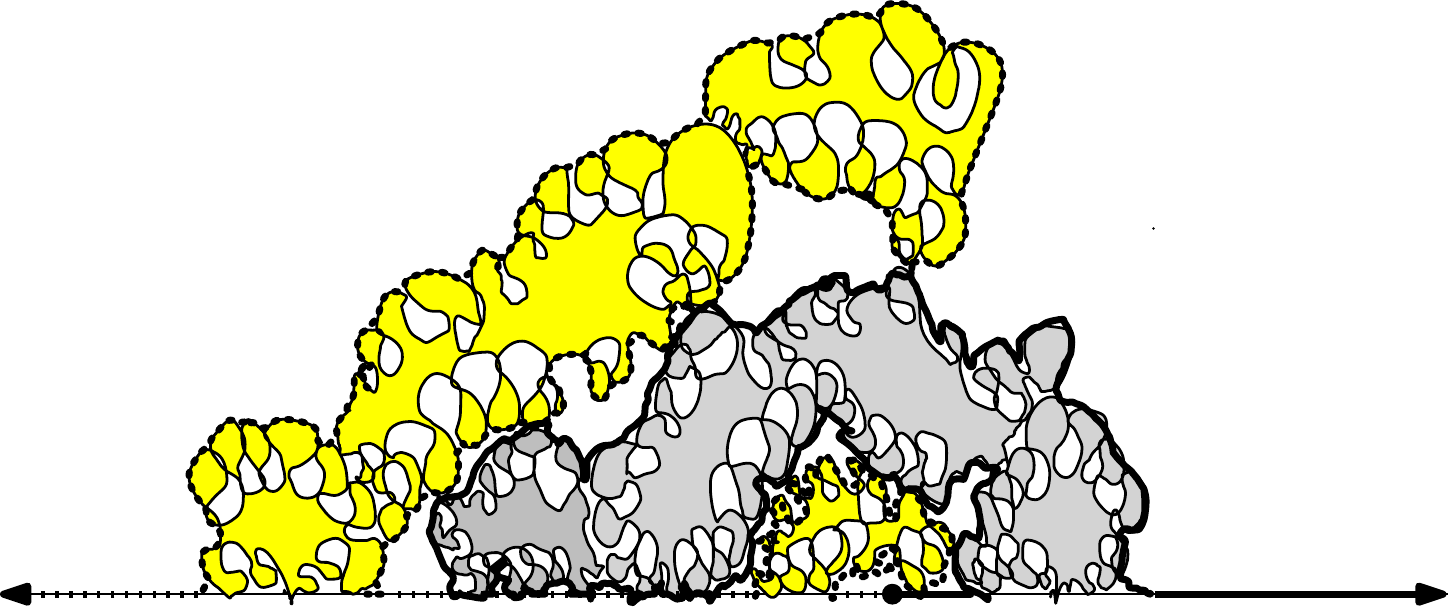} 
\end{center}
\caption{\label{fig:cutoff_clusters} The clusters $C_\eps^+$, $C_\eps^-$ and the paths $c_\eps^+$ and $c_\eps^-$ (in dotted) for two different values of $\eps$}
\end{figure}

We now are going to study the position of the trunk of the full $\bSLE_{\kappa'}^\beta$ trunk from $0$ to infinity with respect to the sets $O^+$ and $O^-$.  Let us call $T^+$ and $T^-$ the two open sets that lie respectively to the right and to the left of the trunk $T$ (recall that we assume that the trunk is a simple curve in $\overline \HH$, but  we expect it to almost surely hit the positive and the negative half-line infinitely many times, so that $O^+$ will actually have infinitely many connected components as soon as $|\beta| \not= 1$).  Let us now argue why $O^+ \subset T^+$ almost surely, using the coupling of all $\eps$-cutoffs of the side-swapping procedure with a single $\CLE_{\kappa'}^\beta$. We note that almost surely, for all $\eps$, the exploration path will almost surely go around $O_\eps^+$ (it may trace the positive loops that are creating the chains of positive loops, but will not do anything else on the right of $c_\eps^+$). It therefore follows readily that in the limiting coupling of 
an 
$\CLE_{\kappa'}^\beta$ with the trunk $T$, $O_\eps^+ \subset T^+$ almost surely.  Hence, we get indeed that $O^+ \subset T^+$, and (symmetrically) $O^- \subset T^-$. It follows that the intersection of the closure of $O^+$ with the closure of $O^-$ is necessarily contained in $T$.  

Next, we are going to argue that for every given $z$, the point $z$ is almost surely not in   $T^+ \setminus O^+$.  As (under the assumptions of the proposition) the trunk has zero Lebesgue measure, it then follows that $O^+ \cup O^-$ has full Lebesgue measure, that $O^+$ is dense in $T^+$, that $O^-$ is dense in $T^-$, which is enough to deduce that $T$ is actually equal to the intersection between the boundary of $O^+$ with the boundary of $O^-$ and concludes the proof of the proposition.

As illustrated in Figures~\ref{statusviaradial} and~\ref{fig:approxclusters}, one  can observe that in order to decide whether $z$ ends up to the right or to the left of the trunk, it suffices to study a radial $\bSLE_{\kappa'}^\beta$ that targets $z$, and that will exactly follow the iterated definitions within the pockets.
\begin{figure}[ht]
\begin{center}
\includegraphics[width=2.4in]{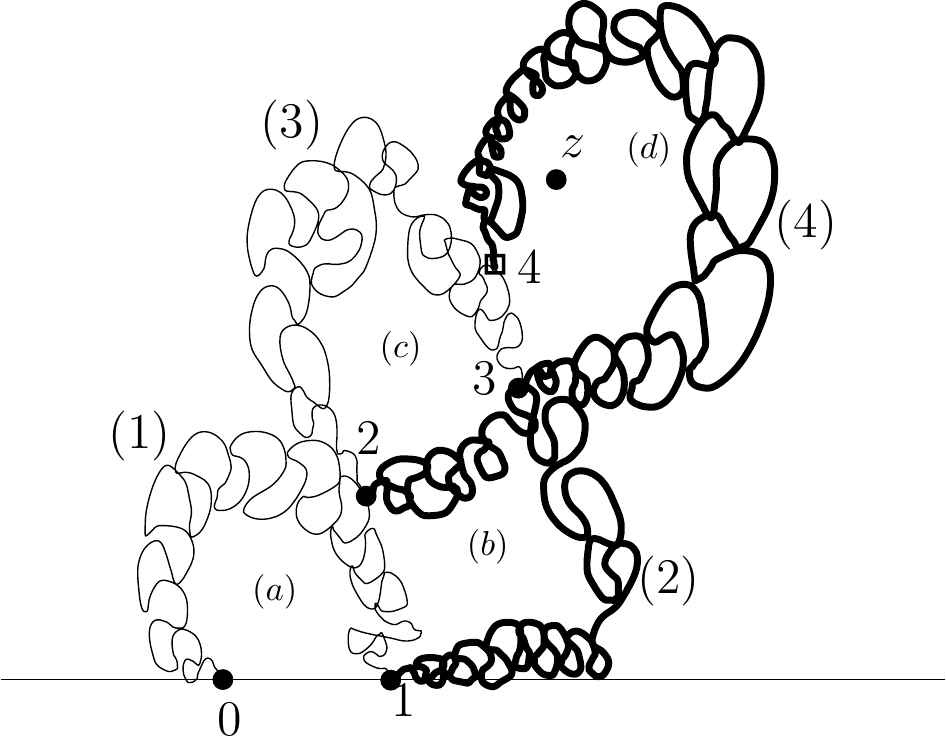} 
\includegraphics[width=2.4in]{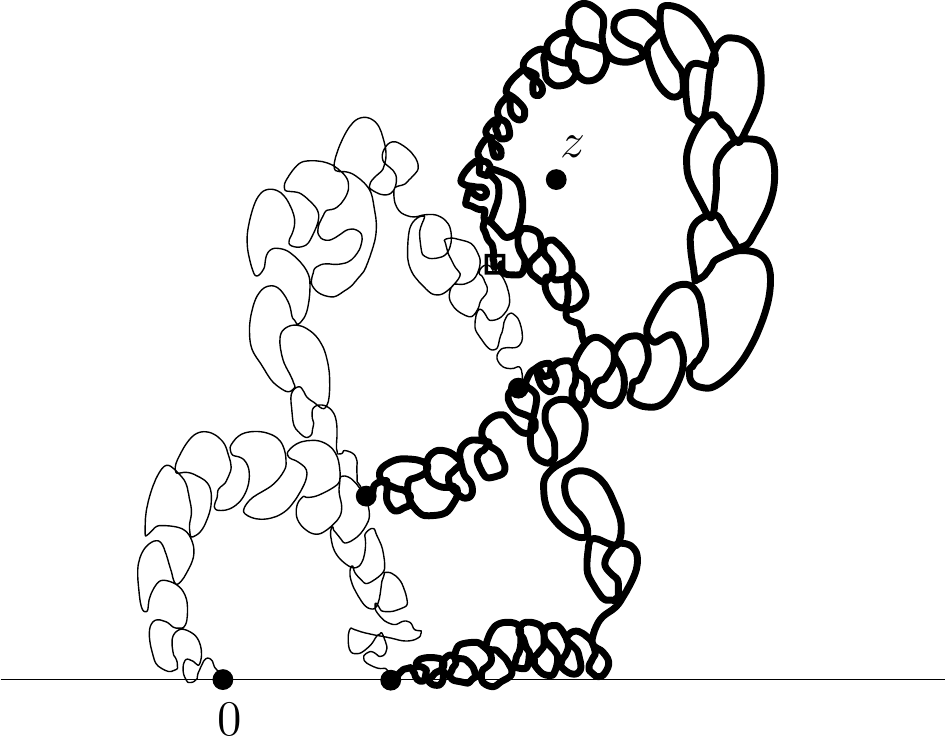} 
\end{center}
\label {fig69}
\caption{\label{statusviaradial}(i) The radial and chordal $\bSLE_{\kappa'}^\beta$ coincide until the point labeled~$4$ is reached. In bold we indicate the parts of the counterclockwise loops (to the right of the trunk). When visiting the points~$1$ to~$4$, it is creating the pockets (a)-(d) (but there are many more pockets, created just after the starting point, and after visiting the points~$1$, $2$ and~$3$). (ii) The radial exploration starts exploring within the pocket (d) containing $z$ and discovers on which side of the trunk $z$ is.}
\end{figure}
As mentioned before, the radial $\bSLE_{\kappa'}^\beta$ can be well approximated via a ``side-swapping'' cut-off, where all bubbles of $h$-diameter smaller than $\eps$ have a negative sign, and only the signs of the finitely many ones with $h$-diameter greater than $\eps$ get tossed at random with the $p_0$-coin. In particular, the probability that $z$ ends up to the right of the trunk of this approximated radial $\bSLE_{\kappa'}^\beta$ converges to $\P( z \in T^+)$ as $\eps$ tends to $0$.

\begin{figure}[ht]
\begin{center}
\includegraphics[width=2.4in]{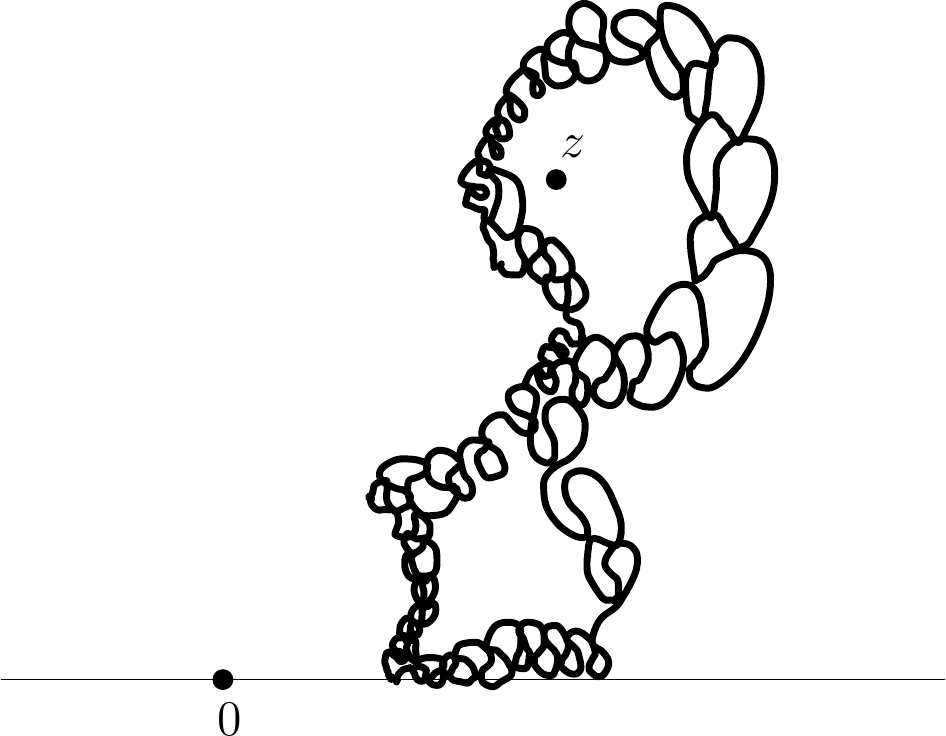} 
\end{center}
\caption{\label{fig:approxclusters} In the $\eps$-approximation of the configuration of Figure~\ref{fig69}, the point $z$ is in $O^+_\eps$}
\end{figure}
But we also observed that for this $\eps$-approximation, if a point $z$ ends up to the right of this (radial) trunk,  it is in the $O^+_\eps$ of the $\CLE_{\kappa'}^\beta$ that can be coupled to it. Hence, letting $\eps \to 0$, we see that indeed 
\[ \P(z \in O^+ ) = \lim_{\eps \to 0^+} \P(z \in O_\eps^+) \ge  \P(z \in T^+).\]
\end{proof}

\section{Boundary conformal loop ensembles and their duality}
\label{sec:bcle}
\label{S7}
\label {Sec7}

\subsection{BCLE definition and first properties}

\subsubsection {Heuristics}
We are now going to define the \emph{boundary conformal loop ensemble} with parameters~$\kappa$ and~$\rho$ that we will denote by $\BCLE_{\kappa} (\rho)$. 
This object will be well-defined for each $ \kappa \in (2,8)$ and each $\rho$ in a certain $\kappa$-dependent range specified below. Let us stress here already that some $\BCLE_{\kappa} (\rho)$ exist for $\kappa \in (2, 8/3]$ even if $\CLE_{\kappa}$ itself does not exist. We will talk about $\BCLE_{\kappa'} (\rho')$ and $\BCLE_{\kappa}(\rho)$ when we will want to differentiate between the cases $\kappa' >4$ and $\kappa \le 4$. 
A BCLE will be a random countable collection of loops defined in the closure of any simply connected domain $D$ with the property that each loop looks locally like an $\SLE_{\kappa}( \rho)$ in the inside of $D$ and intersects the boundary of $D$ on a fractal uncountable set. As it will be a conformally invariant object, it suffices to define it in the upper half-plane or in the unit disk. 

The intuition is that this class of loop ensembles should describe the scaling limits of the family of boundary-intersecting loops for a class of discrete critical models, with the idea that for a given $\kappa$, each value of $\rho$ corresponds to a different boundary condition that for instance can be viewed as a different way to weight the boundary-touching loops compared to the interior loops. Our results will in fact make this intuition more precise.

In Section~\ref{Sec6}, we have explained how to build the collection of $\CLE_{\kappa'}$ loops that touch the boundary of the upper half-plane 
for $\kappa' \in (4,8)$, starting from a Poisson point process of $\SLE_{\kappa'}$ excursions, or equivalently from a $\bSLE_{\kappa'}$ branching tree. The construction of the BCLEs will follow along the same lines, but where one replaces the $\SLE$ bubbles (and henceforth the $\bSLE$) by $\SLE_{\kappa} ( \rho)$ bubbles (and henceforth a $\bSLE_{\kappa}(\rho)$ generalization of $\bSLE_{\kappa}$).
In particular, we already emphasize that  the construction of boundary conformal loop ensembles will involve neither side-swapping nor principal value compensation  arguments,  and will  
be based on $\SLE_\kappa (\rho)$ ideas for $\rho > -2$ solely. 

Recall that the $\SLE_{\kappa}$ bubble measure was the limit as $\eps \to 0^+$ of the appropriately rescaled distribution of an $\SLE_{\kappa}$ from $0$ to $\eps$ in $\HH$. This is an infinite measure 
on loops from $0$ to $0$ in the upper half-plane that possess the following scaling property. If $A$ is a set of loops and $\lambda A$ denotes the set of loops $\gamma$ such that $\gamma / \lambda \in A$, then the mass of $\lambda A$ is equal to $\lambda^{-b}$ times the mass of $A$ for $b = b (\kappa) = 8/\kappa - 1$.
Similarly, the mass of the image of $A$ under a M\"obius transformation $\psi$ of the upper half-plane that keeps the origin fixed is equal to 
$\psi' (0)^{-b}$ times the mass of $A$. 

The fact that $b(\kappa')\in (0,1)$ when $\kappa' \in (4,8)$ can be viewed as the reason that makes it possible  to 
define $\SLE_{\kappa'} (\kappa' -6)$ via a Poisson point process of such bubbles in this way only for this range of~$\kappa'$. This construction (using the conformal invariance property of the bubble measure)
provides a direct way to see that this process is target-invariant. 
The reversibility properties of $\SLE_{\kappa'} (\kappa'-6)$ then allows one to show that the loops defined by this branching $\bSLE_{\kappa'}$ tree does not depend on the choice of its root.

In Section~\ref{subsubsec:classical_sle_kr}, we have recalled the definition of $\SLE_\kappa (\rho)$ from~$0$ to~$\infty$ with marked point at~$0^+$ or at~$0^-$, when $\rho > -2$. By applying a M\"obius transformation $\H \to \H$, this yields the definition of an $\SLE_\kappa (\rho)$ from $x$ to $y$ in $\HH$ with marked point at $x^-$ for any two distinct reals $x$ and $y$.  We can then easily define the $\SLE_\kappa (\rho)$ excursion/bubble measure as the properly renormalized limit when $\eps \to 0^+$ of the law of $\SLE_\kappa (\rho)$ from $0$ to $\eps$ with marked point at $0^-$. Again, this measure satisfies a scaling property (as one can expect from its definition) with some exponent $b$. 
When $b \in (0,1)$ (which imposes the constraint $ -2 < \rho < \kappa-4$), the same procedure as for $\SLE_{\kappa'}(\kappa' -6)$ 
then enables us to start from a Poisson point process of such $\SLE_\kappa (\rho)$ excursions to construct a target-invariant process that we denote by $\bSLE_\kappa (\rho)$. This target-invariant version can be equivalently described as an $\SLE_{\kappa} (\rho ; \kappa -6- \rho)$ process.

\subsubsection{$\BCLE_\kappa(\rho)$ for $\kappa \in (2,4]$}

We now turn to the more formal definition of these boundary conformal loop ensembles. 
Let us first consider the case where $\kappa \in (2,4]$.  When $\kappa$ is in this range, we will define $\BCLE_\kappa(\rho)$ for each $\rho$ such that
\begin{equation}
\label{eqn:bcle_k_range}
	-2 < \rho <  \kappa-4.
\end{equation} 
Note already that $\rho$ is in this admissible interval if and only if $\kappa  - 6 - \rho$ is in this admissible interval; in fact this admissible interval is the set of 
$\rho$'s for which $\rho > -2$ and $\kappa - 6 -\rho > -2$.  Hence SLE$_\kappa (\rho)$ or SLE$_\kappa (\rho ; \kappa - 6- \rho)$ will all be classical processes with no side-swapping or compensation.
 Note also that $\rho$ has to be negative and in particular that the value $\rho = 0$ is not in this admissible range.  

The $\BCLE_\kappa (\rho)$ is then defined to be the set of boundary touching loops that are traced by a branched $\bSLE_\kappa (\rho)$.  Let us consider the case where $D$ is the unit disk (in other simply connected domains, the definition will then be given via conformal invariance). For 
every given boundary point $x$, one can then define a branching tree of $\SLE_\kappa (\rho; \kappa -6- \rho)$
processes starting from $x$ and targeting all other boundary points. 
The trace of this branching tree in the unit disk will consist of the union of a countable family of disjoint ``boundary-to-boundary'' arcs. Each of these arcs 
looks locally like an $\SLE_\kappa$-curve (when it is away from $\partial D$) and  comes with an orientation (the tree being naturally oriented from $x$ towards the other boundary points).
If one considers any fixed point $z \in D$, the boundary of the connected component of the complement of this branching tree that contains $z$ will then consist almost surely of the concatenation of such boundary-to-boundary arcs, that will form either a clockwise loop around $z$ or a counterclockwise loop around $z$. 

The $\BCLE_\kappa (\rho)$ is going to be the knowledge of all these oriented boundary-to-boundary arcs. Clearly, this information can be either encapsulated by the collection  of 
all the clockwise loops that it defines and that we will denote as $\cwBCLE_\kappa (\rho)$, or equivalently by the collection of counterclockwise loops that it defines. We will often refer 
to the clockwise loops as the {\em true  loops} of $\cwBCLE_\kappa (\rho)$ and to the counterclockwise loops as the {\em false loops} of $\cwBCLE_\kappa (\rho)$. 

Conformal invariance and target-invariance show immediately that the law of this $\cwBCLE_\kappa (\rho)$ is invariant under any conformal automorphism of $D$ that keeps $x$ fixed (at this point, we have not yet argued that it does not depend on the choice of the root $x$). 
 Furthermore, the time-reversal symmetry also shows that the $\SLE_\kappa (\rho)$ bubble measure is invariant under the transformation that traces the loop counterclockwise  (instead of clockwise) and then takes its image under the symmetry $x + iy \mapsto -x +iy$. Hence, it follows that if one reverses the orientation of all the  loops of a $\cwBCLE_\kappa (\rho)$ and then takes the image under any given anti-conformal transformation, one gets again 
 a $\cwBCLE_{\kappa} (\rho)$.

Let us now recall (see for instance \cite {sw2005coordinate}) that when one considers in $D$ an $\SLE_\kappa (\rho)$ from $x$ to $y$ with marked point at $y'$, it is also an $\SLE_\kappa (\wt \rho)$ process from $x$ to $y'$ with marked point at $y$ (until the first time it disconnects $y$ from $y'$) with $\wt \rho = \kappa - 6 - \rho$. Note also that any given point $z$ in $D$ is almost surely surrounded 
either by a true loop or by a false loop of a $\cwBCLE_\kappa(\rho)$. The collection of false loops traced by  this $\cwBCLE_\kappa(\rho)$ is 
therefore also exactly created by the image of a $\bSLE_\kappa (\kappa - 6  - \rho)$ branching tree under an anti-conformal automorphism of $D$ that keeps $x$ fixed. 

We can reformulate this by saying that the following two ways to construct a family of counterclockwise loops (using the branching tree rooted at $x$) are identical in law: 
\begin {itemize}
 \item Consider a $\cwBCLE_\kappa (\rho)$ and just reverse the orientation of all its loops. 
 \item Consider a  $\cwBCLE_\kappa ( \kappa -6- \rho)$ and look at the collection of false loops it defines. 
 \end {itemize}
We will refer to the oriented loops obtained in this way as a $\ccwBCLE_\kappa (\rho)$. The true loops of $\ccwBCLE_\kappa (\rho)$ will be this time these counterclockwise loops, and 
the false loops of $\ccwBCLE_\kappa (\rho)$ will be this time the corresponding clockwise loops.

Then, the reversibility property of the $\SLE_\kappa (\rho)$ processes established in \cite{ms2012ig1,ms2012ig2} using the arguments of \cite[Proposition~5.1]{she2009cle}
implies (just as for the case of $\CLE_{\kappa'}$) that the law of $\BCLE_\kappa(\rho)$ does not depend on the choice of the root $x$.  
The law of a  boundary conformal loop ensemble is therefore invariant under the whole group of M\"obius transformations of $D$.

Let us finally note that in the boundary case $\rho=-2$, one can view and define $\cwBCLE_\kappa(-2)$ as consisting of 
just of one single true loop which traces the domain boundary clockwise (and there are no false loops),
whereas the case $\rho=\kappa-4$ corresponds to the case where there is no true loop at all and a single counterclockwise false loop along the domain boundary.  In the sequel, we will use this as a definition of $\cwBCLE_\kappa (-2)$ and $\cwBCLE_{\kappa} (\kappa -4)$. This therefore extends the range of admissible $\rho$-values to 
the closed interval $[\kappa -4,  -2]$.

\subsubsection{$\BCLE_{\kappa'}(\rho')$ for $\kappa' \in (4,8)$}

 We now consider the case of  $\cwBCLE_{\kappa'}(\rho')$  and  $\ccwBCLE_{\kappa'}(\rho')$ for $\kappa' \in (4,8)$. The definition is basically identical, except that the range of admissible $\rho'$ is now 
\begin{equation}
\label{eqn:bcle_kp_range}
	\frac{\kappa'}{2}-4 < \rho' <  \frac{\kappa'}{2}-2.
\end{equation}
 As before, this implies that $\rho' > -2$, so that we will be dealing with classical $\SLE_{\kappa'} ( \rho')$ processes. Note that again, the condition on $\rho'$ remains the same when one changes $\rho'$ into $\kappa' - 6 -\rho'$, but there are some little differences with the previously described case of $\BCLE_\kappa (\rho)$ for $\kappa \in (2,4)$:
 \begin {itemize}
  \item The obtained $\bSLE_{\kappa'} ( \rho')$ branching tree defines a random countable collection of non-simple  boundary-to-boundary arcs. 
  The concatenation of these arcs then defines a random collection $\cwBCLE_{\kappa'}(\rho')$ of (true) clockwise loops and also a collection of false counterclockwise loops. 
 The properties that we have derived in the $\kappa < 4$ case hold as well for these $\BCLE_{\kappa'} (\rho')$ families but the loops are not simple curves anymore, so that they cannot be directly viewed as the outer boundaries of the connected components of the complement of the branching tree.
  \item This time $\rho'=0$ is in the allowed interval. The $\bSLE_{\kappa'} (0)$ is the usual SLE$_{\kappa'} (\kappa' -6)$ process, and the set of true loops of a $\cwBCLE_{\kappa'} (0)$ is then just the set of loops in a $\CLE_{\kappa'}$ that intersect the boundary and that are traced clockwise. 
  \item This condition on $\rho'$ and $\kappa'$ is stronger than $\rho' \in (-2, \kappa'-4)$. This corresponds to the fact that it is necessary to ensure that the $\bSLE_{\kappa'}(\rho')$ does not trace the entire boundary of the domain, so that the boundary branching process really does branch (see \cite{ms2013ig4}) and is reversible (see the non-reversibility problems for $\rho < \kappa'/2-4$ in \cite{ms2012ig3}).  One also requires that $\kappa' < 8$ for these  processes to be reversible (see the non-reversibility problems for $\kappa' > 8$ in \cite{ms2013ig4}).
  \end {itemize}
In the boundary case $\rho'=\kappa'/2-4$, we can define $\cwBCLE_{\kappa'}(\kappa'/2-4)$ as a single clockwise loop which fills the whole domain boundary.
However in this $\kappa' \in (4,8)$ regime, this single loop is not just the boundary itself (as the limiting case $\kappa'  \to 8^-$ shows, where it becomes one single space-filling loop).  We similarly define its reverse-orientation $\ccwBCLE_{\kappa'}(\kappa'/2-4)$ as a single counterclockwise loop which fills the boundary of the domain, and 
$\ccwBCLE_{\kappa'} ( \kappa'/2 -2 )$ and $\cwBCLE_{\kappa'} ( \kappa'/2 -2 )$ as the collection of false loops defined by $\cwBCLE_{\kappa'}(\kappa'/2-4)$ and $\ccwBCLE_{\kappa'}(\kappa'/2-4)$ 
respectively. Since the latter {$\BCLE$s} consist of one boundary-filling true loop, each of these false loops touch the boundary at just one point).

\subsubsection{Basic properties of $\BCLE$}

Let us sum up the basic properties of all these $\BCLE$s in the form of the following proposition.

\begin{proposition}
\label{prop::bcle_prop} Let us consider a $\cwBCLE_{\kappa}(\rho)$ process $\Gamma$ in the simply connected domain $D$ as defined above for all $\kappa \in (2, 8)$ and $\rho$ in the corresponding admissible range. Then: 
\begin{enumerate}[(i)]
 \item The law of $\Gamma$ is invariant under any given conformal automorphism of $D$.
 \item The image of $\Gamma$ under an anti-conformal automorphism of $D$ is a $\ccwBCLE_{\kappa}(\rho)$, a sample of which can be produced by reversing the orientations of the loops of a $\cwBCLE_{\kappa}(\rho)$. 
 \item The collection of false loops traced by $\Gamma$ (i.e., the counterclockwise loops traced by the union of boundary-to-boundary arcs of $\Gamma$-loops around the points that are inside no loop of $\Gamma$) 
 form a $\ccwBCLE_{\kappa} (\kappa- 6 - \rho )$. 
 \end{enumerate}
\end{proposition}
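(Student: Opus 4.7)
My plan is to prove the three items in the order (i) partially, then (iii), then (ii), and finally return to complete (i), since root-independence is the real obstacle behind (i) and depends on a reversibility argument that is cleanest to phrase after (ii) and (iii) are in place.

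First I would observe that for any conformal automorphism $\psi$ of $D$ that fixes the root $x$, the invariance is essentially immediate from the construction. The branching tree rooted at $x$ is built out of $\SLE_\kappa(\rho;\kappa-6-\rho)$ processes from $x$ to the various target points; these processes are conformally invariant modulo time-parametrization by definition, and the target-invariance property that was recalled for $\bSLE_\kappa(\rho)$ means the law of the tree (as an unparametrized union of oriented arcs) is intrinsic to $D$ and $x$. Applying $\psi$ sends the target set to itself and conjugates each branch to another $\SLE_\kappa(\rho;\kappa-6-\rho)$, so the whole ensemble of oriented boundary-to-boundary arcs, hence the collection of true loops, has the same law. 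This yields the root-fixed case of (i).

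For (iii), the key input is the standard identity that an $\SLE_\kappa(\rho)$ from $x$ to $y$ with force point at $x^-$ is, viewed as a chord targeting any point $y'$ across which the curve has not yet disconnected $y$ from $y'$, the same in law as an $\SLE_\kappa(\kappa-6-\rho)$ from $x$ to $y'$ with the same force point (recalled in the excerpt and originally due to \cite{sw2005coordinate, dub2007commutation}). Consequently the same branching tree rooted at $x$ that defines $\cwBCLE_\kappa(\rho)$ is simultaneously a $\bSLE_\kappa(\kappa-6-\rho)$ tree in which the roles of ``force-point side'' and ``other side'' are exchanged; the orientation in which this second tree naturally encloses a typical interior point $z$ is opposite to the first, so clockwise-enclosing loops of one branching tree are precisely the counterclockwise-enclosing loops of the other. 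Hence the false loops of $\cwBCLE_\kappa(\rho)$ are the true loops of $\ccwBCLE_\kappa(\kappa-6-\rho)$, giving (iii).

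For (ii), the observation is that the $\SLE_\kappa(\rho)$ bubble measure is invariant under the involution that reverses the orientation of each bubble and then applies an anti-conformal symmetry fixing the marked boundary point (this uses the time-reversal symmetry of $\SLE_\kappa(\rho)$ that was already invoked for the reversibility/target-invariance considerations). Lifted to the Poisson point process of excursions that is iterated in the branching tree construction, this identifies the image under any anti-conformal automorphism fixing $x$ of a $\cwBCLE_\kappa(\rho)$ with the ensemble obtained by reversing every loop orientation in a $\cwBCLE_\kappa(\rho)$, which by definition is a $\ccwBCLE_\kappa(\rho)$. Combining with (i), the same holds for any anti-conformal automorphism.

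The remaining and genuinely hard step is full root-independence, which is needed to upgrade (i) from automorphisms fixing $x$ to the whole conformal automorphism group of $D$. Here I would follow the strategy of \cite[Prop.~5.1]{she2009cle} adapted to $\SLE_\kappa(\rho;\kappa-6-\rho)$: given two distinct boundary roots $x_1$ and $x_2$, choose a third boundary point $y$ disjoint from both, and consider for $i=1,2$ the branch from $x_i$ to $y$, which by target-invariance (when $y$ is chosen after the tree) is an $\SLE_\kappa(\rho;\kappa-6-\rho)$ targeting $y$. The reversibility of $\SLE_\kappa(\rho;\kappa-6-\rho)$ curves, which was established in the imaginary-geometry series \cite{ms2012ig1, ms2012ig2, ms2012ig3} precisely in the admissible range of $\rho$ we are considering, lets one identify the time-reversal of the branch from $x_2$ to $y$ with an $\SLE_\kappa(\rho;\kappa-6-\rho)$ from $y$ to $x_2$; concatenating with a further branch from $y$ to $x_1$ and applying target-invariance once more shows that the joint law of the branches agrees with what one would obtain by rooting the tree at $x_2$ and running a branch toward $x_1$. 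The collection of boundary-to-boundary arcs, and hence the $\cwBCLE_\kappa(\rho)$, is therefore the same whether we root at $x_1$ or at $x_2$. This is the step I expect to be most delicate, because one has to verify carefully that the target-invariance and reversibility statements combine to give a genuine equality of joint laws of the unrooted tree, not merely of individual branches; but once this is done, (i) follows for the full automorphism group of $D$, completing the proof.
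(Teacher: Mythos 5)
Your proposal is correct and takes essentially the same route as the paper: root-fixed invariance from the conformal and target invariance of the $\SLE_\kappa(\rho;\kappa-6-\rho)$ branches, the anti-conformal/orientation-reversal statement from the reflection symmetry of the $\SLE_\kappa(\rho)$ bubble measure, the false-loop identification from the $\rho \leftrightarrow \kappa-6-\rho$ marked-point swap, and root-independence from the reversibility of these $\SLE_\kappa(\rho)$ processes via the argument of \cite[Proposition~5.1]{she2009cle}. The only (harmless) wrinkle is ordering: since exchanging the two force-point weights amounts to a reflection, your step (iii) implicitly uses the symmetry you prove in (ii), so these two items are really established together, exactly as in the paper's discussion preceding the proposition.
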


The relation between the clockwise and counterclockwise BCLEs indicates a special symmetry feature of the $\BCLE_\kappa((\kappa-6)/2)$.  
For a simple discrete analog of this duality, one can consider critical Bernoulli percolation on the faces of a hexagonal lattice in a simply connected domain. The collection of outer boundaries of all white clusters that touch the boundary describes a discrete analog of the boundary touching loops of a $\CLE_6$. 
The collection of outer boundaries of all black clusters that touch the boundary has of course the same distribution (as that of the white clusters). The samples of the two discrete loop ensembles differ from each other, but almost surely, each of them is made of the union of all black-white interfaces that touch the boundary.  In this setting, both loop ensembles correspond, in the scaling limit, to the set of boundary-intersecting loops of a $\CLE_{6}$, and this set has the law of $\BCLE_6(0)$.  Similarly, one could for instance guess that 
the set of outer boundaries of Ising clusters that touch the boundary of a domain (with free boundary conditions) should give rise to a discrete approximation of the $\BCLE_3(-3/2)$.  This is indeed the case, as has been shown in rigorous work on the Ising model on a grid \cite{HONGLER_KYTOLA} (see also \cite {iz2015freeboundary}). The $\kappa=4$ case (i.e.\ $\BCLE_4(-1)$) corresponds to the zero level-lines that touch the boundary in a GFF with zero boundary conditions \cite{ss2010continuumcontour} that is also the scaling limit of a white/black coloring of the hexagonal lattice that is symmetric in distribution.

\subsection{BCLE nesting and duality statements}

\subsubsection{Main duality statements}

Let us start with a $\cwBCLE_\kappa (\rho)$ as defined above for some fixed $\kappa \in (2, 4)$. Throughout this section, $\kappa'$ and $\kappa$ will always be related by $\kappa \kappa' = 16$. 
Denote by $\Lambda$ the set of 
its true clockwise loops thus created.  We are now going to construct $\BCLE$s within the true and the false loops of  $\cwBCLE_\kappa (\rho)$:
Inside of each clockwise true (resp.\ counterclockwise false) loop $\CL$ of $\Lambda$, we sample an independent $\ccwBCLE_{\kappa'}(\rho_R')$ (resp.\ $\cwBCLE_{\kappa'} (\rho_L')$) for some admissible $\rho_R'$, $\rho_L'$ (see Figure~\ref{fig:bcle_k_rho} as well as Figure~\ref{fig:perc_sim_later}).  As we will see later, it will be useful that in this procedure, the nested $\BCLE$s have the \emph{opposite} orientation of the true loop or false loop that contains them. The actual choice of the values of $\rho$, $\rho_R'$ and $\rho_L'$ will be important, and we will discuss this later. 

\begin{figure}[ht!]
\begin{center}
\includegraphics [width=3in]{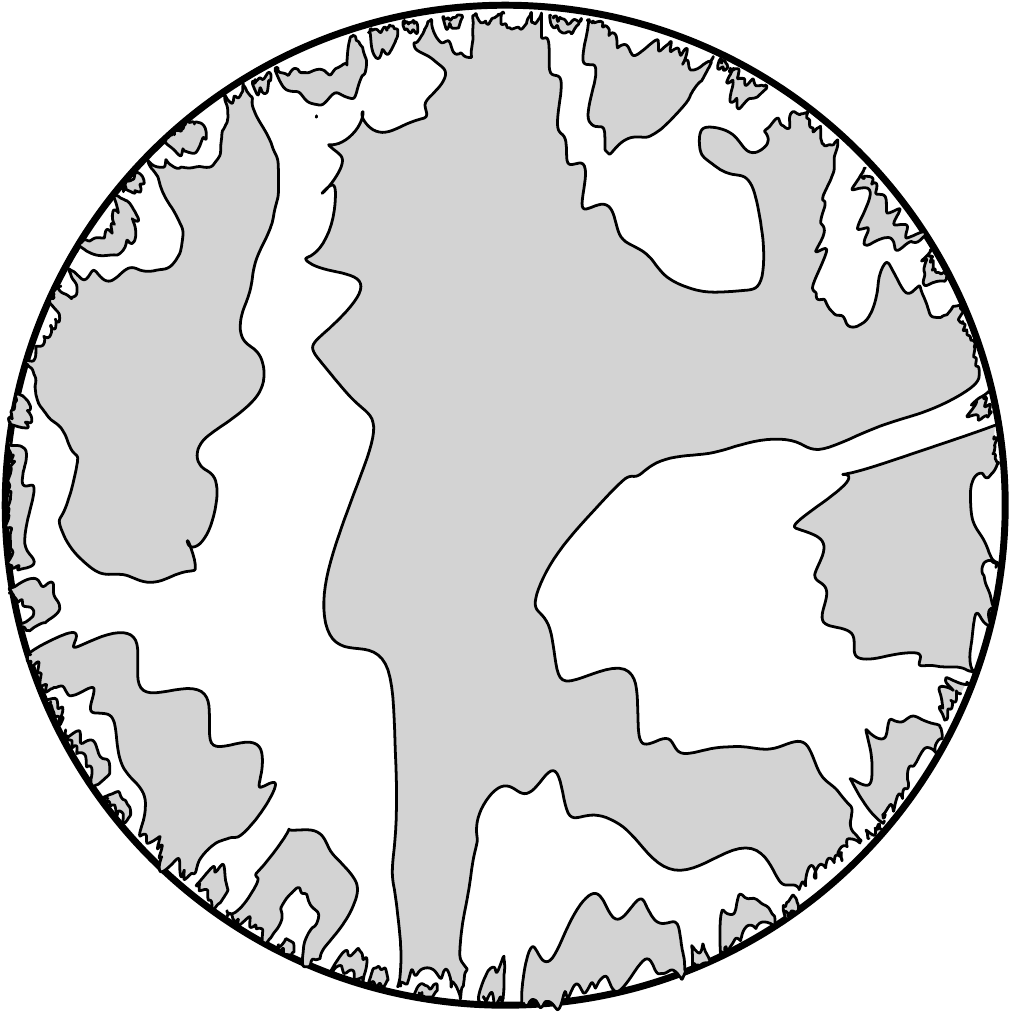}
\includegraphics [width=3in]{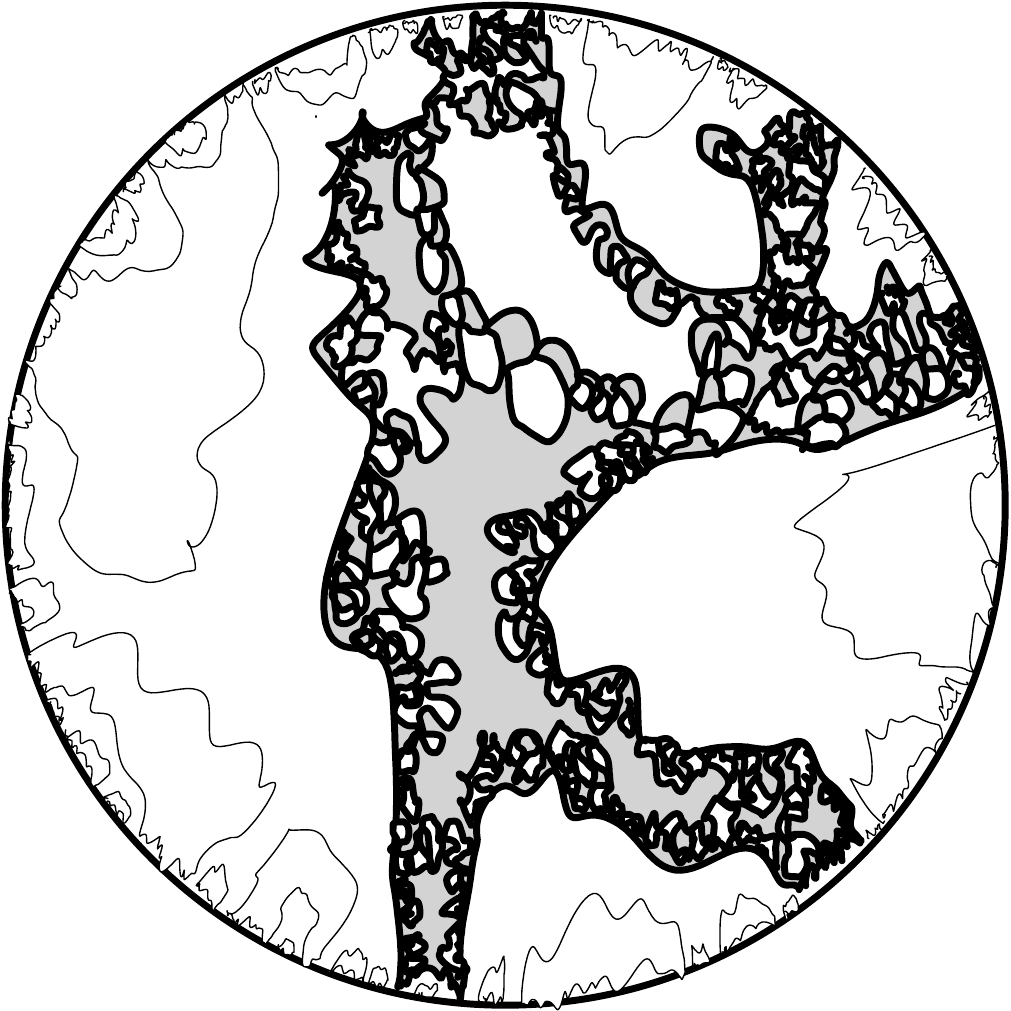}
\caption{\label{fig:bcle_k_rho} A sketch of a $\BCLE_\kappa (\rho)$ and of a nested $\BCLE_{\kappa'} (\rho')$ in one of the BCLE loops.}
\end{center}
\end{figure} 

We now choose a boundary point $x$, from which we will start exploring loops of $\Lambda$ by moving counterclockwise on $\partial D$. 
Assuming that $\Lambda$ is locally finite, we can define a (non-simple) loop $\eta_\Lambda$ from $x$ to $x$ as follows. 
To traverse $\eta_\Lambda$, we follow $\partial D$ counterclockwise starting at $x$ except that each time we encounter a loop of $\Lambda$ for the first time, we traverse the entire loop clockwise before continuing. Thus $\eta_\Lambda$ traces the loops of $\Lambda$ in the order determined by their clockwise-most intersection points with $\partial D$ viewed from $x$. For any given boundary point $y \not= x$, if we parameterize this path according to its half-plane capacity seen from $y$ (thereby excising all the parts of $\eta_\Lambda$ that are hidden from $y$), we obtain a chordal $\bSLE_\kappa (\rho)$ from $x$ to $y$.

Assuming further that the entire collection $\Gamma'$ consisting of all the loops of all the different $\BCLE$s that we defined inside each of the true and false loops of $\Lambda$ is also locally finite, we can then define another loop $\eta'$ from $x$ to $x$ as follows. To trace this path, we follow $\eta_\Lambda$ except that each time we first encounter a point on a loop in $\Gamma'$ we traverse that loop (clockwise or counterclockwise, respectively, depending on whether the loop lies left or right of $\eta_\Lambda$, respectively) before continuing.  We note that $\eta_\Lambda$ never encounters a clockwise and a counterclockwise loop simultaneously, so that the path $\eta_\Lambda$ is well-defined.  This follows because, given $\eta_\Lambda$, the clockwise and counterclockwise loops are independent of each other, there are only a countable number of each, and the probability that a given point on $\eta_\Lambda$ is the starting point of such a loop is equal to zero.  

For each boundary point $y$, we can also define the path $\eta_y'$ from $x$ to $y$ by moving along the $\bSLE_\kappa(\rho)$ from $x$ to $y$ and traversing similarly the loops of $\Gamma'$ that it encounters.  This path can be constructed by excising certain intervals of time from $\eta'$. 

Deriving the following statement will be one of our two main goals for the rest of the paper: 

\begin{theorem}
\label{thm:duality1}
For each $\kappa \in (2,4)$ and $\beta \in [-1,1]$ there exists an admissible $\rho := \rho ( \beta, \kappa') \in [-2, \kappa -4]$  so that if one then defines
\begin{equation}
\label{eqn:rho_p_equations}
\rho_R' = -\frac{\kappa'}{4}(\rho+2) \quad\text{and}\quad \rho_L' = \frac{\kappa'}{2} - 4 - \rho_R', 	
\end{equation}
the following properties hold: 
The collection of loops $\Lambda$ and $\Gamma'$ in this construction are almost surely locally finite, so that the curve $\eta'$ is indeed continuous, and for any boundary point $y$,  the law of the path $\eta_y'$ is a full $\bSLE_{\kappa'}^\beta$ process as defined at the end of Section~\ref{subsec:side-swapping}. 

In particular, this shows that a full $\bSLE_{\kappa'}^\beta$ is almost surely generated by a continuous curve, and that its trunk 
is a $\bSLE_\kappa (\rho)$ process for this $\rho := \rho (\beta, \kappa')$.
\end{theorem}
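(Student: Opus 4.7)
The plan is to realize both the $\cwBCLE_\kappa(\rho)$ with loop set $\Lambda$ and the curve $\eta_y'$ inside a single coupling with an instance of the Gaussian free field $h$ on $D$, via the imaginary geometry framework of \cite{ms2012ig1,ms2012ig2,ms2012ig3,ms2013ig4}. Fix boundary data for $h$ on $\partial D$ (depending on $\rho$ and an orientation at $x$) so that the flow line of $h$ from $x$ to $y$ at an appropriate angle is a chordal $\SLE_\kappa(\rho;\kappa-6-\rho)$. Varying the target $y$, the target-independence property of $\SLE_\kappa(\rho;\kappa-6-\rho)$ together with the flow-line coupling gives the full branched $\bSLE_\kappa(\rho)$ exploration; the resulting collection of boundary-to-boundary arcs is a deterministic function of $h$ and, by the results recalled in Section~\ref{S7}, the clockwise loops it forms are precisely a $\cwBCLE_\kappa(\rho)$ sample (so $\Lambda$ is measurable with respect to $h$).

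Next, I take $\eta'_y$ to be the counterflow line of $h$ from $y$ to $x$. The general interaction results of \cite{ms2012ig1,ms2013ig4} tell us, first, that this counterflow line is generated by a continuous curve, and, second, exactly how it behaves relative to the flow line tree. Concretely, the counterflow line will enter each true loop $\CL$ of $\Lambda$ through the clockwise-most point of $\CL \cap \partial D$, fill it in, and the restriction of the counterflow line to $\CL$ is itself a counterflow line of the GFF restricted to $\CL$. Computing the harmonic boundary data inside $\CL$ using the height-gap rule (the boundary values on the two sides of the flow-line arcs forming $\CL$ differ by $\lambda \kappa/2$ and are shifted by a constant determined by $\rho$) shows that inside $\CL$ the counterflow line traces exactly a $\ccwBCLE_{\kappa'}(\rho_R')$ with $\rho_R'=-\tfrac{\kappa'}{4}(\rho+2)$. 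The analogous computation in each false loop of $\Lambda$ (the boundary data jumps the other way) yields $\cwBCLE_{\kappa'}(\rho_L')$ with $\rho_L' = \kappa'/2-4-\rho_R'$. This confirms that the nested BCLE structure produced by the GFF coupling is exactly the one used to define~$\eta_y'$.

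It remains to identify the law of $\eta_y'$ as a full $\bSLE_{\kappa'}^\beta$. For this I use Lemma~\ref{lem:slekrcharacterization}: the driving pair $(W,O)$ of $\eta_y'$ inherits scale-invariance and the strong Markov property at collision times from the GFF coupling, so it must have the form of some side-swapping $\SLE_{\kappa'}^\beta(\kappa'-6)$. At each excursion/bubble of the counterflow line away from the trunk, the choice of sign (equivalently, whether the bubble corresponds to a true or false loop of $\Lambda$) is governed by the $\BCLE_\kappa(\rho)$ orientation of the surrounding loop. A single boundary computation — comparing the probability that the first macroscopic bubble is positive versus negative in terms of the $\rho$-asymmetry of the $\SLE_\kappa(\rho;\kappa-6-\rho)$ driving — produces a continuous, monotone bijection $\rho\mapsto \beta(\rho,\kappa')$ from $[-2,\kappa-4]$ onto $[-1,1]$. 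Inverting this gives the desired $\rho(\beta,\kappa')$. By construction, when $(W,O)$ is sampled in this way, concatenating the clockwise-traced true loops and counterclockwise-traced false loops reproduces the loop-filling procedure defining the full $\bSLE_{\kappa'}^\beta$, so the two processes agree in law; the trunk is then the flow line, which is an $\SLE_\kappa(\rho;\kappa-6-\rho)=\bSLE_\kappa(\rho)$.

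The main obstacle is the interaction analysis in the second paragraph: one must show that the counterflow line really does wrap around each flow-line-tree loop in the manner needed to produce a $\BCLE_{\kappa'}$, and handle the boundary-intersection regime where flow and counterflow lines hit $\partial D$ and each other. This requires the full strength of the imaginary geometry monotonicity and boundary-hitting results, applied to $\SLE_\kappa(\rho)$ with $\rho\in(-2,\kappa-4)$ and its $\SLE_{\kappa'}$ dual, and is also where local finiteness of $\Gamma'$ is obtained (by bounding nested $\BCLE$ loops in terms of a space-filling $\SLE_{\kappa'}$ pocket decomposition as in \cite{ms2013ig4}). The algebraic relations~\eqref{eqn:rho_p_equations} then fall out of the height-gap calculation and are the only choice compatible with the boundary values seen by the counterflow line inside each loop of~$\Lambda$.
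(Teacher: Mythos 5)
Your overall route — couple the $\cwBCLE_\kappa(\rho)$ with a GFF as a boundary-branching flow line, produce the nested loops via counterflow lines, identify the nested loop tracer through the conformal Markov characterization of Lemma~\ref{lem:slekrcharacterization}, and get the $\beta\leftrightarrow\rho$ correspondence by a continuity argument — is indeed the paper's strategy in outline, and your first paragraph matches the actual construction. The genuine gap is in your second paragraph, where you take $\eta_y'$ to be \emph{the} counterflow line of $h$ from $y$ and assert that its restriction to a true (resp.\ false) loop of $\Lambda$ is an unshifted counterflow line tracing a $\ccwBCLE_{\kappa'}(\rho_R')$ (resp.\ $\cwBCLE_{\kappa'}(\rho_L')$) with the weights of~\eqref{eqn:rho_p_equations}. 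This step fails: after conformally mapping a true loop of $\Lambda$ to $\h$, the field has boundary data $\lambda$ and $\lambda-2\pi\chi$, whereas coupling a $\ccwBCLE_{\kappa'}(\rho_R')$ requires $-\lambda'(1+\rho_R')+2\pi\chi$ and $-\lambda'(1+\rho_R')$; one must therefore add the constant $c_R$ of~\eqref{eqn:c_r_def}, which is nonzero except for one particular value of $\rho_R'$, and the false loops require the \emph{different} constant $c_L$ of~\eqref{eqn:c_l_def}. So the nested ensembles are branching counterflow lines of the two shifted fields $h+c_R$ and $h+c_L$, and no single counterflow line of $h$ produces both; indeed the counterflow line of $h$ from $y$ is a classical $\SLE_{\kappa'}(\ul{\rho}')$ curve, while for $\beta\in(-1,1)$ the full $\bSLE_{\kappa'}^\beta$ (seen from its target) has a side-swapping driving pair with i.i.d.\ excursion signs, so the two laws cannot agree. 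Consequently~\eqref{eqn:rho_p_equations} is not forced by the boundary values seen by an unshifted counterflow line inside each loop; it is exactly the condition $\rho_L'+\rho_R'=\kappa'/2-4$ guaranteeing that the concatenated nested-loop tracer, away from the trunk, sees boundary data $\pm\lambda'$ on its two sides and $-\lambda'+2\pi\chi$ beyond the last trunk point, i.e.\ evolves as an $\SLE_{\kappa'}(\kappa'-6)$ with a single force point at the last trunk point.

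Because $\eta'$ is assembled from counterflow lines of two differently shifted fields rather than being one counterflow line, two points you treat as automatic need real arguments: continuity of $\eta'$, which the paper derives from the local finiteness of $\Lambda$ and $\Gamma'$ proved via space-filling $\SLE_{\kappa'}$ (Propositions~\ref{prop:locfinitek} and~\ref{prop:locfinitekprime}) rather than from continuity of a counterflow line; and the locality of $\eta'([0,\tau])$ for stopping times $\tau$, which is the heart of the matter and requires the commutation argument of Proposition~\ref{prop:localityofnestedlooptracer} (one may grow the nested counterflow lines before seeing all of $\eta_\Lambda$ and obtain the same loops). Finally, your claim that a single boundary computation yields an explicit monotone bijection $\rho\mapsto\beta$ overreaches: the paper establishes the existence of $\rho(\beta,\kappa')$ only softly, via injectivity (uniqueness of the trunk law), the extreme cases $\beta=\pm 1$, and a weak-convergence continuity argument, and it explicitly defers the quantitative relation between $\beta$ and $\rho$ to \cite{msw2016betarho}.
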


We stress already that in the present paper we will not derive the general explicit formula for this function $\rho (\beta, \kappa')$, but that this will be one of the main results of our subsequent paper \cite {msw2016betarho}, see the discussion in Section~\ref{SSdiscussion}. In some special cases though, it is however possible to work out already the value of $\rho$:

\begin {itemize}
 \item 
We can note that by symmetry, $$\rho (\beta, \kappa') + \rho ( -\beta, \kappa') = \kappa - 6.$$
In particular, when $\beta = 0$, one necessarily has 
$\rho_L' = \rho_R' = (\kappa'/4) - 2 $ and
\begin{equation}
\label{eqn:rho_symmetric}	
 \rho ( 0, \kappa') = \frac{\kappa-6}{2}.
\end{equation}
In other words:
\begin {corollary}
 The trunk of a $\bSLE_{\kappa'}^0$ is a $\bSLE_{\kappa} ( (\kappa-6) / 2)$. 
\end {corollary}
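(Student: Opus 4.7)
The plan is to deduce the corollary directly from Theorem~\ref{thm:duality1} by identifying the value $\rho(0,\kappa')$. By the theorem, the trunk of a full $\bSLE_{\kappa'}^\beta$ is a $\bSLE_\kappa(\rho(\beta,\kappa'))$ process, where the parameters $\rho_R'$ and $\rho_L'$ of the nested $\ccwBCLE_{\kappa'}(\rho_R')$ (inside true loops of the $\cwBCLE_\kappa(\rho)$) and $\cwBCLE_{\kappa'}(\rho_L')$ (inside false loops) are determined by $\rho$ through~\eqref{eqn:rho_p_equations}. So the task reduces to pinning down $\rho(0,\kappa')$ via a symmetry argument and then solving~\eqref{eqn:rho_p_equations} explicitly.

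First I would establish the general symmetry $\rho(\beta,\kappa') + \rho(-\beta,\kappa') = \kappa-6$, which is the input already advertised in the text. The idea is that taking the image of the whole coupled configuration under an anti-conformal automorphism of $D$ (for example, reflection across a vertical line through the root $x$ when $D = \D$) reverses all orientations. By Proposition~\ref{prop::bcle_prop}(ii)--(iii), the reflected picture is distributed as a $\ccwBCLE_\kappa(\rho)$ whose false loops contain nested $\cwBCLE_{\kappa'}(\rho_R')$s and whose true loops contain nested $\ccwBCLE_{\kappa'}(\rho_L')$s; equivalently this is the construction associated to the parameter $\wt\rho := \kappa-6-\rho$, with left/right roles swapped. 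On the trunk side, reflection turns a $\bSLE_{\kappa'}^\beta$ trunk into a $\bSLE_{\kappa'}^{-\beta}$ trunk (since the sign $\beta$ governing the asymmetric coin flips of excursions of $Z$ is reversed). Comparing the two identifications gives the additive symmetry.

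Specializing this to $\beta = 0$, the construction is manifestly invariant in law under reflection, so $\rho(0,\kappa') = \kappa-6-\rho(0,\kappa')$ and hence $\rho(0,\kappa') = (\kappa-6)/2$. Equivalently, the symmetry forces $\rho_R' = \rho_L'$ for the two nested BCLEs, and combined with the constraint $\rho_L' + \rho_R' = \kappa'/2 - 4$ from~\eqref{eqn:rho_p_equations} this gives $\rho_R' = \rho_L' = \kappa'/4 - 2$. Plugging into $\rho_R' = -(\kappa'/4)(\rho+2)$ yields
\[
\frac{\kappa'}{4} - 2 = -\frac{\kappa'}{4}(\rho+2),
\qquad\text{so}\qquad
\rho+2 = -1 + \frac{8}{\kappa'} = -1 + \frac{\kappa}{2},
\]
which gives $\rho = (\kappa-6)/2$, as required.

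The only real content beyond Theorem~\ref{thm:duality1} is the reflection symmetry step, and the potential subtlety there is checking that reflection of the side-swapping construction of $\bSLE_{\kappa'}^\beta$ (as defined in Section~\ref{subsec:side-swapping}, through the $\eps$-cutoff scheme and the associated Poisson point process of bubbles) really does interchange the biased coin parameter $\beta$ with $-\beta$. Once this is granted, the identification $\rho(0,\kappa') = (\kappa-6)/2$ is immediate, and the corollary follows by Theorem~\ref{thm:duality1}.
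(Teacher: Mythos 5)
Your proposal is correct and follows essentially the same route as the paper: the paper's own justification is exactly the symmetry relation $\rho(\beta,\kappa')+\rho(-\beta,\kappa')=\kappa-6$ (obtained from orientation reversal under an anti-conformal map, using Proposition~\ref{prop::bcle_prop} and the fact that reflection exchanges $\bSLE_{\kappa'}^{\beta}$ with $\bSLE_{\kappa'}^{-\beta}$), specialized at $\beta=0$ to give $\rho(0,\kappa')=(\kappa-6)/2$ and hence the corollary via Theorem~\ref{thm:duality1}. Your additional consistency check through~\eqref{eqn:rho_p_equations} matches the paper's observation that $\rho_L'=\rho_R'=\kappa'/4-2$ in this case.
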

\item
When $\beta=1$, the full $\bSLE_{\kappa'}^\beta$ process from $x$ to $y$ is nothing else than the process that traces counterclockwise one after the other the $\CLE_{\kappa'}$ loops that touch the boundary of the domain between $x$ and $y$ in their order of appearance on the clockwise arc from $x$ to $y$. So in this special $\beta=1$ case, this theorem is rather trivial: the first BCLE just consists of the loop that traces along the boundary loop clockwise, and inside this loop, one just samples the boundary-touching loops of a $\CLE_{\kappa'}$ with a counterclockwise orientation. The similar symmetric case holds for $\beta=-1$. So,  one 
has 
\[  \rho (1, \kappa') = -2 \quad\hbox{and}\quad \rho (-1, \kappa') = \kappa-4. \]
\item  In fact, our proof will show that for all 
$\kappa' \in (4, 8)$, the mapping $\beta \mapsto \rho(\beta, \kappa')$  is an {\em decreasing} bijection from $[-1,1]$ onto $[-2, \kappa-4]$.
\end {itemize}

\begin{figure}[ht!]
\begin{center}
\includegraphics[width=4in]{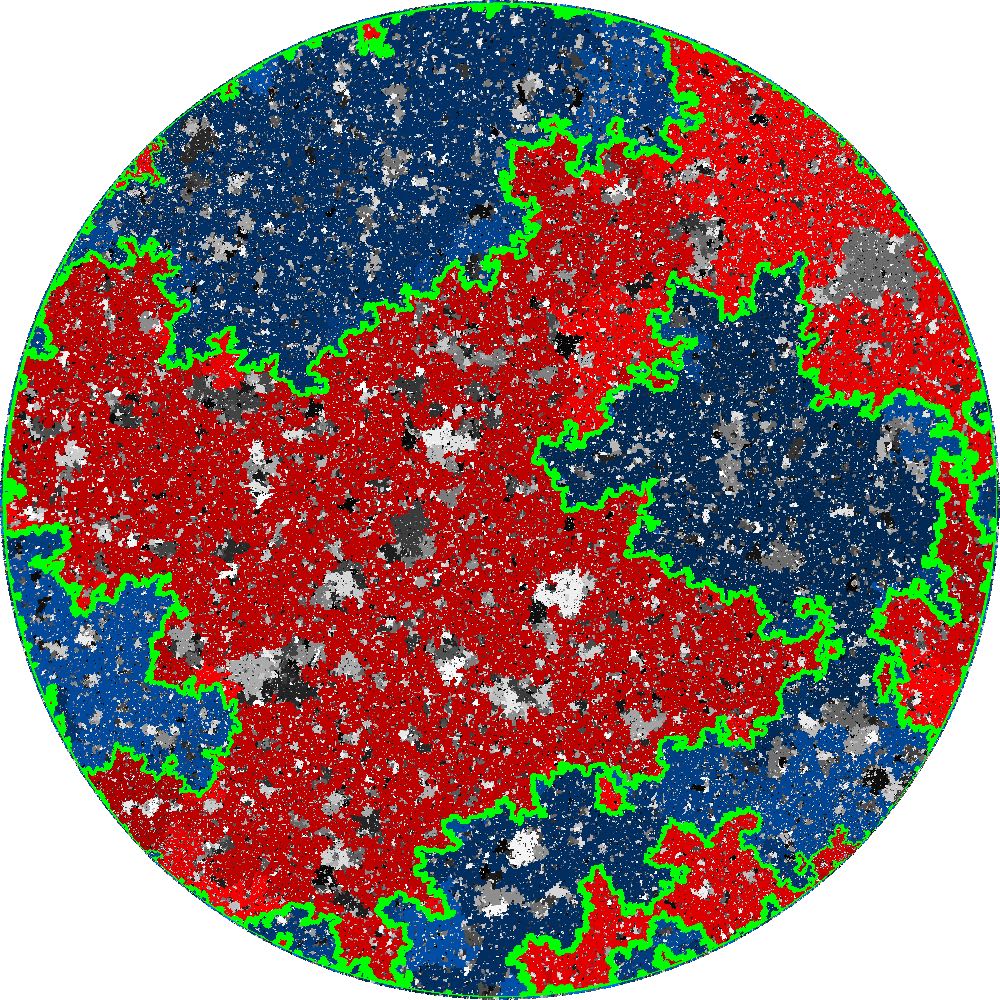}
\end{center}
\caption{\label{fig:perc_sim_later} Continuation of Figure~\ref{fig:cle_6_perc_sim}.  Shown are the red/blue clusters which touch the green interfaces.  In the context of Theorem~\ref{thm:duality1}, the green interfaces correspond to the $\BCLE_\kappa(\rho)$ and the red (resp.\ blue) clusters correspond to the $\BCLE_{\kappa'}(\rho_L')$ (resp.\ $\BCLE_{\kappa'}(\rho_R')$).} 
\end{figure}

\begin{figure}[ht!]
\begin{center}
\includegraphics[width=3.2in]{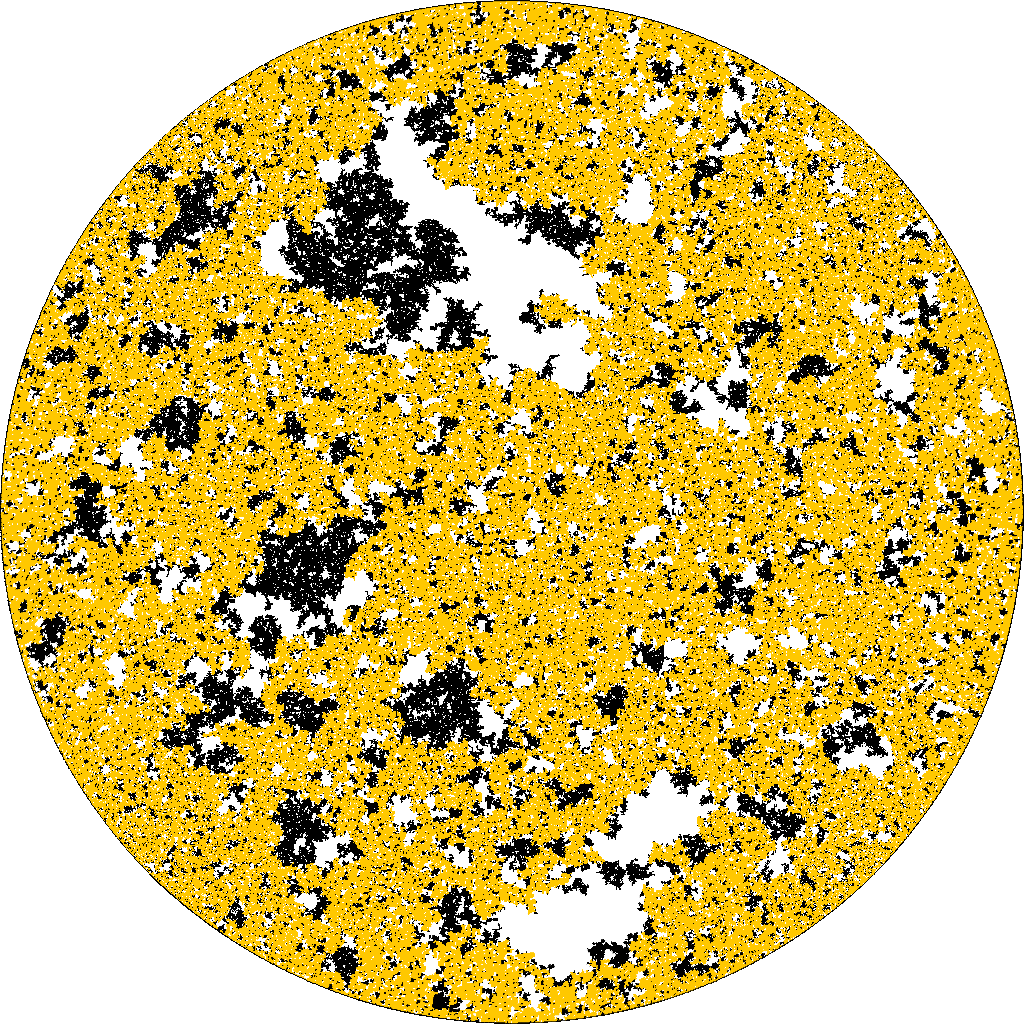}\hspace{0.01\textwidth}\includegraphics[width=3.2in]{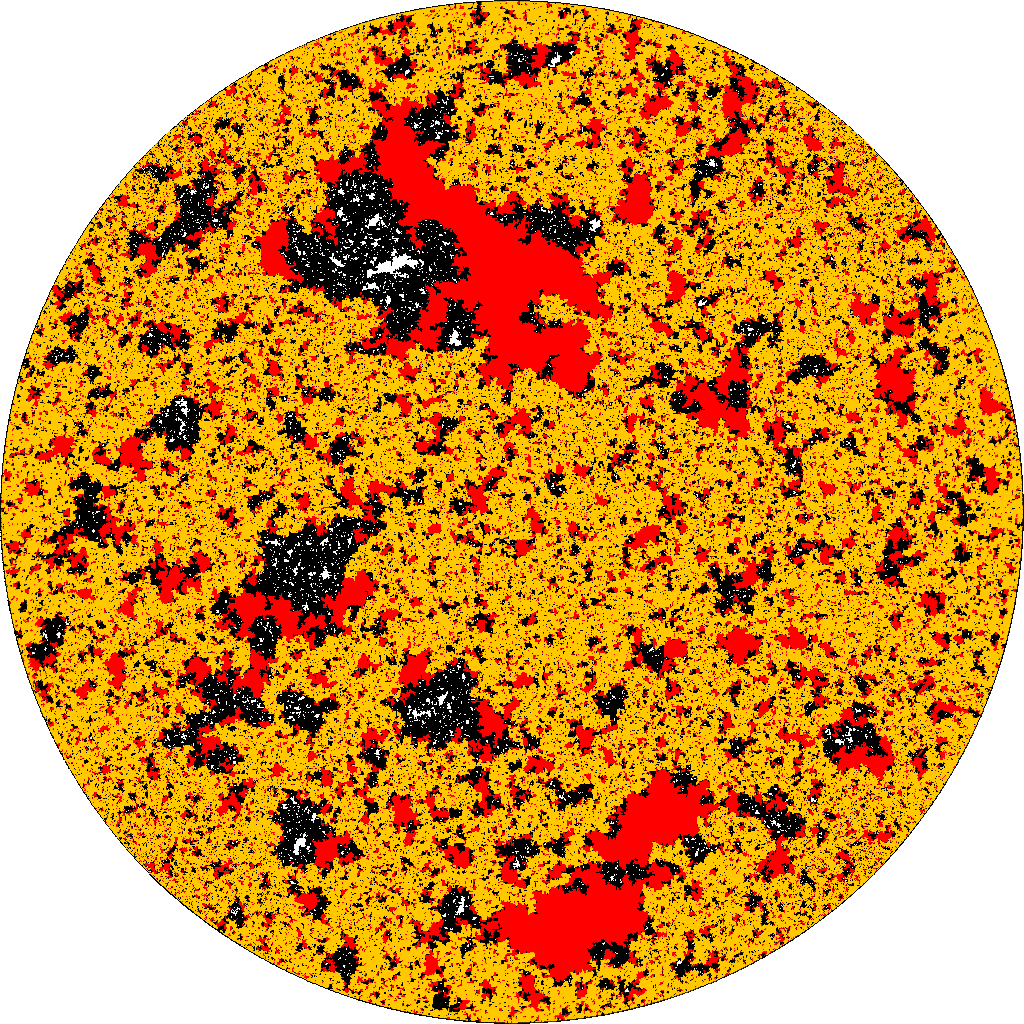}
\end{center}
\vspace{-0.01\textheight}
\caption{\label{fig:ising_sim} $\BCLE_3$ and $\BCLE_{16/3}$ nesting: {\bf Left:} In yellow, the critical FK$_{q=2}$ clusters in an Ising carpet with all $+$ boundary conditions that touch the boundary.  The remainder of the Ising carpet is shown in black.  {\bf Right:} The Ising carpet holes that touch yellow are colored red, together forming a nested $\BCLE_3$.}
\end{figure}

\medbreak

The second main goal will be to derive the corresponding statement when one interchanges~$\kappa$ and~$\kappa'$  (see Figure~\ref{fig:ising_sim}). We now consider $\kappa' \in (4,6)$ (mind that here, the values $\kappa' \in [6,8)$ are excluded), we let $\Lambda'$ be  a $\cwBCLE_{\kappa'} (\rho')$ for an admissible value $\rho'$, and we then iterate by defining an independent $\cwBCLE_\kappa(\rho_L)$ (resp.\ $\ccwBCLE_\kappa(\rho_R)$) in each  false (resp.\ true) loop traced by $\Lambda'$.  We then fix a boundary point $x$ and then define the two loops (from $x$ to $x$) $\eta_{\Lambda'}$ and $\eta$ using the same procedure as before. 

For any other boundary point $y \not= x$, we can also consider the paths $\eta_{\Lambda'}$ and $\eta$ parameterized by their half-plane capacity seen from $y$.  We note that the former is then a $\bSLE_{\kappa'} (\rho')$ 
from $x$ to $y$ and call the latter $\eta_y$.

\begin{theorem}
\label{thm:duality2}
For each $\kappa' \in (4,6)$ and $\beta \in [-1,1]$ there exists   $\rho':= \rho'( \beta, \kappa)$ in the  range $[\kappa'-6, 0]$ so that if one defines 
\begin{equation}
\label{eqn:rho_equations}
	\rho_R = -\frac{\kappa}{4}(\rho'+2) \quad\text{and}\quad \rho_L =  \frac {\kappa}2 - 4 - \rho_R 
\end {equation}
and constructs $\eta$ as just indicated then the following holds: $\eta$ is almost surely a continuous path and the law of the path $\eta_y$ is a $\bSLE_\kappa^\beta$ process. 
In particular, we get that a $\bSLE_\kappa^\beta$ process is almost surely generated by a continuous curve and 
that its trunk is a $\bSLE_{\kappa'} (\rho')$ for this value $\rho' = \rho' ( \beta, \kappa)$.
\end{theorem}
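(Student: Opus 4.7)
The plan is to carry out the same GFF imaginary geometry strategy that will underlie the proof of Theorem~\ref{thm:duality1}, but with the roles of $\kappa$ and $\kappa'$ interchanged. One starts with a GFF $h$ on $D$ whose boundary data is chosen so that the \emph{counterflow line} of $h$ from $x$ to $y$ has the law of a chordal $\SLE_{\kappa'}(\rho';\kappa'-6-\rho')$, i.e., a chordal branch of a $\bSLE_{\kappa'}(\rho')$ tree rooted at $x$. Target-independence of these processes, together with the branching-tree construction recalled in Section~\ref{Sec7}, allows one to realize the full collection $\Lambda'$ simultaneously inside this single GFF. The left and right outer boundaries of the counterflow line, and more generally of each true or false loop of $\Lambda'$, are then flow lines of $h$ at explicit angles determined by the height-gap formulas. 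The whole scheme uses only classical $\SLE_\kappa(\rho)$-type processes with $\rho>-2$ in the construction itself; the side-swapping generalized regime will only appear as the \emph{output} after reparameterization.

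Once $\Lambda'$ is placed inside $h$, each true loop of $\Lambda'$ carries boundary data that, after a conformal map to a standard domain, is exactly the boundary data producing a $\ccwBCLE_\kappa(\rho_R)$ from flow lines of the restricted field; similarly each false loop carries the data of a $\cwBCLE_\kappa(\rho_L)$. The values $\rho_R=-\kappa(\rho'+2)/4$ and $\rho_L=\kappa/2-4-\rho_R$ are read off from the arithmetic of $\lambda=\pi/2$ and $\chi=2/\sqrt{\kappa}-\sqrt{\kappa}/2$: one computes the jump in boundary data across a $\bSLE_{\kappa'}(\rho')$-type loop, divides by the flow-line constant, and the condition that the nested flow lines form an admissible $\BCLE_\kappa$ forces exactly these two values. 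Local finiteness of both $\Lambda'$ and the combined collection $\Gamma'$ then follows from the local finiteness results for $\CLE_{\kappa'}$ established in \cite{ms2013ig4} applied inside each pocket, so the concatenation $\eta$ is a bona fide continuous path from $x$ to $x$.

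It remains to identify the law of $\eta_y$ (with half-plane capacity parameterization from $y$) as a full $\bSLE_\kappa^\beta$ for an appropriate $\beta=\beta(\rho',\kappa)$. Conformal invariance and the Markov property of $\eta_y$ at stopping times where it finishes tracing a nested $\BCLE$ loop are inherited from the corresponding properties of $\Lambda'$ and the independence of the nested $\BCLE_\kappa$'s. One reads off the driving pair $(W,O)$: in the intervals of time where $\eta_y$ lies on the trunk (i.e., is following the $\bSLE_{\kappa'}(\rho')$ interface between loops of opposite orientation), $W-O$ evolves as $\sqrt{\kappa}$ times a Bessel process of dimension $\delta(\kappa,\kappa-6)$, the signs of successive excursions are independent thanks to the independent $\ccwBCLE_\kappa/\cwBCLE_\kappa$ inside true/false loops of $\Lambda'$, and each excursion gets sign $+$ with some probability $p_0\in[0,1]$ depending only on $(\kappa,\rho')$. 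Brownian scaling is inherited from the scaling of $\Lambda'$ and of the nested BCLEs. Lemma~\ref{lem:slekrcharacterization} then forces $\eta_y$ to be an $\SLE_\kappa^\beta(\kappa-6)=\bSLE_\kappa^\beta$ with $\beta=2p_0-1$. Since $p_0$ depends continuously and monotonically on $\rho'$ (using that the $\BCLE$ construction is monotone in its parameter) and takes the boundary values $0$ and $1$ at $\rho'\in\{0,\kappa'-6\}$, the map $\rho'\mapsto\beta$ is a bijection from $[\kappa'-6,0]$ onto $[-1,1]$; inverting it gives the claimed $\rho'(\beta,\kappa)$.

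The main obstacle is the last step: one must show that the trunk really exists as a continuous simple curve (not merely as a set) and that the excursion signs of $W-O$ are genuinely i.i.d.\ fair-or-biased coin tosses rather than being correlated with the Bessel magnitudes. This is where the GFF coupling is essential, because it lets one couple all of $\Lambda'$, the flow lines producing the nested $\BCLE_\kappa$'s, and the resulting path $\eta$ inside a single field; the independence of the signs then reduces to the conditional independence of the GFF in the true loops and false loops of $\Lambda'$ given $\Lambda'$ itself, which is a standard consequence of the local set property. Continuity of the trunk as $\eta_y$ is reconstructed from $\eta$ by excising loop-traversal intervals also requires an argument, analogous to Proposition~\ref{prop:unique_interface}, identifying the trunk with the boundary between the clusters of loops of opposite orientation.
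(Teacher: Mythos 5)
Your overall strategy coincides with the paper's: couple the $\cwBCLE_{\kappa'}(\rho')$ trunk tree and the nested $\BCLE_\kappa$'s with a single GFF (counterflow line for the trunk, flow lines of the field plus suitable constants inside the true and false loops), use local finiteness to see that the concatenated loop-tracing path $\eta$ is continuous, identify its evolution away from the trunk by reading off boundary data, invoke Lemma~\ref{lem:slekrcharacterization} together with scale invariance, and finish with an injectivity-plus-continuity argument to get all $\beta \in [-1,1]$. However, two steps are genuinely off as written. First, local finiteness: the ensembles involved are $\BCLE_{\kappa'}(\rho')$ for general admissible $\rho'$ and $\BCLE_\kappa(\rho_L)$, $\BCLE_\kappa(\rho_R)$, none of which are $\CLE$s, so the local finiteness of $\CLE_{\kappa'}$ from \cite{ms2013ig4} ``applied inside each pocket'' does not cover them; the paper has to prove BCLE local finiteness separately (Lemmas~\ref{lem:bcle_k_space_filling_sle_interaction} and~\ref{lem:bcle_kp_space_filling_sle_interaction}, Propositions~\ref{prop:locfinitek} and~\ref{prop:locfinitekprime}) by coupling each BCLE with a space-filling $\SLE_{\kappa'}$ loop in the same imaginary geometry and showing that the space-filling path visits each BCLE loop in a single stretch of time. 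Second, you state the driving mechanism backwards: on the trunk one has $W=O$, and it is during the excursions in which $\eta_y$ traces a nested $\BCLE_\kappa$ loop that $|W-O|$ evolves as $\sqrt{\kappa}$ times a Bessel process. Identifying this excursion behavior as an $\SLE_\kappa(\kappa-6)$ with force point at the last trunk point is exactly where the paper needs that $\eta([0,\tau])$ is a local set for every stopping time $\tau$, established via the commutation argument (the analogue of Proposition~\ref{prop:localityofnestedlooptracer}); your sketch gestures at the common GFF but omits this step. Relatedly, the values in~\eqref{eqn:rho_equations} are not forced by admissibility of the nested BCLEs alone (admissibility gives ranges, not values); they are forced by requiring that, after the constant shifts, the boundary data in the unexplored domain at a general stopping time exhibits only the two marked points appropriate to $\SLE_\kappa(\kappa-6)$.

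Also, the ``main obstacle'' you single out is not an obstacle in this direction of the argument: the trunk is by construction the continuous $\bSLE_{\kappa'}(\rho')$ exploration that you started from, so no existence-of-the-trunk or Proposition~\ref{prop:unique_interface}-type identification is needed for Theorem~\ref{thm:duality2} itself (that proposition only enters when interpreting the trunk as a percolation interface, as in Theorem~\ref{thm:CPI}); likewise the independence of the excursion signs comes out of the conformal Markov characterization in Lemma~\ref{lem:slekrcharacterization}, once the locality and boundary-data computation are in place, rather than being a separate difficulty. As the paper notes, this direction is in fact simpler than Theorem~\ref{thm:duality1} because the issue of pockets and of full versus capacity-parameterized processes does not arise.
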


 For a discussion of the general formula giving $\rho'$ as a function of $\beta$ and $\kappa$, see again Section~\ref{SSdiscussion}. 

The same symmetry argument as for Theorem~\ref{thm:duality1} shows that 
$$ \rho' (\beta, \kappa) + \rho' (-\beta, \kappa) =  {\kappa' -6 }. $$ 
 In particular: 
\begin {corollary}
 The trunk of a $\bSLE_{\kappa}^0$ is a $\bSLE_{\kappa'} ( (\kappa'-6) / 2)$. 
\end {corollary}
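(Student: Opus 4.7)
The plan is to apply Theorem~\ref{thm:duality2} with $\beta = 0$; the only thing to pin down is the value $\rho'(0,\kappa)$, and for this I will use the symmetry relation $\rho'(\beta,\kappa) + \rho'(-\beta,\kappa) = \kappa' - 6$ stated just above the corollary. Once this relation is established, setting $\beta = 0$ immediately yields $\rho'(0,\kappa) = (\kappa'-6)/2$, and then Theorem~\ref{thm:duality2} identifies the trunk of $\bSLE_\kappa^0$ as a $\bSLE_{\kappa'}\bigl((\kappa'-6)/2\bigr)$, which is exactly the claim.

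To justify the symmetry relation itself, I would argue via reflection. Fix an orientation-reversing conformal involution $\sigma$ of the domain (e.g.\ $z \mapsto -\bar z$ when working in $\h$ from $0$ to $\infty$). In the construction of $\bSLE_\kappa^\beta$ from a Poisson point process of bubbles with coin-tossed orientations, the parameter $\beta$ enters solely through the asymmetry of the coins that decide the signs of the excursions of the driving Bessel-type process. Since $\sigma$ swaps the two possible orientations, it pushes forward $\bSLE_\kappa^\beta$ to $\bSLE_\kappa^{-\beta}$, and hence it pushes forward the trunk of $\bSLE_\kappa^\beta$ to the trunk of $\bSLE_\kappa^{-\beta}$.

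Now, by Theorem~\ref{thm:duality2}, the trunk of $\bSLE_\kappa^\beta$ has the law of $\bSLE_{\kappa'}\bigl(\rho'(\beta,\kappa)\bigr)$, which can be read as an $\SLE_{\kappa'}(\rho';\,\kappa'-6-\rho')$ with force points on the left and right of the starting point, where $\rho' = \rho'(\beta,\kappa)$. Since $\sigma$ interchanges the left and right force points, the image under $\sigma$ is an $\SLE_{\kappa'}(\kappa'-6-\rho';\,\rho')$, i.e.\ a $\bSLE_{\kappa'}(\kappa'-6-\rho')$. Comparing with the direct identification of the trunk of $\bSLE_\kappa^{-\beta}$ as a $\bSLE_{\kappa'}\bigl(\rho'(-\beta,\kappa)\bigr)$ gives $\rho'(-\beta,\kappa) = \kappa' - 6 - \rho'(\beta,\kappa)$, which is the asserted symmetry.

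The main (in fact, essentially the only) thing to check is that the reflection symmetry is cleanly realized in the BCLE-based coupling underlying Theorem~\ref{thm:duality2}, namely that reflecting a $\cwBCLE_{\kappa'}(\rho')$ together with its nested $\cwBCLE_\kappa(\rho_L)$ and $\ccwBCLE_\kappa(\rho_R)$ layers produces the analogous configuration with the orientation of every loop reversed; this is immediate from part~(ii) of Proposition~\ref{prop::bcle_prop} and the definitions of $\rho_L,\rho_R$ in~\eqref{eqn:rho_equations}.
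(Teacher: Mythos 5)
Your proof is correct and follows essentially the same route as the paper: Theorem~\ref{thm:duality2} at $\beta=0$ combined with the symmetry relation $\rho'(\beta,\kappa)+\rho'(-\beta,\kappa)=\kappa'-6$, which the paper invokes as ``the same symmetry argument as for Theorem~\ref{thm:duality1}''. Your reflection argument (an anti-conformal involution sends $\bSLE_\kappa^\beta$ to $\bSLE_\kappa^{-\beta}$, hence trunk to trunk, while swapping the two force points of the $\SLE_{\kappa'}(\rho';\kappa'-6-\rho')$ trunk law) is exactly the symmetry the paper leaves implicit, so no further comment is needed.
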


The special cases where $\beta = 1$ and $\beta=-1$ are more interesting than in the previous $\kappa'$-loops on $\kappa$-trunk case. 
Indeed, the loops traced by this $\SLE_\kappa^1(\rho)$ process will still be all on the right-hand side of the trunk, but this time the trunk is non-trivial.

More precisely, the fact that loops are all on the right-hand side of the trunk implies that the $\cwBCLE_{\kappa}(\rho_L)$ has in fact no true loops. 
In other words,
the $\bSLE_{\kappa} (\rho_L)$ just goes along the boundary counterclockwise, which means that $\rho_L = \kappa-4$  
 and therefore $ \rho_R =-\kappa /2$, and:
\begin {corollary} 
\label {co:76}
 The trunk of the totally asymmetric $\bSLE_\kappa^{1}$  is a $\bSLE_{\kappa'}(0)$.
\end {corollary}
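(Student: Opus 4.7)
The plan is to deduce Corollary~\ref{co:76} directly from Theorem~\ref{thm:duality2} by specializing $\beta = 1$ and identifying the corresponding value of $\rho'(1,\kappa)$. The strategy mirrors the discussion preceding the corollary: one transfers the known one-sidedness of $\bSLE_\kappa^1$ into the nested $\BCLE$ coupling and then reads off $\rho'$ from an algebraic relation.

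First I recall from the end of Section~\ref{Sec3} that $\bSLE_\kappa^1$ is the totally asymmetric side-swapping process corresponding to the $\beta = 1$ coin, which assigns the same (positive) sign to every excursion of the underlying multiple of a Bessel process. Consequently every loop traced by $\bSLE_\kappa^1$ is counterclockwise and lies on the right-hand side of the trunk. Next I transfer this information to the coupling provided by Theorem~\ref{thm:duality2}: one samples $\Lambda'$ as a $\cwBCLE_{\kappa'}(\rho'(1,\kappa))$, one nests an independent $\cwBCLE_\kappa(\rho_L)$ inside each false loop of $\Lambda'$, and an independent $\ccwBCLE_\kappa(\rho_R)$ inside each true loop of $\Lambda'$. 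The loops that the full $\bSLE_\kappa^\beta$ hangs on its trunk come from the true loops of these two nested families, and the two families sit on opposite sides of the trunk. Since for $\beta = 1$ no loop lies on the left side of the trunk, the family nested inside the false loops of $\Lambda'$ must contribute no true loop at all, i.e.\ the $\cwBCLE_\kappa(\rho_L)$ must be degenerate. From the boundary-case discussion in the definition of $\cwBCLE_\kappa(\rho)$, this degeneracy occurs precisely when $\rho_L = \kappa - 4$.

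Finally, substituting $\rho_L = \kappa - 4$ into the relation $\rho_L = \kappa/2 - 4 - \rho_R$ from~\eqref{eqn:rho_equations} gives $\rho_R = -\kappa/2$, and then $-(\kappa/4)(\rho'+2) = -\kappa/2$ yields $\rho'(1,\kappa) = 0$. Theorem~\ref{thm:duality2} therefore identifies the trunk of $\bSLE_\kappa^1$ as a $\bSLE_{\kappa'}(0)$, as claimed; note that $\rho' = 0$ is indeed the right endpoint of the admissible range $[\kappa'-6,0]$, consistent with $\beta = 1$ being the extremal choice. The only genuine subtlety in the argument is the orientation bookkeeping in the middle paragraph: one must track clockwise versus counterclockwise, true versus false loops, and left versus right of the trunk carefully enough to conclude that it is $\rho_L$ (rather than $\rho_R$) which takes the degenerate value; once that is pinned down, the remainder is a one-line algebraic substitution.
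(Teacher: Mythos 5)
Your argument is correct and is essentially the paper's own: the paper also derives Corollary~\ref{co:76} by noting that for $\beta=1$ all loops lie to the right of the trunk, so the $\cwBCLE_\kappa(\rho_L)$ in the false loops of $\Lambda'$ has no true loops, forcing the degenerate value $\rho_L=\kappa-4$, hence $\rho_R=-\kappa/2$ and $\rho'=0$ via~\eqref{eqn:rho_equations}. The orientation bookkeeping you flag (that it is $\rho_L$ which degenerates) matches the paper's conventions, so there is nothing to add.
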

As we will discuss below, this is not surprising in view of the discrete Edwards-Sokal couplings.  A similar statement holds for the case $\beta=-1$: The trunk 
of the totally asymmetric $\bSLE_\kappa^{-1}$ is a $\bSLE_{\kappa'} (\kappa' -6)$. 

More generally, the constraints on the ranges of $\rho_R$ and $\rho_L$ do show why in fact, in Theorem~\ref{thm:duality2}, $\rho'(\beta, \kappa)$ will take its value only in $[\kappa'-6, 0]$ and does not cover the entire 
range of admissible values $[ \kappa' /2 - 4, \kappa'/2 -2]$ where $\BCLE_{\kappa'}(\rho')$ processes exist.  
In fact, our proof will show that the map $\beta \mapsto \rho' (\beta, \kappa)$ is a {\em increasing} bijection from $[-1,1]$ onto $[\kappa'-6, 0]$

The reader might be a little bit puzzled by the fact that the trunk of $\bSLE_\kappa^{1}$ tends to somehow be more to the right than 
the trunk of $\bSLE_\kappa^{-1}$  for $\kappa \in (8/3, 4)$, whereas clearly, for $\kappa' \in (4,8)$, 
the definition shows that the larger $\beta$ is, the more to the left the trunk of $\bSLE_{\kappa'}^{\beta}$ is. 
More generally, it may seem counterintuitive that $\beta \mapsto \rho' (\beta, \kappa)$ turns out to be increasing when $\kappa \in (8/3, 4)$ while $\beta \mapsto \rho (\beta, \kappa')$ from 
Theorem~\ref{thm:duality1} must clearly be decreasing when $\kappa' \in (4,8)$.
Here, one should bear in mind that the principal value correction in the definition of the driving function for bSLE$_\kappa^\beta$ when $\kappa <4$ creates a compensation 
to the $\beta$-dependent ``push'' that was due to the status of the loops, as opposed to the case $\kappa' >4$ where no such compensation is present.

\subsection{Consequences for $\CLE_\kappa$: Edwards-Sokal couplings in the continuum}

We now go through some consequences of 
 Theorems~\ref{thm:duality1} and~\ref{thm:duality2} for conformal loop ensembles themselves. 
 First, we note that they exactly
 provide the missing items that turn the conditional statements of Propositions~\ref{prop:CME_characterization} and~\ref{prop:unique_interface} into plain statements
 and provide a description of the \hyperref[def:cpi]{CPIs} in $\CLE_\kappa^\beta$ carpets for $\kappa \in (8/3, 4)$ as $\bSLE_{\kappa'} (\rho')$ processes,  and of 
 colored $\CLE_{\kappa'}^\beta$ interfaces as $\bSLE_{\kappa} (\rho)$ processes. For the record and future reference, let us state this  formally: 

\begin{theorem}
\label {thm:CPI}
\begin{enumerate}[(i)]
 \item 
 For each $\kappa' \in (4,8)$ and each $\beta \in (-1, 1)$, the colored $\CLE_{\kappa'}^\beta$ percolation interface is the trunk of the full $\bSLE_{\kappa'}^\beta$ that can be used to construct the $\CLE_{\kappa'}^\beta$. It is a continuous curve and its marginal distribution is that of a $\bSLE_\kappa ( \rho)$ for $\rho = \rho (\beta, \kappa')$. 
\item
For any $\kappa \in (8/3, 4)$ and $\beta \in [-1, 1]$, there exists a unique (in distribution) \hyperref[def:cpi]{CPI} in $\CLE_\kappa^\beta$. The joint distribution of this \hyperref[def:cpi]{CPI} with the $\CLE_\kappa^\beta$ loops that it intersects is that of the trunk and the loops drawn by a $\bSLE_\kappa^\beta$. The marginal (i.e.\  ``annealed'' in the terminology used for random walks in random environments) law of the \hyperref[def:cpi]{CPI} is that of a $\bSLE_{\kappa'} (\rho')$ for $\rho' = \rho' (\beta, \kappa)$.
\end{enumerate}
\end{theorem}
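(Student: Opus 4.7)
The plan is to observe that this theorem is essentially a formal corollary of the results already assembled in the paper: it converts the conditional statements of Proposition~\ref{prop:CME_characterization} and Proposition~\ref{prop:unique_interface} into unconditional ones, by feeding in the continuity and trunk-identification outputs of Theorems~\ref{thm:duality1} and~\ref{thm:duality2}. So the structure of the proof is almost purely combinatorial: check that the hypotheses of the two conditional propositions are supplied exactly by the two duality theorems, and read off the stated conclusions. No further probabilistic input is needed here.

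For part (ii), I would proceed as follows. Proposition~\ref{prop:CME_characterization} establishes two things: any CPI in $\CLE_\kappa^\beta$ must be distributed as the trunk of a $\bSLE_\kappa^\beta$ (together with the oriented loops it encounters), and conversely, \emph{if} $\bSLE_\kappa^\beta$ is almost surely generated by a continuous curve, then its trunk is a CPI in the associated $\CLE_\kappa^\beta$. Theorem~\ref{thm:duality2} furnishes exactly this continuity for every $\beta \in [-1,1]$, and moreover identifies the trunk's marginal law as a $\bSLE_{\kappa'}(\rho')$ for some $\rho' = \rho'(\beta,\kappa) \in [\kappa'-6, 0]$. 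Concatenating the two statements produces existence of a CPI, uniqueness in distribution, the joint description with the intersected $\CLE_\kappa^\beta$ loops, and the annealed marginal $\bSLE_{\kappa'}(\rho')$ law.

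For part (i), I would invoke Proposition~\ref{prop:unique_interface}, which, conditionally on the full $\bSLE_{\kappa'}^\beta$ from $0$ to $\infty$ being a continuous curve of zero Lebesgue measure whose trunk is a continuous simple curve in $\overline{\h}$, identifies the trunk almost surely with the intersection of $\partial C^+$ and $\partial C^-$, i.e.\ with the colored $\CLE_{\kappa'}^\beta$ percolation interface. Theorem~\ref{thm:duality1} supplies the continuity of the full $\bSLE_{\kappa'}^\beta$ and identifies its trunk as a $\bSLE_\kappa(\rho)$ for $\rho = \rho(\beta,\kappa')$; since the latter is a classical $\SLE_\kappa(\rho;\kappa-6-\rho)$ process with both force-point weights in the admissible simple range $(-2,\kappa-4)$, it is automatically a simple continuous curve of zero Lebesgue measure. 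Hence the hypotheses of Proposition~\ref{prop:unique_interface} are met, and the conclusion of part (i) follows directly, including the identification of the trunk law.

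The only genuine obstacle in this plan lies upstream: namely, proving Theorems~\ref{thm:duality1} and~\ref{thm:duality2} themselves, which is the business of the GFF/imaginary-geometry arguments of Sections~\ref{Sec8}--\ref{sec:proofs}. Once those are granted, the remaining ingredients (continuity of classical $\SLE_\kappa(\rho;\kappa-6-\rho)$ from \cite{ms2012ig1}, local finiteness of $\CLE_{\kappa'}$ from \cite{ms2013ig4}, and the bubble-measure/$\CLE$-exploration arguments already packaged in the lemmas of Sections~\ref{Sec4} and~\ref{Sec6}) are standard and have been collected precisely for this assembly. In particular, no new SDE or martingale computation is required at this stage, so I would expect the writeup of Theorem~\ref{thm:CPI} itself to be short.
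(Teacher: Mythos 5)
Your proposal is correct and coincides with the paper's own treatment: Theorem~\ref{thm:CPI} is stated there precisely as the assembly you describe, with Theorems~\ref{thm:duality1} and~\ref{thm:duality2} supplying the continuity and trunk-identification hypotheses that turn the conditional Propositions~\ref{prop:CME_characterization} and~\ref{prop:unique_interface} into unconditional statements. The only detail worth making explicit is that the zero-Lebesgue-measure hypothesis of Proposition~\ref{prop:unique_interface} concerns the \emph{full} $\bSLE_{\kappa'}^\beta$ curve, not just its trunk, but this is immediate since that curve is the union of the trunk with countably many $\SLE$-type loops, each of zero Lebesgue measure.
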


Let us now explain how Theorem~\ref{thm:duality2} makes it possible to construct an entire $\CLE_\kappa$ for $\kappa \in (8/3, 4)$
using an iteration of boundary conformal loop ensembles. 

Let us first consider the case where $\beta=1$. 
Suppose that $\kappa \in (8/3, 4)$, and start with the setup of Theorem~\ref{thm:duality2} for $\beta=1$ (which is that of Corollary~\ref{co:76}):
Sample first a $\cwBCLE_{\kappa'} (0)$ (recall that this corresponds to {the} boundary-touching loops of a $\CLE_{\kappa'}$). Then, inside all the true loops of this 
$\cwBCLE_{\kappa'} (0)$, sample independent 
$\ccwBCLE_\kappa ( - \kappa /2)$ processes. 
Theorem~\ref{thm:duality2} states that the obtained picture can be viewed as a $\bSLE_\kappa^1$ branching tree starting from one boundary point and targeting 
all other boundary points. This means that the collection of all true $\SLE_\kappa$ loops that have been traced can be viewed as being part of the same $\CLE_\kappa$, and that in order 
to find the missing $\CLE_\kappa$ loops, one would have to continue exploring via the branching tree in the remaining unexplored regions. 
Here, we note that there are two types of unexplored regions where one has not launched the tree yet: Those that correspond to 
false loops of the $\cwBCLE_{\kappa'} (0)$, and those that correspond to false loops of one of the $\ccwBCLE_\kappa (-\kappa/2)$. However, in both cases, the conditional joint law of the missing $\CLE_\kappa$ loops in those regions is just given by independent $\CLE_\kappa$ collections in each of these regions (this is just due to the branching $\bSLE_\kappa$ construction of $\CLE_\kappa$). 
In particular, one can just iterate the same construction inside each of these regions: Sample a $\cwBCLE_{\kappa'} (0)$ and then an independent 
$\ccwBCLE_\kappa (- \kappa /2)$ inside each of its true loops and so on.  

For each given point $z$ in $D$, the number of such iteration steps required to find the $\CLE_\kappa$ loop that surrounds it is almost surely finite (and follows a geometric random variable because 
the probability of success at each iteration step is independent of $z$ by conformal invariance).
{It follows that:} 

\begin {theorem}
\label {thm:ES}
This iterative $\cwBCLE_{\kappa'} (0)$ / $\ccwBCLE_\kappa (-\kappa/2)$ procedure constructs exactly an entire $\CLE_\kappa$. 
\end {theorem}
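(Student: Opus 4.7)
The plan is to interpret the one-step construction as a partial branching exploration of a $\CLE_\kappa$ via a $\bSLE_\kappa^1$ tree, and then iterate inside the unexplored regions.

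First, I would fix a boundary prime end $x\in\partial D$ and apply Theorem~\ref{thm:duality1} (equivalently, Corollary~\ref{co:76}) to the sample obtained by drawing a $\cwBCLE_{\kappa'}(0)$ rooted at $x$ and, independently inside each of its true loops, a $\ccwBCLE_\kappa(-\kappa/2)$. The theorem says that, for any target boundary point $y$, the chordal exploration from $x$ to $y$ induced by this configuration has the law of a $\bSLE_\kappa^1$ from $x$ to $y$. Varying $y$ and using target-independence of $\bSLE_\kappa$, the whole configuration is the boundary branching $\bSLE_\kappa^1$ tree rooted at $x$. By the branching tree definition of $\CLE_\kappa$ recalled in Section~\ref{Sec3}, the loops cut out by the excursions of the Bessel component of this tree form (a sample of) a $\CLE_\kappa$. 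Because $\beta=1$, all excursions have the same orientation, so these loops are precisely the \emph{true} loops of all the $\ccwBCLE_\kappa(-\kappa/2)$'s we drew. Hence those true loops are already a subset of the loops of a genuine $\CLE_\kappa$.

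The next step is to identify the regions where no $\CLE_\kappa$ loop has yet been produced. These fall into two families: the false (counterclockwise) loops of the root $\cwBCLE_{\kappa'}(0)$, and the false (clockwise) loops of each $\ccwBCLE_\kappa(-\kappa/2)$ that was placed inside a true loop of the root $\BCLE_{\kappa'}$. The branching $\bSLE_\kappa^1$ tree has, up to this point, only explored in the complement of the interiors of these false components; consequently, the conditional law of the remaining $\CLE_\kappa$ loops, given everything drawn so far, is that of independent $\CLE_\kappa$'s inside each such component. This is the key domain Markov input, and it follows directly from the branching construction of $\CLE_\kappa$ together with the fact that the exploration has not crossed the boundaries of those components. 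Inside each such component we therefore repeat the same one-step procedure: sample an independent $\cwBCLE_{\kappa'}(0)$ and nested $\ccwBCLE_\kappa(-\kappa/2)$.

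Iterating this, at the $n$-th stage we have produced a (random) finite/countable family of true $\CLE_\kappa$ loops, and the unexplored components are a countable collection of Jordan domains containing, independently and conditionally, their own $\CLE_\kappa$'s. To conclude, I would fix $z\in D$ and argue that the probability that the $\CLE_\kappa$ loop surrounding $z$ is produced at a given iteration step, conditioned on $z$ still lying in an unexplored component, is bounded below by a constant $p_0>0$ that depends only on $\kappa$. This uses conformal invariance of both $\BCLE_{\kappa'}(0)$ and $\BCLE_\kappa(-\kappa/2)$: mapping the unexplored component to a reference domain, the probability that $z$ (whose image may be anywhere) is surrounded by a true $\BCLE_\kappa(-\kappa/2)$ loop after one step is a fixed positive number because a.s.\ every interior point is surrounded by a true or a false loop, and a positive proportion of points fall into true loops. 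Hence the iteration depth needed to capture the loop around $z$ is stochastically dominated by a geometric random variable, and in particular is a.s.\ finite. This shows that every $\CLE_\kappa$ loop is eventually produced, so the union of all true loops collected through the iteration has the law of the full $\CLE_\kappa$.

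The main difficulty I anticipate is the last conformal-invariance/positivity step: one needs a quantitative lower bound, uniform over the shape of the unexplored component and the location of $z$ inside it, for the probability that $z$ is swallowed by a true loop at a given stage. The clean way to handle this is to reduce to a reference simply connected domain via conformal invariance of both $\BCLE$s, and then observe that the probability in question equals a deterministic positive constant because both layers of $\BCLE$ are locally finite and their true-loop gaskets have positive harmonic measure from any interior point; no $z$-dependence survives the conformal reduction. Once this is in hand, Borel--Cantelli (or the geometric stochastic domination directly) closes the proof.
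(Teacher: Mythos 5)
Your proposal is correct and is essentially the paper's own argument: identify the one-step picture as the boundary branching $\bSLE_\kappa^1$ tree via Corollary~\ref{co:76}, note that the branching construction of $\CLE_\kappa$ gives independent $\CLE_\kappa$'s in the two families of unexplored false-loop components, and iterate, with the loop around any fixed $z$ found after a geometric number of steps since the per-step success probability is positive and independent of $z$ and of the component by conformal invariance. The only correction is a citation slip: the relevant duality statement here is Theorem~\ref{thm:duality2} (of which Corollary~\ref{co:76} is the $\beta=1$ case), not Theorem~\ref{thm:duality1}.
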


We can furthermore note that in this construction, 
the drawn $\cwBCLE_{\kappa'} (0)$ loops correspond to  ``critical percolation clusters'' drawn by a CPI in the $\CLE_\kappa^1$ carpet, as in (ii) of Theorem~\ref{thm:CPI}. 

Let us make the following comment before discussing the generalization to other values of $\beta$: 
As we have already mentioned, prior to the present paper, the only existence proof of $\CLE_\kappa$ for $\kappa \in (8/3, 4)$  was based on the loop-soup construction in \cite{sw2012cle}. More precisely, this was the only existing proof of the fact that $\bSLE_\kappa^\beta$ exploration trees starting from different points would all construct the same law of locally finite collections of simple loops -- one can for instance keep in mind that the coupling of the $\CLE_\kappa$ with the GFF is more complicated for $\kappa\not=4$ than in the case $\kappa=4$ and depends for instance on the choice of the starting point of the chosen tree.  
However, this new construction of $\CLE_\kappa$ provides an alternative derivation of the fact that this collection of loops is defined in a root-invariant way. Indeed, we use here only the fact that the BCLEs are root-invariant (as one constructs the CLEs via iteration of two BCLEs) and this fact follows from the reversibility of the $\SLE_\kappa (\rho)$ processes for $\rho > -2$ derived via the GFF couplings in \cite{ms2012ig2, ms2012ig3}.

We now turn to the generalization of Theorem~\ref{thm:ES} to other values of $\beta$, which will also follow from Theorem~\ref{thm:duality2}. 
It will provide for each value of $\beta \in [-1,1]$ a 
similar yet different BCLE-based construction of $\CLE_\kappa$, or more precisely, a joint construction of $\CLE_\kappa^\beta$ and of ``critical percolation 
clusters'' in the $\CLE_\kappa^\beta$ (where the $\CLE_\kappa^\beta$-loops are considered to be open or closed depending on their label). 
Let us describe this first in the symmetric case where $\beta=0$. 
Suppose that $\kappa \in (8/3, 4)$, and start again with the setup of Theorem~\ref{thm:duality2}, but  for $\beta=0$: Sample first a $\cwBCLE_{\kappa'} ((\kappa' - 6) / 2)$ process $\Lambda'$. 
Then, inside all the false loops of $\Lambda'$ {\em and} all the true loops of $\Lambda'$, sample independent 
$\BCLE_\kappa ( (\kappa / 4) - 2 )$ processes. However, depending on whether one is in a true or false loop of $\Lambda'$, one will sample a $\ccwBCLE_\kappa ( \kappa / 4 - 2 )$ or a $\cwBCLE_\kappa ( \kappa / 4 - 2 )$.

This time, Theorem~\ref{thm:duality2} states that the obtained picture can be viewed as a $\bSLE_\kappa^0$ branching tree starting from one boundary point and targeting 
all other boundary points. Exactly the same procedure as before then shows that one can just iterate the same construction inside each of the false loops of 
each of these $\BCLE_\kappa ((\kappa / 4) - 2)$'s, and eventually, one constructs the whole symmetric $\bSLE_\kappa^0$ branching tree.  Hence:

\begin {theorem}
\label {thm:ES2}
This iterative $\cwBCLE_{\kappa'} ((\kappa' -6)/ 2)$ / ( $\cwBCLE_\kappa (\kappa /4 -2 )$ or $\ccwBCLE_\kappa ( \kappa /4 - 2)$)  procedure constructs exactly an entire $\CLE_\kappa^0$. 
\end {theorem}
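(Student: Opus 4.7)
The plan is to reduce Theorem~\ref{thm:ES2} to Theorem~\ref{thm:duality2} at $\beta=0$ and then iterate, in the same spirit as the proof of Theorem~\ref{thm:ES} for $\beta=1$. The only novelty is that the iteration must now also be carried out inside the false loops of the outer $\cwBCLE_{\kappa'}((\kappa'-6)/2)$ process, and not only inside the false loops of the inner $\BCLE_\kappa$ processes.

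First I would check that the parameters match. The symmetry $\rho'(\beta,\kappa)+\rho'(-\beta,\kappa)=\kappa'-6$ stated just after Theorem~\ref{thm:duality2} forces $\rho'(0,\kappa)=(\kappa'-6)/2$, and plugging this into \eqref{eqn:rho_equations} using $\kappa\kappa'=16$ gives $\rho_R=-(\kappa/4)((\kappa'-6)/2+2)=\kappa/4-2$ and $\rho_L=\kappa/2-4-\rho_R=\kappa/4-2$. Hence Theorem~\ref{thm:duality2} at $\beta=0$ asserts exactly that the one-step construction of Theorem~\ref{thm:ES2} has the property that, for any fixed boundary target $y$, the path $\eta_y$ obtained by tracing $\Lambda'$ decorated by the traversed nested $\BCLE_\kappa(\kappa/4-2)$ loops is a $\bSLE_\kappa^0$ from $x$ to $y$.

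Next I would invoke target-invariance of the branching $\bSLE_\kappa(\kappa-6)$ processes (recalled in Section~\ref{Sec3}) to run this construction consistently to a countable dense set of targets, producing a $\bSLE_\kappa^0$ exploration tree rooted at $x$. The branching construction of $\CLE_\kappa$ from Section~\ref{Sec3} then identifies the loops traced by this tree with the loops of a $\CLE_\kappa^0$. These loops are exactly the true loops of the nested $\BCLE_\kappa(\kappa/4-2)$'s, with orientations determined by the label (true vs.~false) of the containing $\Lambda'$-loop; Theorem~\ref{thm:duality2} at $\beta=0$ guarantees that the induced distribution of orientations is that of i.i.d.\ fair coin tosses, which is precisely the $\beta=0$ labelling rule of the $\bSLE_\kappa^0$ tree. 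The unexplored part of $D$ after one step is the union of the false loops of the nested $\BCLE_\kappa(\kappa/4-2)$ processes, and the conformal Markov property of the exploration tree (inherited from the Markov properties of the pair of $\bSLE$'s used to realize it) implies that the conditional law of the still-to-be-discovered $\CLE_\kappa^0$-loops inside each such component is that of an independent $\CLE_\kappa^0$. This justifies reapplying the construction inside each unexplored component by invoking Theorem~\ref{thm:duality2} there.

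The final ingredient is termination. For each fixed $z\in D$, conformal invariance shows that the probability that $z$ fails to be surrounded by a true $\BCLE_\kappa(\kappa/4-2)$-loop at a given iteration step is a constant $p<1$ independent of the shape of the unexplored component containing $z$, because both $\cwBCLE_\kappa(\kappa/4-2)$ and $\ccwBCLE_\kappa(\kappa/4-2)$ produce true loops surrounding any given interior point with positive probability. Thus the number of iterations needed to surround $z$ is dominated by a geometric random variable, the iteration almost surely produces a loop surrounding every point of $D$, and the full collection of true loops produced is a $\CLE_\kappa^0$. The main obstacle, as in the proof of Theorem~\ref{thm:ES}, is the verification that the iteration is consistent with the $\bSLE_\kappa^0$ branching structure on \emph{both} sides of $\Lambda'$: inside a false loop of $\Lambda'$ the $\bSLE_\kappa(\rho)$ branch enters through a $\cwBCLE_\kappa(\kappa/4-2)$ rather than a $\ccwBCLE_\kappa(\kappa/4-2)$, so one must appeal to the orientation-swapping duality of Proposition~\ref{prop::bcle_prop}(ii)--(iii) together with the equality $\rho_L=\rho_R$ at $\beta=0$ to see that the two kinds of inner $\BCLE_\kappa$'s support subtrees with compatible sign conventions, which is what makes the symmetric $\beta=0$ case work.
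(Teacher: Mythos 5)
Your proposal is correct and follows essentially the same route as the paper's argument: it reduces the statement to Theorem~\ref{thm:duality2} at $\beta=0$ (checking $\rho_L=\rho_R=\kappa/4-2$), views the one-step picture as the $\bSLE_\kappa^0$ branching tree, iterates inside the false loops of the nested $\BCLE_\kappa(\kappa/4-2)$'s exactly as in the proof of Theorem~\ref{thm:ES}, and concludes via the geometric termination argument based on conformal invariance. The only cosmetic difference is your closing appeal to Proposition~\ref{prop::bcle_prop}, which is not actually needed, since the statement of Theorem~\ref{thm:duality2} already prescribes the correct orientations of the nested $\BCLE_\kappa$'s (counterclockwise in true loops, clockwise in false loops of $\Lambda'$) on both sides of the trunk.
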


For the case of general $\beta \in (-1,1)$, the similar procedure works and constructs a $\CLE_\kappa^\beta$, 
except that one has to sample at each step a $\cwBCLE_\kappa (\rho_L)$ or a $\ccwBCLE_\kappa (\rho_R)$ depending on 
whether one is a true loop or a false loop of the $\cwBCLE_{\kappa'} (\rho')$.

\medbreak

Finally, let us properly state and prove the actual Edwards-Sokal coupling based on coloring the {\em clusters}
of a nested $\CLE_{\kappa'}$. Here, we consider a {\em nested} $\CLE_{\kappa'}$ $\Lambda'$ for $\kappa' \in (4,6)$. 
  As explained in Section~\ref{sec:intro}, in view of the FK framework, in the setting of wired boundary conditions it is natural to use $\Lambda'$ to construct clusters as follows.  Each loop of $\Lambda'$ is assigned an even or odd parity depending on its level of nesting in $\Lambda'$, because the loops correspond in an alternate fashion to outside and inside boundaries 
  of ``clusters''. We take the boundary of the domain to have even parity (and the outermost loops of $\Lambda'$ have odd parity).  If $\CL$ is an even parity loop (or the boundary of the domain), then we associate with it a cluster $C$ by taking $C$ to be the set of points surrounded by $\CL$ minus all of the points surrounded by a loop of odd parity surrounded by $\CL$.  We set the color of the outermost cluster to be white and for a given 
  $p \in (0,1)$, we assign the colors 
  to the other clusters independently, with probability $p$ to be white and with probability $1-p$ to be black.  We then consider clusters of black clusters. 
\begin{theorem}
\label{thm:cle_k_from_kp}
For each value of $\kappa' \in (4,6)$ there exists
$p(\kappa') \in (0,1)$ such that in this construction, the collection of all outermost outer boundaries of clusters of black $\CLE_{\kappa'}$ clusters has the law of a $\CLE_\kappa$.  

This probability $p(\kappa)$ is expressed in terms of the function $\rho(\beta, \kappa')$ of Theorem~\ref{thm:duality2} via  
$$  \rho (1 - 2p , \kappa' ) = - \kappa /2 .$$ 

In the special case where $\kappa' = 16/3$, we have $p(\kappa')=1/2$.
\end{theorem}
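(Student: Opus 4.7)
Define $\beta\in(-1,1)$ by $\rho(\beta,\kappa')=-\kappa/2$ and set $p=(1-\beta)/2$. Such a $\beta$ exists and is unique because, as noted right after Theorem~\ref{thm:duality1}, the map $\beta\mapsto\rho(\beta,\kappa')$ is a decreasing bijection from $[-1,1]$ onto $[-2,\kappa-4]$ and $-\kappa/2\in(-2,\kappa-4)$ whenever $\kappa\in(8/3,4)$. Plugging $\rho=-\kappa/2$ into~(\ref{eqn:rho_p_equations}) yields $\rho_R'=2-\kappa'/2$ and $\rho_L'=\kappa'-6$, and the symmetry relation~(\ref{eqn:rho_symmetric}) gives $\rho(0,\kappa')=(\kappa-6)/2=-\kappa/2$ precisely when $\kappa=3$, i.e., $\kappa'=16/3$, yielding the last assertion $p(16/3)=1/2$.

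The strategy is to realize the Edwards--Sokal coupling as the iterated BCLE construction of Theorem~\ref{thm:duality1} applied with the above $\beta$. Sample $\cwBCLE_\kappa(-\kappa/2)$ in $D$; inside each of its true (resp.\ false) loops sample an independent $\ccwBCLE_{\kappa'}(2-\kappa'/2)$ (resp.\ $\cwBCLE_{\kappa'}(\kappa'-6)$); and iterate the same procedure recursively inside every false loop of every BCLE so produced, so as to fill all pockets. By Theorem~\ref{thm:duality1} the associated full $\bSLE_{\kappa'}^\beta$ is continuous, so by its definition in Section~\ref{subsec:side-swapping} it traces out a nested $\CLE_{\kappa'}^\beta$; in particular, the assembly of $\kappa'$-BCLE loops across all iteration levels is a nested $\CLE_{\kappa'}$ in $D$. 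Dually, by iterating Theorem~\ref{thm:duality1} in the spirit of Theorem~\ref{thm:ES2} (with the roles of $\kappa$ and $\kappa'$ exchanged), the union of true loops of all the iterated $\cwBCLE_\kappa(-\kappa/2)$'s forms a $\CLE_\kappa^\beta$ and hence, as an unoriented ensemble, a $\CLE_\kappa$.

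It remains to identify this $\CLE_\kappa$ with the outermost outer boundaries of black cluster-of-clusters in the Edwards--Sokal coloring of the above nested $\CLE_{\kappa'}$. One proceeds level by level. At the outermost level the $\CLE_{\kappa'}$ loops coincide with the first-generation $\kappa'$-BCLE loops, and Proposition~\ref{prop::bcle_prop}(iii), together with the alternation between true and false $\kappa$-BCLE cells, shows that the color attached to each outermost primal cluster (via the orientation of its even outer boundary loop in the construction) is an i.i.d.\ coin of bias $p$, with the outer cluster forced white because the outermost $\kappa$-BCLE is clockwise. A true loop of the outermost $\cwBCLE_\kappa(-\kappa/2)$ necessarily separates adjacent $\CLE_{\kappa'}$-cells of opposite orientation, hence of opposite color, so it is an outermost boundary of a black cluster-of-clusters; conversely every such boundary arises this way. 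The Markov property of BCLE on the construction side, combined with the independence of the Edwards--Sokal coloring across clusters, then propagates the identification to all deeper iteration levels, yielding the theorem.

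\textbf{Main obstacle.} The delicate point is this final identification: $\CLE_{\kappa'}^\beta$ carries \emph{loop-by-loop} independent orientations, whereas Edwards--Sokal requires a \emph{single} color per wired cluster whose boundary consists of one even outer loop and several odd inner loops. Reconciling these requires showing that, despite the loops being independently oriented, the orientation of each cluster's even outer loop --- which is what the color reads off --- has marginal probability $p$ of being white and is independent across distinct clusters and of all finer data at deeper iteration levels. The precise BCLE parameters $\rho=-\kappa/2$, $\rho_R'=2-\kappa'/2$, $\rho_L'=\kappa'-6$ are exactly those for which Proposition~\ref{prop::bcle_prop}(iii) propagates these correlations automatically through the nested iteration.
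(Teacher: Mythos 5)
Your reduction of the parameters is fine --- defining $\beta$ by $\rho(\beta,\kappa')=-\kappa/2$, computing $\rho_R'=2-\kappa'/2$, $\rho_L'=\kappa'-6$, and the $\kappa'=16/3$ case all agree with the paper --- but the construction you build around them has a genuine gap, and it sits exactly at the point you flag as the ``main obstacle''. You start by drawing a $\cwBCLE_\kappa(-\kappa/2)$ directly in $D$ and attaching the $\kappa'$-BCLEs of Theorem~\ref{thm:duality1} to it. The colored $\CLE_{\kappa'}$ loops produced this way are the loops adjacent to a boundary-touching trunk in $D$, i.e.\ \emph{outermost (odd)} loops of the nested ensemble; but in the wired Edwards--Sokal coloring of the theorem the colors are carried by the \emph{even} loops (each wired cluster has a single even outer boundary loop, and the cluster attached to $\partial D$ is forced white). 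So your reading of orientations as cluster colors is off by one parity level from the start, the forced-white outermost cluster is never implemented, and since you only iterate inside false loops, the even loops that actually carry the colors are never drawn at all. Moreover the first-generation $\cwBCLE_\kappa(-\kappa/2)$ loops touch $\partial D$, so the ensemble of ``true loops of all iterated $\cwBCLE_\kappa(-\kappa/2)$'s'' in your scheme cannot be a $\CLE_\kappa$, whose loops almost surely avoid the boundary. The paper avoids both problems by first sampling a $\cwBCLE_{\kappa'}(0)$ (the odd, boundary-touching loops of the nested $\CLE_{\kappa'}$, i.e.\ the inner boundaries of the white outermost cluster) and only then drawing the BCLE$_\kappa(-\kappa/2)$'s inside its true loops; the colored even loops are produced at the next stage by the $\cwBCLE_{\kappa'}(\kappa'-6)$ / $\ccwBCLE_{\kappa'}(2-\kappa'/2)$ of Theorem~\ref{thm:duality1}.

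The second gap is that your claim that iterating Theorem~\ref{thm:duality1} ``in the spirit of Theorem~\ref{thm:ES2} with the roles of $\kappa$ and $\kappa'$ exchanged'' yields a $\CLE_\kappa$ is unsupported: Theorem~\ref{thm:duality1} identifies the loops traced by the full $\bSLE_{\kappa'}^\beta$ as $\CLE_{\kappa'}$ loops, and says nothing about the $\kappa$-loops forming a $\CLE_\kappa$. In the paper the $\CLE_\kappa$ identification comes from Theorem~\ref{thm:duality2} at $\beta=1$, $\rho'=0$ (i.e.\ Theorem~\ref{thm:ES}), and the proof hinges on a coincidence of parameters used in \emph{both} dualities: $\rho'=0$ forces $\rho_R=-\kappa/2$ in~\eqref{eqn:rho_equations}, so the pair ($\cwBCLE_{\kappa'}(0)$, then BCLE$_\kappa(-\kappa/2)$ in its true loops) is simultaneously the first stage of the Theorem~\ref{thm:ES} construction of $\CLE_\kappa$ and, via $\rho=-\kappa/2\Rightarrow\rho_L'=\kappa'-6$ in~\eqref{eqn:rho_p_equations}, the black/white interface structure for the even loops; the true/false swap of Proposition~\ref{prop::bcle_prop}(iii) (between BCLE$_{\kappa'}(\kappa'-6)$ and BCLE$_{\kappa'}(0)$ levels) is then what closes the recursion and keeps the parity bookkeeping consistent, with each wired cluster receiving exactly one i.i.d.\ color through its unique even outer loop. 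Proposition~\ref{prop::bcle_prop}(iii) alone --- which is just the true/false loop duality of a single BCLE --- does not ``propagate these correlations automatically'', and as written your argument asserts rather than proves precisely the identification the theorem requires.
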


Let us mention already (see the discussion in Section~\ref{SSdiscussion}) that it will be a consequence of \cite{msw2016betarho} that for all $\kappa \in (8/3,4)$, 
 $p(\kappa)= 1/( 4 \cos^2 ( \pi  \kappa / 4 ))$, which will solve \cite[Problem~8.10]{she2009cle}.

As we will now see, the proof will use both Theorem~\ref{thm:duality2} (via Theorem~\ref{thm:ES}) and Theorem~\ref{thm:duality1}, and it will make use of the fact that in the latter, when $\rho' = \kappa' -6$ then $\rho_L = -\kappa/2$ while conversely, in the former, when $\rho = -\kappa /2 $, then $\rho_L' = \kappa' -6$. This relation between these coefficients can somehow be interpreted as the continuous counterpart of the fact that Potts and FK models with free boundary conditions correspond to each other, that Potts and FK models with uniform color/wired boundary conditions correspond to each other too, and that the dual of the critical FK model with wired boundary conditions is exactly the critical  FK model
with free boundary conditions.
 
\begin{proof}[Proof of Theorem~\ref{thm:cle_k_from_kp}]
We are going to construct the nested $\CLE_{\kappa'}$ iteratively, and we will call it~$\Lambda'$.
We start with the same setup as in Theorem~\ref{thm:ES}: 

Suppose that we first sample a $\cwBCLE_{\kappa'}(0)$, which will be the boundary-touching loops of our nested $\CLE_{\kappa'}$ $\Lambda'$.
Recall that this can be interpreted as the continuous analog of the boundary-touching loops of 
an FK model with free boundary conditions and that the corresponding false loops would correspond to the boundary-touching loops of the model with wired boundary conditions.

In order to construct the rest of the nested $\CLE_{\kappa'}$ $\Lambda'$, one then has to iterate this procedure in each of the remaining connected components,
but the $\Lambda'$-parity of the drawn loops at the next iterative step will depend on whether one is inside a true or a false loop of this $\cwBCLE_{\kappa'}(0)$: 
In the false loops, the situation is as at the beginning, and the first loops that one will draw will be odd loops of $\Lambda'$. We leave the exploration 
of what happens in these false loops aside for the moment. 
In the true loops however, the first loops that one will draw will be even loops of $\Lambda'$, and we are going to continue the exploration. 

Now, in each of these true loops of the $\cwBCLE_{\kappa'}(0)$,  instead of simply
only drawing another $\cwBCLE_{\kappa'}(0)$ whose loops correspond to even loops of the nested $\CLE_{\kappa'}$,  we want in fact to 
explore a $\CLE_{\kappa'}^\beta$ in order to also be able to discover the colors of the corresponding clusters.
For this, it will be handy to 
use Theorem~\ref{thm:duality1} in order to draw also the BCLE that corresponds to the interfaces between the colored $\CLE_{\kappa'}^\beta$ clusters.

We start therefore to sample in each of the connected components inside the true loops of the $\cwBCLE_{\kappa'}(0)$, an independent $\cwBCLE_\kappa(-\kappa/2)$.
Note that from Theorem~\ref{thm:ES}, these $\cwBCLE_\kappa(-\kappa/2)$ loops form part of a $\CLE_\kappa$ in the original domain (this is due to the fact that $\rho_R= - \kappa/2$ when $\rho'=0$ in Theorem~\ref{thm:duality2}); recall also from Proposition~\ref{prop::bcle_prop} that, modulo orientation, the loops of a $\cwBCLE_\kappa(-\kappa/2)$ are equal in distribution to to those of a $\ccwBCLE_\kappa(-\kappa/2)$).

But by Theorem~\ref{thm:duality1}, if we then sample independently a $\cwBCLE_{\kappa'}(\kappa'-6)$ (resp.\ a $\ccwBCLE_{\kappa'}(2-\kappa'/2)$)  inside each of the 
false (resp.\ true) loops of these $\cwBCLE_\kappa(-\kappa/2)$ loops, then we have drawn (part of) 
the loops in a nested $\CLE_{\kappa'}$ -- we will consider them as being part of $\Lambda'$. Furthermore, if one assigns a color to
these loops according to their orientation, i.e., to whether they lie in 
false or true loops of these $\cwBCLE_\kappa(-\kappa/2)$, then these colored $\CLE_{\kappa'}$ loops correspond to a randomly colored CLE$_{\kappa'}$ where  
loops are independently  chosen to be black or white according to a $(1+\beta)/2$ versus $(1-\beta)/2$ coin, where $\beta$ is chosen so that 
\[ \rho ( \beta, \kappa') = - \kappa / 2. \] The $\cwBCLE_\kappa(-\kappa/2)$ loops are then exactly interfaces between clusters of black loops and clusters of white loops. 
We note that these colored CLE$_{\kappa'}$ loops are all even loops, i.e., outer loops of a cluster of $\Lambda'$, 
so that the color of that cluster can be simply defined to be  the color of that loop if $p=(1- \beta)/2$. Then, we get readily that the $\cwBCLE_\kappa(-\kappa/2)$ loops correspond exactly to the outer boundary of a cluster of black clusters in $\Lambda'$, and that it also corresponds to an inner boundary of the cluster of white clusters in $\Lambda'$ 
that touches the boundary of the domain. 

To the inside of the $\cwBCLE_\kappa(-\kappa/2)$'s, we do not need to explore anymore, since we are inside the outer boundary of a cluster of black clusters.
However, we need to continue to explore in both the components that are surrounded by false loops of the $\cwBCLE_{\kappa'}(\kappa'-6)$'s and in the components 
that are surrounded by the true loops of the $\cwBCLE_{\kappa'}(\kappa'-6)$. In both cases, the conditional distribution of the loops of $\Lambda'$ will be simply that of a nested CLE$_{\kappa'}$ in these components, but the parity rule for the loops are different. Inside the true loops, the parity rule is like at the beginning (the first encountered loops 
will be odd loops for $\Lambda'$), and we leave the exploration of the inside of these true loops aside for the moment.

In the components which are surrounded by false loops of the $\cwBCLE_{\kappa'}(\kappa'-6)$, 
we are in the situation where the first encountered loops will be even loops for $\Lambda'$. So we 
start the same procedure again by launching first a $\cwBCLE_\kappa(-\kappa/2)$ and then a $\cwBCLE_{\kappa'}(\kappa'-6)$ in its false loops, and so on.

Summing up, we see that in fact we have a mechanism where (using the fact that true loops for a BCLE$_{\kappa'}(\kappa'-6)$ are the false loops for a BCLE$_{\kappa'}(0)$ 
and vice-versa), we iteratively perform the following steps (here we simply omit the orientation of the BCLEs as they play no role in these statements): 
\begin{enumerate}
 \item[Step 1:] Launch a BCLE$_{\kappa'} (0)$, and leave the false loops aside. 
\item[Step 2:] In the true loops defined by Step 1, we launch a BCLE$_{\kappa} ( - \kappa/2)$. These true loops are outermost outer boundaries of clusters of black clusters of $\Lambda'$. 
\item[Step 3:] Inside the false loops defined at Step 2, we go back to Step 1. 
\end{enumerate} 
We see that this is exactly the procedure described in Theorem~\ref{thm:ES} that constructs a CLE$_\kappa$, except that here we stopped exploring in the false loops of the BCLE$_{\kappa'} (0)$. However, as we have noted, inside of all these loops, the conditional law of $\Lambda'$ and of the parity of the loops is (modulo conformal invariance) exactly the same as the one we started with, so we can launch the same procedure at Step 1 inside each of them and iterate. 

If we iterate this indefinitely, then we will on the one hand discover the entire family of 
outer boundaries of outermost black clusters of $\Lambda'$, and on the other hand, by Theorem~\ref{thm:ES}, we know that they form exactly a $\CLE_\kappa$. 
This proves the main statement in the theorem.

To conclude, we note that $p$ is equal to $1/2$ when $\beta=0$, which happens for $\rho= (\kappa-6) /2$ by~\eqref{eqn:rho_symmetric}. 
This quantity is equal to $-\kappa /2$ only for $\kappa =3$. 
\end{proof}

Let us stress that the fact that in Theorem~\ref{thm:cle_k_from_kp} {one has that} $p (\kappa') = 1/2$ for $\kappa' =16/3$, or the fact that in Theorem~\ref{thm:duality1} one has $\rho_R' = \rho_L' = \kappa' -6$ only when $\kappa' =16/3$ provide evidence, based on these CLE considerations only, that the only possible conformally invariant scaling limit of the critical FK$_{q=2}$ model and of the Ising model have respectively to be $\CLE_{16/3}$ and $\CLE_3$.

\medbreak

Finally, let us remark that Theorem~\ref{thm:duality1} shows that for any $\beta$, if one starts with a $\CLE_{\kappa'}^\beta$, then the boundary-touching interfaces between the clusters of black clusters and the clusters of white clusters, will have the law of a $\BCLE_\kappa(\rho)$ for a corresponding value of $\rho$ (note that this is just a feature about non-nested $\CLE_{\kappa'}$). 
One can then couple together these laws when letting $\beta$ varying in $[-1,1]$ for each given $\CLE_{\kappa'}$ by associating to each cluster a uniform random variable in $[0,1]$. 
We will show in \cite{msw2016fan} that if one fixes two boundary points and considers the collection of such interfaces between two fixed boundary points then one gets a family of paths which have the same law as the {\em fan of GFF flow lines} considered in \cite{ms2012ig1} with certain GFF boundary conditions.

\subsection{On the relationship between $\beta$ and $\rho$}
\label{SSdiscussion}

As we have already mentioned several times, this paper does not present a derivation of a general formula relating $\beta$ and $\rho$ with $\kappa'$ or $\kappa$ in Theorems~\ref{thm:duality1} and~\ref{thm:duality2}. 
Similarly, except in the case that $\kappa=3$ and $\kappa'=16/3$, we have not identified in this paper the value of $p (\kappa)$ in Theorem~\ref{thm:cle_k_from_kp}.

However, in our upcoming  \cite{msw2016betarho}, we plan to show,  building
on the results and ideas of present paper and combining them with quantum gravity ideas and techniques from  \cite{dms2014mating}, that:
\begin {itemize}
 \item 
When $\kappa' \in (4,8)$, the relation between $\beta$, $\kappa' = 16/ \kappa$ and $\rho$ in Theorem~\ref{thm:duality1} (so that the trunk of a $\bSLE_{\kappa'}^\beta$ is a $\bSLE_{\kappa} (\rho)$)
is given by
\begin{equation}
\label{eqn:beta_rho_k_formula}
\frac {1 - \beta}{2} =  \frac{ \sin ( \pi \rho/2 ) }{ \sin (\pi \rho/2 ) - \sin ( \pi (\kappa-\rho) / 2 )}  .
\end {equation} 
\item 
When $\kappa  \in (8/3,4)$, the relation between $\beta$, $\kappa' = 16/ \kappa$ and $\rho'$ in Theorem~\ref{thm:duality2} (so that 
 $\bSLE_{\kappa'} (\rho')$ is the trunk of a $\bSLE_\kappa^\beta$) is given by
\begin {equation}
 \label{eqn:beta_rho_k_formula2}
\frac {1 - \beta}{2} = \frac{ \sin ( \pi \rho'  /2 ) }{ \sin (\pi \rho'/2 ) - \sin ( \pi (\kappa'-\rho') / 2 )} .
\end{equation}
\end {itemize}
 Note that the formulas look identical but the range of admissible values for which the formulas apply are different.
 In particular, 
 for $\kappa'$ fixed, when $\beta$ varies from $-1$ to $1$ then $\rho$ spans indeed all the admissible interval $[-2, \kappa -4]$ in the first formula, but in the second formula, when 
 $\kappa \in (8/3, 4)$ is fixed and $\beta$ varies from $-1$ to $1$, then the range of obtained values is only $[\kappa'-6, 0]$ (which is not surprising in view of Theorem~\ref{thm:duality2}). 

Furthermore, this formula for $\rho = - \kappa /2$ will show 
that $p(\kappa')$ in Theorem~\ref{thm:cle_k_from_kp} is equal to 
\begin{equation}
\label{eqn:q_k}	
p(\kappa') =  \frac {1}{4 - 4 \sin^2 (\pi \kappa/4)} 
= \frac 1 {4 \cos^2 ( \pi  \kappa / 4 )} =  \frac 1{ 4 \cos^2 ( 4 \pi / \kappa' )}.
\end{equation}
This is of course of interest because in the discrete Edwards-Sokal coupling for $q$-Potts models and FK$_q$-models, the corresponding probability is equal to $1/q$.
It for instance explains why $\kappa'=4$ and $\kappa'=24/5$ would respectively correspond to the $4$ and $3$-state FK-Potts-models respectively, because they give rise to the values $p = 1/4$ and $p =1/3$. 
It gives more generally a justification for the relation 
\begin {equation}
\label {eqn:qkappa}
q =  4 \cos^2 ( 4 \pi / \kappa' )
\end{equation}
between the value of  $q$ for a critical FK$_q$-model and its conjectured $\CLE_{\kappa'}$ scaling limit. See also \cite{mw2017connection} for another approach to this formula via crossing probabilities.
Note that the existence of the scaling limit remains conjectural for most FK models on lattices, but
that it is established (with the identification of the limit where the same relation (\ref{eqn:qkappa}) appears) for FK models on random planar maps with respect to the so-called peanosphere topology in \cite{she2011qginv,dms2014mating}.  See also \cite{gms2015cone_times,gs2015finite_volume,gs2015finite,quantum_spheres,gm2016topology}.

\subsection{Revisiting $\CLE_4^0$ percolation}

Before turning our attention to the proof of these results, let us briefly revisit our study of $\CLE_4^0$ percolation in this BCLE framework. What follows are a few comments on how to reformulate the ideas of the $\CLE_4^0$ percolation proof that we presented in terms of BCLE, that will serve as a warm-up for the proofs of the two theorems that we have just stated.

Let us first give a BCLE on BCLE version (Figure~\ref{fig:bcle_cle_4_perc}), i.e., a loop version, of our construction of one $\SLE_4 (\rho; -2 - \rho)$  path and of the $\CLE_4^0$ loops that it intersects: Let $h$ be a GFF on $D$ with zero boundary conditions and fix $\rho \in (-2,0)$ (recall that $(-2,0)$ is the range of $\rho$ values such that a classical $\SLE_4(\rho)$ bounces off the boundary -- more precisely of one side of the boundary between its starting and end-points). 
We can naturally define deterministically a $\cwBCLE_4(\rho)$ (that we will denote by $\Lambda$) from $h$ as  the collection of loops which are formed by the boundary branching $\bSLE_4 (\rho)$ tree of level lines with boundary conditions given by $\lambda \rho$ on their left and by $\lambda(2+ \rho)$ on their right. In other words, when $\CL$ is a true loop (resp.\ a false loop) of $\Lambda$ then the boundary data for the conditional law of $h$ inside of $\CL$ given the branching tree is equal to $\lambda(2+\rho)$ (resp.\ $\lambda \rho$).

We now define the nested $\cwBCLE_4(\rho_L)$'s and $\ccwBCLE_4(\rho_R)$'s inside the false and true loops of $\Lambda$ respectively, and for reasons that will become immediately clear, 
we choose to define them as level lines of $-h$ rather than $h$  (this type of feature is reminiscent and closely related to the fact that when one couples an $\SLE_\kappa$ from $a$ to $b$ with an $\SLE_{\kappa'}$-type process from $b$ to $a$ via the GFF the former is coupled with $h$ and the latter is coupled with $-h$).  With this coupling, we get that the boundary data of $h$ in the true loops of the different $\ccwBCLE_4(\rho_R)$'s is $\lambda(4 + \rho+\rho_R)$, while it is $\lambda(\rho- \rho_L-2)$ inside the true loops of the different $\cwBCLE_4 (\rho_L)$'s.  If we choose $\rho_L = \rho =  -2-\rho_R \in (-2,0)$, then the heights inside of the true loops of the $\ccwBCLE_4(\rho_R)$'s and the true loops of the $\cwBCLE_4(\rho_L)$'s are respectively given by $2\lambda$ and $-2\lambda$.  That is, they correspond to the same values that one sees inside the loops of the $\CLE_4$ in the $\CLE_4$/GFF coupling.  We also note that the heights inside of the false loops of the $\ccwBCLE_4(\rho_R)$'s and inside of the false loops of the $\cwBCLE_4(\rho_L)$'s are both equal to $0$.  Modulo checking the Minkowski dimension statement, we can therefore directly apply Lemma~\ref{cle4charact}, and conclude that the collection of the true loops of the $\ccwBCLE_4(\rho_R)$ and of the $\cwBCLE_4(\rho_L)$ are exactly the $\CLE_4^0$ loops that intersect the $\BCLE_4 (\rho)$ process.
\begin{figure}[ht!]
\begin{center}
\includegraphics [width=3in]{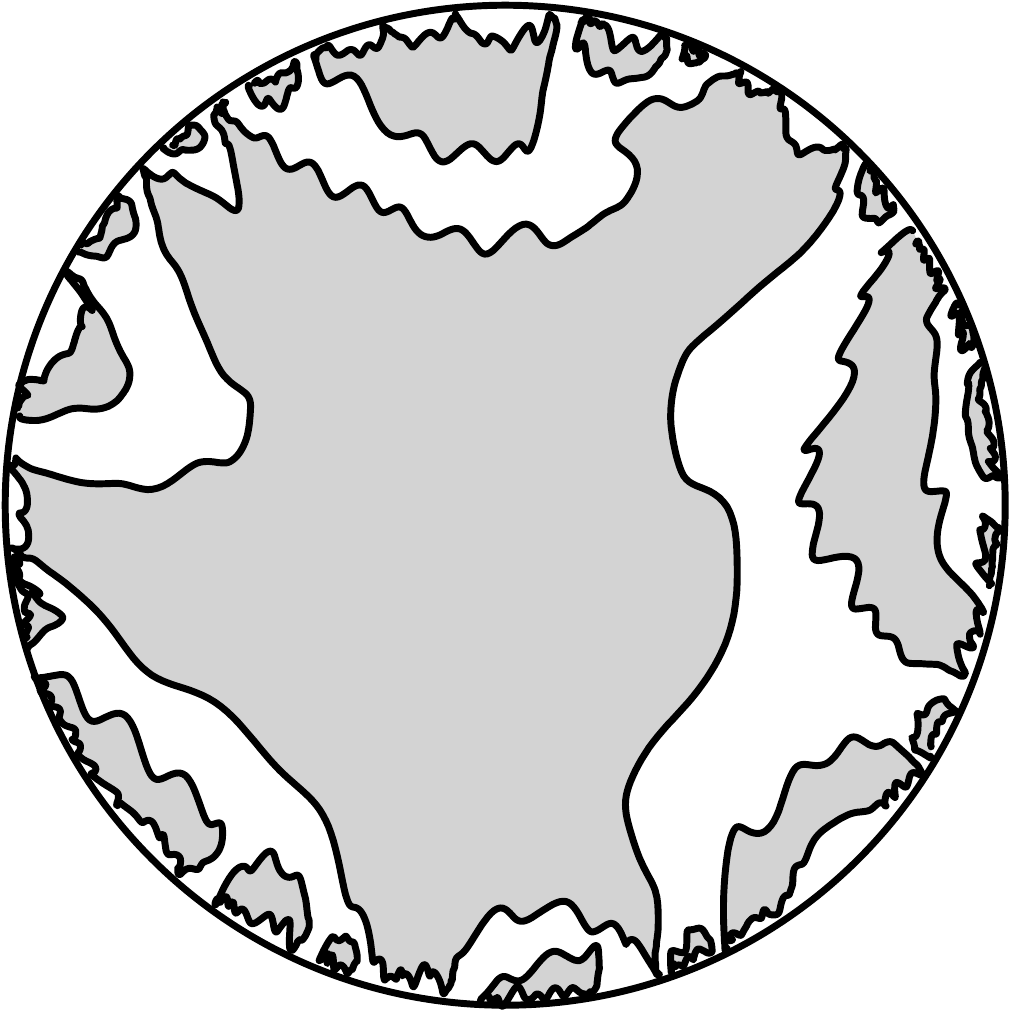}
\includegraphics [width=3in]{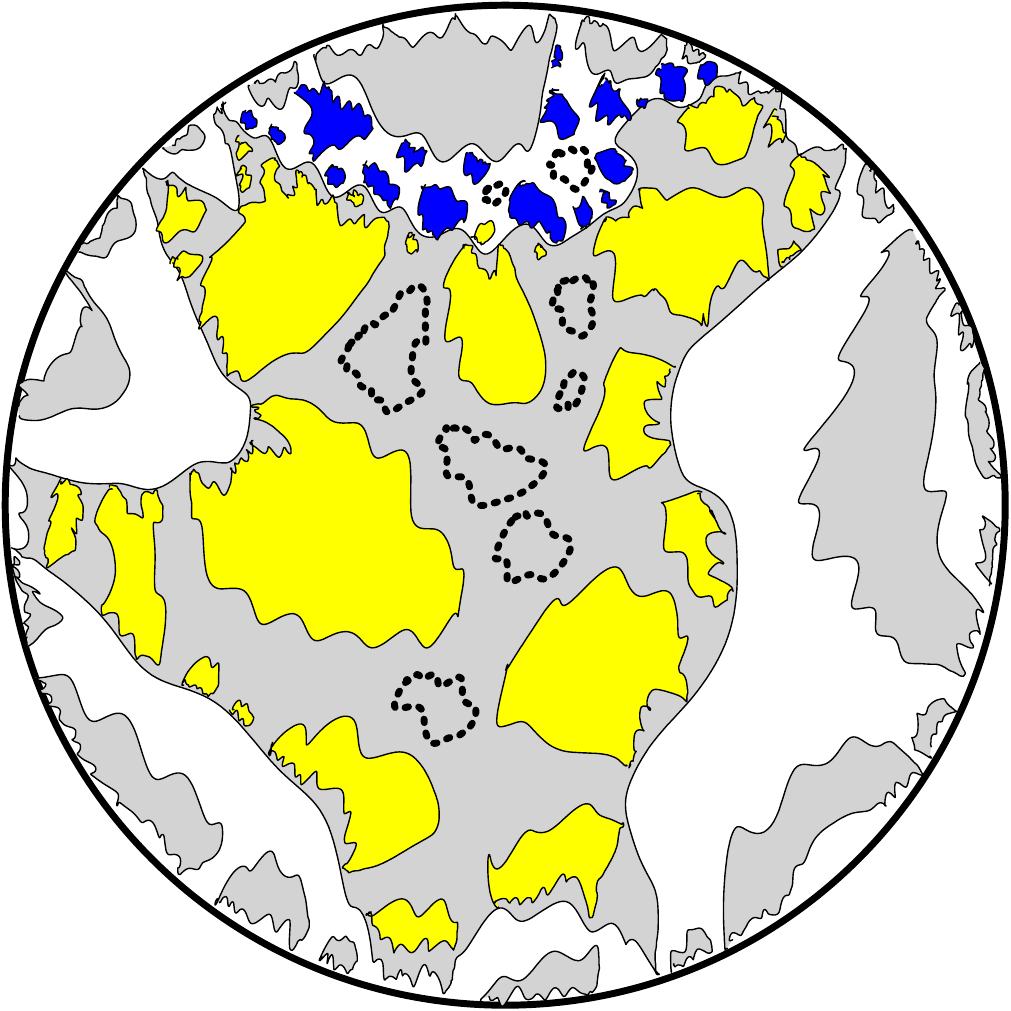}
\caption{\label{fig:bcle_cle_4_perc} A sketch of a $\BCLE_4 (\rho)$ and of a nested $\BCLE_4$ in two of the BCLE loops. The nested (yellow and blue) loops are part of a $\CLE_4^0$, yellow on gray are $+$ loops, blue on white are $-$ loops.
Not all $\CLE_4$ are discovered by this two-level nesting, examples of seven missing loops are drawn in dotted lines.}
\end{center}
\end{figure}

Let $\Gamma$ be the collection of $\cwBCLE_4(\rho_L)$ and $\ccwBCLE_4(\rho_R)$ loops coupled with~$h$ as described just above.  Assuming that~$\Lambda$ is locally finite, one can define a path~$\eta_\Lambda$ which follows along the true (clockwise) loops of~$\Lambda$, in the order in which they are discovered along the counterclockwise arc of~$\partial D$ starting from~$x$.  Assuming further that the entire collection $\Gamma$ is locally finite, we can then define another exploration path $\eta_\Gamma$ which follows along the loops of $\Gamma$ ordered according to when and starting from where they are first visited by $\eta_\Lambda$.  As all of the loops of $\Lambda$ and $\Gamma$ are almost surely determined by $h$ (since they were generated from level lines in the usual sense) it follows that $\eta_\Lambda$ and $\eta_\Gamma$ are almost surely determined by $h$. One can then adapt in this loop-case the arguments of the $\CLE_4^0$ percolation section, to see that $\eta_\Gamma$, when stopped at a stopping time, is a 
local set of the GFF with an explicitly defined harmonic function, and that this property implies that it is a ``full $\bSLE_{\kappa'}^\beta$ loop'' and  that $\eta_\Lambda$ is its ``trunk.''

In the general $\kappa \in (8/3,4)$ and $\kappa' \in (4,6)$ case, the strategy will be quite similar. We will start by defining $\eta_\Lambda$ and $\eta_\Gamma$ in a similar way, then we will study its coupling with the GFF, and in particular see that when one traces $\eta_\Gamma$ up to some stopping time, one constructs a local set with fully described harmonic functions, that in turn will enable us to identify $\eta_\Gamma$ in terms of $\SLE_{\kappa'}^\beta$ paths and $\eta_\Lambda$ as its trunk.  Also, in the general $\kappa,\kappa'$ setting, we will derive the continuity of $\bSLE_\kappa^\beta$ and $\bSLE_{\kappa'}^\beta$ from the continuity of space-filling $\SLE_{\kappa'}$ established in \cite{ms2013ig4}  (note that this fact was instrumental to establish the local finiteness of $\CLE_{\kappa'}$ in \cite{ms2013ig4}).

\section{Imaginary geometry background and some first consequences}
\label{sec:ig}
\label {Sec8}

We begin with a review of imaginary geometry ideas and results.   We then use some of these results to  prove the continuity  of the process which traces the loops of a $\BCLE_\kappa(\rho)$ or $\BCLE_{\kappa'}(\rho')$ process and then of the process that traces the nested $\BCLE$s.  In the present section, $\eta$ and $\eta'$ will stand for curves which are different from earlier.

\subsection{SLE/GFF couplings}

We turn to give a brief overview of the so-called imaginary geometry of the GFF \cite{ms2012ig1,ms2012ig2,ms2012ig3,ms2013ig4}.  This terminology refers to the theory of the flow lines of the formal vector field $e^{i h / \chi}$ where $h$ is an instance of the GFF and $\chi > 0$.  These are paths $\eta$ which formally solve the ODE
$d \eta(t) / dt  =  \exp ({i h(\eta(t))/\chi})$.  As in the case of GFF level lines described in Section~\ref{subsec:cle4_local}, this description is non-rigorous because~$h$ takes values in the space of distributions and does not have values at points, but there is a way make sense of this which is analogous to the construction of GFF level lines and loosely goes as follows.  The general idea is that, if $h$ were a smooth function and $\eta$ its flow line as described by the previous ODE, then $\eta$ determines the values of $h$ along the trajectory of $h$ since the condition that $\eta$ is a flow line of $h$ gives the direction of the arrows of the vector field $\exp ({ih/\chi})$ along $\eta$.  Moreover, one can change the values of $h$ off the range of $\eta$ without affecting that $\eta$ solves the equation.  This suggests the following strategy to make sense of solutions to the flow line ODE in the case that $h$ is a GFF:

First sample the random curve $\eta$ according to a well-chosen distribution (that turns out to be an $\SLE_{\kappa} (\rho)$-type path, depending on the boundary conditions for $h$) and view it as a local set of a certain GFF with corresponding boundary conditions.  That is, we associate deterministically with $\eta$ a harmonic function in the complement of $\eta$, define a GFF in the complement of $\eta$ with these boundary conditions, and then check that, viewed as a distribution on the entire domain, this procedure yields a GFF $h$ with certain boundary conditions.  We then check that in this coupling between $h$ and $\eta$, the curve $\eta$ is in fact a deterministic function of $h$. This strategy has been implemented and made precise in \cite{ss2010continuumcontour,she2010weld,dubedat2009gff,ms2012ig1,ms2013ig4}.  The version of the statements of this type that we will need (and briefly recall now) are given in \cite[Theorem~1.2]{ms2012ig1}.

Note that if $h$ is a smooth function and $\psi \colon \wt{D} \to D$ is a conformal transformation, then by the chain rule $\psi^{-1} \circ \eta$ is a flow line of $h \circ \psi - \chi \arg \psi'$.  With this in mind, we define an \emph{imaginary surface} to be an equivalence class of pairs $(D,h)$ under the equivalence relation:
\begin{equation}
\label{eqn:change_of_coordinates}
(D,h) \to (\psi^{-1}(D), h \circ \psi - \chi \arg \psi') = (\wt{D},\wt{h}).
\end{equation}
We will frequently use this equivalence relation when we describe the GFF and its boundary data in various domains. In particular, a GFF $h$ with boundary data $\Fh_0$ in $D$ will be considered to be equivalent (via the mapping $\psi$) to a GFF $\wt{h}$ with boundary data $\wt{\Fh}_0$ in $\wt{D}$.  Note in particular that~$\Fh_0$ is harmonic if and only if $\wt{\Fh}_0$ is harmonic.  Note also that if $\wt{D} = \h$ and $D$ is a domain with smooth boundary so that $\psi'$ is defined everywhere on the boundary, the function $\arg \psi'$ is equal to the harmonic extension of the winding of the boundary to the interior of the domain.  For general domains with fractal boundary on which $\psi'$ is not defined on the boundary, we have that $\psi'$ is defined in the interior and $\arg \psi'$ still has the interpretation of corresponding to the harmonic extension of the winding of the domain boundary.

\subsubsection{Boundary data}
\label{subsubsec:ig_boundary_data}

Let us now explain in detail how to couple an $\SLE_\kappa (\rho)$ type curve with several force-points with a GFF with a given well-chosen boundary condition $\Fh_0$ on $D$.  This coupling will be invariant under the equivalence rule~\eqref{eqn:change_of_coordinates} for a well-chosen $\chi = \chi (\kappa)$, so it is enough to describe it in one particular domain with given starting and end-points. As is customary in the SLE framework, we describe this now in $\HH$ for an SLE going from $0$ to $\infty$.  In the remainder of this paper, for $\kappa \in (0,4)$ and $\kappa'=16/\kappa$, we let
\begin{equation}
\label{eqn:gff_constants}
 \lambda = \frac{\pi}{\sqrt{\kappa}},\quad \lambda' = \frac{\pi}{\sqrt{\kappa'}} , \quad \text{and}\quad \chi = \frac{2}{\sqrt{\kappa}} - \frac{\sqrt{\kappa}}{2}.
\end{equation}
Note that $2(\lambda - \lambda') = \pi \chi$.  We could also define $\chi' = - \chi$, but we will prefer to keep $\chi$, as this will be easier when we will consider simultaneously $\SLE_\kappa$ and $\SLE_{\kappa'}$ processes coupled with the same instance of the GFF.

Let us now consider an $\SLE_\kappa(\ul{\rho})$ process $\eta$ from $0$ to $\infty$ in $\HH$ for 
\[ \ul{\rho} = (\ul{\rho}^L;\ul{\rho}^R) = (( \rho_{k,L}, \rho_{k-1, L}, \ldots, \rho_{1,L}), (\rho_{1,R}, \ldots, \rho_{\ell,R})) \] 
with force points located at $\ul{x} = (x_{k,L} < \cdots < x_{1,L} < 0 < x_{1,R} < \cdots < x_{\ell,R})$. As explained in \cite[Section~2]{ms2012ig1}, there is no problem to define such a process as long as for all $j \le k$ and $i \le \ell$, $\overline \rho_{j,L}:= \rho_{1,L} + \ldots + \rho_{j,L} > -2$ and $\overline \rho_{i,R} := \rho_{1, R} + \ldots + \rho_{i,R} > -2$ and it is a direct generalization of the $\SLE_\kappa (\rho)$ processes that we have discussed before.  It is shown in \cite{ms2012ig1} that it is generated by a continuous curve $\eta$ in $\ol{\H}$.  In fact, if $\ol \rho_{i,q} \leq -2$ for some $i,q$, there is no difficulty in making sense of the process and it also follows from \cite{ms2012ig1} that it is continuous, but only up to the first time that the driving function collides with one of the force points with $\ol \rho_{i,q} \leq -2$.  This time is called the \emph{continuation threshold} in \cite{ms2012ig1}.

We now define the boundary conditions of the GFF with which we will want to couple with this curve. We let $\Fh_0$ denote the bounded harmonic function in $\HH$ with boundary conditions
\begin{equation}
\label{eqn:gff_flow_bd}
\begin{split}
 -\lambda ( 1+ \overline{\rho}_{j, L}) \quad\text{for}\quad x \in (x_{j+1,L},x_{j,L}] \quad\text{and}\quad 0 \leq j \leq k \\
  \lambda (1+ \overline \rho_{i,R})  \quad\text{for}\quad x \in (x_{i,R},x_{i+1,R}] \quad\text{and}\quad 0 \leq i \leq \ell,
\end{split}
\end{equation}
with $\overline \rho_{0,L} = \overline \rho_{0,R} = 0$, $x_{0,L} = x_{0,R} = 0$, and $x_{k+1,L} = - \infty$, $x_{\ell+1,R} = \infty$.

\begin{figure}[ht!]
\includegraphics[scale=0.85]{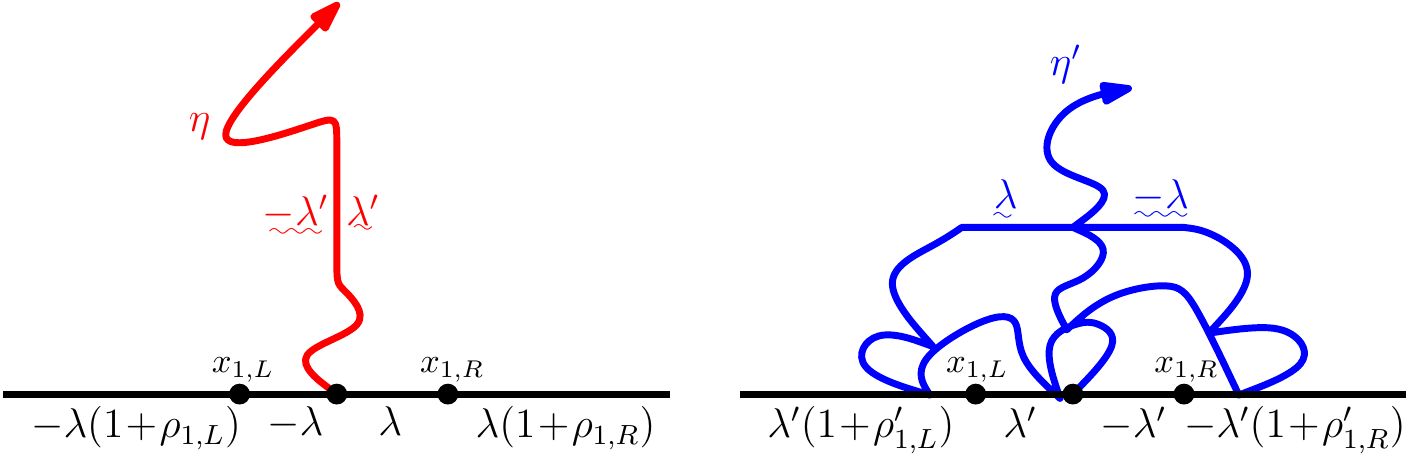}
\caption{\label{fig:gff_boundary_data}
{\bf Left:} Boundary data and the corresponding $\SLE_{\kappa}(\rho_{1,L};\rho_{1,R})$ flow line. {\bf Right:} Boundary data and the corresponding $\SLE_{\kappa'}(\rho_{1,L}';\rho_{1,R}')$ counterflow line.}
\end{figure}

For any $t \ge 0$, we define the harmonic function $\Fh_t$ in the complement of the curve $\eta [0,t]$ with boundary data that can be informally defined as follows: On the left side of the curve $\eta [0,t]$ at $\eta(s)$, it is equal to $-\lambda' + \chi \cdot {\rm winding}$, where the winding is the winding of $\eta$ between $\eta(0)$ and the considered point $\eta(s)$. On the right side of the curve, the boundary data is $\lambda' + \chi \cdot {\rm winding}$, and on $\partial \h$, one uses the same boundary data as $\Fh_0$.  More explicitly, $\Fh_t = \Fh_0 \circ f_t - \chi \arg f_t'$ where $f_t = g_t - W_t$ and~$g_t$ is the unique conformal map from the unbounded component of $\h \setminus \eta([0,t])$ to~$\h$ with $g_t(z) - z \to 0$ as $z \to \infty$.  Note that this definition easily extends to the bounded connected components of the complement of $\eta [0,t]$, if there are any.

It then turns out that for any stopping time $\tau$, the curve $\eta$ up to $\tau$ and the harmonic function $\Fh_\tau$ define a local set of a GFF $h$ in $\HH$ with boundary conditions $\Fh_0$ (this result is stated \cite[Theorem~1.1]{ms2012ig1}, though \cite{ms2012ig1} is not the first place where this result is proved; see \cite{ss2010continuumcontour,dubedat2009gff,she2010weld}).

In our figures, we will often indicate the boundary data along curves and boundary segments using the notation $\uwave{x}$.  This notation is explained in detail in \cite[Figure~1.10]{ms2012ig1} (see also \cite[Figure~1.9]{ms2012ig1}).  This is illustrated in the case that $k=\ell=1$ in Figure~\ref{fig:gff_boundary_data}. 

Again, as shown in \cite{ms2012ig1} the curve $\eta$ can then be deterministically recovered from the GFF. For the reasons mentioned above, it is referred to as the \emph{flow line} starting at $0$ and targeted at $\infty$ of the GFF $h$ with the boundary conditions $\Fh_0$.  This flow line is targeting $\infty$, but it coincides with the flow line targeting another point of the same GFF (defined modulo the imaginary geometry equivalence) until the first time at which $\eta$ disconnects it from $\infty$.  This point will be important when we discuss the couplings of $\BCLE$ with the GFF.

In view of the imaginary geometry, it is natural to define the flow line with angle $\theta$ associated with a GFF (with certain boundary conditions) $h$, to be the flow line of $h + c$, where $c(\theta):= \theta \chi$.  This terminology is motivated by the interpretation of the path as a flow line of the vector field $e^{ih / \chi}$.  In particular, adding $c(\theta)$ to the field has the interpretation of rotating all of the vectors by the angle $\theta$.

The same story works for an $\SLE_{\kappa'}(\ul{\rho}')$ process starting from $0$. However, we will change signs in order to accommodate for the $\chi = - \chi'$ change. The boundary conditions in this case are given by:
\begin{equation}
\label{eqn:gff_cfl_bd}
\begin{split}
 \lambda' (1+\overline \rho_{j,L}') &\quad\text{for}\quad x \in (x_{j+1,L},x_{j,L}] \quad\text{and}\quad 0 \leq j \leq k\\
  -\lambda' (1+\overline \rho_{i,R}') &\quad\text{for}\quad x \in (x_{i,R},x_{i+1,R}] \quad\text{and}\quad 0 \leq i \leq \ell \end{split}
\end{equation}
with otherwise the same conventions as indicated above.  This notation along with the boundary data for the coupling is illustrated in the case that $k=\ell=1$ in Figure~\ref{fig:gff_boundary_data}.  We refer to $\eta'$ as the \emph{counterflow line} of $h$ starting from $0$.  The reason for the differences in signs and terminology is that it enables us to couple flow and counterflow lines with the same field, in such a way that the latter naturally grows in the opposite direction of the former.

These couplings, and their embedded possible change-of-targets make it possible to couple entire BCLE processes with a GFF, once the starting point of the BCLE tree is chosen. Table~\ref{tab:sle_bcle_boundary_conditions} lists the various boundary conditions for $\SLE$ with force points located at $0^-$ and $0^+$.
\begin{table}[ht!]
\rowcolors{1}{lightgray}{white}
\setlength\extrarowheight{3pt}
\begin{tabular}{ c  c  c  }
    & $\R_-$  & $\R_+$\\
 $\SLE_\kappa(\rho_1;\rho_2)$ & $-\lambda(1+\rho_1)$ & $\lambda(1+\rho_2)$ \\
 $\SLE_{\kappa'}(\rho_1';\rho_2')$ & $\lambda'(1+\rho_1')$ & $-\lambda'(1+\rho_2')$\\
  $\cwBCLE_\kappa(\rho)$ & $-\lambda(1+\rho)$ & $-\lambda(1+\rho) - 2\pi \chi$ \\
 $\ccwBCLE_\kappa(\rho)$ & $\lambda(1+\rho) + 2\pi \chi$ & $\lambda(1+\rho) $\\
 $\cwBCLE_{\kappa'}(\rho')$ & $\lambda'(1+\rho')$ & $\lambda'(1+\rho') - 2\pi \chi$ \\
$\ccwBCLE_{\kappa'}(\rho')$ & $-\lambda'(1+\rho') + 2\pi \chi$ & $-\lambda'(1+\rho')$ \\
\end{tabular}
\vspace{0.01\textheight}
\caption{\label{tab:sle_bcle_boundary_conditions}
Boundary data for coupling $\SLE_\kappa(\rho_1;\rho_2)$ and $\SLE_{\kappa'}(\rho_1';\rho_2')$ from $0$ to $\infty$ with force points at $0^-$ and $0^+$ with a GFF on $\h$.  Also shown is the boundary data for coupling clockwise and counterclockwise $\BCLE_\kappa(\rho)$ and $\BCLE_{\kappa'}(\rho')$ with a GFF~$h$ on~$\h$ using a branching flow or counterflow line starting from $0$.}
\end{table}

\subsubsection{Interaction rules} 
\label{subsubsec:ig_interaction_rules}

The description of how the flow and counterflow lines starting from different boundary points and with different angles interact with each other is provided in \cite{ms2012ig1} (paths starting from boundary points) and \cite{ms2013ig4} (paths starting from interior points).  We will now recall the elements of this that will be important for this article.

Suppose that $h$ is a GFF on $\h$ with piecewise constant boundary data as before.  For $x_1 < x_2$ and $\theta_1,\theta_2 \in \R$, let $\eta_{\theta_i}^{x_i}$ be the flow line of $h$ with angle $\theta_i$ starting from $x_i$.  In \cite[Theorem~1.5]{ms2012ig1}, it is described how $\eta_{\theta_1}^{x_1}$ and $\eta_{\theta_2}^{x_2}$ interact (i.e., the conditional law of the latter given the former, the relative position of the latter with respect to the former etc.).  In this article, we will need one particular version of this.  Namely, in the case that the boundary data of $h$ is given by $a$ (resp.\ $b$) on $\R_-$ (resp.\ $\R_+$) and $x_1=x_2 = 0$ (so both paths start from the origin) and $\theta_1 < \theta_2$.  In this case, $\eta_1 = \eta_{\theta_1}^0$ almost surely lies to the right of $\eta_2 = \eta_{\theta_2}^0$.  Moreover, $\eta_i$ is marginally an $\SLE_{\kappa}(\rho_1^i;\rho_2^i)$ process with
\[ \rho_1^i = -\frac{a+\theta_i \chi}{\lambda}-1 \quad\text{and}\quad \rho_2^i = \frac{b+\theta_i \chi}{\lambda}-1.\]
The conditional law of $\eta_1$ given $\eta_2$ is that of an $\SLE_\kappa( (\theta_2-\theta_1) \chi/\lambda-2; (b+\theta_1 \chi)/\lambda-1)$ independently in each of the components of $\h \setminus \eta_2$ which are to the right of $\eta_2$ and the conditional law of $\eta_2$ given $\eta_1$ is that of an $\SLE_\kappa(-(a+\theta_2 \chi)/\lambda-1;(\theta_2 -\theta_1)\chi/\lambda-2)$ independently in each of the components of $\h \setminus \eta_1$ which are to the left of $\eta_1$.  See Figure~\ref{fig:monotonicity_cfl} (see also \cite[Figure~1.20]{ms2012ig1}).

\begin{figure}
\begin{center}
\includegraphics[scale=0.85]{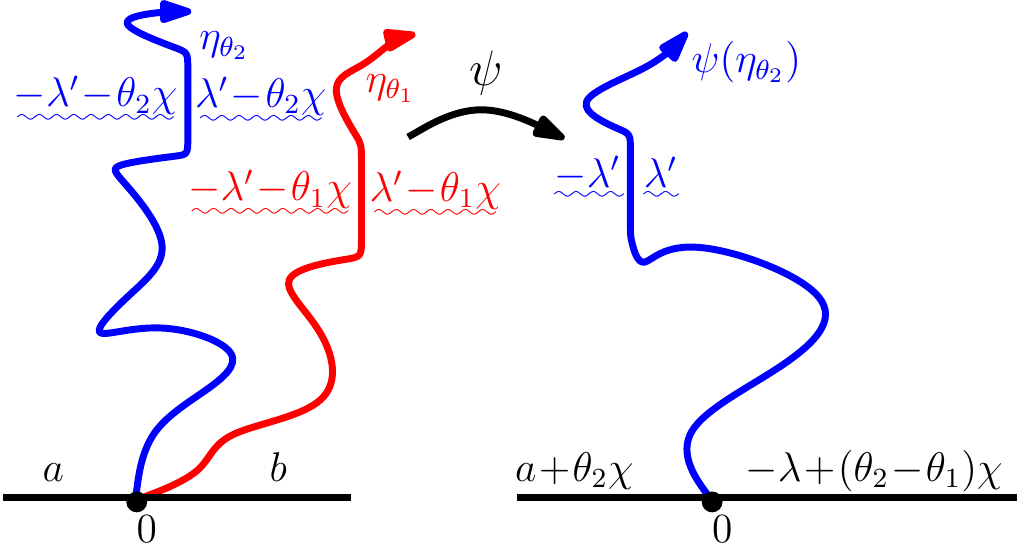} \hspace{0.025\textwidth}\includegraphics[scale=0.85]{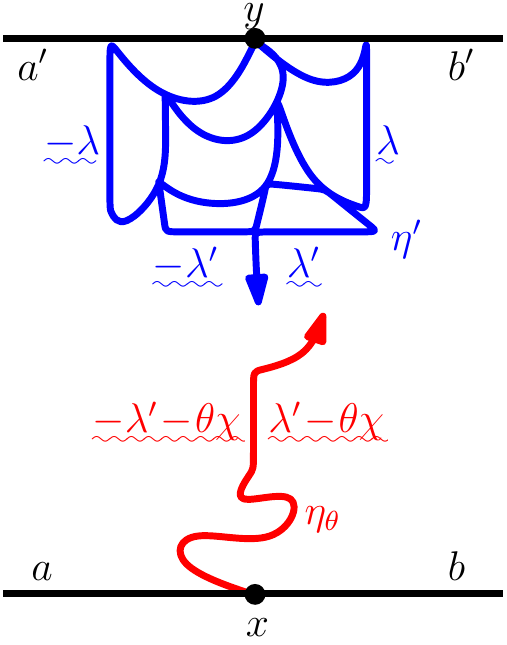}
\end{center}
\caption{\label{fig:monotonicity_cfl} {\bf Left:} Two flow lines $\eta_{\theta_1},\eta_{\theta_2}$ starting from the origin with angles $\theta_1 < \theta_2$ of a common GFF $h$ on $\h$ with the indicated boundary data.  {\bf Middle:} The image of  $\eta_{\theta_2}$ (given $\eta_{\theta_1}$) via the conformal transformation $\psi$ from the component which is to the left of $\eta_{\theta_1}$ to $\h$) is then a flow line of the image GFF with new boundary data. {\bf Right:} We can couple flow and counterflow lines into the same imaginary geometry.  When $\theta = \pi/2$ (resp.\ $\theta=-\pi/2$), the flow line is equal to the left (resp.\ right) side of the counterflow line.  Note that for $\theta = \pi/2$ (resp.\ $\theta = -\pi/2$) the boundary data on the left (resp.\ right) side of the flow line is the same as the boundary data on the corresponding side of the counterflow line.}
\end{figure}

The $\SLE$/GFF coupling will also play an important role in this article because it provides a natural coupling between various $\SLE$ processes.  We will now describe one particular example which will be useful in this article.

Shown on the right side of Figure~\ref{fig:monotonicity_cfl} is a GFF on the strip $\strip = \R \times [0,1]$ with the boundary data indicated.  The flow line $\eta_\theta$ starting from $x$ with angle $\theta$ is marginally an $\SLE_\kappa(\ul{\rho})$ process and the counterflow line $\eta'$ starting from $y$ is marginally an $\SLE_{\kappa'}(\ul{\rho}')$ process for some $\ul \rho$ and $\ul \rho'$.  If $\theta > \tfrac{\pi}{2}$ (resp.\ $\theta < -\tfrac{\pi}{2}$), then $\eta_\theta$ will lie to the left (resp.\ right) of $\eta'$.  If $\theta = \tfrac{\pi}{2}$ (resp.\ $\theta = - \tfrac{\pi}{2}$) then $\eta_\theta$ is equal to the left (resp.\ right) side of $\eta'$.  Finally, if $\theta \in (-\tfrac{\pi}{2},\tfrac{\pi}{2})$ then $\eta_\theta$ is contained in $\eta'$.  Suppose for simplicity that $\theta > \tfrac{\pi}{2}$ so that $\eta_\theta$ is to the left of $\eta'$.  Then the results of \cite{ms2012ig1} imply that we can draw $\eta_\theta$ and $\eta'$ in either order.  In particular, if we first 
draw all of $\eta'$, then $\eta_\theta$ is given by the flow line with angle $\theta$ of the restriction of $h$ to the component of $\strip \setminus \eta'$ which is to the left of $\eta'$  Conversely, if we first draw all of $\eta_\theta$, then $\eta'$ is given by the counterflow line of the restriction of $h$ to the component of $\strip \setminus \eta_\theta$ which is to the right of $\eta_\theta$.  This follows because:
\begin{itemize}
\item In general, the flow and counterflow lines of GFFs are almost surely determined by the GFF \cite[Theorem~1.2]{ms2012ig1} (the version of this for paths which start at interior points is given in \cite[Theorems~1.2, 1.4, 1.6]{ms2013ig4}.)
\item The flow / counterflow lines coupled with a GFF are characterized by the boundary data of the conditional law of the field given the path.  This, in particular, implies that there can be at most one flow line coupled with a GFF with each given angle \cite[Theorems~1.1, 2.4]{ms2012ig1}.
\end{itemize}
In our setting, local set theory tells us that the boundary data for the conditional law of $h$ given all of $\eta'$ and a segment of $\eta_\theta$ takes the form of the flow line of the GFF given by $h$ conditional on $\eta'$ along $\eta_\theta$.  Therefore, by the points above, $\eta_\theta$ is the flow line of this GFF with angle $\theta$.  Conversely, the boundary data for the conditional law of $h$ given all of $\eta_\theta$ and a segment of $\eta'$ takes the form of a counterflow line of the GFF given by $h$ conditional on $\eta_\theta$ along $\eta'$.  Thus, as before, $\eta'$ is the counterflow line of this GFF.  In summary, one can draw $\eta_\theta$ and $\eta'$ in any order without affecting the final path configuration. This type of idea will be of course very useful in order to derive our BCLE duality relations. 

\subsubsection{$\BCLE$ and the GFF}
\label{subsubsec:bcle_gff}

\begin{table}[ht!]
\rowcolors{1}{lightgray}{white}
\setlength\extrarowheight{3pt}
\begin{tabular}{ c  c  c  }
    & $\R_-$  & $\R_+$\\
 CW loop of $\BCLE_\kappa(\rho)$ & $\lambda$ & $\lambda-2\pi \chi$ \\
 CCW loop of $\BCLE_\kappa(\rho)$ & $-\lambda + 2\pi \chi$ & $-\lambda$ \\
 CW loop of $\BCLE_{\kappa'}(\rho')$ & $-\lambda'$ & $-\lambda'-2\pi \chi$ \\
 CCW loop of $\BCLE_{\kappa'}(\rho')$ & $\lambda'+2\pi \chi$ & $\lambda'$\\
\end{tabular}
\vspace{0.01\textheight}
\caption{\label{tab:bcle_inside_loop_bc}
Boundary data for the conditional law of a GFF $h$ inside of the clockwise and counterclockwise loops of a $\BCLE$ after applying a conformal change of coordinates from the region surrounded by the loop to $\h$ which sends the marked point of the loop to $0$ and any other point to $\infty$.  The boundary data for the conditional law does not depend on whether we use $\cwBCLE$ or $\ccwBCLE$.  Note that the heights on $\R_-$ and $\R_+$ always differ by $2\pi\chi$; this is due to the change of coordinates formula~\eqref{eqn:change_of_coordinates}.}
\end{table}

\begin{figure}
\begin{center}
\includegraphics[scale=0.85]{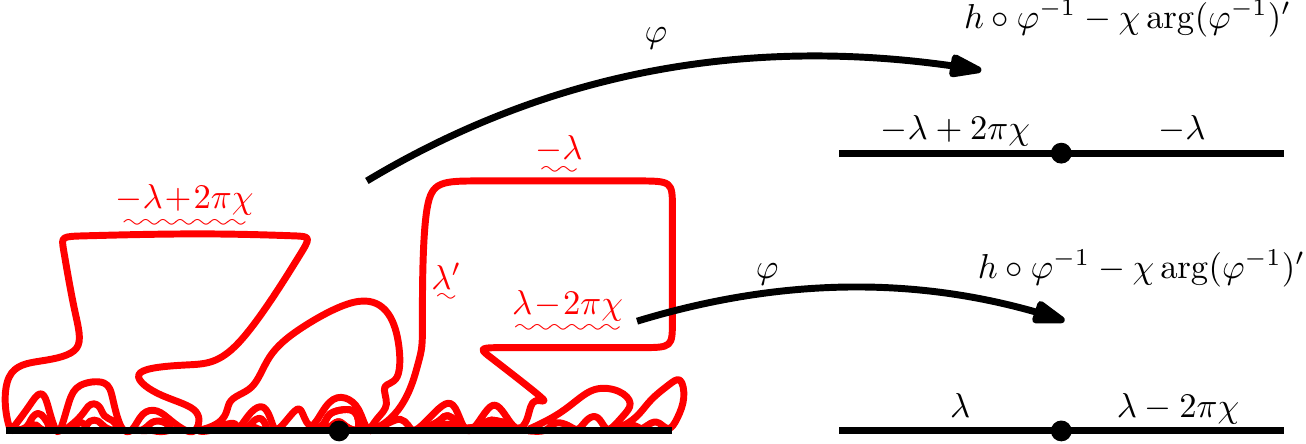}
\end{center}
\caption{\label{fig:bcle_boundary_data}{\bf Left:} A $\cwBCLE_\kappa(\rho)$ generated using a boundary branching flow line of a GFF on $\h$ with boundary conditions given by $-\lambda(1+\rho)$ (resp.\ $-\lambda(1+\rho)-2\pi \chi$) on $\R_-$ (resp.\ $\R_+$).  {\bf Right:} The boundary data for the field $h \circ \varphi^{-1} - \chi \arg (\varphi^{-1})'$ where $\varphi$ is a conformal transformation from a counterclockwise (top) and clockwise (bottom) loop of the $\cwBCLE_\kappa(\rho)$.}
\end{figure}

As mentioned earlier, the target invariance which is built into the $\SLE$/GFF coupling makes it possible to couple the $\BCLE$'s with the GFF in a natural way.  Namely, a $\BCLE_\kappa(\rho)$ is constructed as a boundary branching flow line targeted at every boundary point and a $\BCLE_{\kappa'}(\rho')$ is coupled as a boundary branching counterflow line targeted at every boundary point.  The reason that this construction works is that, by \cite[Theorem~1.2]{ms2012ig1} we have that the paths targeted at different points almost surely agree until the two points are first separated.  That is, the $\bSLE_\kappa (\rho)$ processes naturally fit into the $\SLE$/GFF coupling framework.

The boundary data one uses to couple the different $\BCLE$s (depending on $\kappa$, $\kappa'$, and the orientation) is the same as for the $\SLE$-type processes and is summarized in Table~\ref{tab:sle_bcle_boundary_conditions}.

Since it will be important for our later arguments, we will now explain how one reads off the conditional law of the field inside of each $\BCLE$ loop.  We will explain this in detail in the case of a $\cwBCLE_\kappa(\rho)$; the other possibilities are analogous.  (See Figure~\ref{fig:bcle_boundary_data} for an illustration.)  Suppose that $h$ is a GFF on $\h$ with boundary conditions given by $-\lambda(1+\rho)$ (resp.\ $-\lambda(1+\rho)-2\pi \chi$) on $\R_-$ (resp.\ $\R_+$).  These are the boundary conditions to couple a $\bSLE_\kappa(\rho)$ hence $\cwBCLE_\kappa(\rho)$ starting from $0$ with the GFF.  Let $\Lambda$ be the collection of loops and false loops thus formed and let $\CL \in \Lambda$ be a clockwise loop.  Let $\gamma \colon [0,1] \to \ol{\H}$ be a parameterization of $\CL$ which starts from the first point $x$ on $\CL \cap \partial \h$ which is visited by the path which traverses $\partial \h$ counterclockwise starting from $0$.  Then for each $\epsilon > 0$ there exists $z$ such that the right side of $\gamma([0,1-\epsilon])$ is 
contained in the right side of the flow line of $h$ starting from $0$ and targeted at $z$.  By \cite[Proposition~3.8]{ms2012ig1}, we thus have that the boundary data for the conditional law of $h$ given $\Lambda$ in the region $U$ inside of $\CL$ along the right side of $\gamma([0,1-\epsilon])$ agrees with the boundary data for the conditional law of $h$ given the aforementioned flow line along the same boundary segment.  Since $\epsilon > 0$ was arbitrary, this allows us to determine the boundary data for the conditional law of $h$ given $\Lambda$ in $\CL$ and it is the same as if $\CL$ was equal to the right side of a flow line starting from and terminating at $z$.  Consequently, if $\varphi$ is a conformal transformation $U \to \h$ which takes $x$ to $0$ and any other point to $\infty$, then $h \circ \varphi^{-1} - \chi \arg (\varphi^{-1})'$ has the law of a GFF on $\h$ with boundary conditions given by $\lambda$ (resp.\ $\lambda-2\pi \chi$) on $\R_-$ (resp.\ $\R_+$).

One can similarly read off the boundary conditions for $h$ given $\Lambda$ along the boundary of a region $U$ which is surrounded by a counterclockwise loop in $\Lambda$.  Namely, if $x$ is the point on $\partial \h$ as described above and $\varphi$ is a conformal transformation as described above, then $h \circ \varphi^{-1} - \chi \arg (\varphi^{-1})'$ has the law of a GFF on $\h$ with boundary conditions given by $-\lambda + 2\pi \chi$ (resp.\ $-\lambda$) on $\R_-$ (resp.\ $\R_+$).

We note that the boundary data inside of the clockwise false loops and counterclockwise loops of a $\ccwBCLE_\kappa(\rho)$ takes exactly the same form.

The boundary data for the conditional law after conformally mapping to $\h$ in all cases is summarized in Table~\ref{tab:bcle_inside_loop_bc}.

\subsection{Space-filling $\SLE$ and local finiteness}

We turn to remind the reader of the construction and continuity of space-filling $\SLE_{\kappa'}$.  We will then describe how it is possible to extract the local finiteness of $\BCLE$s from the continuity of space-filling $\SLE_{\kappa'}$.

\subsubsection{Space-filling $\SLE_{\kappa'}$}

We begin by describing the construction of space-filling $\SLE_{\kappa'}$ in the context of $\CLE_{\kappa'}$ and then subsequently describe its construction in the framework of imaginary geometry.  This latter framework is the setting in which many of the properties of space-filling $\SLE_{\kappa'}$, including continuity, are actually proved in \cite{ms2013ig4}, and this is the setting in which we will use it to prove the local finiteness of $\BCLE$.

Suppose that $D \subseteq \C$ is a bounded Jordan domain and that $\Gamma'$ is a nested $\CLE_{\kappa'}$ in $D$.  Fix $x \in \partial D$ and consider the path which is defined as follows.  Let $\eta_0'$ be the path which parameterizes $\partial D$ in clockwise order, starting from and ending at $x$.  Let $\Gamma_1'$ be the collection of loops in $\Gamma'$ which have non-empty intersection with $\partial D$ and let $\eta_1'$ be the path which traces each of the loops of $\Gamma_1'$ starting from and in the order in which they are first hit by the time-reversal of $\eta_0'$, with a clockwise direction.  Assuming the local finiteness of $\CLE_{\kappa'}$ (i.e., for each $\epsilon > 0$ the number of loops of $\Gamma'$ with diameter at least $\epsilon$ is finite), note that $\eta_1'$ does in fact define a continuous path.  Assuming that $\eta_1',\ldots,\eta_k'$ and $\Gamma_1',\ldots,\Gamma_k'$ have been defined for some $k$, we let $\Gamma_{k+1}'$ consist of those loops of $\Gamma'$ which are contained in 
the closure of a component of $D \setminus \eta_k'$ and which intersect the range of $\eta_k'$.  Equivalently, $\Gamma_{k+1}'$ consists of those loops of $\Gamma'$ which are not in $\Gamma_1',\ldots,\Gamma_k'$ and have non-empty intersection with a loop in $\Gamma_k'$.  We then let $\eta_{k+1}'$ be the path which is given by following $\eta_k'$ and, whenever $\eta_k'$ intersects itself and cuts off a component $U$, it follows the loops of $\Gamma_{k+1}'$ contained in $\ol{U}$ as follows.  If $\eta_k'$ has drawn $\partial U$ with a clockwise (resp.\ counterclockwise) orientation, then the aforementioned loops of $\Gamma_{k+1}'$ are each drawn with a clockwise (resp.\ counterclockwise) orientation starting from and ordered according to where/when the path which traverses $\partial U$ in counterclockwise (resp.\ clockwise) order starting from the first (equivalently, last) point on $\partial U$ visited by $\eta_k'$.  Assuming the local finiteness of $\Gamma'$, it is not difficult to see that $\eta_k'$ converges uniformly to a limiting path $\eta'$ as $k \to \infty$.  This limiting path is space-filling $\SLE_{\kappa'}$ and it is the Peano curve associated with the exploration tree defined in \cite{she2009cle} to construct $\CLE_{\kappa'}$.

\begin{figure}[ht!]
\begin{center}
\subfigure{
\includegraphics[scale=0.85, page=1]{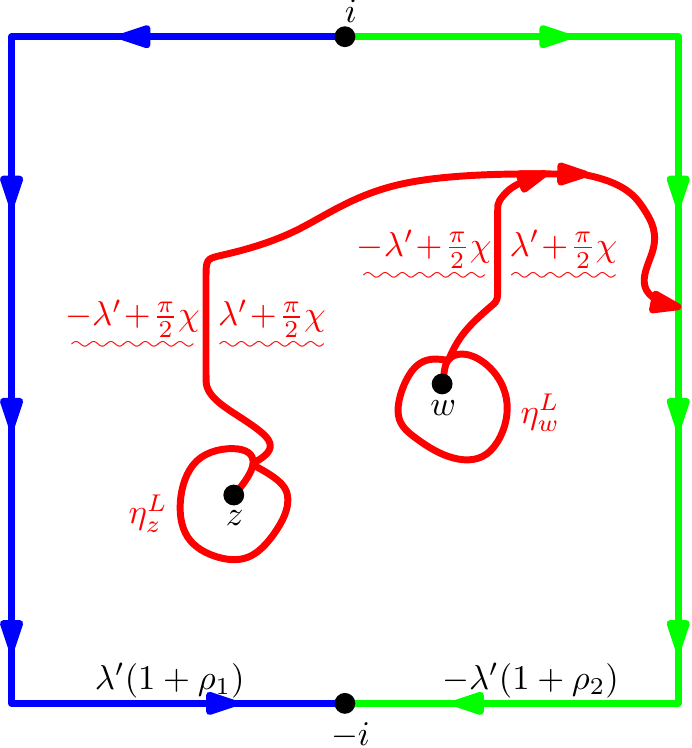}}
\hspace{0.05\textwidth}
\subfigure{
\includegraphics[scale=0.85, page=2]{figures/space_filling_sle}}
\end{center}
\caption{\label{fig:space_filling_sle} {\bf Left:} Suppose that $h$ is a GFF in $D = [-1,1]^2$ with the illustrated boundary data where $\rho_1',\rho_2' \in (-2,\tfrac{\kappa'}{2}-2)$.  For each $z \in D$, let $\eta_z^L$ be the flow line of $h$ starting from $z$ with angle $-\tfrac{\pi}{2}$.  Then we can order the points of $D$ using $h$ by declaring for $z,w \in D$ distinct that $w$ comes before $z$ if $\eta_w^L$ merges with $\eta_z^L$ on its right side.  It is shown in \cite[Theorem~1.16]{ms2013ig4} that there exists a non-crossing, non-self-tracing continuous path $\eta'$, so-called space-filling $\SLE_{\kappa'}(\rho_1';\rho_2')$, which visits the points of $D$ according to this order.  Flow lines bounce off the left (blue) and right (green) sides of $\partial D$ as if they were flow lines using the interaction rules from \cite[Theorem~1.7]{ms2013ig4}.  The direction of these flow lines depends on the values of $\rho_1',\rho_2'$.  In particular, if $\rho_j' \in (-2,\kappa'/2-4]$ (resp.\ $\rho_j' \in (\kappa'/2-4,\kappa'/2-2)$) then the direction of the corresponding boundary segment is from $i$ to $-i$ (resp.\ $-i$ to $i$).  Shown is the case that $\rho_1',\rho_2' \in (-2,\kappa'/2-4]$.  {\bf Right:} Suppose that $h$ is a GFF with the given boundary data.  Then we can define a space-filling $\SLE_{\kappa'}$ loop by ordering the points of $D$ in the same manner.  It follows from \cite[Theorem~1.16]{ms2013ig4} that this is also a continuous curve which fills $\partial D$ counterclockwise.  A similar construction yields a clockwise space-filling $\SLE_{\kappa'}$ loop.  This path is equal in law to the path on the left when it fills a loop that it has cut off from its target point.  These paths exactly correspond to the Peano curve associated with the $\CLE_{\kappa'}$ exploration tree explored in the counterclockwise and clockwise directions.}
\end{figure}

As mentioned just above, space-filling $\SLE_{\kappa'}$ also fits into the imaginary geometry framework of \cite{ms2012ig1,ms2012ig2,ms2012ig3,ms2013ig4} and, in fact, was first proved to be continuous in \cite{ms2013ig4} which in turn led to the first proof of the local finiteness of $\CLE_{\kappa'}$.  This was established by understanding the manner in which the imaginary geometry construction of space-filling $\SLE_{\kappa'}$ interacts with $\CLE_{\kappa'}$.  In other words, the idea for the construction of space-filling $\SLE_{\kappa'}$ is motivated by its connection to $\CLE_{\kappa'}$ described just above, but its existence as a continuous path (hence the local finiteness of $\CLE_{\kappa'}$) was first proved in \cite{ms2013ig4}.  As we will explain below, we will see in a similar manner that the loops formed by admissible $\BCLE_\kappa(\rho)$ and $\BCLE_{\kappa'}(\rho')$ ensembles are also locally finite.

We will now recall the construction of space-filling $\SLE_{\kappa'}$ in the context of imaginary geometry \cite{ms2013ig4} (see Figure~\ref{fig:space_filling_sle}).  We begin with the case of \emph{chordal} space-filling $\SLE_{\kappa'}(\rho_1';\rho_2')$.  Like chordal $\SLE$, this is a continuous process which connects two distinct points on the boundary of a domain.  Specifically, we suppose that $h$ is a GFF on $\h$ with boundary conditions given by $\lambda'(1+\rho_1')$ on $\R_-$ and $-\lambda'(1+\rho_2')$ on $\R_+$.  Fix a deterministic countable dense set $(r_k)$ in $\h$.  Then we can construct an ordering on the $(r_k)$ by saying that $r_j$ comes before $r_k$ if it is true that the flow line of $h$ starting from $r_j$ with angle $-\pi/2$ merges with the flow line of $h$ starting from $r_k$ with angle $-\pi/2$ on its right side (flow lines started at interior points were constructed in \cite{ms2013ig4}).  In this construction, it is important to describe how these flow lines bounce off $\partial \h$.  For chordal space-filling $\SLE_{\kappa'}
(\rho_1';\rho_2')$, the reflection rule is defined by viewing $\R_-$ (resp.\ $\R_+$) as a flow line and then using the interaction rules for flow lines \cite[Theorem~1.7]{ms2013ig4} to determine how the flow lines used to define $\eta'$ behave when they hit $\partial \h$.  The orientations of the two boundary segments $\R_-$ and $\R_+$ depend on the values of $\rho_1',\rho_2'$.  In particular, for $\rho_j' \in (-2,\kappa'/2-4]$, the boundary segment is oriented towards $0$ from $\infty$ and if $\rho_j' \in (\kappa'/2-4,\kappa'/2-2)$, the boundary segment is oriented from $0$ towards $\infty$.  It is shown in \cite{ms2013ig4} that there is a continuous path coupled with and determined by $h$ which respects this ordering.  This is the construction/definition of chordal space-filling $\SLE_{\kappa'}(\rho_1';\rho_2')$ from $0$ to $\infty$ in $\h$.  It is related to ordinary chordal $\SLE_{\kappa'}(\rho_1';\rho_2')$ in $\h$ from $0$ to $\infty$ in that if one parameterizes it according to half-capacity then it is 
an ordinary chordal $\SLE_{\kappa'}(\rho_1';\rho_2')$.  Chordal space-filling $\SLE_{\kappa'}(\rho_1';\rho_2')$ from $x$ to $y$ on a bounded Jordan domain $D$ with $x,y \in \partial D$ distinct is defined by applying a conformal transformation $\h \to D$ which takes $0$ to $x$ and $\infty$ to $y$.

Due to the flow line interaction rules \cite[Theorem~1.7]{ms2013ig4}, if we start a flow line from the target point of a chordal space-filling $\SLE_{\kappa'}(\rho_1';\rho_2')$, the space-filling path will visit that flow line in reverse chronological order.  If we draw a counterflow line with the same starting and ending points as the space-filling $\SLE_{\kappa'}(\rho_1';\rho_2')$, then the space-filling $\SLE_{\kappa'}(\rho_1';\rho_2')$ will visit the points of the counterflow line in the same order.  Whenever the counterflow line cuts off a component from $\infty$, the space-filling $\SLE$ branches in and fills up this component before continuing along the trajectory of the counterflow line.

There is another version of space-filling $\SLE_{\kappa'}$ which is a loop starting and ending at a given boundary point.  This is the version which corresponds to the space-filling loop constructed out of a $\CLE_{\kappa'}$ as described just above.  It is defined in the same way as chordal space-filling $\SLE_{\kappa'}$ except the reflection rule for the flow lines which define the ordering interact with the domain boundary is different.  Namely, in this construction on $\h$, we view $\partial \h$ as a flow line loop which starts and ends at $0$ with a counterclockwise orientation.  It is not explicitly stated in \cite{ms2013ig4} that this ordering extends to a continuous path (after conformally mapping to a bounded domain, say).  However, it is actually an immediate consequence of the continuity of chordal space-filling $\SLE_{\kappa'}(\rho_1';\rho_2')$ that this is the case because whenever the latter makes a clockwise loop and then fills it up, the conditional law of the curve while it is filling the 
loop visits the points inside of the loop according to the aforementioned ordering.  This defines a counterclockwise space-filling $\SLE_{\kappa'}$ loop.  We similarly define a clockwise space-filling $\SLE_{\kappa'}$ loop by viewing $\partial \h$ as a flow line loop which starts and ends at $0$ with a clockwise orientation.

\subsubsection{Local finiteness of $\BCLE$}

We are now ready to state and derive some consequences of the properties of space-filling SLEs for local finiteness of BCLEs. The following lemma describes the ``relative position'' of such a space-filling SLE with respect to a BCLE.

\begin{figure}[ht!]
\begin{center}
\includegraphics[scale=0.85, page=3]{figures/space_filling_sle}
\end{center}
\caption{\label{fig:bcle_space_filling_sle_interaction} Shown is a single loop $\CL$ of a $\ccwBCLE_\kappa(\rho)$, say $\Lambda$, coupled with a GFF~$h$.  The boundary data for $h$ and the boundary is oriented as in Figure~\ref{fig:space_filling_sle} so as to be compatible with a coupling with a counterclockwise space-filling $\SLE_{\kappa'}$ loop $\eta'$.  Shown are three flow lines of angle $-\pi/2$, one starting from inside of $\CL$ and two others starting along $\CL$ (more precisely, at points in the countable dense set used to define $\eta'$ which are very close to $\CL$).  As $\Lambda$ is formed by flow lines of angle $c/\chi$ for $c = \lambda' + \lambda(1+\rho) + 2\pi \chi$, which is always at least $3\pi/2$ for $\rho \in (-2,\kappa-4)$ (i.e., point to the left of flow lines with angle $-\pi/2$), it follows from \cite[Theorem~1.7]{ms2013ig4} that the former will cross $\CL$ upon intersecting and then merge into the domain boundary and the latter will stay to the right of $\CL$.  Consequently, $\eta'$ 
will visit the points of $\CL$ in chronological order and the start and end points for each excursion it makes from $\CL$ are equal.  Therefore $\CL$ (and all of $\Lambda$) can be generated from $\eta'$ by excising the intervals of time in which $\eta'$ spends in the loops of $\Lambda$.  This implies that there exists a continuous path whose range is equal to the union of the loops in $\Lambda$ and therefore $\Lambda$ is almost surely locally finite.}
\end{figure}

\begin{lemma}
\label{lem:bcle_k_space_filling_sle_interaction}
Suppose that $h$ is a GFF on $\D$ with the boundary conditions so that it can be coupled with a counterclockwise space-filling $\SLE_{\kappa'}$ loop from $-i$ to $-i$ as in Figure~\ref{fig:bcle_space_filling_sle_interaction} and let $\eta'$ be the associated space-filling $\SLE_{\kappa'}$ loop from $-i$ back to $-i$.  Let $c = \lambda' + \lambda(1+\rho) + 2\pi \chi$ and let $\Lambda$ be the $\ccwBCLE_\kappa(\rho)$ coupled with $h$ as being the loop ensemble formed by a boundary branching flow line of $h+c$.  Let $I = \cup_j I_j$ be the disjoint union of open intervals of times $t$ in which $\eta'(t) \notin \Lambda$ and, for each $j$, write $I_j = (a_j,b_j)$.  Then we almost surely have that $\eta'(a_j) = \eta'(b_j)$ for all~$j$.
\end{lemma}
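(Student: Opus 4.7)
The plan is to exploit the fact that both $\eta'$ and the loops of $\Lambda$ are built from flow lines of the same GFF $h$, with angles that are far apart. Recall that $\eta'$ is obtained by ordering a countable dense subset of $\D$ according to which flow line of $h$ at angle $-\pi/2$ merges into which on the right. On the other hand, adding $c$ to $h$ rotates every flow line by angle $c/\chi$, so the arcs composing the loops of the $\ccwBCLE_\kappa(\rho)$ $\Lambda$ are flow lines of $h$ at the common angle
\[
\theta_\Lambda \;=\; \frac{c}{\chi} \;=\; \frac{\lambda' + \lambda(1+\rho)}{\chi} \;+\; 2\pi.
\]
A short computation using $\lambda - \lambda' = \pi\chi/2$ shows that for every admissible $\rho$ (and in particular for every $\rho \in [-2,\kappa-4]$) one has $\theta_\Lambda \geq 3\pi/2$; equivalently, $\theta_\Lambda - (-\pi/2) \geq 2\pi$. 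This angle gap is the only quantitative input needed.

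With this gap in hand, the interior and boundary flow line interaction rules of \cite[Theorem~1.7]{ms2013ig4} give the picture illustrated in Figure~\ref{fig:bcle_space_filling_sle_interaction}: for any counterclockwise loop $\CL \in \Lambda$ with interior $U$, a flow line $\eta_z^{-\pi/2}$ starting at $z \in U$ must cross $\CL$ upon first intersection and thereafter stay outside of $\ol U$, while a flow line $\eta_w^{-\pi/2}$ starting at $w \notin \ol U$ cannot enter $U$. The analogous statement holds for the clockwise (false) loops of $\Lambda$, with the roles of interior and exterior reversed. Combined with the definition of the space-filling order, this implies that for each connected component $U_j$ of $\D \setminus \Lambda$ (i.e., each pocket of $\Lambda$), the time-set $\{t : \eta'(t) \in U_j\}$ is an open connected interval, and hence coincides with one of the excursion intervals $I_j = (a_j, b_j)$ appearing in the lemma.

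The main remaining task is then to check that $\eta'(a_j) = \eta'(b_j)$. Let $\CL \in \Lambda$ be the loop whose interior is $U_j$, let $x_\CL$ be the first point of $\CL \cap \partial\D$ visited when $\partial\D$ is traversed counterclockwise starting from $-i$ (the natural ``root'' of $\CL$), and let $\varphi \colon U_j \to \D$ be a conformal map sending $x_\CL$ to $-i$. By the local-set property of $\Lambda$ together with the computation recorded in Section~\ref{subsubsec:bcle_gff} and Table~\ref{tab:bcle_inside_loop_bc}, the conditional law of $h|_{U_j}$ given $\Lambda$, transported to $\D$ via $\wt h := h \circ \varphi^{-1} - \chi \arg(\varphi^{-1})'$, is exactly that of a GFF on $\D$ with the boundary data used in Figure~\ref{fig:space_filling_sle} (right) to generate a counterclockwise space-filling $\SLE_{\kappa'}$ loop from $-i$ to $-i$. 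Since the $-\pi/2$ flow line order defining $\eta'$ is intrinsic to $h$, it is transported by $\varphi$ to the corresponding order for $\wt h$; consequently $\varphi(\eta'|_{[a_j, b_j]})$ is the counterclockwise space-filling $\SLE_{\kappa'}$ loop of $\wt h$ from $-i$ back to $-i$. Pulling back yields $\eta'(a_j) = \eta'(b_j) = \varphi^{-1}(-i) = x_\CL$. (If $\CL$ does not touch $\partial\D$, the same argument applies with the marked point replaced by the appropriate root supplied by the iterative BCLE exploration.)

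\textbf{Main obstacle.} The essential content is the last step: matching the boundary data inside a BCLE loop with that needed to generate a ccw space-filling $\SLE_{\kappa'}$ loop, and verifying that the imaginary-geometry ordering is preserved under this conformal identification. The preceding steps --- the inequality $\theta_\Lambda \geq 3\pi/2$, the crossing/non-entering dichotomy for $-\pi/2$ flow lines, and the resulting contiguity of $\{t : \eta'(t) \in U_j\}$ --- are straightforward applications of the flow line calculus of \cite{ms2012ig1, ms2013ig4} once the angle gap is in place.
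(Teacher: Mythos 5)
Your first half—the computation that $c/\chi \ge 3\pi/2$ and the crossing/non-entering dichotomy from \cite[Theorem~1.7]{ms2013ig4}—is exactly the route the paper takes. The genuine gap is in your final step. The conditional law of $h$ inside a pocket of $\Lambda$ is \emph{not} that of the field used in Figure~\ref{fig:space_filling_sle} to generate a counterclockwise space-filling $\SLE_{\kappa'}$ loop rooted at the pocket's root. Indeed, Table~\ref{tab:bcle_inside_loop_bc} applies to the field $h+c$ used to generate $\Lambda$; unwinding it, inside a counterclockwise (true) loop of $\Lambda$ the field $h$ itself has boundary data $-\lambda'-\lambda(2+\rho)$ and $-\lambda'-\lambda(2+\rho)-2\pi\chi$ (after mapping to $\h$ with the root sent to $0$), which agrees with the required data $-\lambda'$, $-\lambda'-2\pi\chi$ only when $\rho=-2$; inside the clockwise (false) loops the analogous match occurs only when $\rho=\kappa-4$. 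So for every $\rho$ in the open admissible range the identification of $\varphi(\eta'|_{[a_j,b_j]})$ with a counterclockwise space-filling loop of $\wt h$ from $-i$ to $-i$ fails. Relatedly, the ordering defining $\eta'$ restricted to a pocket is not ``intrinsic'' to $h|_{U_j}$ in the way you use it: as you yourself note, the angle $-\pi/2$ flow lines started inside a pocket \emph{cross} the loop and leave the pocket, which is incompatible with the plain loop-variant reflection rule; the correct conditional description would be a space-filling variant with a nontrivial force point, and identifying where such an excursion begins and ends is essentially the content of the lemma, so nothing has been gained. (Your conclusion $\eta'(a_j)=\eta'(b_j)=x_\CL$ is also stronger than, and in general different from, what is true: excursions return to the point at which they left the loop, not necessarily to the root. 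Your intermediate claim that each pocket is filled during a single time interval is likewise asserted rather than proved, though it is not needed.)

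The paper closes the argument differently and more cheaply: since the angle gap forces the flow lines generating the left boundary of $\eta'$ to stay to the right of the flow lines generating $\Lambda$, the path $\eta'$ visits the points of each loop $\CL\in\Lambda$ in chronological order (the order in which $\CL$ is traced). Given this, equality of the excursion endpoints follows from continuity and the space-filling property alone: if an excursion left $\CL$ at $p$ and returned at $q\ne p$, every point of the arc of $\CL$ strictly between $p$ and $q$ would have to be visited after $p$ and before $q$, hence during the excursion, which is impossible since the excursion avoids $\Lambda$; such points would then never be visited, contradicting that $\eta'$ is space-filling. No conditional-law computation inside the pockets is required, and replacing your last step by this order-theoretic argument would repair the proof.
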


Note that the analogous statement holds if we replace $\ccwBCLE_\kappa(\rho)$ with $\cwBCLE_\kappa(\rho)$ and the counterclockwise space-filling $\SLE_{\kappa'}$ loop with a clockwise space-filling $\SLE_{\kappa'}$ loop.

This lemma implies the following: 
\begin{proposition}
\label{prop:locfinitek}
The collection of loops of a $\ccwBCLE_\kappa(\rho)$ (for $\kappa \in (2, 4]$ and admissible $\rho$) is locally finite (if defined in the unit disk, then for all $\eps$, there are only finitely many loops of diameter greater than $\eps$). 
\end{proposition}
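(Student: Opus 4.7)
The strategy is to deduce local finiteness of $\Lambda$ directly from Lemma~\ref{lem:bcle_k_space_filling_sle_interaction} together with the continuity of the counterclockwise space-filling $\SLE_{\kappa'}$ loop established in \cite{ms2013ig4}. Working in $\D$, couple $\Lambda$ and the counterclockwise space-filling $\SLE_{\kappa'}$ loop $\eta'\colon[0,T]\to\ol\D$ jointly via the GFF coupling of Section~\ref{subsubsec:bcle_gff}, so that Lemma~\ref{lem:bcle_k_space_filling_sle_interaction} applies: every excursion interval $(a_j,b_j)$ of $\eta'$ away from $\bigcup\Lambda$ satisfies $\eta'(a_j)=\eta'(b_j)$.

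Set $J=\{t\in[0,T]:\eta'(t)\in\bigcup\Lambda\}$ (a closed set) and define $\phi(t)=|J\cap[0,t]|$. Then $\phi$ is continuous and non-decreasing, and is constant on each excursion interval. The prescription $\tilde\eta(s)=\eta'(t)$ for any $t$ with $\phi(t)=s$ is therefore well-defined (any ambiguity only occurs within a single excursion interval, where the endpoint-matching guarantees consistency), and one checks that $\tilde\eta\colon[0,|J|]\to\ol\D$ is continuous, with image exactly $\bigcup\Lambda$.

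Next, I would argue that each loop $\CL$ of $\Lambda$ is visited by $\tilde\eta$ during a single closed sub-interval $I_\CL\subset[0,|J|]$, and that these intervals have pairwise disjoint interiors as $\CL$ ranges over $\Lambda$. This is the step where the imaginary-geometry picture of Lemma~\ref{lem:bcle_k_space_filling_sle_interaction} is used most heavily: the $\ccwBCLE_\kappa(\rho)$ loops are built from flow lines of angle $c/\chi$ with $c=\lambda'+\lambda(1+\rho)+2\pi\chi$, whereas $\eta'$ is ordered by the angle $-\pi/2$ flow lines. The interaction rules of \cite[Theorem~1.7]{ms2013ig4} then imply that once $\eta'$ has finished filling the region enclosed by $\CL$, it cannot return to $\CL$, because the tree of angle $-\pi/2$ flow lines used to define $\eta'$ has separated that region from the target point. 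Consequently the intervals of time during which $\eta'$ (equivalently $\tilde\eta$) touches $\CL$ form a single closed interval, and for different loops these intervals are contained in the mutually disjoint ``filling times'' of the corresponding enclosed regions.

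Finally, uniform continuity of $\tilde\eta$ on the compact interval $[0,|J|]$ gives, for each $\epsilon>0$, a $\delta>0$ such that $\mathrm{diam}(\tilde\eta(I))<\epsilon$ whenever $I\subset[0,|J|]$ has length less than $\delta$. As the interiors of the $I_\CL$ are pairwise disjoint and contained in $[0,|J|]$, at most $|J|/\delta$ of them can have length $\geq\delta$, whence only finitely many loops of $\Lambda$ satisfy $\mathrm{diam}(\CL)\geq\epsilon$. The main obstacle is the contiguity-and-disjointness step: it is intuitively clear from the space-filling $\SLE_{\kappa'}$ picture underlying Lemma~\ref{lem:bcle_k_space_filling_sle_interaction}, but a rigorous statement requires pinning down precisely how $\eta'$ enters and exits the regions cut off by the $\BCLE$ loops, and extracting this from the joint flow-line/counterflow-line description of \cite{ms2012ig1,ms2013ig4}.
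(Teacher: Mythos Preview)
Your overall strategy---collapse the excursions of $\eta'$ away from $\Lambda$ to obtain a continuous path tracing $\bigcup\Lambda$, then extract local finiteness from uniform continuity---is the same as the paper's. However, the specific reparameterization you propose does not work. With the natural (Lebesgue/area) parameterization of the space-filling $\SLE_{\kappa'}$ loop used in \cite{ms2013ig4}, the set $J=\{t:\eta'(t)\in\bigcup\Lambda\}$ has zero Lebesgue measure: $\bigcup\Lambda$ is a countable union of $\SLE_\kappa$-type curves with $\kappa\in(2,4]$, hence has Hausdorff dimension at most $3/2<2$ and zero area. Thus $\phi\equiv 0$, the map $\tilde\eta$ collapses to a single point, and the subsequent argument about intervals $I_\CL\subset[0,|J|]$ is vacuous. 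Even if you tried to finesse this by choosing some exotic parameterization of $\eta'$ for which $|J|>0$, your well-definedness claim (``ambiguity only occurs within a single excursion interval'') would still need justification: $\phi$ being constant on an interval only says that interval meets $J$ in a null set, not that it lies in a single component of $J^c$.

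The paper avoids both problems by a simpler device: keep the original time interval and define $\tilde\eta'$ to equal $\eta'$ on $J$ and to equal the common endpoint value $\eta'(a_j)=\eta'(b_j)$ on each excursion interval $(a_j,b_j)$. This $\tilde\eta'$ is manifestly continuous, its range is exactly $\bigcup\Lambda$, and its complement in $\D$ is the union of the regions enclosed by the loops of $\Lambda$. Local finiteness then follows from the classical fact that the complement of the range of a continuous curve in a bounded domain has only finitely many connected components of diameter at least $\eps$ (the range is compact, connected and locally connected). This route entirely sidesteps your contiguity-and-disjointness step, which as you note is the delicate part of your argument.
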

\begin{proof}
Let $I = \cup_j I_j = \cup_j (a_j,b_j)$ denote the open set of times at which $\eta'$ is in the parts of $\D$ which are surrounded by a loop of $\Lambda$.  By Lemma~\ref{lem:bcle_k_space_filling_sle_interaction}, we almost surely have that $\eta'(a_j) = \eta'(b_j)$ for all $j$.  This implies that the path $\wt{\eta}'$ which is taken to be equal to $\eta'$ on $[0,\infty) \setminus I$ and on each interval $(a_j,b_j)$ is taken to be equal to $\eta'(a_j) = \eta'(b_j)$ is almost surely continuous.  Since the complement of the range of $\wt{\eta}'$ is equal to the set of points in $D$ which are surrounded by a loop in $\Lambda$, the desired local finiteness follows from the continuity of $\wt{\eta}'$.
\end{proof}

\begin{proof}[Proof of Lemma~\ref{lem:bcle_k_space_filling_sle_interaction}]
By applying a conformal transformation, we may assume that we are working on $\h$.  Suppose that $h$ is a GFF on $\h$ with boundary conditions given by $-\lambda'$ on $\R_-$ and $-\lambda' - 2\pi \chi$ on $\R_+$ and let $\eta'$ be the counterclockwise space-filling $\SLE_{\kappa'}$ loop associated with~$h$ from~$0$ to~$0$.  Let $c = \lambda' + \lambda(1+\rho) + 2\pi \chi$ and let $\Lambda$ be the $\ccwBCLE_\kappa(\rho)$ which is generated by considering the boundary branching flow line of $h+c$ targeted at every point of $\partial \h$.  As $c/\chi > 3\pi/2$ for all $\rho \in (-2,\kappa-4)$, it follows from \cite[Theorem~1.7]{ms2013ig4} that the flow lines which generate the left boundary of $\eta'$ always point to the right of the flow lines which generate $\Lambda$.  Consequently, as illustrated in Figure~\ref{fig:bcle_space_filling_sle_interaction}, it follows that $\eta'$ visits the points of a loop $\CL$ of $\Lambda$ in chronological order, which proves the proposition.
\end{proof}

We now derive the corresponding results for the $\cwBCLE_{\kappa'} (\rho')$ for $\kappa' \in (4,8)$.
\begin{proposition}
\label{prop:locfinitekprime}
The collection of loops of a $\cwBCLE_{\kappa'}(\rho')$ for $\kappa' \in (4, 8)$ and admissible $\rho'$ is locally finite
\end{proposition}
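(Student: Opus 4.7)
The argument parallels that of Proposition~\ref{prop:locfinitek}, with the flow-line-based construction of $\ccwBCLE_\kappa(\rho)$ replaced by the counterflow-line-based construction of $\cwBCLE_{\kappa'}(\rho')$. Couple the $\cwBCLE_{\kappa'}(\rho')$ with a GFF $h$ on $\h$ via a boundary branching counterflow line starting from $0$, with boundary data as in Table~\ref{tab:sle_bcle_boundary_conditions}, namely $\lambda'(1+\rho')$ on $\R_-$ and $\lambda'(1+\rho')-2\pi\chi$ on $\R_+$. After adding an appropriate additive constant $c$, the field $\tilde h=h+c$ admits a coupling with a counterclockwise space-filling $\SLE_{\kappa'}$ loop $\eta'$ (or, equivalently after conformal change of coordinates, with a chordal space-filling $\SLE_{\kappa'}(\tilde\rho_1';\tilde\rho_2')$ between two boundary marked points), whose continuity is guaranteed by \cite[Theorem~1.16]{ms2013ig4}.

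The key step is a counterpart of Lemma~\ref{lem:bcle_k_space_filling_sle_interaction}. By the flow-line interaction rules of \cite[Theorem~1.7]{ms2013ig4} applied to the angle-$(-\pi/2)$ flow lines of $\tilde h$ that encode the ordering of $\eta'$ and the counterflow lines that generate the loops $\Lambda$ of $\cwBCLE_{\kappa'}(\rho')$, one shows that $\eta'$ visits the region surrounded by each loop $\CL\in\Lambda$ during a single contiguous interval $I_j=(a_j,b_j)$, and that $\eta'(a_j)=\eta'(b_j)$. The underlying point is that a branching counterflow line targeted at any interior point $z$ produces macroscopic excursions whose insides are subsequently filled by $\eta'$ in one pass, which is precisely the mechanism used in \cite{ms2013ig4} to deduce local finiteness of $\CLE_{\kappa'}$ in the special case $\rho'=0$ (where $\cwBCLE_{\kappa'}(0)$ is the boundary-touching part of $\CLE_{\kappa'}$).

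Given this matching-endpoint property, define $\tilde\eta'$ by collapsing each interval $I_j$ to the single point $\eta'(a_j)=\eta'(b_j)$. Then $\tilde\eta'$ is a continuous path in $\overline\h$ whose complement is the disjoint union of the open regions surrounded by the loops of $\cwBCLE_{\kappa'}(\rho')$. Continuity of $\tilde\eta'$ immediately forces, for every $\epsilon>0$, only finitely many of these complementary regions to have diameter at least $\epsilon$, and hence (after transferring to the unit disk by conformal invariance and using distortion estimates) only finitely many $\cwBCLE_{\kappa'}(\rho')$ loops of diameter at least $\epsilon$.

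The main obstacle is the verification of the interaction-rule step: in contrast to the setting of Lemma~\ref{lem:bcle_k_space_filling_sle_interaction}, the $\cwBCLE_{\kappa'}(\rho')$ loops are non-simple and can touch both the boundary and each other, so one must carefully verify that for every admissible $\rho'\in(\kappa'/2-4,\kappa'/2-2)$ the relevant angle comparisons place the $\eta'$-ordering flow lines on the correct side of the counterflow-line boundary segments, that no continuation threshold for \cite[Theorem~1.7]{ms2013ig4} is encountered prematurely, and that the appropriate choice of the constant $c$ (and the orientation of the space-filling loop, clockwise versus counterclockwise) exists throughout the admissible range of $\rho'$.
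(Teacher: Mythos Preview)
Your proposal is correct and follows essentially the same route as the paper: the paper states Proposition~\ref{prop:locfinitekprime} as an immediate consequence of Lemma~\ref{lem:bcle_kp_space_filling_sle_interaction}, whose proof is in turn declared to be ``essentially the same argument used to prove Lemma~\ref{lem:bcle_k_space_filling_sle_interaction}.'' Your write-up spells out precisely this strategy---couple with a space-filling $\SLE_{\kappa'}$, use the flow-line interaction rules to obtain the matching-endpoint property for the excursion intervals, and then collapse those intervals to obtain a continuous path whose complementary components are the loop interiors---and the caution you express in your final paragraph about verifying the angle comparisons across the admissible range of $\rho'$ is exactly the check the paper suppresses when it says the argument is ``essentially the same.''
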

This is a consequence as before of the following lemma. 
\begin{lemma}
\label{lem:bcle_kp_space_filling_sle_interaction}
Suppose that $h$ is a GFF on $\D$ with the boundary conditions so that it can be coupled with a counterclockwise space-filling $\SLE_{\kappa'}$ loop $\eta'$ from $-i$ to $-i$ as in Figure~\ref{fig:bcle_space_filling_sle_interaction}.  Let $c = \lambda'(2+\rho)$ and let $\Lambda'$ be the $\cwBCLE_{\kappa'}(\rho')$ process associated with $h+c$ using a boundary branching counterflow line starting from $-i$.  Let $I = \cup_j I_j$ be the disjoint union of open intervals of times~$t$ in which $\eta'(t) \notin \Lambda$ and, for each $j$, write $I_j = (a_j,b_j)$.  Then we almost surely have that $\eta'(a_j) = \eta'(b_j)$ for all~$j$.
\end{lemma}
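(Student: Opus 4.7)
The plan is to adapt almost verbatim the argument of Lemma~\ref{lem:bcle_k_space_filling_sle_interaction} to the setting where the BCLE is generated by counterflow lines rather than flow lines. By a conformal change of coordinates we may take the domain to be $\h$ and $h$ to be a GFF on $\h$ with boundary values $-\lambda'$ on $\R_-$ and $-\lambda'-2\pi\chi$ on $\R_+$, so that the counterclockwise space-filling $\SLE_{\kappa'}$ loop $\eta'$ from $0$ to $0$ is coupled with $h$. The shift $c=\lambda'(2+\rho')$ (we interpret the $\rho$ of the statement as $\rho'$) is chosen precisely so that $h+c$ carries the boundary data listed in Table~\ref{tab:sle_bcle_boundary_conditions} for a $\cwBCLE_{\kappa'}(\rho')$ constructed by a boundary branching counterflow line rooted at $0$.

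The first step is to identify the ``boundary'' of each branch of $\Lambda'$ in the imaginary geometry of $h$. A single branch of $\Lambda'$ is an $\SLE_{\kappa'}(\rho';\kappa'-6-\rho')$-type counterflow line of $h+c$, whose left and right outer boundaries, by \cite[Theorem~1.4]{ms2012ig1} (respectively the interior-point analog in \cite[Theorem~1.7]{ms2013ig4}), are flow lines of $h+c$ of angles $+\tfrac{\pi}{2}$ and $-\tfrac{\pi}{2}$. Translating back to $h$ using the imaginary geometry convention $\theta \mapsto \theta+c/\chi$, these are flow lines of $h$ of angles $\tfrac{\pi}{2}+c/\chi$ and $-\tfrac{\pi}{2}+c/\chi$. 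A direct computation from the definitions $\lambda'=\pi\sqrt{\kappa}/4$ and $\chi=(4-\kappa)/(2\sqrt{\kappa})$ gives
\[
c/\chi \;=\; \lambda'(2+\rho')/\chi \;\in\; \bigl(\pi,\,4\pi/(4-\kappa)\bigr)
\]
throughout the admissible range $\rho'\in(\kappa'/2-4,\,\kappa'/2-2)$. In particular the right outer boundary of any branch of $\Lambda'$ has angle strictly greater than $\tfrac{\pi}{2}$ in the imaginary geometry of~$h$.

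The ordering defining $\eta'$ is given by flow lines of $h$ of angle $-\tfrac{\pi}{2}$ issued from a countable dense set of interior points. By the flow line interaction rules of \cite[Theorem~1.7]{ms2013ig4}, the angle gap of more than $\pi$ between these ordering flow lines and the right outer boundary of a BCLE branch lies in the ``merge on the left'' regime: an ordering flow line issued from a point just outside the region cut off by a loop $\CL'$ of $\Lambda'$ cannot cross into that region, while an ordering flow line issued from a point inside stays trapped in the region until it is absorbed into (the right boundary of) $\CL'$. Consequently, exactly as in the proof of Lemma~\ref{lem:bcle_k_space_filling_sle_interaction}, the points of $\D$ that are visited by $\eta'$ strictly between the first and last visits to $\CL'$ are exactly the points of the region it cuts off from the target; $\eta'$ therefore fills each pocket of $D\setminus\Lambda'$ during a single time interval $I_j=(a_j,b_j)$, and enters and exits this pocket at the same point of $\CL'$, giving $\eta'(a_j)=\eta'(b_j)$ for each $j$.

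The main technical obstacle is that, unlike in the $\kappa\in(2,4]$ case, the loops of $\Lambda'$ are non-simple: each $\CL'$ has a dense set of double points and may touch $\partial D$ on a fractal set, so the region ``cut off'' by $\CL'$ from the target is really the region bounded by the associated right outer \emph{flow line} rather than by $\CL'$ itself. One must therefore justify that the $\eta'$-excursions away from $\Lambda'$ are in bijection with the pockets bounded by these right outer boundary flow lines, and that the unique entry/exit point for each such pocket indeed lies on $\CL'$ (and not on an interior part of another loop). This requires invoking the local-set theory together with the description of how the counterflow line of $h+c$ decomposes into its outer boundary flow lines and the countably many ``sub-counterflow-lines'' trapped between them, as developed in \cite[Section~4]{ms2013ig4}; once this decomposition is in place the chronological argument above applies unchanged.
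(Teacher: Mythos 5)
Your proposal is correct and takes essentially the same route as the paper, whose own proof of this lemma is just the remark that it follows by the argument of Lemma~\ref{lem:bcle_k_space_filling_sle_interaction}: you replace the $\BCLE$ flow lines by the angle $\pm\tfrac{\pi}{2}$ outer-boundary flow lines of the counterflow-line branches of $h+c$ and verify (correctly) that the angle gap $c/\chi \in (\pi, 4\pi/(4-\kappa))$ with the angle $-\tfrac{\pi}{2}$ ordering flow lines puts you in the non-crossing regime of \cite[Theorem~1.7]{ms2013ig4}, which is exactly the mechanism used there. The only cosmetic caveat is the phrase ``merge on the left regime'' (with angle gap larger than $\pi$ the ordering flow lines do not merge with the boundary flow lines, they are merely prevented from crossing), but non-crossing is all the argument needs, so this does not affect correctness.
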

Again, the analogous statement holds if we replace $\cwBCLE_{\kappa'}(\rho')$ with $\ccwBCLE_{\kappa'}(\rho')$ and the counterclockwise space-filling $\SLE_{\kappa'}$ loop with a clockwise space-filling $\SLE_{\kappa'}$ loop.
\begin{proof}
This follows from essentially the same argument used to prove Lemma~\ref{lem:bcle_k_space_filling_sle_interaction}.
\end{proof}

\section{Conclusion of the proofs of Theorem~\ref{thm:duality1} and Theorem~\ref{thm:duality2}}
\label{sec:proofs}
\label {Sec9}

The proofs of Theorem~\ref{thm:duality1} and Theorem~\ref{thm:duality2} have many similarities.  We will first describe in detail the various steps in the derivation of Theorem~\ref{thm:duality1}, and we then explain what minor differences one has to implement in order to derive Theorem~\ref{thm:duality2}.

\subsection{Locality of the boundary-loop-tracing process}
Consider a $\cwBCLE_{\kappa}(\rho)$ process $\Lambda$ in the unit disk $\D$, traced by a boundary branching $\SLE_{\kappa}(\rho)$ beginning at $-i$.  Assume that the $\bSLE_{\kappa}(\rho)$ processes used to trace $\Lambda$ are all coupled with an instance $h$ of the GFF as described in Section~\ref{subsubsec:bcle_gff}: If $\varphi \colon \D \to \h$ is a conformal transformation which takes $-i$ to $0$ and $i$ to $\infty$ then $h = \wt{h} \circ \varphi - \chi \arg \varphi'$ where $\wt{h}$ is a GFF on $\h$ with boundary conditions given by $-\lambda(1+\rho)$ on $\R_-$ and $-\lambda(1+\rho)-2\pi \chi$ on~$\R_+$ (recall Table~\ref{tab:sle_bcle_boundary_conditions}).  Then, we note that the set of points belonging to the loops of $\Lambda$ that intersect a given boundary arc of $\D$ {containing $-i$} is a local set of $h$, because this set can be described by a boundary branching $\SLE_{\kappa}(\rho)$ starting from $-i$ that only targets a dense set of points in this arc (instead of a dense set of points on all of~$\partial \D$). For every loop $\CL \in \Lambda$, let us define $\theta(\CL)$ to be the first point at which one encounters the loop $\CL$ as one traces the boundary $\partial \D$ counterclockwise starting from $-i$ (note that $\theta(\CL)$ determines an ordering of the loops of $\Lambda$).

Let $\eta_\Lambda$ be the single loop that traces through all of the loops of $\Lambda$ in the order described above, with each individual loop of $\Lambda$ being traversed clockwise.  To be more explicit, if we are given any finite collection $\{\CL_1, \CL_2, \ldots, \CL_k \}$, one can define a path that traverses $\partial \D$ in counterclockwise order except that each time it first hits one of the $\CL_i$ it traverses that entire loop clockwise before continuing.  By the local finiteness of $\cwBCLE_\kappa(\rho)$ established in Proposition~\ref{prop:locfinitek}, one can then construct $\eta_\Lambda$ as a uniform limit of (appropriate parameterizations of) the paths defined this way. This ensures that almost surely, $\eta_\Lambda$ indeed traces a continuous path.

We are then interested in the ``dynamic'' locality property of this path:  
\begin{lemma}
Suppose that $\tau$ is any stopping time for the filtration generated by the path $\eta_\Lambda$.  Then $\eta_\Lambda([0,\tau])$ is a local set for $h$.
\end{lemma}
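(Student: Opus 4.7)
The plan is to invoke the characterization of local sets given in Proposition~\ref{prop:local_set_char}: it suffices to show that for every open $U \subseteq \D$ the event $\{\eta_\Lambda([0,\tau]) \cap U \neq \emptyset\}$ is measurable with respect to the $\sigma$-algebra generated by the projection $h^U$ of $h$ onto the space of generalized functions harmonic in $U$ (together with the information in $\eta_\Lambda|_{[0,\tau]}$ itself). I would first analyze the geometric structure of $\eta_\Lambda([0,\tau])$: at time $\tau$ this set decomposes into (i) an initial counterclockwise arc of $\partial\D$ starting at $-i$, (ii) a countable collection of fully traced clockwise loops of $\Lambda$ whose ``first-hit'' point $\theta(\CL)$ lies on this arc, and (iii) at most one partially traced loop.

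Next, I would use that the boundary-branching $\bSLE_\kappa(\rho)$ tree out of $-i$ is generated by flow lines of $h$ (with the boundary data recorded in Table~\ref{tab:sle_bcle_boundary_conditions}), so that each of its branches is, as an $\SLE_\kappa(\rho;\kappa-6-\rho)$ curve coupled with $h$ in the sense of Section~\ref{subsubsec:ig_boundary_data}, a local set of $h$ by the imaginary geometry coupling theorems of \cite{ms2012ig1}. Each loop of $\Lambda$ is traced by the relevant portion of the branch targeting any boundary point lying just clockwise of the loop's first-hit point, and is therefore in turn local. By Proposition~\ref{prop:locfinitek}, for any $\varepsilon>0$ only finitely many loops of $\Lambda$ have diameter greater than $\varepsilon$; combining this with the stability of the class of local sets under (measurable) countable unions and closures \cite{ss2010continuumcontour} would show that for any \emph{deterministic} time $t$, the set $\eta_\Lambda([0,t])$ is local, since the event $\{\eta_\Lambda([0,t]) \cap U \neq \emptyset\}$ can be rewritten as a countable union of events of the form ``a given revealed branch enters $U$'' each of which is measurable with respect to $h^U$.

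To promote this to a stopping time $\tau$ of the filtration generated by $\eta_\Lambda$, I would then use the standard dyadic approximation $\tau_n := 2^{-n}\lceil 2^n (\tau \wedge n)\rceil$, which takes only countably many values and decreases to $\tau$. On each event $\{\tau_n = k 2^{-n}\}$ the previous step identifies $\eta_\Lambda([0,\tau_n])$ as a local set; gluing these along the measurable partition indexed by $k$ and using continuity of $\eta_\Lambda$ (itself coming from Proposition~\ref{prop:locfinitek}), one obtains that $\eta_\Lambda([0,\tau_n])$ is local for every $n$. Letting $n \to \infty$ and using preservation of locality under monotone limits of nested local sets (as in \cite{ss2010continuumcontour}) would then yield that $\eta_\Lambda([0,\tau])$ is itself local.

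The main technical obstacle I expect is the careful bookkeeping in the countable-union step: one must verify that the (random) index set of ``branches revealed by time $\tau$'' and the partial branch in (iii) are measurable in the right way, i.e.\ that the decision of whether a particular candidate loop of $\Lambda$ has been entered, or fully traced, by time $\tau$ depends only on information visible through $h^U$ for a suitable $U$ surrounding that loop. This is morally a consequence of the locality of the individual flow-line branches, but it requires care because the ordering $\theta$ couples all of the branches together along $\partial \D$, and the ``currently traced'' branch must be simultaneously treated as an initial segment of an $\SLE$ with its own SLE/GFF local coupling and grafted onto the picture built from all previously completed loops.
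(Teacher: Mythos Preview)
Your overall strategy---invoking Proposition~\ref{prop:local_set_char} and using that the branching flow lines generating $\Lambda$ are locally determined by $h$---is the same mechanism the paper uses. But your route through deterministic times, dyadic approximation, and monotone limits is unnecessarily indirect, and the final step has a genuine gap: the result in \cite{ss2010continuumcontour} concerns \emph{increasing} unions of local sets, whereas your $\tau_n$ decrease to $\tau$, so $\eta_\Lambda([0,\tau_n])$ decreases to $\eta_\Lambda([0,\tau])$. Preservation of locality under decreasing limits is not standard, and the naive attempt fails: in general $\{A \cap U \neq \emptyset\} \subsetneq \bigcap_n \{A_n \cap U \neq \emptyset\}$, so $h^U$-measurability of the latter events does not yield $h^U$-measurability of the former. (Approximating $\tau$ from below instead would fix the direction of the limit but would cost you the stopping-time property, which is what made step~2 work.)

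The paper's proof bypasses all of this by noting that the stopping time causes no additional difficulty over the deterministic case. Fix an open $U$. On the event $\{\eta_\Lambda([0,\tau]) \cap U = \emptyset\}$, one can reconstruct $\eta_\Lambda|_{[0,\tau]}$ entirely from the boundary-branching flow line of $h$ from $-i$ targeted at every point of $\partial \D$, with each branch stopped upon first entering $U$; these stopped branches are determined by $h^U$. Since $\tau$ is a stopping time for the filtration of $\eta_\Lambda$, the decision of when to stop is itself determined by $\eta_\Lambda|_{[0,\tau]}$, hence by $h^U$ on this event. Thus $\{\eta_\Lambda([0,\tau]) \cap U \neq \emptyset\}$ is $h^U$-measurable and Proposition~\ref{prop:local_set_char} applies directly---no approximation, no limits, and the bookkeeping you flagged as the main obstacle simply does not arise.
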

\begin{proof}
We fix an open set $U \subseteq \D$.  On the event that $\eta_\Lambda|_{[0,\tau]}$ does not hit~$U$, we can determine $\eta_\Lambda|_{[0,\tau]}$ from the boundary branching flow line of~$h$ started from $-i$ and targeted at every point on $\partial \D$, stopped upon hitting~$U$.  Therefore the lemma follows from Proposition~\ref{prop:local_set_char}.
\end{proof}
We note that we can determine the boundary conditions for the conditional law of~$h$ given $\eta_\Lambda|_{[0,\tau]}$ using \cite[Proposition~3.8]{ms2012ig1} to make a comparison to the conditional law of $h$ given the flow lines of~$h$ used to generate $\eta_\Lambda|_{[0,\tau]}$.  In particular, if $U$ is a component of $\D \setminus \eta_\Lambda([0,\tau])$ and $x \in \partial \D$ is such that the boundary branching flow line of $h$ starting from $-i$ and targeted at $x$ agrees with $\partial U$ along a boundary segment $L$, then \cite[Proposition~3.8]{ms2012ig1} implies that the boundary conditions for the restriction of $h$ to $U$ given $\eta_\Lambda|_{[0,\tau]}$ along $L$ are the same as the boundary conditions for the conditional law of $h$ given the flow line branch targeted at $x$ along~$L$.

\subsection{Locality of the nested boundary-loop-tracing process}

We now iterate the BCLE construction, and consider $\Gamma$ and $\eta=\eta_\Gamma$ as described in Theorem~\ref{thm:duality1}.  Let us first note that the same argument as for $\eta_\Lambda$ can be applied to deduce that the nested loop-tracer $\eta$ is a continuous path. Let us now describe what GFF boundary conditions this nested loop-tracer corresponds to (when one looks at the whole path at once).  

We note that $\Lambda$ and $\Gamma$ are coupled with a GFF as follows.  We will assume that we are working on~$\h$ because this is the setting in which it is easiest to specify the boundary conditions.  Specifically, we suppose that~$h$ is a GFF on~$\h$ with boundary conditions given by $-\lambda(1+\rho)$ on~$\R_-$ and $-\lambda(1+\rho)-2\pi \chi$ on $\R_+$ (recall Table~\ref{tab:sle_bcle_boundary_conditions}).  These are the same boundary conditions as considered in the previous section for generating $\eta_\Lambda$.  We note that each component $V$ of $\h \setminus \eta_\Lambda$ has a marked point corresponding to where $\eta_\Lambda$ first visits $\partial V$.  Let $\varphi \colon V \to \h$ be a conformal map which sends this special point to $0$ and any other distinct point on $\partial V$ to $\infty$.

We now consider two cases depending on whether $V$ is to the left or to the right of $\eta_\Lambda$ (i.e., whether~$V$ is surrounded by a clockwise or counterclockwise loop of~$\Lambda$). When $V$ is to the right of $\eta_\Lambda$  then we know that $\wt{h} = h \circ \varphi^{-1} - \chi \arg (\varphi^{-1})'$ is a GFF on $\h$ with boundary conditions given by $\lambda$ on $\R_-$ and $\lambda - 2\pi \chi$ on $\R_+$ (recall Table~\ref{tab:bcle_inside_loop_bc} and Figure~\ref{fig:bcle_boundary_data}).  Let
\begin{equation}
\label{eqn:c_r_def}
c_R = -\lambda'(1+\rho_R') + 2\pi \chi - \lambda.
\end{equation}
Then the boundary conditions for $\wt{h} + c_R$ are given by $-\lambda'(1+\rho_R')+ 2\pi \chi$ on~$\R_-$ and $-\lambda'(1+\rho_R')$ on~$\R_+$.  These are the boundary conditions for a $\ccwBCLE_{\kappa'}(\rho_R')$ process (recall Table~\ref{tab:sle_bcle_boundary_conditions}).  So, we take the $\ccwBCLE_{\kappa'}(\rho_R')$ inside of $V$ to be given by the image under $\varphi^{-1}$ of the $\ccwBCLE_{\kappa'}(\rho_R')$ generated by the boundary branching counterflow line of $\wt{h} + c_R$ starting from~$0$ and targeted at every point of~$\partial \h$.

When $V$ is to the left of $\eta_\Lambda$, then we know that $\wt{h} = h \circ \varphi^{-1} - \chi \arg (\varphi^{-1})'$ is a GFF on $\h$ with boundary conditions given by $-\lambda+2\pi \chi$ on $\R_-$ and $-\lambda$ on $\R_+$ (recall Table~\ref{tab:bcle_inside_loop_bc} and Figure~\ref{fig:bcle_boundary_data}).  Let
\begin{equation}
\label{eqn:c_l_def}
c_L = \lambda'(1+\rho_L') - 2\pi \chi + \lambda.
\end{equation}
Then the boundary conditions for $\wt{h} + c_L$ are given by $\lambda'(1+\rho_L')$ on $\R_-$ and $\lambda'(1+\rho_L')-2\pi \chi$ on $\R_+$.  These are the boundary conditions for $\cwBCLE_{\kappa'}(\rho_L')$ (recall Table~\ref{tab:sle_bcle_boundary_conditions}).  So, we take the $\cwBCLE_{\kappa'}(\rho_L')$ inside of $V$ to be given by the image under $\varphi^{-1}$ of the $\cwBCLE_{\kappa'}(\rho_L')$ generated by $\wt{h} + c_L$.

Let $\eta'$ be the path which is given by following along the $\cwBCLE_{\kappa'}(\rho_L')$ (resp.\ $\ccwBCLE_{\kappa'}(\rho_R')$) loops described just above starting from the first point visited by $\eta_\Lambda$ and with a counterclockwise (resp.\ clockwise) orientation.

We now study the iterated loop-tracing path $\eta'$ when stopped at a given stopping time. 

\begin{proposition}
\label{prop:localityofnestedlooptracer}
Suppose that $\tau$ is a stopping time for the filtration generated by the iterated loop-tracing path $\eta'$.  Then $\eta'([0,\tau])$ is a local set for $h$.
\end{proposition}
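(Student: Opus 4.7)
The approach is to verify the sufficient condition of Proposition~\ref{prop:local_set_char}: for every open $U \subseteq \D$, the event $\{\eta'([0,\tau]) \cap U = \emptyset\}$ and, on that event, the set $\eta'([0,\tau])$ itself, must be $h^U$-measurable. Fixing such a $U$ and working on the event $E_U = \{\eta'([0,\tau]) \cap U = \emptyset\}$, I would reconstruct $\eta'|_{[0,\tau]}$ from $h^U$ in two stages corresponding to the two layers of the construction.

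First, the portion of $\eta_\Lambda$ traversed by $\eta'$ up to time $\tau$ is contained in $\eta'([0,\tau])$, hence disjoint from $U$ on $E_U$. Since $\eta_\Lambda$ is generated by a boundary branching flow line of $h$ started at $-i$ and targeted at every boundary point, and flow lines stopped upon entering $U$ are local (as in the lemma just established for $\eta_\Lambda$ at stopping times, together with \cite[Proposition~3.8]{ms2012ig1}), this portion of $\eta_\Lambda$ is a measurable function of $h^U$. For the second stage, let $V$ be any component of $\D \setminus \eta_\Lambda$ whose nested $\BCLE_{\kappa'}$ has been partially traced by $\eta'$ by time $\tau$. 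The nested BCLE in $V$ is generated by the boundary branching counterflow line of the pushforward $\widetilde h + c_{R}$ or $\widetilde h + c_{L}$ (with constants given by~\eqref{eqn:c_r_def} and~\eqref{eqn:c_l_def}), where $\widetilde h$ is obtained from $h|_V$ by the conformal change of coordinates~\eqref{eqn:change_of_coordinates}; crucially, the shift depends only on $\eta_\Lambda$ and the orientation of $V$, not on the field values inside $U$. On $E_U$, the portion of this nested BCLE traced by $\eta'$ up to time $\tau$ is disjoint from $U$, so by the locality of counterflow lines (and the GFF couplings of \cite{ms2012ig1, ms2013ig4}) it is determined by the restriction of $\widetilde h + c$ to the image of $V \setminus U$, hence by $h^U$. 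Assembling the contributions of all such components with the first-stage portion of $\eta_\Lambda$, we obtain that $\eta'([0,\tau])$ is $h^U$-measurable on $E_U$, whence so is $E_U$ itself, and Proposition~\ref{prop:local_set_char} yields the claim.

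The main obstacle is that $\tau$ is a stopping time for the filtration generated by the composite path $\eta'$, not for any individual flow or counterflow line branch in the branching trees used to produce $\eta_\Lambda$ and the nested BCLEs. One must argue that the portion of each such branch that contributes to $\eta'|_{[0,\tau]}$ is stopped at a \emph{branch-wise} stopping time, so that the local set theory of \cite{ss2010continuumcontour, ms2012ig1, ms2013ig4} applies branch by branch. I would handle this by an approximation argument in the spirit of the cut-offs of Section~\ref{subsec:side-swapping}: restrict to $E_U$ and stop each branch at the earlier of entering $U$ or contributing to $\eta'$ past time $\tau$, both of which are stopping times of the branch's own filtration. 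The countability of the branches that can contribute non-trivially is ensured by the local finiteness of $\BCLE_\kappa(\rho)$ and $\BCLE_{\kappa'}(\rho')$ from Propositions~\ref{prop:locfinitek} and~\ref{prop:locfinitekprime}, which also guarantees that the union of the stopped branch-hulls is closed and gives the correct set $\eta'([0,\tau])$ on $E_U$.
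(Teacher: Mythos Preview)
Your overall strategy—verify the hypothesis of Proposition~\ref{prop:local_set_char} by reconstructing $\eta'|_{[0,\tau]}$ from $h^U$ in two layers—matches the paper's approach, but the execution has a genuine gap at the heart of the argument.

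In your second stage you write ``let $V$ be any component of $\D \setminus \eta_\Lambda$'' and proceed to recover the nested $\BCLE_{\kappa'}$ inside $V$ from $h|_{V \setminus U}$. The difficulty is that $V$ is a component of the complement of the \emph{entire} path $\eta_\Lambda$, and on $E_U$ you have no guarantee that all of $\eta_\Lambda$ avoids $U$: only the portion of $\eta_\Lambda$ traversed by $\eta'$ up to time $\tau$ does. The remainder of $\eta_\Lambda$ may well enter $U$, so the component $V$ (and the conformal map $\varphi \colon V \to \h$ needed to define $\wt h$) is \emph{not} $h^U$-measurable. Your approximation argument in the second paragraph addresses a different issue (branch-wise stopping times) and does not resolve this, since the counterflow line branches you want to stop are themselves defined only after $V$ is known.

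The paper handles exactly this point by a commutation argument. Rather than working in the final components $V$ of $\D \setminus \eta_\Lambda$, it works in the (larger) components $W$ of $\D \setminus \eta_\Lambda([0,\tau_{\Lambda,U}])$, where $\tau_{\Lambda,U}$ is the first time $\eta_\Lambda$ hits $U$. In each such $W$ one launches branching counterflow lines of the conditional field plus $c_L$ or $c_R$, targeted only at those boundary points of $W$ that lie on the already-drawn portion of $\eta_\Lambda$, and stops them upon hitting $U$. These three steps are manifestly $h^U$-measurable and suffice to determine $\eta'$ up to time $\tau_U$. The crux is then to show that this alternative order of generation produces the \emph{same} counterflow line loops as first drawing all of $\eta_\Lambda$ and then sampling the nested $\BCLE_{\kappa'}$'s in the final components $V$. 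This is precisely the flow/counterflow commutation explained in Section~\ref{subsubsec:ig_interaction_rules}: since both the flow line $\eta_\Lambda$ and the counterflow lines are local sets and are each deterministic functions of the field, one can draw them in either order without affecting the outcome. Your write-up skips this step entirely; it is the essential content of the proof.
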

\begin{proof}
Fix an open set $U \subseteq \h$.  Let $\tau_U$ (resp.\ $\tau_{\Lambda,U}$) be the first time that $\eta'$ (resp.\ $\eta_\Lambda$) hits $U$.  By Proposition~\ref{prop:local_set_char}, it suffices to show that the event that $\tau \leq \tau_U$ is determined by the projection of $h$ onto the subspace of functions which are harmonic in $U$.  In order to show that this is the case, we will consider the following alternative method of generating $\eta'$ (see Figure~\ref{fig:commutation_arg} for an illustration). 
\begin{enumerate}
\item[Step 1:] Generate $\eta_\Lambda|_{[0,\tau_{\Lambda,U}]}$ from $h$.
\item[Step 2:] Generate the $\cwBCLE_{\kappa'}(\rho_L')$'s and $\ccwBCLE_{\kappa'}(\rho_R')$'s associated with the components which are completely surrounded by $\eta_\Lambda|_{[0,\tau_{\Lambda,U}]}$ as branching counterflow lines stopped at the first time they hit $U$ from the GFF given by $h$ conditioned on the result of the previous step.
\item[Step 3:] In each component $W$ not completely surrounded by $\eta_\Lambda|_{[0,\tau_{\Lambda,U}]}$, we draw a branching counterflow line of the field plus $c_L$ (resp.\ $c_R$) conditioned on the previous steps starting from the point on $\partial W$ first drawn by the left (resp.\ right) side of $\eta_\Lambda|_{[0,\tau_{\Lambda,U}]}$ targeted at every point on $\partial W$ which is drawn by the left (resp.\ right) side of $\eta_\Lambda|_{[0,\tau_{\Lambda,U}]}$, with each counterflow line branch stopped upon first hitting $U$.
\item[Step 4:] Generate the \emph{rest} of the stopped counterflow lines from the previous step (i.e., until they have reached their target point).
\item[Step 5:] Generate the \emph{rest} of $\eta_\Lambda$ from the GFF given by $h$ conditioned on the previous steps.
\item[Step 6:] Generate the \emph{rest} of the $\cwBCLE_{\kappa'}(\rho_L')$'s and $\ccwBCLE_{\kappa'}(\rho_R')$'s as branching counterflow lines of~$h$ conditioned on the previous steps.
\end{enumerate}

\begin{figure}
\includegraphics[width=2in]{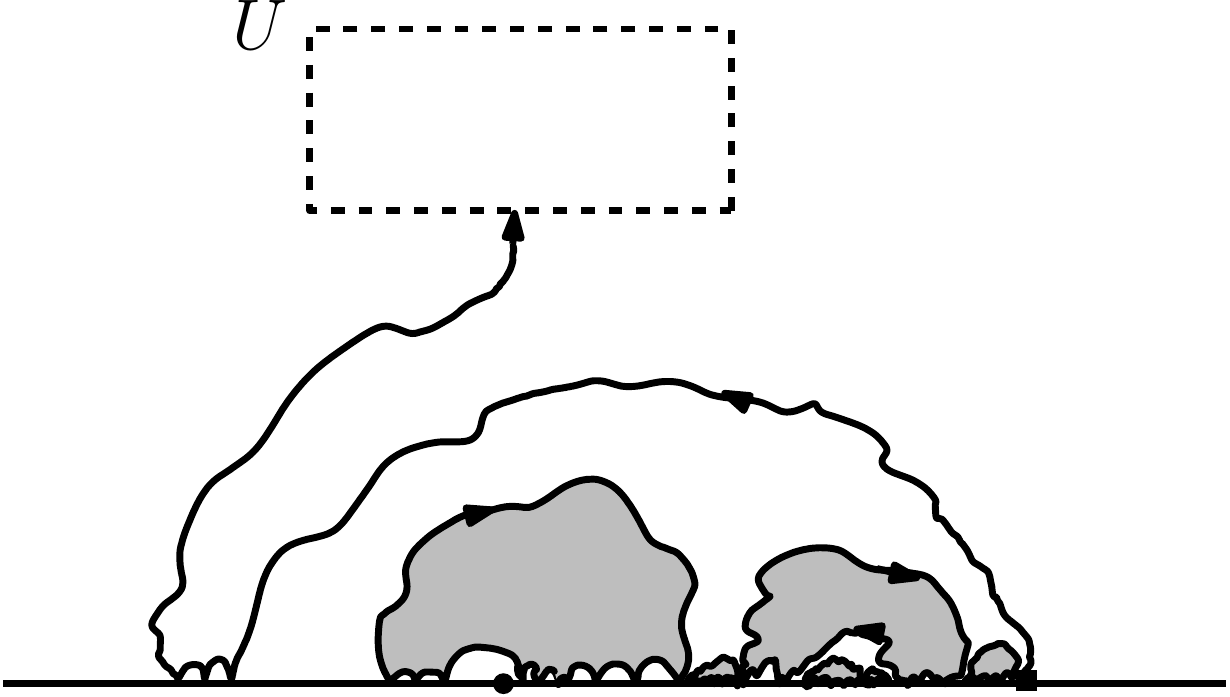}
\includegraphics[width=2in]{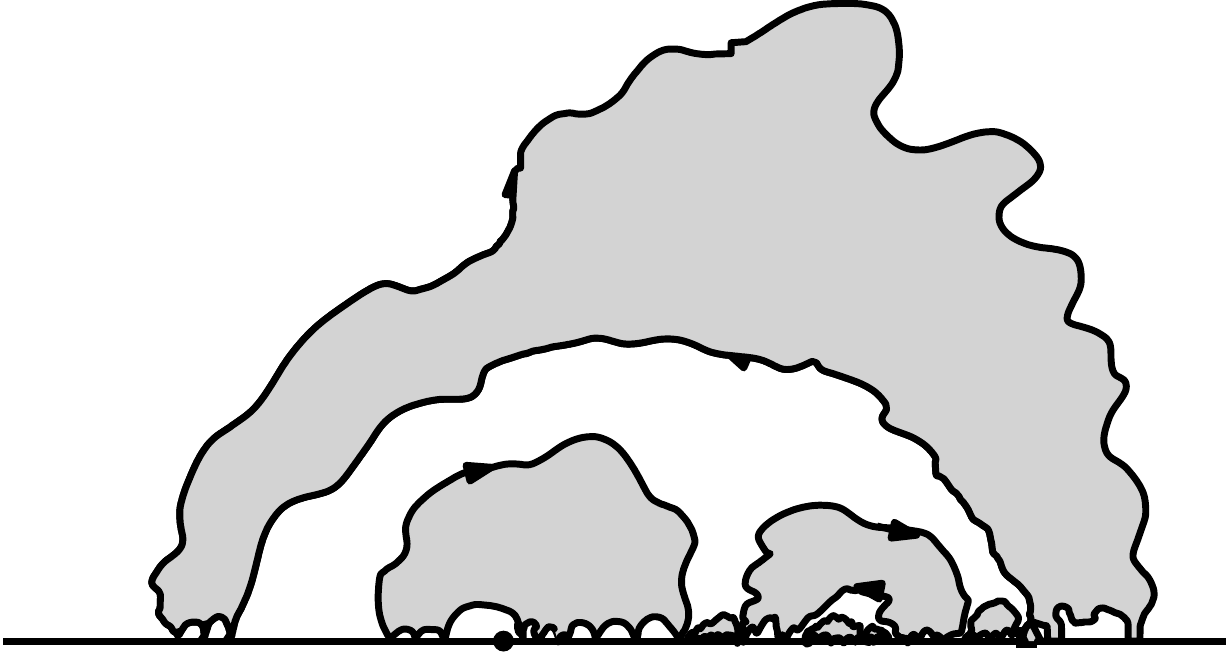}
\includegraphics[width=2in]{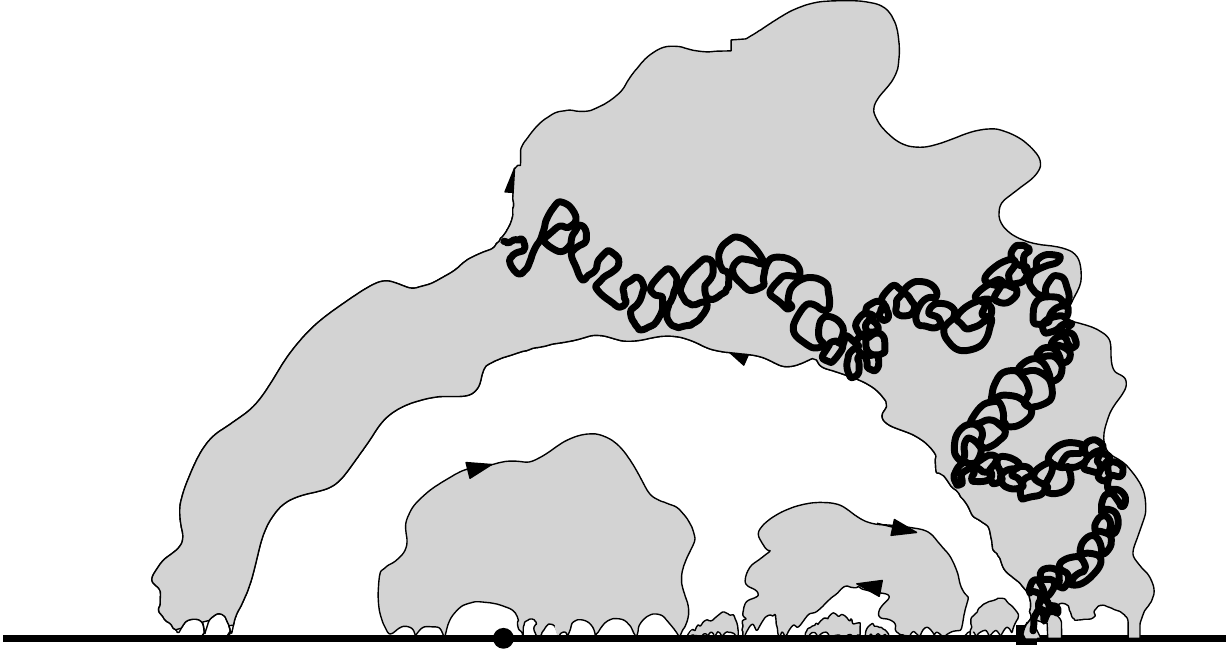}
\caption{\label{fig:commutation_arg} Illustration of the commutation argument used in the proof of Theorem~\ref{thm:duality1}. {\bf Left:} $\BCLE_\kappa(\rho)$ exploration path $\eta_\Lambda$ up until hitting $U$.  Regions surrounded clockwise in grey are ``true loops'' and regions surrounded counterclockwise  are ``false loops.''  {\bf Middle:} Path until finishing loop it is currently drawing.  {\bf Right:} Left/right boundaries of counterflow line to marked point.}
\end{figure}
The difference between this method of generating loops using $h$ from that used in the original definition of $\eta'$ is that we have (partially) generated some of the $\cwBCLE_{\kappa'}(\rho_L')$'s and $\ccwBCLE_{\kappa'}(\rho_R')$'s \emph{before} seeing the entire realization of $\eta_\Lambda$.  One could therefore worry that the loops thus formed are \emph{not} the same as those if we \emph{had} seen the entire realization of~$\eta_\Lambda$.  We will however now explain that this is not the case (i.e., that this method of generating loops does in fact generate the same family as in the definition of~$\eta'$).  This claim indeed implies the proposition because the first three steps in this method of generating~$\eta'$ fully determine~$\eta'$ up until hitting $U$ and do not require the observation of the values of~$h$ on~$U$ itself.  In particular, this implies that the event $\tau \leq \tau_U$ is determined by the projection of~$h$ onto the functions which are harmonic in~$U$.

First, we note that it is obvious that both methods of generating loops produce the same result inside of those components which are completely surrounded by $\eta_\Lambda|_{[0,\tau_{\Lambda,U}]}$ (corresponding to Step~2 above).  Indeed, the conditional law of $h$ given $\eta_\Lambda|_{[0,\tau_{\Lambda,U}]}$ restricted to such a component is the same as the corresponding conditional law given all of~$\eta_\Lambda$.  Moreover, conditionally on $\eta_\Lambda|_{[0,\tau_{\Lambda,U}]}$, the restriction of $h$ to such a component is independent of the restriction of $h$ to the other components of $\h \setminus \eta_\Lambda([0,\tau_{\Lambda,U}])$.

It therefore remains to show that the joint law of the loops generated in Step~3 and Step~4 above given the previous steps is the \emph{same} as the joint law which results by first generating the rest of $\eta_\Lambda$ and then sampling the $\cwBCLE_{\kappa'}(\rho_L')$'s and $\ccwBCLE_{\kappa'}(\rho_R')$'s from $h$ given all of $\eta_\Lambda$.  That is, we must show that the operation of drawing the rest of $\eta_\Lambda$ commutes with Step~3 and Step~4 above.  This type of commutation between flow and counterflow lines was already explained in Section~\ref{subsubsec:ig_interaction_rules} (and indeed follows since flow and counterflow lines are locally determined by the GFF).
\end{proof}

\subsection{Identification of the law of $\eta'$}

Assume that we have chosen $\rho_L',\rho_R'$ as in~\eqref{eqn:rho_p_equations}.  That is, we let
\[ \rho_L' = \frac{\kappa'}{4} \rho + \kappa'-4 \quad\text{and}\quad \rho_R' = -\frac{\kappa'}{4}(\rho+2)\]
and note that $\rho_L'  +\rho_R' = \kappa'/2-4$.  These choices ensure that $-\lambda(1+\rho) + c_L = \lambda'$ and $-\lambda(1+\rho) -2\pi \chi + c_R = -\lambda'$.  Let us first fix a boundary point which is distinct from the starting point of the path and compute the law of the path $\wt{\eta}'$ which is given by parameterizing $\eta'$ by capacity (i.e., targeted at) as seen from this marked point.  By applying a conformal mapping to $\h$, we may assume that $\wt{\eta}'$ starts from $0$ and that this marked point is equal to $\infty$.  Suppose that $\tau$ is a stopping time for the filtration generated by $\wt{\eta}'$.  We now want to describe the conditional law of $h$ given $\wt{\eta}'|_{[0,\tau]}$ in the unbounded component of $\h \setminus \wt{\eta}' ([0,\tau])$ (see Figure~\ref{fig:cond_law_bd_fig} for an illustration).  As $\wt{\eta}'([0,\tau])$ is a local set for $h$, we know that the conditional law of $h$ given $\wt{\eta}'|_{[0,\tau]}$ is that of a GFF on $\h \setminus \wt{\eta}' ([0,\tau])$ with boundary conditions that can be determined\footnote{In this particular setting, one can apply \cite[Proposition~3.8]{ms2012ig1} to the family of local sets $A_n$ defined as follows.  Given $\wt{\eta}'|_{[0,\tau]}$, let $U_n$ be a decreasing sequence of open sets consisting of finite unions of balls with rational radii centered at points with rational coordinates and with $\cap_n U_n = \wt{\eta}'([0,\tau])$ and then take $A_n$ to be given by the union of $\eta_\Lambda$ stopped upon exiting $U_n$ together with the branching counterflow lines on its left and right sides used to generate the $\cwBCLE_{\kappa'}(\rho_L')$ and $\ccwBCLE_{\kappa'}(\rho_R')$, also stopped upon exiting $U_n$.} using \cite[Proposition~3.8]{ms2012ig1} as follows (here, our choice of $\rho_L',\rho_R'$ will ensure that one gets a $\bSLE_{\kappa'}$): Let $\varphi$ be a conformal transformation which takes this unbounded component to $\h$ fixing $\infty$ and with the tip $\wt{\eta}'(\tau)$ taken to $0$.  Let $X$ be the image of the point on $\eta_\Lambda$ most recently visited by $\wt{\eta}'$ before time $\tau$ under $\varphi$.

\begin{figure}[ht!]
\includegraphics[scale=0.85, page=3]{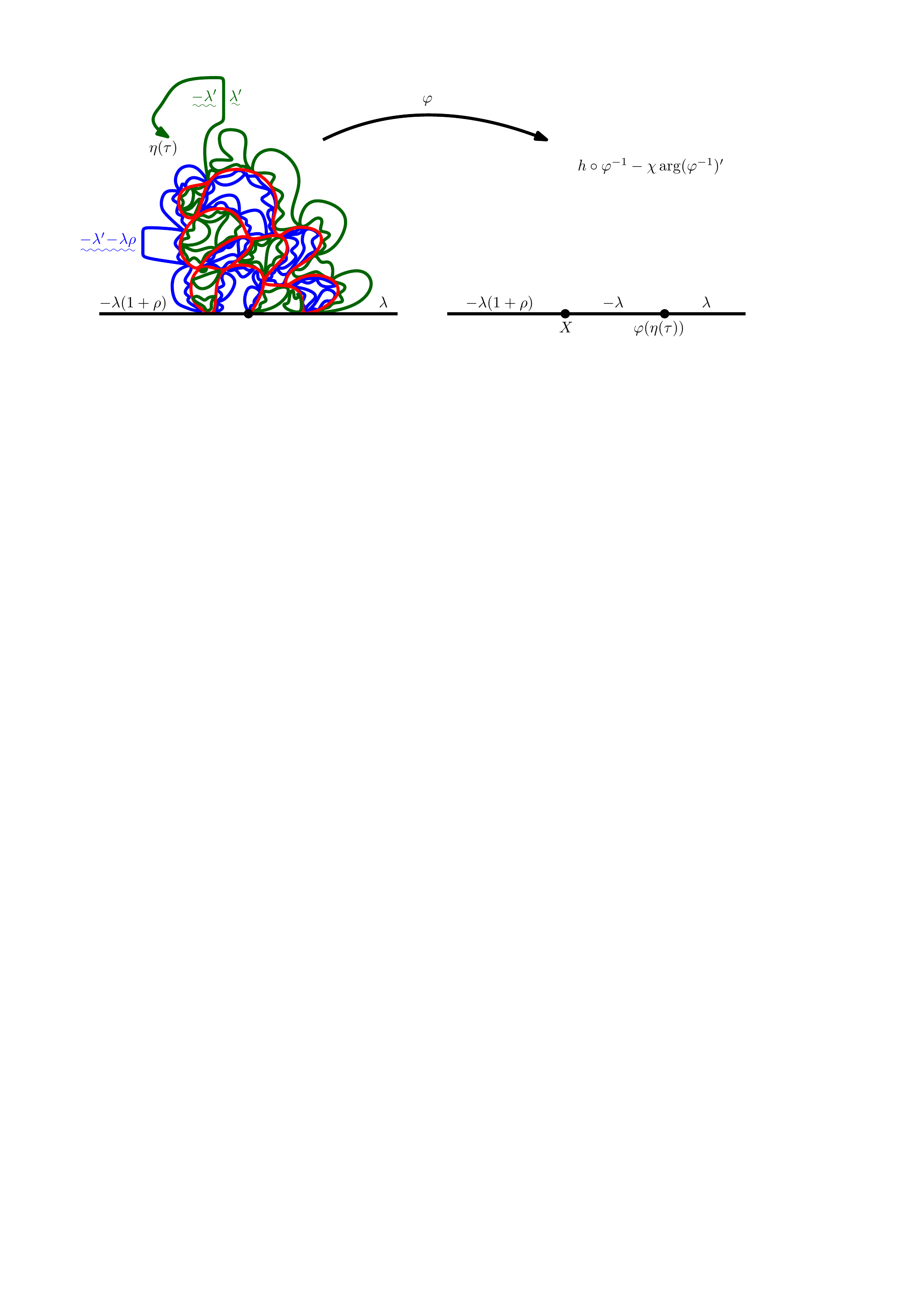}
\caption{\label{fig:cond_law_bd_fig} Illustration of the computation of the conditional law of $h$ given $\eta'$ up to time $\tau$.  Note that the values of $\rho,\rho_L',\rho_R'$ are chosen so that, after applying the conformal change of coordinates $\varphi$ which takes the unbounded component of $\h \setminus \eta'([0,\tau])$ to $\h$ fixing $\infty$ and with $\varphi(\wt{\eta}'(\tau)) = 0$, the boundary data for the field changes only at the image of the tip of the path and the most recently visited point on the trunk.  That is, there are no marked points corresponding to where the left and right sides of $\eta'$ have most recently hit $\R$.}	
\end{figure}

If $X < 0$, then $\wt{\eta}'$ is in the process of drawing a counterclockwise loop on the right side of its trunk and the continuation of this counterflow line is given by the image under $\varphi^{-1}$ of the counterflow line starting from $0$ of the GFF $\wt{h} = h \circ \varphi^{-1} - \chi \arg (\varphi^{-1})' + c_R$ on $\h$.  This GFF has boundary conditions given by:
\[ \lambda' + c_R - c_L =  -\lambda'(1+\rho_L'+\rho_R') + 4\pi \chi -2\lambda \quad\text{on}\quad (-\infty,X),\quad {\lambda'} \quad\text{on}\quad [X,0),\quad\text{and}\quad -\lambda' \quad\text{on}\quad \R_+.\]
As $\rho_L'  +\rho_R' = \kappa'/2-4$ so that the boundary data on $(-\infty,X)$ is equal to $-\lambda'+2\pi \chi$, the martingale characterization \cite[Theorem~2.4]{ms2012ig1} implies that this counterflow line is evolving as an $\SLE_{\kappa'}(\kappa'-6)$ with a single force point located at the most recent point on $\eta_\Lambda$ visited by $\wt{\eta}'$ before time $\tau$.  The very same argument can be applied when $X > 0$, and we can then conclude $\wt{\eta}'$ evolves as an $\SLE_{\kappa'}(\kappa'-6)$ process when it is not hitting the trunk~$\eta_{\Lambda}$ (with marked point located at the last point visited on~$\eta_\Lambda$).

Let us now consider the path $\wt{\eta}'$ when parameterized by capacity seen from infinity. This will in particular erase all its loops that are ``hidden from infinity'' when they are made (this is exactly like looking at the usual $\bSLE_{\kappa'}^\beta$ instead of at the full $\bSLE_{\kappa'}^\beta$).  It is easy to see that this curve has a continuous Loewner driving function because it is easy to see that the corresponding family of hulls grows continuously (see, e.g., \cite{LAW05}).  Moreover, by Proposition~\ref{prop:localityofnestedlooptracer}, it evolves as an $\SLE_{\kappa'} ( \kappa'- 6)$ when it is ``tracing a loop'', its law is invariant under scaling (because the whole construction is invariant under scaling), and satisfies the conformal Markov property (because the path is determined by the field and we have determined the conditional law of the field given the path up to a stopping time just above).  Therefore Lemma~\ref{lem:slekrcharacterization} implies that it evolves as an $\SLE_{\kappa'}^\beta (\kappa'-6)$ process for all times.   We note that the same argument also applies for interior points $z$ to get that the law of $\eta'$ reparameterized by capacity as seen from $z$ is that of an $\SLE_{\kappa'}^\beta(\kappa'-6)$ up until the first time that $z$ has been surrounded by a loop.  Indeed, recall that $\SLE_{\kappa'}^\beta(\kappa'-6)$ is target invariant.  Consequently, to show that $\eta'$ as seen from an interior point $z$ has the law of an $\SLE_{\kappa'}^\beta(\kappa'-6)$ process, it suffices to show that $\eta'$ as seen from a boundary point has this property as this can be iterated by choosing successive boundary points in a measurable manner.  That this is the case follows from the argument given just above.  Note also that from the construction we have that if $z$ and $w$ are distinct points, then the branch of $\eta'$ targeted at $z$ is the same as the branch of $\eta'$ targeted at $w$ up until the two points are disconnected, after which the two branches evolve independently.

We also note that the value of $\beta$ has to be the same for all target points (because the corresponding paths $\wt{\eta}'$ are identical at small times). We are now getting closer to identifying the distribution of the path $\eta_\infty'$, as defined in Theorem~\ref{thm:duality1}, as a full $\bSLE_{\kappa'}^\beta$, but we are not quite there yet, because we need to also describe the behavior of $\eta'$ within the ``pockets.''

Recall that the nested loop tracing path $\eta'$ constructed just above has the property that if we fix any point $z \in D$, then the law of $\eta'$ parameterized by capacity from $z'$ evolves as an $\SLE_{\kappa'}^\beta(\kappa'-6)$ targeted at $z$.  Moreover, it follows from the construction that if we consider the path targeted at two different points, once these two points are first separated, the two continuations of the targeted paths become conditionally independent.  It is easy to see that these properties characterize the law of $\eta'$ and identifies $\eta_\infty'$ as a full $\bSLE_{\kappa'}^\beta$.

\subsection{Proof of Theorem~\ref{thm:duality1}}
We have shown so far that for each admissible triple $\rho$, $\rho_L'$, $\rho_R'$ there exists $\beta \in [-1,1]$ such that the path $\eta_y'$ is a full $\bSLE_{\kappa'}^\beta$ process. 
It is immediate from the construction that this map that to each admissible triple the corresponding $\beta$ is injective (because the law of the 
trunk of a $\bSLE_{\kappa'}^\beta$ has to be unique). It therefore remains to show that the map from admissible triples to $\beta$ is surjective onto $[-1,1]$.  This, in turn, follows from the continuity argument described at the end of the proof of Proposition~\ref{prop:CME_characterization2} and the fact that $\beta=1$ and $\beta=-1$ are reached for the extremal values of $\rho$.
\qed

\subsection{Proof of Theorem~\ref{thm:duality2}}
We now turn our attention to the case of $\BCLE_{\kappa}$ loops attached to $\BCLE_{\kappa'}$. The discussion is essentially the same, and can be to a large extent copied and pasted from the  proof of Theorem~\ref{thm:duality1}. Let $\eta$, $\Lambda$ and $\Gamma$ now be as in Theorem~\ref{thm:duality2}. First of all, the very same arguments as above using the local finiteness of the BCLEs show that the iterated loop-tracer $\eta$ is a continuous path. 

\begin{figure}
\begin{center}
\subfigure{\includegraphics[width=2in]{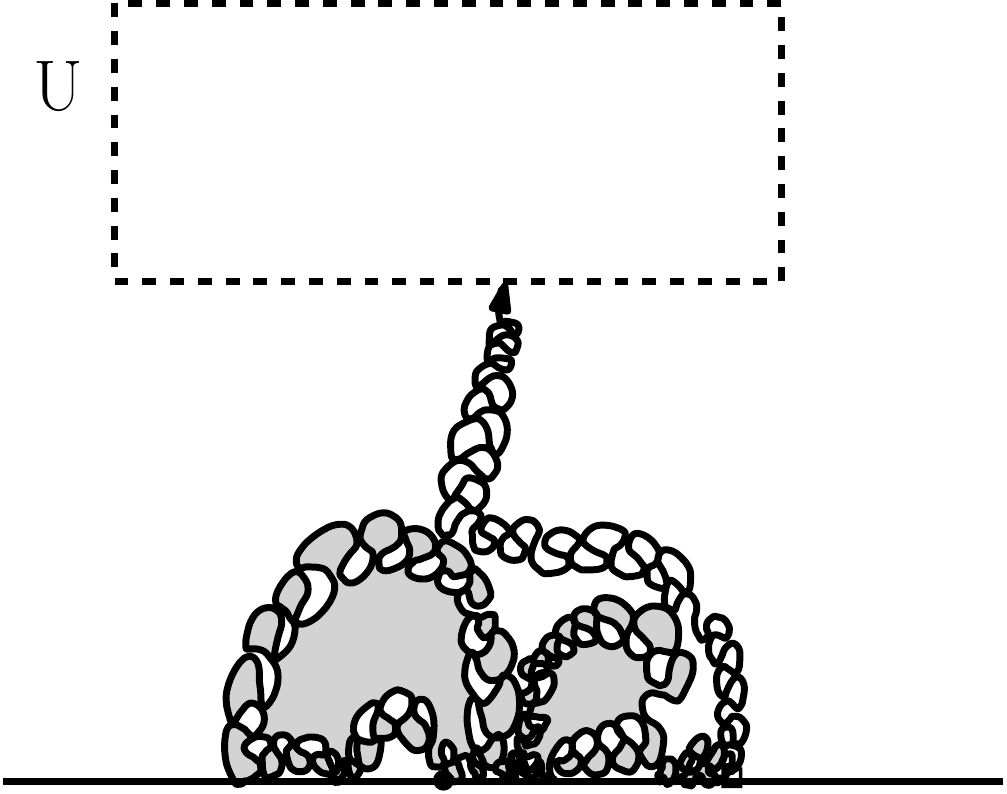}}
\subfigure{\includegraphics[width=2in]{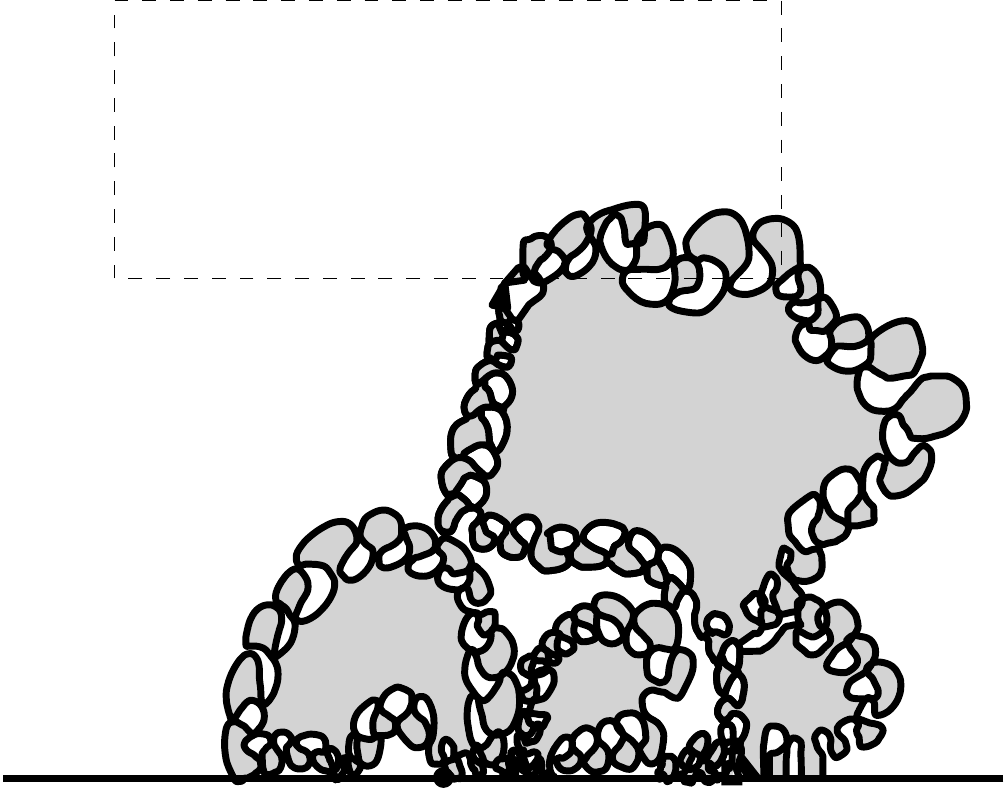}}
\subfigure{\includegraphics[width=2in]{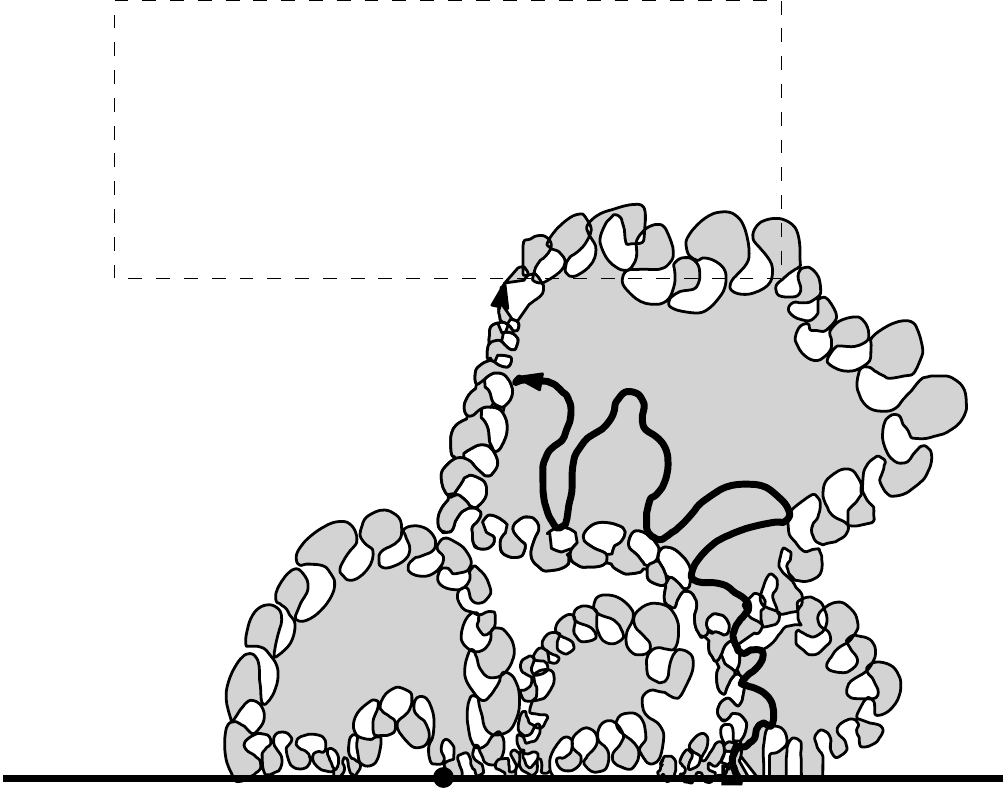}}
\end{center}
\caption{\label{fig:duality2} Illustration of the commutation argument used in the proof of Theorem~\ref{thm:duality2}.  {\bf Left:} $\cwBCLE_{\kappa'}(\rho')$ exploration path $\eta_\Lambda$ drawn up to hitting given open set $U$.  {\bf Middle:}  The continuation of $\eta_\Lambda$ up until the current loop being drawn is finished, i.e., up to its most recent intersection of $\R_+$.  {\bf Right:} Flow line starting from this point }
\end{figure}

The proof of the fact that $\eta([0,\tau])$ is a local set for $h$ when $\tau$ is a stopping time does also follow the same line as above, with some minor difference only in the formulas for the boundary values of the field and the commutation argument (see Figure~\ref{fig:duality2}).

Identifying the law of $\eta_y$ follows again the same lines as before, and it is actually a simpler task than in the previous case, because one has to show that it is a $\bSLE_\kappa^\beta$ process, and the issues of ``pockets''  and full $\bSLE_{\kappa'}^\beta$ versus $\bSLE_{\kappa'}^\beta$ do not arise. \qed

\section{Variants and comments}
\label{sec:conclusion}
\label {Sec10}

\subsection{Generalized $\SLE_\kappa(\rho)$ processes}

The iteration scheme (sampling a flow line and well-chosen counterflow lines) that we  used in the previous section can be generalized to other values of $\rho$ and iterated in different ways.  In this section, we will illustrate this by first  explaining how one can use the same ideas to construct the totally asymmetric generalized  $\SLE_{\kappa}(\rho)$ processes (in the ``non-light cone regime'') and thereby derive some of their properties (continuity of the trace, and the fact that they can be viewed as a deterministic function of the GFF with which they are naturally coupled).

We stress that while it is possible to construct the couplings of these processes with  the GFF directly, by adapting the arguments of \cite{dubedat2009gff,ss2010continuumcontour,she2010weld,ms2012ig1} (even if there is a slight technicality to be dealt with because Bessel processes with dimension $\delta \in (0,1)$ are not semimartingales due to the principal value correction in the drift when the process is hitting $0$), deriving these additional properties is a priori not an easy task. 

An almost identical analysis, with a little extra work, can be applied to construct these $\SLE_\kappa^\beta (\rho)$ processes for the other values of $\beta$ (and prove similarly the continuity of the trace and properties of the coupling with the GFF). One then has, like in the iterated BCLE scheme, to start with an $\SLE_\kappa(\rho_L ; \rho_R)$ trunk and to attach loops on both of its sides.

As before, we will separately treat the cases $\kappa \le 4$ and $\kappa' > 4$ (even if this is actually not really needed in the present case, because the arguments are now exactly the same). The corresponding ranges of values for $\rho$ and $\rho'$ are 
\[ \rho \in (-2-(\kappa/2),(\kappa/2)-4) \quad\text{and}\quad \rho' \in (-2-(\kappa'/2),(\kappa'/2)-2).\] 
Recall that this does not quite cover all of the generalized $\SLE_{\kappa} (\rho)$ cases with $\rho < -2$. The missing light-cone regime where $\kappa \le 4$ and $-2 \ge \rho \ge \max ( -2 - \kappa /2, -4 + \kappa/2 )$ is addressed in \cite{ms2016lightcone}.

\subsubsection{Generalized totally asymmetric $\SLE_{\kappa'}(\rho')$ processes}
\label{subsubsec::ayssm_sle_kp}

Fix $\kappa' >4$ and $\rho' \in (-2-(\kappa'/2),-2)$.  (In this case we assume that $\rho' < -2$ because the continuity and coupling properties with the GFF of the $\SLE_{\kappa'}(\rho')$ processes with $\rho' > -2$ and $\beta=1$ or $\beta=-1$ have already been analyzed in \cite{ms2012ig1}.)   Define then  
\[ \rho  = - \frac{\kappa}{4}(\rho' +2 + ( \kappa' /2)) = - \frac{\kappa}{4}(\rho' +2) - 2.\]
Let $h$ be a GFF on $\h$ with boundary conditions given by $-\lambda$ on $\R_-$ and $\lambda(1+\rho)$ on $\R_+$ and let $\eta$ be the flow line of $h$ from $0$ to $\infty$.  Note that $\eta$ is an $\SLE_{\kappa}(\rho)$ process with force point at  $0^+$.  Since $\rho \in (-2,0)$ for $\rho' \in (-2-(\kappa'/2),-2)$, it follows that $\eta$ may or may not hit $\R_+$.  Suppose that we have fixed $\eta$ and let $U$ be a component of $\h \setminus \eta$ which is to the right of $\eta$ and let $\varphi \colon U \to \h$ be a conformal map which takes the first (resp.\ last) point on $\partial U$ visited by $\eta$ to $0$ (resp.\ $\infty$).  We then add
\begin{equation}
\label{eqn:c_r_kp_on_k_new_def2}
c = \lambda + \lambda'(1+\rho')   
\end{equation}
to the field and draw in the branching counterflow line starting from $0$ and targeted at every point on $\varphi(\partial U \cap \eta) = \varphi(\partial U \setminus \partial \h) = \R_-$.  We note that the counterflow line starting from $0$ and targeted at $\infty$ has the law of an $\SLE_{\kappa'}(\rho'+\kappa'/2)$.  Note that $\rho' + \kappa'/2 \in (-2,\kappa'/2-2)$ so that the counterflow line almost surely hits $\R_-$.  The same argument used to prove Proposition~\ref{prop:locfinitekprime} implies that there exists a continuous path $\gamma$ which follows the loops generated by the branching counterflow lines in the order in which they are visited by $\eta$.

As in the previous subsection, the key to determining the law of $\gamma$ is to describe its evolution when it is not hitting $\eta$.  This amounts to showing that $\gamma([0,\tau])$ is a local set for $h$ for each stopping time $\tau$ and then identifying the conditional law of $h$ given $\gamma|_{[0,\tau]}$.  The proof of the locality of $\gamma([0,\tau])$ is analogous to the proof of Proposition~\ref{prop:localityofnestedlooptracer}.  We will omit the details of this here, and instead focus on identifying the conditional law of $h$ given $\gamma|_{[0,\tau]}$.

Let $\varphi$ be a conformal transformation from the unbounded component of $\h \setminus \gamma([0,\tau])$ to $\h$ which fixes $\infty$ and takes $\gamma(\tau)$ to $0$.  Let $X$ be the image under~$\varphi$ of the point on~$\eta$ most recently hit by~$\gamma$.  Suppose that $X \neq 0$, i.e., that $X < 0$.  Then $\wt{h} = h \circ \varphi^{-1} - \chi \arg (\varphi^{-1})' + c$ has the law of a GFF on $\h$ with boundary conditions given by (and this is why we chose the previous values for $\rho$ and $c$):
\[ \lambda'  (1 + \rho')  \quad\text{on}\quad (-\infty,X),\quad \lambda' \quad\text{on}\quad [X,0), \quad\text{and}\quad -\lambda' \quad\text{on}\quad \R_+.\]
Combining this with the general characterization of \cite[Theorem~2.4]{ms2012ig1} implies that $\gamma$ is evolving as an $\SLE_{\kappa'}(\rho')$ process with a single force point at the most recent point visited by $\eta'$ before time $\tau$.  Hence, using the characterization of the generalized $\SLE_\kappa (\rho)$ processes and the scale-invariance of the law of $\gamma$, we can conclude that:
\begin {itemize}
 \item The path $\gamma$ is an $\SLE_{\kappa'}^{\beta}(\rho')$ process for~$\beta=1$, as the loops it makes are always to the right of the trunk. 
\item It is almost surely a continuous curve.
\item It is a deterministic function of the GFF $h$.
\item The joint law of $(h,\gamma)$ is characterized by the form of the conditional law of $h$ given $\gamma|_{[0,\tau]}$ for each $\gamma$-stopping time $\tau$.
\end {itemize}
We note that the difference between the boundary data to the left and to the right of $0$ for the GFF used to construct $\gamma$ is given by $\lambda'(2+\rho')$, which is the same difference as in the case of $\SLE_{\kappa'}(\rho')$ for $\rho' > -2$.  The construction in the case that $\beta = -1$ instead of $\beta = 1$ is analogous, except the trunk will be on the right rather than the left side.

\subsubsection{Generalized totally asymmetric $\SLE_\kappa(\rho)$ processes}
\label{subsubsec::ayssm_sle_k}

We now fix $\kappa \in (0,4]$ (note that we include $\kappa=4$ here) and $\rho \in (-2-(\kappa/2),(\kappa/2)-4)$. Note that this second condition in fact implies that $\kappa > 2$ as otherwise, this interval of possible values for $\rho$ would be empty.  
This time, we choose
\[ \rho' = -\frac{\kappa'}{4}(\rho +2 + (\kappa/2)) = -\frac{\kappa'}{4}(\rho+2) -2.\]
Let $h$ be a GFF on $\h$ with boundary conditions given by~$\lambda'$ on~$\R_-$ and $-\lambda'(1+\rho')$ on~$\R_+$ and let~$\eta'$ be the counterflow line of~$h$ from~$0$ to~$\infty$.  Note that $\eta'$ is an $\SLE_{\kappa'}(\rho')$ process with a single force point located at $0^+$.  Suppose that we have fixed $\eta'$ and let $U$ be a component of $\h \setminus \eta'$ which is either surrounded by $\eta'$ with a clockwise orientation or is to the right of $\eta'$ and let $\varphi \colon U \to \h$ be a conformal map which takes the first point visited by $\eta'$ on $\partial U$ to $0$ and any other point to $\infty$.  We then add $c = -\lambda' -\lambda(1+\rho)$ to the field and draw in a branching flow line starting from $0$ and targeted at every point on $\varphi(\partial U \cap \eta') = \varphi(\partial U \setminus \partial \h)$.  

The argument of Proposition~\ref{prop:locfinitek} implies that there exists a continuous path $\gamma$ which follows the loops generated by the branching flow lines in the order in which they are visited by $\eta'$. Exactly as in the previous case, the key to determining the law of $\gamma$ is to describe its evolution when it is not hitting $\eta'$. This is then done word for word as in the previous case, and leads to the following:
\begin{itemize}
 \item This path $\gamma$ is an $\SLE_\kappa^{\beta}(\rho)$ process for $\beta=1$, as the loops it makes away from its trunk are always to the right of the trunk.
 \item It is almost surely a continuous curve.
 \item It is almost surely determined by the GFF $h$.
 \item The joint law of $(h,\gamma)$ is characterized by the form of the conditional law of $h$ given $\gamma|_{[0,\tau]}$ for each $\gamma$-stopping time $\tau$. 
 \end{itemize}
 We note that the difference between the boundary data to the right and to the left of $0$ for the GFF used to construct $\gamma$ is given by $\lambda(2+\rho)$, which is the same difference as in the case of $\SLE_{\kappa}(\rho)$ for $\rho > -2$.  The construction in the case that $\beta = -1$ instead of $\beta = 1$ is analogous, except the trunk will be on the right rather than the left side.

\begin{remark}
It is interesting to remark that the critical value $\rho=\kappa/2-4$ is where the $\SLE_\kappa(\rho)$ processes with $\rho < -2$ make the transition from the trunk phase to the light cone phase.  In this case, the branching flow lines in each of the components surrounded by $\eta'$ with a clockwise orientation have the law of an $\SLE_{\kappa}(\kappa-4;-2)$ with force points immediately to the left and right of the seed.  Thus these paths can be interpreted as tracing along the boundary of the corresponding component cut off by $\eta'$, but in the \emph{opposite} order in which the boundary was drawn by $\eta'$, i.e., counterclockwise.  This means that an $\SLE_\kappa(\kappa/2-4)$ process has the same range as an $\SLE_{\kappa'}(\rho')$ process with $\rho' = (\kappa'/2)-4$ but visits the points of its range in a different order.  This point is elaborated on further in \cite{ms2016lightcone}.

Note also that $\rho=-2-(\kappa/2)$ is the critical value at or below which the $\SLE_\kappa(\rho)$ processes are not defined.  Note that, in this case, the value of $\rho'$ for the $\SLE_{\kappa'}(\rho')$ trunk converges to $0$ as $\rho \downarrow -2-(\kappa/2)$.  Thus we can interpret an $\SLE_\kappa(\rho)$ with $\rho = -2-(\kappa/2)$ as corresponding exactly to an $\SLE_{\kappa'}$.

In summary, as $\rho$ varies from $-2-(\kappa/2)$ to $(\kappa/2)-4$, the law of an $\SLE_\kappa(\rho)$ process interpolates between the law of a curve which has the same range as an $\SLE_{\kappa'}$-type process but visits its points in a different order and the law of an $\SLE_{\kappa'}$ itself.
\end{remark}

\subsubsection{Other values of $\beta$}

The results of Sections~\ref{subsubsec::ayssm_sle_kp}, \ref{subsubsec::ayssm_sle_k} can be extended to the cases of the non-totally-asymmetric $\SLE_\kappa^\beta (\rho)$ and $\SLE_{\kappa'}^\beta (\rho')$ processes for the same range of values of $\rho$ and $\rho'$. Let us just state without proof the type of results that one obtains in this way.

Fix $\kappa' >4$, $\rho' \in (-2-(\kappa'/2),\kappa'/2-2)$, and choose $\rho_L',\rho_R' \in (-2,(\kappa'/2)-2)$ such that $\rho_L' + \rho_R' = \rho' + (\kappa'/2)-2$.  We then let
\[ \rho_L = - \frac{\kappa}{4}(\rho_L'+2) \quad\text{and}\quad  \rho_R = - \frac{\kappa}{4}(\rho_R'+2).\]
Let $h$ be a GFF on $\h$ with boundary conditions given by $-\lambda(1+\rho_L)$ on $\R_-$ and $\lambda(1+\rho_R)$ on $\R_+$ and let $\eta$ be the flow line of $h$ from $0$ to $\infty$.  Note that $\eta$ is an $\SLE_{\kappa}(\rho_L; \rho_R)$ process with force points located at $0^-$ and $0^+$.  Suppose that we have fixed $\eta$ and let $U$ be a component of $\h \setminus \eta$ which is to the right of $\eta$ and let $\varphi \colon U \to \h$ be a conformal map which takes the first (resp.\ last) point visited by $\eta$ on $\partial U$ to $0$ (resp.\ $\infty$).  We then add
\begin{equation}
\label{eqn:c_r_kp_on_k_new_def3}
c_R= -\lambda  + \lambda'(1+\rho_R')
\end{equation}
to the field and draw in a branching counterflow line starting from $0$ and targeted at every point on $\varphi(\partial U \cap \eta) = \varphi(\partial U \setminus \partial \h)$.  Similarly, if $U$ is to the left of $\eta$, then we add
\begin{equation}
\label{eqn:c_l_k_on_kp_new_def}
c_L = -\lambda'(1+\rho_L') + \lambda
\end{equation} 
to the field and draw in a branching counterflow line starting from $0$ and targeted at every point on $\varphi(\partial U \cap \eta) = \varphi(\partial U \setminus \partial \h)$.  As before, one can see that there exists a continuous path~$\gamma$ which follows the loops generated by the branching counterflow lines in the order in which they are visited by~$\eta$. Note that the law of this path is scale-invariant.

Again, in order to determine the law of~$\gamma$, we first describe its evolution when it is not hitting~$\eta$.  This amounts to showing that $\gamma([0,\tau])$ is a local set for $h$ for each stopping time $\tau$ and then identifying the conditional law of~$h$ given $\gamma|_{[0,\tau]}$.  The proof of the locality of $\gamma([0,\tau])$ is analogous to the proof of Proposition~\ref{prop:localityofnestedlooptracer}, and one can work out the boundary conditions of the GFF given $\gamma|_{[0, \tau]}$ which leads to a statement of the following type:

\begin{theorem}
\label{thm:generalized_kp_law}
For each $\beta \in [-1,1]$ and $\rho' \in (-2-(\kappa'/2),\kappa'/2-2) \setminus \{-2\}$ there exists $\rho_L',\rho_R' \in (-2,(\kappa'/2)-2)$ with $\rho_L' + \rho_R' = \rho' + (\kappa'/2)-2$ such that with $\rho_L = -{\kappa}(\rho_L'+2)/4$ and $\rho_R = -{\kappa}(\rho_R'+2)/4$, the path $\gamma$ constructed above with these parameters is distributed like a ``full''  $\SLE_{\kappa'}^\beta(\rho')$ process. In particular, if we parameterize it according to half-plane capacity seen from infinity (thereby excising all ``loops hidden from infinity''), we get exactly a path $\gamma^\#$ distributed like an $\SLE_{\kappa'}^\beta(\rho')$.  Similarly, for each $\mu \in \R$ there exists $\rho_L',\rho_R',\rho_L,\rho_R$ satisfying the above relations with $\rho' = -2$ such that the path constructed is distributed as an $\SLE_{\kappa'}^\mu(-2)$ (with $\beta=0$).  In particular, in all cases these processes are uniquely coupled with and determined by the GFF and are almost surely generated by continuous curves.  
\end{theorem}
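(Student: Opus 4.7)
The plan is to follow closely the template established in Section~\ref{sec:proofs} for proving Theorem~\ref{thm:duality1}, adapted to the ``free-standing'' setting where we attach loops on both sides of an $\SLE_\kappa(\rho_L;\rho_R)$ trunk rather than iterating inside BCLE loops. First I would establish the continuity of $\gamma$. Because the flow line $\eta$ is a continuous simple curve (classical $\SLE_\kappa(\rho_L;\rho_R)$ with $\rho_L,\rho_R>-2$ as $\kappa\leq 4$), each component of $\H\setminus\eta$ is a Jordan domain carrying a conformally mapped GFF with boundary data matching that of a branching counterflow line used to build a $\cwBCLE_{\kappa'}(\rho_R')$ or $\ccwBCLE_{\kappa'}(\rho_L')$. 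The local finiteness result of Proposition~\ref{prop:locfinitekprime}, applied component by component and concatenated in the order $\eta$ visits those components (which is well-defined by the monotonicity of $\eta$'s capacity), then produces a continuous path $\gamma$ — exactly as in the $\BCLE$-on-$\BCLE$ construction of Section~\ref{sec:proofs}.

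Next I would show that $\gamma([0,\tau])$ is a local set of $h$ for every stopping time $\tau$ of the filtration generated by $\gamma$. For a fixed open $U\subset \H$, one uses the same commutation diagram as in Proposition~\ref{prop:localityofnestedlooptracer}: draw $\eta$ up to its first hit of $U$, partially draw the two branching counterflow lines (stopped on hitting $U$), then finish $\eta$, then finish the counterflow lines. By the flow/counterflow commutation recalled in Section~\ref{subsubsec:ig_interaction_rules} together with \cite[Theorem~1.2]{ms2012ig1}, the joint configuration is unchanged by this reordering, so the event $\{\tau\leq \tau_U\}$ is measurable with respect to $h^U$, and Proposition~\ref{prop:local_set_char} applies. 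Using \cite[Proposition~3.8]{ms2012ig1}, the conditional boundary data of $h$ on the unbounded component of $\H\setminus\gamma([0,\tau])$ after conformally mapping to $\H$ and sending $\gamma(\tau)\mapsto 0$, $\infty\mapsto\infty$, reads (with $X$ the image of the most recent point of $\eta$ visited by $\gamma$, say $X<0$):
\begin{equation*}
\lambda'(1+\rho_L')+c_R-c_L\ \text{on}\ (-\infty,X],\qquad \lambda'\ \text{on}\ [X,0),\qquad -\lambda'(1+\rho_R')\ \text{on}\ \R_+.
\end{equation*}
The specific choices $c_L=-\lambda'(1+\rho_L')+\lambda$ and $c_R=-\lambda+\lambda'(1+\rho_R')$ together with the constraint $\rho_L'+\rho_R'=\rho'+\kappa'/2-2$ make the height on $(-\infty,X]$ equal to $\lambda'(1+\rho')+2\pi\chi$, which is precisely the $\SLE_{\kappa'}(\rho')$ boundary configuration with an additional force point at the image of the trunk. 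By the martingale characterization \cite[Theorem~2.4]{ms2012ig1}, $\gamma$ evolves while off $\eta$ as an $\SLE_{\kappa'}(\rho')$ with the extra force point located at the last visited trunk point — in particular with the correct Bessel dimension.

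The third step is to identify the law of $\gamma$ reparameterized by half-plane capacity seen from infinity. Scale invariance is immediate, the conformal Markov property at the ``collision times'' (when $\gamma$ finishes a loop and returns to the trunk) follows from the local set analysis above, and the driving process is continuous since the hulls grow continuously. Lemma~\ref{lem:slekrcharacterization} then identifies $\gamma^\#$ with an $\SLE_{\kappa'}^\beta(\rho')$ for some $\beta\in[-1,1]$ (or an $\SLE_{\kappa'}^{0,\mu}(-2)$ for some $\mu\in\R$ in the boundary case $\rho'=-2$). That $\gamma$ itself is the full version is then built into the construction: when $\gamma^\#$ hits the trunk and a loop excursion of the Bessel process begins, the loop attached to $\gamma$ by the construction is exactly the corresponding branching-counterflow-line loop in the appropriate pocket, and the target-invariance of these $\bSLE_{\kappa'}(\rho_R')$ and $\bSLE_{\kappa'}(\rho_L')$ trees ensures consistency across all choices of target point. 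Since $\gamma$ is a measurable function of $h$ by the local set identification plus \cite[Theorems~1.1, 2.4]{ms2012ig1}, the coupling with the GFF is unique.

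The final step is surjectivity: as $(\rho_L',\rho_R')$ ranges over the admissible line $\rho_L'+\rho_R'=\rho'+\kappa'/2-2$ with both coordinates in $(-2,\kappa'/2-2)$, I need to show that the resulting $\beta$ sweeps out all of $[-1,1]$. The endpoints are handled by Sections~\ref{subsubsec::ayssm_sle_kp}-\ref{subsubsec::ayssm_sle_k}: taking $\rho_L'$ to the upper boundary $\kappa'/2-2$ (so the left BCLE becomes trivial) recovers the totally asymmetric case $\beta=1$ with only right-side loops, and symmetrically for $\beta=-1$. The continuity argument at the end of the proof of Proposition~\ref{prop:CME_characterization2} applies verbatim: the map from the parameter to $\beta$ is continuous (as the driving processes converge weakly) and injective (as the trunk law determines the Bessel dimension and the loop-side law determines $\beta$), hence a homeomorphism onto $[-1,1]$. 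The case $\rho'=-2$ is essentially the same except that the Bessel process of dimension one is a reflected Brownian motion, so Lemma~\ref{lem:slekrcharacterization} gives an $\SLE_{\kappa'}^{0,\mu}(-2)$ for some $\mu\in\R$, and the analogous continuity/surjectivity argument identifies all $\mu$. I expect the main technical obstacle to be verifying the boundary-data bookkeeping around the ``collision point'' $X$ — specifically checking that no spurious force point appears at the images of the last hits of $\gamma$ on $\R_\pm$ — but this is handled exactly as in Figure~\ref{fig:cond_law_bd_fig} since our choices of $c_L,c_R$ were engineered to cancel those contributions.
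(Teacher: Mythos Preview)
Your approach is correct and is precisely the one the paper itself takes: the paper states Theorem~\ref{thm:generalized_kp_law} with only the remark that the proof follows the Section~\ref{sec:proofs} template (locality of $\gamma([0,\tau])$ via the commutation argument of Proposition~\ref{prop:localityofnestedlooptracer}, identification of the evolution away from the trunk via \cite[Theorem~2.4]{ms2012ig1}, then Lemma~\ref{lem:slekrcharacterization} and the continuity/surjectivity argument from the end of Proposition~\ref{prop:CME_characterization2}).

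One concrete correction, though: your displayed boundary data are not quite right. With $c_R = -\lambda + \lambda'(1+\rho_R')$, $c_L = \lambda - \lambda'(1+\rho_L')$, and $\rho_L,\rho_R$ as in the statement, the value on $\R_+$ in the $c_R$-frame is $-\lambda'$, not $-\lambda'(1+\rho_R')$: the right side of the counterflow line contributes $-\lambda'$, the right side of $\eta$ contributes $\lambda + c_R = \lambda'(1+\rho_R')$ only \emph{inside} the pocket, and the original $\R_+$ contributes $\lambda(1+\rho_R)+c_R = -\lambda'$ (using $\lambda\kappa/4=\lambda'$). Similarly, the value on $(-\infty,X)$ works out to $\lambda'(1+\rho')$ (from $-\lambda(1+\rho_L)+c_R = -2\lambda + \lambda'(\rho_L'+\rho_R'+3)$ and $\lambda'\kappa'/2=2\lambda$), not $\lambda'(1+\rho')+2\pi\chi$; your formula $\lambda'(1+\rho_L')+c_R-c_L$ is not the relevant combination here, since the geometry differs from the $\BCLE$-on-$\BCLE$ setting of Section~\ref{sec:proofs} (the trunk is a single chord, not a closed loop). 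These corrected values are exactly what makes $\gamma$ evolve as $\SLE_{\kappa'}(\rho')$ with a single force point at $X$ — i.e., your ``engineered cancellation'' does occur, just not via the expressions you wrote.
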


We note that there are some admissible triples $\rho_L',\rho_R',\rho'$ in which either $\rho_L' \leq \kappa'/2-4$ , $\rho_R' \leq \kappa'/2-4$, or both.  If $\rho_L'$ (resp.\ $\rho_R'$) is in this range, then it means that the excursions that $\gamma$ makes to the left (resp.\ right) of its trunk terminate on the most recently visited point of the trunk before the excursion started.  This simply follows because the corresponding counterflow line completely fills the trunk.  The commutation argument is the same as described in the proof of Theorem~\ref{thm:duality1} and Theorem~\ref{thm:duality2}, except in this case the counterflow lines considered actually \emph{swallow} entirely the flow lines considered.  As explained in \cite{ms2012ig1}, in particular in the proofs of continuity of the $\SLE_{\kappa'}(\rho')$ processes with $\rho' \in (-2,\kappa'/2-4]$, this type of commutation naturally fits into the imaginary geometry framework.

We also note that in the case $\rho' \in (-2-(\kappa'/2),(\kappa'/2)-4]$, the path $\gamma$ does not branch into the components that it cuts off from $\infty$ so that there is no distinction between $\gamma$ and the path which arises by reparameterizing $\gamma$ according to capacity as seen from $\infty$.  This follows because if $\eta'$ is an $\SLE_{\kappa'}(\rho')$ process with $\rho' \leq (\kappa'/2)-4$ in $\h$ from $0$ to $\infty$ with a single boundary force point located at $x > 0$ then $\eta'$ first hits $[x,\infty)$ at $x$.

Using the same argument as in the proof of Theorem~\ref{thm:generalized_kp_law}, we also obtain the following.

\begin{theorem}
\label{thm:generalized_k_law}
For each $\beta \in [-1,1]$ and $\rho \in (-2-(\kappa/2),(\kappa/2)-4)$ there exists $\rho_L,\rho_R \in (-2,\kappa-4)$ with $\rho_L + \rho_R = \rho + (\kappa/2)-2$ such that with $\rho_L' = -\kappa'(\rho_L+2)/4$ and $\rho_R' = -\kappa'(\rho_R+2)/4$, the path $\gamma$ constructed above, switching the roles of flow and counterflow lines, with these parameters is distributed like an $\SLE_{\kappa}^\beta(\rho)$ process.  In particular, these processes are uniquely coupled with and determined by the GFF and are almost surely generated by continuous by curves.
\end{theorem}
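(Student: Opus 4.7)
The plan is to mirror the argument giving Theorem~\ref{thm:generalized_kp_law} (and, before that, the proof of Theorem~\ref{thm:duality2}), swapping the roles of flow and counterflow lines. First I set $\rho_L + \rho_R = \rho + (\kappa/2) - 2$ with $\rho_L, \rho_R \in (-2, \kappa-4)$ and $\rho_L' = -\kappa'(\rho_L+2)/4$, $\rho_R' = -\kappa'(\rho_R+2)/4$. Take $h$ a GFF on $\h$ whose boundary data is tuned so that the counterflow line $\eta'$ from $0$ to $\infty$ is an $\SLE_{\kappa'}(\rho_L';\rho_R')$ process with force points at $0^-$ and $0^+$ (the relevant constants on $\R_-$ and $\R_+$ are obtained by reading off Table~\ref{tab:sle_bcle_boundary_conditions}). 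In each component $U$ of $\h \setminus \eta'$, let $\varphi \colon U \to \h$ send the first point on $\partial U$ visited by $\eta'$ to $0$ and the last to $\infty$. If $U$ lies to the right of $\eta'$ (or is surrounded by $\eta'$ clockwise), add $c_R = -\lambda'(1+\rho_R')+\lambda$ to the pulled-back field and launch a branching flow line targeting every point of $\varphi(\partial U \cap \eta')$; if $U$ lies to the left, add the symmetric constant $c_L = \lambda'(1+\rho_L')-\lambda$ and do the same. Let $\gamma$ be the path obtained by following $\eta'$ and, each time a component $U$ is swallowed, inserting the concatenation of these branching flow-line loops in their order of appearance.

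The next step is continuity of $\gamma$. Local finiteness of the $\BCLE$-type flow-line ensembles attached inside each component follows from the space-filling $\SLE_{\kappa'}$ sandwiching argument of Lemma~\ref{lem:bcle_k_space_filling_sle_interaction} and Proposition~\ref{prop:locfinitek} applied inside each $U$; this together with the continuity of $\eta'$ itself produces a continuous concatenation by the same uniform approximation used for $\eta_\Lambda$ in Section~\ref{sec:proofs}. I would then prove, for each stopping time $\tau$ of the filtration generated by $\gamma$, that $\gamma([0,\tau])$ is a local set of $h$. This is carried out exactly as in Proposition~\ref{prop:localityofnestedlooptracer}: given an open test set $U$, exhibit the alternative generation scheme that first draws $\eta'$ up to hitting $U$, then draws the attached branching flow lines up to $U$, using the commutation of flow lines with counterflow lines recalled in Section~\ref{subsubsec:ig_interaction_rules}, and apply Proposition~\ref{prop:local_set_char}.

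Having the locality, I read off the conditional boundary data of $h$ in the unbounded component of $\h \setminus \gamma([0,\tau])$ by pulling back by a conformal map $\varphi$ fixing $\infty$ and sending $\gamma(\tau)$ to $0$, using \cite[Proposition~3.8]{ms2012ig1}. The constants $c_L,c_R$ have been chosen precisely so that, in this pullback, the boundary data changes only at the image of the tip and at the image $X$ of the most recently visited point on $\eta'$: the data on the side of $0$ containing $X$ is $\pm\lambda(1+\rho)$ (the $\SLE_\kappa(\rho)$ value), while the data on the other side of $0$ matches a $\bSLE_\kappa$-style force at infinity via the relation $\rho_L+\rho_R = \rho+(\kappa/2)-2$. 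The martingale characterization \cite[Theorem~2.4]{ms2012ig1} then identifies the evolution of $\gamma$ away from the trunk as $\SLE_\kappa(\rho)$ with force point at the last trunk-hit. Combined with scale invariance and the conformal Markov property, Lemma~\ref{lem:slekrcharacterization} forces $\gamma$ to be an $\SLE_\kappa^\beta(\rho)$ process for some $\beta \in [-1,1]$ (or an $\SLE_\kappa^{0,\mu}(-2)$ when $\rho=-2$, which however is excluded by $\rho \in (-2-\kappa/2, \kappa/2-4)$ when $\kappa < 4$; the edge cases are handled as in Theorem~\ref{thm:generalized_kp_law}).

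The main obstacle I expect is showing that the parameter $\beta$ produced this way sweeps out all of $[-1,1]$ as $(\rho_L,\rho_R)$ ranges over the admissible pairs with $\rho_L+\rho_R = \rho+(\kappa/2)-2$. Injectivity is immediate because the conditional trunk law determines $(\rho_L',\rho_R')$; the continuous-dependence argument used at the end of Proposition~\ref{prop:CME_characterization2} and invoked in the proof of Theorem~\ref{thm:duality1} should give continuity of $\beta$ as a function of $(\rho_L,\rho_R)$, and the boundary cases $\rho_L = \kappa-4$ or $\rho_R = \kappa-4$ (degenerate trunk on one side) correspond to the totally asymmetric cases of Section~\ref{subsubsec::ayssm_sle_k}, i.e.\ $\beta = \pm 1$. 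The intermediate value theorem then gives surjectivity onto $[-1,1]$. The uniqueness of the coupling and the fact that $\gamma$ is determined by $h$ are then automatic, since $\eta'$ and all attached flow lines are each almost surely determined by $h$ by \cite[Theorems~1.2,~1.4,~1.6]{ms2013ig4}.
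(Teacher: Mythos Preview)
Your proposal is correct and follows essentially the same approach as the paper, which simply states that Theorem~\ref{thm:generalized_k_law} follows by the same argument as Theorem~\ref{thm:generalized_kp_law} with the roles of flow and counterflow lines interchanged; you have written out that argument in appropriate detail, including the locality step via Proposition~\ref{prop:localityofnestedlooptracer}, the identification via Lemma~\ref{lem:slekrcharacterization}, and the surjectivity of $\beta$ via the continuity-and-endpoints argument from the end of the proof of Theorem~\ref{thm:duality1}. One minor caution: the explicit formulas you give for $c_L$ and $c_R$ should be double-checked against the totally asymmetric case in Section~\ref{subsubsec::ayssm_sle_k} (where the relevant constant is $c=-\lambda'-\lambda(1+\rho)$), since the correctness of the boundary-data computation hinges on these, but this is a bookkeeping point rather than a gap in the argument.
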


As in the case $\beta=1$ mentioned above, the gap in the boundary data for the GFF in the couplings of Theorem~\ref{thm:generalized_kp_law} and Theorem~\ref{thm:generalized_k_law} is the same as in the coupling of $\SLE_{\kappa'}(\rho')$ for $\rho' > -2$ and $\SLE_\kappa(\rho)$ for $\rho > -2$ with the GFF.

\subsection{Brief discussion of $\CLE_\kappa$ and $\CLE_{\kappa'}$ couplings with the GFF}

Let us now very briefly comment on the couplings of conformal loop ensembles with the GFF that we constructed and used in the present paper: 
As $\BCLE$ is coupled with the GFF as a local set as either a boundary branching flow or counterflow line and we have shown that it is possible to obtain $\CLE_\kappa^\beta$ by appropriately iterating $\BCLE$s, we get that $\CLE_\kappa^\beta$ for all $\kappa \in (8/3,8) \setminus \{4\}$ is naturally coupled with the GFF as a local set.  Similarly, $\CLE_4^0$ is also coupled with the GFF as a local set and the coupling can be constructed by iterating $\BCLE_4$'s.  In particular, it is possible to read off the boundary data for the GFF along the 
CLE loop boundaries because we know the boundary data for the GFF along the loop boundaries for $\BCLE$. 
Some important aspects of the nature of this coupling of $\CLE$ with the GFF are different for $\kappa \in (8/3,4)$, $\kappa=4$, and $\kappa' \in (4,8)$: 

\begin {itemize}
\item In the case of $\CLE_4^0$, the boundary data for the field along the loops is only determined by the orientations of the loops.  That is, the boundary data in the inside of a loop is $2\lambda$ (resp.\ $-2\lambda$) if the loop has a counterclockwise (resp.\ clockwise) orientation.  
\item When $\kappa' \in (4,8)$, once we choose the starting point of the $\bSLE_{\kappa'}^\beta$ exploration tree and we know the loops of the $\CLE_{\kappa'}^\beta$, together with their orientations, then the boundary of the field along all the loop boundaries is 
fully determined.  Indeed, the exploration tree is a function of its root and of the entire oriented $\CLE_{\kappa'}^\beta$. 
Then, in this exploration tree, each loop $\gamma$ of the $\CLE_{\kappa'}$ will be traced starting from one of its points $w(\gamma)$ which is the first one that the trunk hits. 
The  GFF boundary data along this loop is then given by a constant plus a winding term which is counted starting from $w$ and will have a $ \pm 2\pi \chi$ jump at $w$ (depending on the orientation of the loop). 
So, the boundary data is  more complicated to work out than in the  $\CLE_4^0$ case, but each choice of a boundary point on $\partial D$ makes it possible to determine all the boundary conditions inside all the loops from the knowledge of $\CLE_{\kappa'}^\beta$. 
\item 
In the setting of $\CLE_\kappa^\beta$ for $\kappa \in (8/3,4)$, one needs to specify much more information to determine the boundary data of the field along the loop boundaries. 
The reason is that (as is shown in \cite{msw2016randomness}), the $\CLE_\kappa^\beta$ and the root of the tree are not sufficient to recover the trunk of the $\bSLE_\kappa^\beta$. 
So, neither the position of the special point $w$ on each $\CLE_\kappa^\beta$ loop where the boundary value jumps by $\pm 2 \pi \chi$ nor the actual additional constant that comes from 
the winding of the trunk before it hits that point can be worked out from the knowledge of the $\bSLE_{\kappa}^\beta$ and the root of the tree only. 
\end {itemize}

\subsection*{Acknowledgements}

J.M.\ thanks FIM at ETH Zurich for its hospitality on several occasions during which part of the work for this project was completed.  J.M.'s work was partially supported by DMS-1204894.  S.S.\ also thanks FIM at ETH Zurich for its hospitality during a visit during which part of this work was completed.  S.S.'s work was also partially supported by a grant from the Simons Foundation and NSF grant DMS-1209044.  W.W. acknowledges the support of  SNF grant 155922, and the support of the Clay Foundation. W.W.\ is part of the NCCR Swissmap. All authors acknowledge the hospitality of the Isaac Newton Institute where part of this work was completed.

\bibliographystyle{abbrv}
\bibliography{sle_kappa_rho}

\begin{thebibliography}{10}

\bibitem{asw2016boundedthin}
J.~{Aru}, A.~{Sep{\'u}lveda}, and W.~{Werner}.
\newblock {On bounded-type thin local sets of the two-dimensional Gaussian free
  field}.
\newblock {\em ArXiv e-prints}, Mar. 2016.

\bibitem{BEF_DIM}
V.~Beffara.
\newblock The dimension of the {SLE} curves.
\newblock {\em Ann. Probab.}, 36(4):1421--1452, 2008.

\bibitem{cgn2015ising}
F.~Camia, C.~Garban, and C.~M. Newman.
\newblock Planar {I}sing magnetization field {I}. {U}niqueness of the critical
  scaling limit.
\newblock {\em Ann. Probab.}, 43(2):528--571, 2015.

\bibitem{cn2006percolation}
F.~Camia and C.~M. Newman.
\newblock Two-dimensional critical percolation: the full scaling limit.
\newblock {\em Comm. Math. Phys.}, 268(1):1--38, 2006.

\bibitem{cardy2010conformal}
J.~Cardy.
\newblock Conformal field theory and statistical mechanics.
\newblock In {\em Exact methods in low-dimensional statistical physics and
  quantum computing}, pages 65--98. Oxford Univ. Press, Oxford, 2010.

\bibitem{cdhks2014ising}
D.~Chelkak, H.~Duminil-Copin, C.~Hongler, A.~Kemppainen, and S.~Smirnov.
\newblock Convergence of {I}sing interfaces to {S}chramm's {SLE} curves.
\newblock {\em C. R. Math. Acad. Sci. Paris}, 352(2):157--161, 2014.

\bibitem{chi2015isingcorrelation}
D.~Chelkak, C.~Hongler, and K.~Izyurov.
\newblock Conformal invariance of spin correlations in the planar {I}sing
  model.
\newblock {\em Ann. of Math. (2)}, 181(3):1087--1138, 2015.

\bibitem{cs2012universality}
D.~Chelkak and S.~Smirnov.
\newblock Universality in the 2{D} {I}sing model and conformal invariance of
  fermionic observables.
\newblock {\em Invent. Math.}, 189(3):515--580, 2012.

\bibitem{dub2007commutation}
J.~Dub{\'e}dat.
\newblock Commutation relations for {S}chramm-{L}oewner evolutions.
\newblock {\em Comm. Pure Appl. Math.}, 60(12):1792--1847, 2007.

\bibitem{dubedat2009duality}
J.~Dub{\'e}dat.
\newblock Duality of {S}chramm-{L}oewner evolutions.
\newblock {\em Ann. Sci. \'Ec. Norm. Sup\'er. (4)}, 42(5):697--724, 2009.

\bibitem{dubedat2009gff}
J.~Dub{\'e}dat.
\newblock S{LE} and the free field: partition functions and couplings.
\newblock {\em J. Amer. Math. Soc.}, 22(4):995--1054, 2009.

\bibitem{dtw2016ising}
H.~Duminil-Copin, V.~Tassion, and H.~Wu.
\newblock 2016.
\newblock In preparation.

\bibitem{dms2014mating}
B.~{Duplantier}, J.~{Miller}, and S.~{Sheffield}.
\newblock {Liouville quantum gravity as a mating of trees}.
\newblock {\em ArXiv e-prints}, Sept. 2014.

\bibitem{es1988fk}
R.~G. Edwards and A.~D. Sokal.
\newblock Generalization of the {F}ortuin-{K}asteleyn-{S}wendsen-{W}ang
  representation and {M}onte {C}arlo algorithm.
\newblock {\em Phys. Rev. D (3)}, 38(6):2009--2012, 1988.

\bibitem{fk1972fkmodel}
C.~M. Fortuin and P.~W. Kasteleyn.
\newblock On the random-cluster model. {I}. {I}ntroduction and relation to
  other models.
\newblock {\em Physica}, 57:536--564, 1972.

\bibitem{grimmett2006cluster}
G.~Grimmett.
\newblock {\em The random-cluster model}, volume 333 of {\em Grundlehren der
  Mathematischen Wissenschaften [Fundamental Principles of Mathematical
  Sciences]}.
\newblock Springer-Verlag, Berlin, 2006.

\bibitem{gms2015cone_times}
E.~{Gwynne}, C.~{Mao}, and X.~{Sun}.
\newblock {Scaling limits for the critical Fortuin-Kasteleyn model on a random
  planar map {I}: cone times}.
\newblock 2015.
\newblock \arXiv{1502.00546}.

\bibitem{gm2016topology}
E.~Gwynne and J.~Miller.
\newblock Convergence of the topology of critical {F}ortuin-{K}asteleyn planar
  maps to that of {CLE}$_\kappa$ on a {L}iouville quantum surface.
\newblock 2016.
\newblock In preparation.

\bibitem{gs2015finite_volume}
E.~{Gwynne} and X.~{Sun}.
\newblock {Scaling limits for the critical {F}ortuin-{K}astelyn model on a
  random planar map {II}: local estimates and empty reduced word exponent}.
\newblock 2015.
\newblock \arXiv{1505.03375}.

\bibitem{gs2015finite}
E.~{Gwynne} and X.~{Sun}.
\newblock {Scaling limits for the critical Fortuin-Kastelyn model on a random
  planar map III: finite volume case}.
\newblock {\em ArXiv e-prints}, Oct. 2015.

\bibitem{hagg1999fuzzypotts}
O.~H{\"a}ggstr{\"o}m.
\newblock Positive correlations in the fuzzy {P}otts model.
\newblock {\em Ann. Appl. Probab.}, 9(4):1149--1159, 1999.

\bibitem{hw2008uniqueness}
Y.~Higuchi and X.-Y. Wu.
\newblock Uniqueness of the critical probability for percolation in the
  two-dimensional {S}ierpi\'nski carpet lattice.
\newblock {\em Kobe J. Math.}, 25(1-2):1--24, 2008.

\bibitem{HONGLER_KYTOLA}
C.~Hongler and K.~Kyt{\"o}l{\"a}.
\newblock Ising interfaces and free boundary conditions.
\newblock {\em J. Amer. Math. Soc.}, 26(4):1107--1189, 2013.

\bibitem{hs2013isingenergy}
C.~Hongler and S.~Smirnov.
\newblock The energy density in the planar {I}sing model.
\newblock {\em Acta Math.}, 211(2):191--225, 2013.

\bibitem{iz2015freeboundary}
K.~Izyurov.
\newblock Smirnov's observable for free boundary conditions, interfaces and
  crossing probabilities.
\newblock {\em Comm. Math. Phys.}, 337(1):225--252, 2015.

\bibitem{ks2015fkcle}
A.~{Kemppainen} and S.~{Smirnov}.
\newblock {Conformal invariance of boundary touching loops of FK Ising model}.
\newblock {\em ArXiv e-prints}, Sept. 2015.

\bibitem{ks2012loewner}
A.~Kemppainen and S.~Smirnov.
\newblock Random curves, scaling limits and {L}oewner evolutions.
\newblock {\em Ann. Probab.}, 45(2):698--779, 2017.

\bibitem{kumagai1997percolation}
T.~Kumagai.
\newblock Percolation on pre-{S}ierpinski carpets.
\newblock In {\em New trends in stochastic analysis ({C}haringworth, 1994)},
  pages 288--304. World Sci. Publ., River Edge, NJ, 1997.

\bibitem{LAW05}
G.~Lawler.
\newblock {\em Conformally invariant processes in the plane}, volume 114 of
  {\em Mathematical Surveys and Monographs}.
\newblock American Mathematical Society, Providence, RI, 2005.

\bibitem{lsw2003restriction}
G.~Lawler, O.~Schramm, and W.~Werner.
\newblock Conformal restriction: the chordal case.
\newblock {\em J. Amer.\ Math.\ Soc.}, 16(4):917--955 (electronic), 2003.

\bibitem{lsw2004lerw_ust}
G.~Lawler, O.~Schramm, and W.~Werner.
\newblock Conformal invariance of planar loop-erased random walks and uniform
  spanning trees.
\newblock {\em Ann. Probab.}, 32(1B):939--995, 2004.

\bibitem{mv1995fuzzypotts}
C.~Maes and K.~Vande~Velde.
\newblock The fuzzy {P}otts model.
\newblock {\em J. Phys. A}, 28(15):4261--4270, 1995.

\bibitem{mscle}
J.~Miller and S.~Sheffield.
\newblock {CLE}(4) and the {G}aussian free field.

\bibitem{ms2013ig4}
J.~{Miller} and S.~{Sheffield}.
\newblock {Imaginary geometry IV: interior rays, whole-plane reversibility, and
  space-filling trees}.
\newblock {\em ArXiv e-prints}, Feb. 2013.
\newblock To appear in Probab. Theory Related Fields.

\bibitem{quantum_spheres}
J.~{Miller} and S.~{Sheffield}.
\newblock {Liouville quantum gravity spheres as matings of finite-diameter
  trees}.
\newblock {\em ArXiv e-prints}, June 2015.

\bibitem{ms2016lightcone}
J.~{Miller} and S.~{Sheffield}.
\newblock {Gaussian free field light cones and SLE$_\kappa(\rho)$}.
\newblock {\em ArXiv e-prints}, June 2016.

\bibitem{ms2012ig1}
J.~Miller and S.~Sheffield.
\newblock Imaginary geometry {I}: interacting {SLE}s.
\newblock {\em Probab. Theory Related Fields}, 164(3-4):553--705, 2016.

\bibitem{ms2012ig2}
J.~Miller and S.~Sheffield.
\newblock Imaginary geometry {II}: reversibility of {${\rm
  SLE}_\kappa(\rho_1;\rho_2)$} for {$\kappa\in(0,4)$}.
\newblock {\em Ann. Probab.}, 44(3):1647--1722, 2016.

\bibitem{ms2012ig3}
J.~Miller and S.~Sheffield.
\newblock Imaginary geometry {III}: reversibility of {SLE}$_\kappa$ for
  {$\kappa\in(4,8)$}.
\newblock {\em Ann. of Math. (2)}, 184(2):455--486, 2016.

\bibitem{msw2016randomness}
J.~{Miller}, S.~{Sheffield}, and W.~{Werner}.
\newblock {Non-simple SLE curves are not determined by their range}.
\newblock {\em ArXiv e-prints}, Sept. 2016.

\bibitem{msw2016betarho}
J.~Miller, S.~Sheffield, and W.~Werner.
\newblock Conformal loop ensembles on {L}iouville quantum gravity.
\newblock 2017.
\newblock In preparation.

\bibitem{msw2016fan}
J.~Miller, S.~Sheffield, and W.~Werner.
\newblock Labeled {CLE} interfaces and the {G}aussian free field fan.
\newblock 2017.
\newblock In preparation.

\bibitem{msw2012cle_gasket}
J.~Miller, N.~Sun, and D.~B. Wilson.
\newblock The {H}ausdorff dimension of the {CLE} gasket.
\newblock {\em Ann. Probab.}, 42(4):1644--1665, 2014.

\bibitem{mw2017connection}
J.~{Miller} and W.~{Werner}.
\newblock {Connection probabilities for conformal loop ensembles}.
\newblock {\em ArXiv e-prints}, Feb. 2017.

\bibitem{nw2011carpets}
{\c{S}}.~Nacu and W.~Werner.
\newblock Random soups, carpets and fractal dimensions.
\newblock {\em J. Lond. Math. Soc. (2)}, 83(3):789--809, 2011.

\bibitem{ry2004martingale}
D.~Revuz and M.~Yor.
\newblock {\em Continuous martingales and {B}rownian motion}, volume 293 of
  {\em Grundlehren der Mathematischen Wissenschaften [Fundamental Principles of
  Mathematical Sciences]}.
\newblock Springer-Verlag, Berlin, third edition, 1999.

\bibitem{rs2005basic}
S.~Rohde and O.~Schramm.
\newblock Basic properties of {SLE}.
\newblock {\em Ann. of Math. (2)}, 161(2):883--924, 2005.

\bibitem{rozanov1982markov}
Y.~A. Rozanov.
\newblock {\em Markov random fields}.
\newblock Applications of Mathematics. Springer-Verlag, New York-Berlin, 1982.
\newblock Translated from the Russian by Constance M. Elson.

\bibitem{schramm2000sle}
O.~Schramm.
\newblock Scaling limits of loop-erased random walks and uniform spanning
  trees.
\newblock {\em Israel J. Math.}, 118:221--288, 2000.

\bibitem{ss2009contours}
O.~Schramm and S.~Sheffield.
\newblock Contour lines of the two-dimensional discrete {G}aussian free field.
\newblock {\em Acta Math.}, 202(1):21--137, 2009.

\bibitem{ss2010continuumcontour}
O.~Schramm and S.~Sheffield.
\newblock A contour line of the continuum {G}aussian free field.
\newblock {\em Probab. Theory Related Fields}, 157(1-2):47--80, 2013.

\bibitem{ssw2009radii}
O.~Schramm, S.~Sheffield, and D.~B. Wilson.
\newblock Conformal radii for conformal loop ensembles.
\newblock {\em Comm. Math. Phys.}, 288(1):43--53, 2009.

\bibitem{sw2005coordinate}
O.~Schramm and D.~B. Wilson.
\newblock S{LE} coordinate changes.
\newblock {\em New York J. Math.}, 11:659--669 (electronic), 2005.

\bibitem{sep2016thinlocalsets}
A.~{Sep{\'u}lveda}.
\newblock {On thin local sets of the Gaussian free field}.
\newblock {\em ArXiv e-prints}, Feb. 2017.

\bibitem{she2009cle}
S.~Sheffield.
\newblock Exploration trees and conformal loop ensembles.
\newblock {\em Duke Math. J.}, 147(1):79--129, 2009.

\bibitem{she2010weld}
S.~Sheffield.
\newblock Conformal weldings of random surfaces: {SLE} and the quantum gravity
  zipper.
\newblock {\em Ann. Probab.}, 44(5):3474--3545, 2016.

\bibitem{she2011qginv}
S.~Sheffield.
\newblock Quantum gravity and inventory accumulation.
\newblock {\em Ann. Probab.}, 44(6):3804--3848, 2016.

\bibitem{sw2012cle}
S.~Sheffield and W.~Werner.
\newblock Conformal loop ensembles: the {M}arkovian characterization and the
  loop-soup construction.
\newblock {\em Ann. of Math. (2)}, 176(3):1827--1917, 2012.

\bibitem{smirnov2001percolation}
S.~Smirnov.
\newblock Critical percolation in the plane: conformal invariance, {C}ardy's
  formula, scaling limits.
\newblock {\em C. R. Acad. Sci. Paris S\'er. I Math.}, 333(3):239--244, 2001.

\bibitem{smirnov2007ising}
S.~Smirnov.
\newblock Conformal invariance in random cluster models. {I}. {H}olomorphic
  fermions in the {I}sing model.
\newblock {\em Ann. of Math. (2)}, 172(2):1435--1467, 2010.

\bibitem{smirnov2010icm}
S.~Smirnov.
\newblock Discrete complex analysis and probability.
\newblock In {\em Proceedings of the {I}nternational {C}ongress of
  {M}athematicians. {V}olume {I}}, pages 595--621. Hindustan Book Agency, New
  Delhi, 2010.

\bibitem{ww2016ln}
W.~Werner.
\newblock Lecture notes, 2016.

\bibitem{ww2013conformally}
W.~Werner and H.~Wu.
\newblock On conformally invariant {CLE} explorations.
\newblock {\em Comm. Math. Phys.}, 320(3):637--661, 2013.

\bibitem{zhan2008duality}
D.~Zhan.
\newblock Duality of chordal {SLE}.
\newblock {\em Invent. Math.}, 174(2):309--353, 2008.

\bibitem{zhan2010duality2}
D.~Zhan.
\newblock Duality of chordal {SLE}, {II}.
\newblock {\em Ann. Inst. Henri Poincar\'e Probab. Stat.}, 46(3):740--759,
  2010.

\end{thebibliography}

\end{document}